
\documentclass[11pt,a4paper,reqno]{amsart}
\usepackage[a4paper,inner=2.5cm,outer=2.5cm,top=2.5cm,bottom=2.5cm]{geometry}
\usepackage{amsmath,amssymb,amsthm,enumerate,mathtools,stmaryrd}
\SetSymbolFont{stmry}{bold}{U}{stmry}{m}{n} 
\usepackage{hyperref}
\hypersetup{colorlinks=true,linkcolor=blue,citecolor=teal,filecolor=magenta,urlcolor=cyan}

\usepackage{makecell}
\usepackage{graphicx}
\usepackage{multirow}

\usepackage{float}
\usepackage[justification=centering]{caption}

\usepackage{extarrows}

\usepackage{xcolor}

\newcommand{\C}{\mathbb{C}}

\colorlet{light-gray}{gray!10}
\colorlet{lightish-gray}{gray!30}
\colorlet{light-cyan}{cyan!20}

\usepackage[style=alphabetic,maxalphanames=5,giveninits,sorting=nyt,%
maxbibnames=99]{biblatex}
\renewbibmacro{in:}{}
\addbibresource{LocalGlobal.bib}

\usepackage[noabbrev]{cleveref}
\expandafter\def\csname ver@etex.sty\endcsname{3000/12/31}

\crefname{definition}{Definition}{Definitions}
\Crefname{definition}{Definition}{Definitions}
\crefname{theorem}{Theorem}{Theorems}
\Crefname{theorem}{Theorem}{Theorems}
\crefname{introthm}{Theorem}{Theorems}
\Crefname{introthm}{Theorem}{Theorems}
\crefname{proposition}{Proposition}{Propositions}
\Crefname{proposition}{Proposition}{Propositions}
\crefname{lemma}{Lemma}{Lemmata}
\Crefname{lemma}{Lemma}{Lemmata}
\crefname{corollary}{Corollary}{Corollary}
\Crefname{corollary}{Corollary}{Corollary}
\crefname{section}{Section}{Sections}
\Crefname{section}{Section}{Sections}
\crefname{subsection}{Subsection}{Subsections}
\Crefname{subsection}{Subsection}{Subsections}
\crefname{conjecture}{Conjecture}{Conjectures}
\Crefname{conjecture}{Conjecture}{Conjectures}
\crefname{question}{Question}{Questions}
\Crefname{question}{Question}{Questions}
\crefname{warning}{Warning}{Warnings}
\Crefname{warning}{Warning}{Warnings}
\crefname{figure}{Figure}{Figures}
\crefname{appendix}{Appendix}{Appendices}
\Crefname{appendix}{Appendix}{Appendices}
\Crefname{figure}{Figure}{Figures}
\crefname{table}{Table}{Tables}
\Crefname{table}{Table}{Tables}
\crefname{example}{Example}{Examples}
\Crefname{example}{Example}{Examples}

\makeatletter
\def\subsection{\@startsection{subsection}{2}%
  \z@{\linespacing\@plus.7\linespacing}{.3\linespacing}%
  {\normalfont\bfseries}}
\makeatother

\makeatletter
\def\subsubsection{\@startsection{subsubsection}{2}%
  \z@{\linespacing\@plus.7\linespacing}{.3\linespacing}%
  {\normalfont\itshape}}
\makeatother

\usepackage[textsize=footnotesize]{todonotes}
\presetkeys%
		{todonotes}%
		{}{}%
\setlength{\marginparwidth}{2cm}

%
%
%
%
%

\theoremstyle{plain}
\newtheorem{theorem}{Theorem}[section]

\newtheorem{lemma}[theorem]{Lemma}
\newtheorem{proposition}[theorem]{Proposition}

\theoremstyle{definition}
\newtheorem{definition}[theorem]{Definition}
\newtheorem{remark}[theorem]{Remark}

\newtheorem{example}[theorem]{Example}


\AddToHook{env/corollary/begin}{\crefalias{theorem}{corollary}}
\AddToHook{env/lemma/begin}{\crefalias{theorem}{lemma}}
\AddToHook{env/proposition/begin}{\crefalias{theorem}{proposition}}
\AddToHook{env/definition/begin}{\crefalias{theorem}{definition}}
\AddToHook{env/remark/begin}{\crefalias{theorem}{remark}}
\AddToHook{env/conjecture/begin}{\crefalias{theorem}{conjecture}}
\AddToHook{env/example/begin}{\crefalias{theorem}{example}}

\newcommand{\mc}[1]{\mathcal{#1}}
\newcommand{\mb}[1]{\mathbb{#1}}
\newcommand{\mf}[1]{\mathfrak{#1}}
\newcommand{\Z}{\mathbb{Z}}
\newcommand{\cD}{\mathcal{D}}

\renewcommand{\P}{\mb{P}}

\newcommand{\dd}{\mathrm{d}}
\newcommand{\Res}{\mathop{\mathrm{Res}}}
\title{Taking limits in topological recursion}

\author[G. Borot]{Ga\"etan Borot}
\address{Institut f\"ur Mathematik \& Institut f\"ur Physik, Humboldt-Universit\"at zu Berlin, Unter den Linden 6, 10099 Berlin, Germany.}
\email{gaetan.borot@hu-berlin.de}

\author[V. Bouchard]{Vincent Bouchard}
\address{Department of Mathematical \& Statistical Sciences, University of Alberta, 632 CAB, Edmonton, Alberta, Canada, T6G 2G1.}
\email{vincent.bouchard@ualberta.ca}

\author[N.K. Chidambaram]{Nitin Kumar Chidambaram}
\address{Max Planck Institut f\"ur Mathematik, Vivatsgasse 7, 53111 Bonn, Germany.}

\address{School of Mathematics, University of Edinburgh, James Clerk Maxwell Building, Peter Guth\-rie Tait Rd, Edinburgh EH9 3FD, U.K.}
\email{nitin.chidambaram@ed.ac.uk}

\author[R. Kramer]{Reinier Kramer}
\address{Department of Mathematical \& Statistical Sciences, University of Alberta, 632 CAB, Edmonton, Alberta, Canada, T6G 2G1.}

\address{Universit\`a di Milano-Bicocca, Dipartimento di Matematica e Applicazioni, Via Roberto Cozzi, 55, Milan, 20125, Italy.}
\email{reinier.kramer@unimib.it}

\author[S. Shadrin]{Sergey Shadrin}
\address{Korteweg-de Vries Institute for Mathematics, University of Amsterdam, Postbus 94248, 1090 GE Amsterdam, the Netherlands.}
\email{s.shadrin@uva.nl}

\subjclass[2020]{14H10, 14H50, 14H70, 14H81, 14N10, 14D06}

\begin{document}

\begin{abstract}
	
	When does topological recursion applied to a family of spectral curves commute with taking limits? This problem is subtle, especially when the ramification structure of the spectral curve changes at the limit point. We provide sufficient (straightforward-to-use) conditions for checking when the commutation with limits holds, thereby closing a gap in the literature where this compatibility has been used several times without justification. This takes the form of a stronger result of analyticity of the topological recursion along suitable families. To tackle this question, we formalise the notion of global topological recursion and provide sufficient conditions for its equivalence with local topological recursion. The global version  facilitates the study of analyticity and limits.  For nondegenerate algebraic curves, we reformulate these conditions purely in terms of the structure of its underlying singularities. Finally, we apply this to study deformations of $ (r,s) $-spectral curves, spectral curves for weighted Hurwitz numbers, and provide several other examples and non-examples (where the commutation with limits fails).
	
\end{abstract}

\maketitle

\tableofcontents

\section{Introduction}

\subsection{Incipit}

To a spectral curve (roughly speaking, a branched covering of complex curves with additional data) topological recursion associates a module for a $\mathcal{W}$-algebra, generated by a unique normalised vector, called a partition function. This partition function can be thought of as encoding all genera B-model invariants, and topological recursion organises its computation. The coefficients of this partition function are indexed by $g \in \mathbb{Z}_{\geq 0}$ (genus)  and $n \in \mathbb{Z}_{> 0}$ (number of insertions/punctures) and can be efficiently encoded as meromorphic differentials $\omega_{g,n}$ on the $n$-th product of the spectral curve. The data of the spectral curve includes $\omega_{0,1}$ and $\omega_{0,2}$, and topological recursion provides a way to compute all the $\omega_{g,n}$, recursively in $2g - 2 +  n > 0$. The partition functions constructed by topological recursion have numerous applications. Without being exhaustive, let us mention some in random matrix theory (see e.g. \cite{CEO06,EO09,BEO13}), in enumerative geometry (see e.g. \cite{DN11,EynardICM,EO15,E14,L17,FLZ17,FLZ20,BDKS20a,GKL21,BCCGF22,ABCDGLW23}), in supersymmetric gauge theories of class $ \mathcal S $ in the context of the Alday--Gaiotto--Tachikawa proposal \cite{BBCC21,Osu21}, in integrable systems and semiclassical expansions of solutions of ODEs (see e.g. \cite{BE12,BE17,IMS18,DM18a,EGMO21}), etc.

Topological recursion was first introduced by Eynard and Orantin for smooth branched coverings with simple ramifications \cite{EO07}. The definition was later extended to allow higher ramifications \cite{BHLMR14,BE13}, see also \cite{Prats} for an early instance.  Kontsevich and Soibelman then proposed an algebraic framework leading to topological recursion, called Airy structures \cite{KS17}. A large class of Airy structures can be constructed from vertex operator algebras having a free field representation -- in particular $\mathcal{W}(\mathfrak{gl}_r)$-algebras -- and the computation of their partition function can in turn be recast as a residue computation on a spectral curve \cite{BBCCN18}. The type of ramifications of the spectral curve (among other things) specifies the underlying $\mathcal{W}$-algebra and module. This perspective has led to an extension of the definition of topological recursion to a much larger class of (not necessarily smooth) spectral curves. The advantage of working with spectral curves versus Airy structures is that it allows us to exploit analytic continuation and use the global geometry of the curve, instead of just dealing with germs at a finite set of points.

\subsection{The problem}
\label{sec:problem}
The central question driving this article is:
\begin{center}
\textit{ Given a family of spectral curves, is the topological recursion procedure compatible with taking limits (i.e. approaching special fibres) in the family?}
\end{center}
Of particular interest are the cases where the type of the spectral curve changes at the limit,  for instance, when ramification points collide with each other or with singularities of the curve. Compatibility with such limits has been used in several papers, either without justification or a reference, or with a reference to \cite[Section 3.5]{BE13} where the core idea necessary to study compatibility with limits is introduced but the argument for compatibility itself is merely sketched. Here are some relevant examples:
\begin{itemize}
\item In \cite{DNOPS19}, Theorem 7.3 states that topological recursion on the $ r $-Airy curve (associated to the $A_{r - 1}$-singularity) computes the descendant integrals of the Witten $r$-spin class. Its proof invokes compatibility with limits.
\item In \cite[Section 2.1]{BDS20}, \cite[Section 2.2]{BDKLM22} and \cite[Proof of Theorems 5.3 and 5.4]{BDKS20a}. In these articles it is shown that (weighted) double Hurwitz numbers satisfy topological recursion in case the spectral curve has simple ramifications (this is the case for generic weights), and compatibility with limits is invoked to extend the result to the non-generic case where higher ramifications occur. \cite{BDKS20a} refers to the present article.
\end{itemize}

It turns out that this question is highly non-trivial, and  the assertion ``topological recursion commutes with limits" is only conditionally true. In fact, a better formulation of the problem is to ask for sufficient conditions that guarantee the \emph{analyticity} of the correlators of topological recursion in families. We investigate this problem in detail and identify the crux of the matter. We provide sufficient conditions for analyticity/compatibility with limits which are easy to test and cover many cases of practical use.  We also provide examples where commutation with limits fails, which illustrates  the relevance of our results. In particular, our results apply to families of algebraic curves of constant genus.

Using ideas of \cite{BE13}, Milanov studied in \cite{Milanov} a similar question  
in the family of deformations of the $A_{r - 1}$-singularity and used it to establish Givental's conjecture asserting the analytic extension of the total ancestor potential to non-semisimple points. Prior to this work, Charbonnier, Chidambaram, Garcia-Failde and Giacchetto \cite{CCGG22}  justified that topological recursion commutes with the limit $t \rightarrow 0$ for families of genus $0$ spectral curves of the form
\begin{equation}
\label{negrcurve} x = \frac{P(y,t)}{Q(y,t)}\,,
\end{equation}
where $P$ and $Q$ are bivariate polynomials, assuming that the ramification points of $x$ are simple for $t \neq 0$, that there is a single ramification point above each branch point,  and that the $t \rightarrow 0$ limit of the $\omega_{g,n}(y_1,\ldots,y_n)$ (for fixed $y_i$) exist. Using this limit procedure, they reproved that topological recursion governs the intersection indices of Witten $r$-spin class (filling the gap in \cite[Theorem 7.3]{DNOPS19}) and gave another proof of the Witten $r$-spin conjecture first proved in~\cite{FSZ10}. Furthermore,  \cite{CGG22} proved that the descendant integrals of the $ \Theta^r $-class (a negative spin version of the Witten $r$-spin class) are computed by topological recursion on the spectral curve
$$
xy^{r} = 1, \qquad \omega_{0,1} = y \dd x,\qquad \omega_{0,2} = \frac{ \dd y_1\, \dd y_2}{(y_1 - y_2)^2}\,.
$$
For $r = 2$, this resolved a conjecture of Norbury \cite{Nor17} giving an intersection-theoretic interpretation to the Br\'ezin--Gross--Witten $ \tau $-function. Moreover, again for $ r=2 $,  commutation of the topological recursion with the limit implies the vanishing of descendant integrals of certain combinations of $\kappa$-classes, which was first noticed by Kazarian and Norbury \cite{KN21}. 
Similar to the observation of \cite{KN21}, we expect our results to have consequences for the intersection theory on $\overline{\mathcal{M}}_{g,n}$, but this line of investigation remains beyond the scope of the present article.

Our results about existence of limits and compatibility with topological recursion include and significantly extend the cases treated in \cite{Milanov,CCGG22}. We discuss many examples in \cref{sec:EX}. We will also provide examples of a different nature in \cref{sec:Chiodo} and \cref{GWP111} , where limits can be taken on the intersection-theoretic side but are difficult to study on the topological recursion side. This makes the interplay between enumerative geometry and topological recursion particularly interesting and indicates that there are often non-trivial phenomena underlying compatibility with limits.

 The question \emph{Is topological recursion compatible with limits?} is in fact closely related to other problems in different contexts.

\medskip

\noindent \textit{(Variants of) cohomological field theories:} Can one express the $\omega_{g,n}$s of topological recursion for any admissible spectral curve in terms of intersection indices of classes on $\overline{\mathcal{M}}_{g,n}$ satisfying CohFT-like axioms?  Such a statement is known for certain types of spectral curves (in terms of the Witten $r$-spin class and the $ \Theta^r $-class  \cite{CGG22}) but is expected in general (see \cite[Conjecture 7.23]{BKS20}). It is conceivable that this question can be adressed by taking non-trivial limits of known cases.

\medskip

\noindent \textit{Tautological relations:} When topological recursion commutes with limits, we expect the existence of tautological relations in the cohomology ring of $\overline{\mathcal{M}}_{g,n}$ --- in the spirit of \cite{KN21,CGG22}. It is an interesting question to investigate and prove these relations and to identify whether they are new (i.e., do not fall in the set of Pixton's relations \cite{Pix13}).

\medskip

\noindent \textit{Frobenius manifolds:} In the context of the correspondence between Hurwitz--Frobenius manifolds and topological recursion, cf.~\cite{DNOPS19,DNOPS18}, it would be interesting to see whether the compatibility with the limits and the necessary admissibility conditions can be matched in some regular cases to the invariants and local analysis of semisimple Frobenius coalescent structures studied in~\cite{CottiDubGuz} --- that includes, for instance, the Maxwell strata of the discriminants.

\medskip

\noindent \textit{Representation theory:} Topological recursion on a spectral curve can be reformulated in terms of modules of $\mathcal{W}$-algebras \cite{BBCCN18}. To each fibre in a family of spectral curves one can associate a module of a certain $\mathcal{W}$-algebra. When the ramification type of the spectral curve changes at a special fibre in a family, the type of $\mathcal{W}$-algebra also changes. In cases where topological recursion commutes, the limit procedure gives us a way to construct new modules out of old ones, by a sort of analytic continuation --- cf. the schematics in \cref{S1ex}. This calls for a representation-theoretic understanding of the limit procedure.

\subsection{Outline and main results}
\label{S1ex}
For us, a spectral curve will be the data of a disjoint union of Riemann surfaces $\Sigma$, a holomorphic map $x: \Sigma \to \mathbb{P}^1$, a meromorphic 1-form $\omega_{0,1}$ on $\Sigma$, and a symmetric meromorphic bidifferential $\omega_{0,2}$ on $\Sigma \times \Sigma$, satisfying a few conditions stated in \Cref{de:sc}. If $p \in \Sigma$, let $U_p$ be an (arbitrarily) small neighbourhood of $p$ and $\mathfrak{f}(p) = x^{-1}(x(p))$ the set of points in the same $x$-fibre as $p$.

\medskip

\noindent \underline{\textit{\Cref{S2}}.} We review the theory of topological recursion for spectral curves. Our assumptions are slightly weaker than those previously considered in the literature (see the discussion below \cref{de:localadm}), and this generalisation is useful for our arguments later on. We define the type of a spectral curve as specified by the local data at each point $p \in \Sigma$ (\cref{de:localparameters}), which are, roughly speaking:
\begin{itemize}
\item $r_p \in \mathbb{Z}_{> 0}$, the ramification order of $x$ at $p$;
\item $\bar s_p \in \mathbb{Z}$, the minimal exponent of the local expansion of $\omega_{0,1}$ near $p$;
\item $s_p \in \mathbb{Z} \cup \{\infty\}$, which is similar to $\bar s_p$, but this time considered modulo pullbacks via $x$ of $1$-forms locally defined near $x(p) \in \mathbb{P}^1$.
\end{itemize}
In the literature, ramification points with $s_p = r_p + 1$ are sometimes called ``regular", while those with $s_p = r_p - 1$ are called ``irregular'' \cite{NorDo,NorChe}. The $s_p = 1$ case may be called ``maximally irregular''. The highlight of this section is
\begin{itemize}
\item[$\star$] \cref{th:ale}: Topological recursion (equation~\eqref{eq:TR}) provides the unique solution of the \emph{abstract loop equations} (\cref{de:localloop}) that satisfies the \emph{projection property} (\cref{de:projection}).
\end{itemize}
This existence and uniqueness result is valid only for certain types of spectral curves, which we call \emph{locally admissible} (\cref{de:localadm}).  In order to define topological recursion by the formula \eqref{eq:TR}, local admissibility is the minimal requirement that ensures that the recursion kernel of \cref{de:recker} is well-defined and that the correlators produced by \eqref{eq:TR} are symmetric in all their variables, as they ought to be. The only new aspect of \cref{th:ale} compared to previously known results is its weaker set of assumptions.

\medskip

\noindent \underline{\textit{\Cref{S3}}.} The key idea to understand limits in topological recursion is to transform the \textit{local formula} \eqref{eq:TR} which defines it, in two steps. \Cref{S3} describes these transformations (\cref{de:globalTR}) and examines their conditions of validity. The local formula (rewritten as in \eqref{eq:TRbp}) involves the sum over $q \in x(\Sigma) \subseteq\mathbb{P}^1$ of terms of the form
\begin{equation}
\label{localsk}\sum_{p \in x^{-1}(q)} \Res_{z = p} \sum_{Z \subseteq (\mathfrak{f}(z) \cap U_p) \setminus \{z\}} \Omega_p(Z)\,,
\end{equation}
Here $\Omega_p(Z)$ is a meromorphic $1$-form defined in $U_p$ whose definition depends\footnote{A clean way to define the sum over $Z$ of $\Omega_p(Z)$ is by push-pull of a certain $1$-form via $x_{|U_p}$. While the writing \eqref{localsk} is more suggestive, we refer to \cref{S2} where its meaning is explained.} on the choice of subsets $Z $ of points in the fibre of $x(z)$ that remain near $p$. For each $q \in \mathbb{P}^1$, the first transformation is to ``globalise vertically''. This means that we enlarge the subset  $Z$ in the sum to include  all points in $\mathfrak{f}'(z)$, not just points that remain near $ p $. In other words, we replace \eqref{localsk} by:
$$
\sum_{p \in x^{-1}(q)} \Res_{z = p} \sum_{Z \subseteq \mathfrak{f}(z) \setminus \{z\}} \Omega_p(Z)\,.
$$
Whether or not this expression is equal to \eqref{localsk} for a given $q$ depends on the local behaviour of the spectral curve above $q$. When the two expressions are equal, we say that topological recursion can be globalised above $q$. The main result of this section is
\begin{itemize}
		\item[$\star$]  \cref{th:rewriting}: A non-trivial criterion for vertical globalisation at a point $q \in \mathbb{P}^1$, pertaining to the local behaviour at points and pairs of points in $x^{-1}(q)$.  If $q$ is not a branchpoint, this criterion takes the simpler form of \cref{th:unramified}.
\end{itemize}
The second step is to ``globalise horizontally', which means rewriting the sum over residues as integration over contours that may surround several ramification points at the same time (in the same fibre of $x$ or not). The technical tool here is \cref{pr:globalTRdomain}. In general, there are topological constraints pertaining to the groups of ramification points that can be formed. However, these constraints do not appear if one considers homologically trivial contours of integration or if the spectral curve has genus $0$.

\medskip

\noindent \underline{\textit{\Cref{S4}}.} Fundamental examples of (families of) spectral curves come from algebraic curves, which play a central role in the Hitchin integrable system on the moduli space of (meromorphic) Higgs bundles. The genus $0$ sector of topological recursion encodes the special K\"ahler geometry on the Hitchin base \cite{BH19}, and this relation can be understood in the more general context of deformations of compact curves in symplectic surfaces \cite{CNST20}. The ``type'' of spectral curves corresponds to strata on moduli of deformations of algebraic spectral curves, and it is desirable to understand how topological recursion behaves when changing strata. Building on the results of \cref{S3}, the theory of globalisation for algebraic curves is given a detailed treatment in \cref{S4}.

More precisely, we start by explaining how affine curves give rise to non-compact spectral curves (\cref{S4noncom}) and, more interestingly, how after projectivisation in $\mathbb{P}^1 \times \mathbb{P}^1$ and normalisation, they give rise to compact spectral curves (\cref{S4com}). We introduce a notion of \emph{global admissibility} for compact spectral curves obtained in this way (\cref{de:globaladm}), and prove
\begin{itemize}
\item[$\star$] \cref{th:ga}: If a spectral curve is globally admissible, then it is locally admissible and topological recursion can be vertically globalised above all $q \in \mathbb{P}^1$.
\end{itemize}
 Global admissibility is a condition describing the allowed singularities of the projective curve (before its normalisation) and the property of its Newton--Puiseux series, as proved in \cref{pr:ndga}. In the case of nondegenerate curves, the analysis can be pushed further and our main results are
 \begin{itemize}
\item[$\star$] \cref{th:globsimp}: A criterion for globalisation of irreducible nondegenerate algebraic curves, pertaining to the slopes of the Newton polygon;
\item[$\star$] \cref{th:gapairs}: Global admissibility is preserved along families of irreducible nondegenerate algebraic curves of constant geometric genus.
\end{itemize}
We show in \cref{pr:Qdelt} that, up to a few pathological cases, convex integral polygons inscribed in a fixed rectangle can be enlarged into a unique largest convex integral polygon while keeping the set of integral interior points. This implies the existence of maximal families of irreducible nondegenerate algebraic curves of constant geometric genus, which is interesting in view of \cref{th:gapairs}.

This section relies on basic singularity theory for curves and  the properties of Farey sequences, and is logically independent of \cref{S5} where limits in families are studied. Yet, we indicate in \cref{S5} the extra simplifications arising in the algebraic setting of \cref{S4}.

\medskip

\noindent \underline{\textit{\Cref{S5}}.} We harvest the fruits of \cref{S3} and explain that, whenever the global formula for topological recursion holds, with integration contours whose $x$-projection can be chosen independently of the parameters of the family, commutation of topological recursion with limits is automatic under reasonable assumptions. The main result of the article and the answer we have reached to the central question is
\begin{itemize}
\item[$\star$] \cref{th:TRLimits}: An analyticity result of the correlators $\omega_{g,n}$ with respect to the parameters of a globally admissible family of spectral curves -- the latter notion is introduced in \cref{de:admfamily}.
\end{itemize}
To avoid difficulties due to the topological constraints in the globalisation procedure, we can restrict ourselves to families of spectral curves of ``constant genus'' (cf. \cref{le:admproper}). Although the result of \cref{th:TRLimits} could hold under weaker assumptions, the  impossibility of globalising on a compact domain where all singularities or ramification points remain while moving in the family is often a footprint for a lack of commutation with limits.
\medskip

\noindent \underline{\textit{\Cref{sec:EX}}.} We illustrate our results with a series of examples (where topological recursion commutes with limits) and non-examples (where the commutation fails). Many of the examples are covered by our main result \cref{th:TRLimits}, while some of the examples require an \textit{ad hoc} adaptation of our general arguments. There are also a few examples where commutation is known to hold due to indirect means but our results cannot be applied directly, and we view this as an illustration of the current shortcomings of our method that deserve further investigation. 

In \cref{sec:rsexamples} we give a detailed treatment of the important case of the $(r,s)$-spectral curves, i.e. compact spectral curves obtained from the equation $x^{r - s}y^r = 1$ with $r \geq 2$ and $s \in [r + 1]$ coprime. We exhibit them as the central fibre in a (in some sense, maximal) family $\mathcal{S}_{T}^{(r,s)}$ of compact spectral curves, where $T$ is the base of this family. Moving away from the central fibre generically splits the ramification points of order $r$ above $x = 0$ and $\infty$ into ramification points of lower orders, which can be located above $0$, or $\infty$, or elsewhere. The complete description is given in \cref{Sec532}, but the main fact about them is
\begin{itemize}
\item[$\star$] \cref{le:analrs}: The family $\mathcal{S}_{T}^{(r,s)}$ is globally admissible if and only if the $(r,s)$-curve is locally admissible. This condition amounts here to $r = \pm 1\,\,{\rm mod}\,\,s$.
\end{itemize}
The splitting of ramification points along these deformations is described in \cref{assidesec}. \Cref{th:TRLimits} then tells us that the topological recursion in analytic (thus commutes with limits) in those families, which we formulate as:
\begin{itemize}
\item[$\star$] \Cref{th:lim}: Topological recursion is analytic (in particular commutes with limits) in the family $\mathcal{S}^{(r,s)}_{T}$ if and only if $r = \pm 1\,\,{\rm mod}\,\,s$.
\end{itemize}
This largely generalises the situations where commutation of limits was proved in  \cite{CCGG22}, as announced in \cref{sec:problem}. In particular, we prove commutation with limits for the Chebyshev family, which was a question raised in \cite[Remark 4.8 and 5.5]{CCGG22}.

In \cref{sec:rsexamples2} we examine certain deformations of the $(r,s)$-curve that do not fit in the previous general discussion, including examples with singular curves, curves with logarithmic cuts related to the Euler class of the Chiodo bundle (\cref{pr:Chiodo}), and curves with an irreducible component on which $y = \infty$ (\cref{pr:Norbury}). In the last case we can extend our method to show commutation with limits, while in the first two cases commutation with limits typically does not hold, leading to a number of interesting questions in relation to intersection theory on the moduli space of $r$-spin curves.

In \cref{sec:Hurwitz}, we show that families of spectral curves that govern weighted double Hurwitz numbers and similar enumeration problems in the symmetric group, are globally admissible. Hence, we
\begin{itemize}
\item[$\star$] explain how to extend the validity of the topological recursion results of \cite{BDKS20a,bychkov2023symplectic,alexandrov2023topological} for this class of enumerative problems  to non-generic values of parameters at which the spectral curve develops non-simple ramification (see~\cref{pr:PQRHur}).
\end{itemize}
These papers explicitly refer to the present paper for this step; it also closes the earlier gaps in the literature  and incorrect references that were mentioned in \cref{sec:problem}.

In \cref{sec:moreexamples}, we discuss the possibility or impossibility to handle further examples that do not fit the general scheme: the large radius and the non-equivariant limits in the mirror curve governing equivariant Gromov--Witten theory of $\mathbb{P}^1$ according to \cite{FLZ17}; a spectral curve developing a component on which $x$ is constant.

\medskip

Finally, \cref{ap:appendix} explains how considering the action of certain symplectomorphisms of $\mathbb{C}^2$ on spectral curves makes the congruence condition $r = \pm 1\,\,{\rm mod}\,\,s$ --- which is crucial for the topological recursion to be well defined, but was so far geometrically mysterious --- appear naturally.

\cref{fig:sumdiag} can help visualise part of the logical organisation of some definitions and results of the body of the text.

\begin{figure}[h!]
\begin{center}
\includegraphics[width=\textwidth]{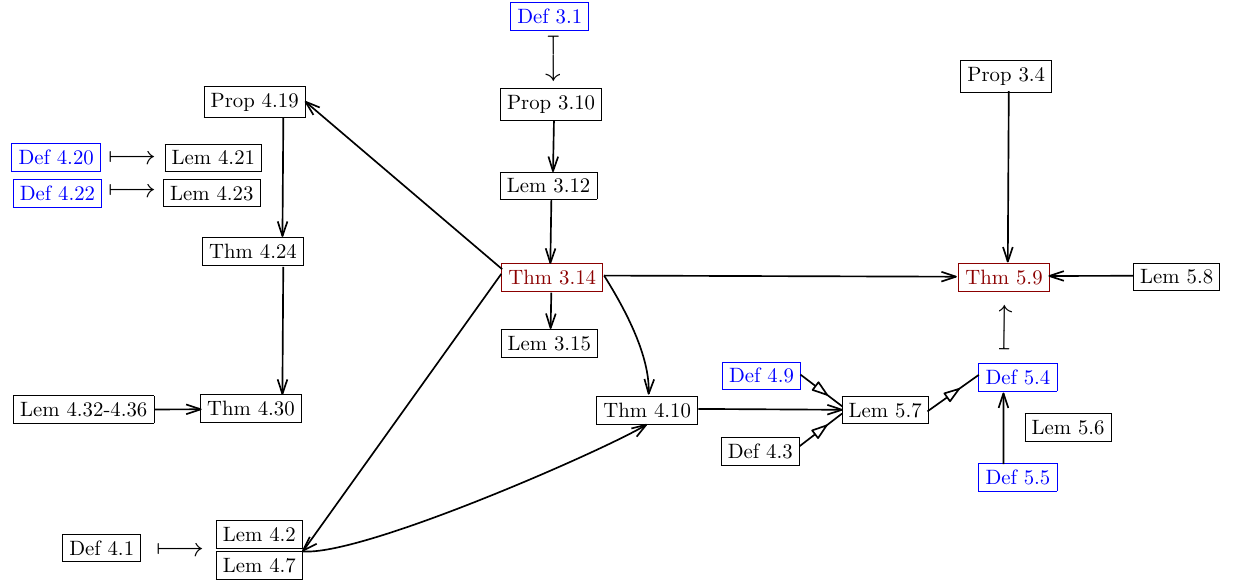}
\caption[Figure explaining relationship between various concepts introduced in this paper]{\label{fig:sumdiag} The main goal of the article is to prove \cref{th:TRLimits} (conditional analyticity of TR, using the basic tool \cref{le:LimitsOfContours}) involving the notion of global admissibility in families given in \cref{de:admfamily} or the simpler finite-degree setting of \cref{de:admfamilyfinite}. Before working in families, we study global admissibility (\cref{de:globaladm}) for the compact algebraic spectral curves of \cref{de:irredpc}. \cref{th:ga} shows that global admissibility implies local admissibility of \cref{de:localadm}. The results  concerning the non-compact algebraic spectral curves of \cref{de:redac} are \cref{le:redacla} and \cref{le:singularities}. An important tool to analyse the admissibility properties of any spectral curve (algebraic or not) is \cref{th:rewriting} (sufficient condition for vertical globalisation), \cref{th:unramified} is its special case for unramified points. This tool comes from a careful analysis of the vertical globalisation of \cref{de:globalTR}, first in terms of correlators (\cref{pr:sufficient2}), then in terms of the spectral curve (\cref{le:pairs}).
	\vspace{.4 cm}
	
	 Results that help to check global admissibility are listed in the left of the diagram: \cref{pr:ndga} (criterion for global admissibility of compact algebraic spectral curves), and its formulation in terms of Newton polygons \cref{th:globsimp} which is proved using \cref{de:edgeadm} and \cref{de:edgeadm2} (admissibility of edges) and the corresponding \cref{le:coincm} and \cref{le:coincm2}, while \cref{th:gapairs} describes allowed deformations of Newton polygons that preserve global admissibility (proved via \cref{le:localad}-\cref{le:muiglob} thanks to the properties of Farey sequences).  \cref{le:admproper} describes the relationship between global admissibility for families of spectral curves  and that for individual compact algebraic spectral curves. }
\end{center}
\end{figure}

\subsection{Splitting the ramification of the \texorpdfstring{$(r,s)$}{(r,s)}-spectral curve}

\label{assidesec}

As an application of our results, we here give an overview of the way ramification points split in the generic fibre of the maximal family $\mathcal{S}_T^{(r,s)}$ of deformations of the $(r,s)$-spectral curve for locally admissible values of $(r,s) $.  \cref{le:rsmax} provides an explicit description of a generic fibre of  $\mathcal{S}_T^{(r,s)}$ in terms of the associated Newton polygon, and \cref{Sec532} gives an explicit rational parametrisation of this generic fibre. From this rational parametrization, we can calculate the branching profile of a generic fibre, which is given as follows.

\noindent $\bullet$ If $s = r \pm 1$, we have
\begin{flalign}
(r,r\pm 1)_0\,\, & \rightsquigarrow\,\, (2,3)^{\boxtimes(r-1)} \,\,\textcolor{gray}{ \boxtimes\,\, (2,3)^{\boxtimes (r-1)}}\,. &&
\end{flalign}
\noindent $\bullet$ If $s \in \{2,\ldots,r-2\}$ with $r = 1\,\,{\rm mod}\,\,s$, we write $r = r's + 1$ with $r' \geq 2$ and we have 
\begin{flalign}
(r,s)_0 \,\,& \rightsquigarrow\,\,(r', 1)_0^{\boxtimes s} \boxtimes (2,3)^{\boxtimes s}\,\,\textcolor{gray}{\boxtimes\,\, (2,3)^{\boxtimes s}}\,. &&
\end{flalign} 
\noindent $\bullet$
If $r \geq 3$ and $s = 1$:
\begin{flalign}
(r,1)_0 \,\,& \rightsquigarrow\,\,(2,3)^{\boxtimes (r-1)}\,\,\textcolor{gray}{\boxtimes\,\,(2,3)^{\boxtimes (r - 1)}}\,. &&
\end{flalign}
\noindent $\bullet$
If $s \in \{3,\ldots,r-2\}$ and $r = -1\,\,{\rm mod}\,\,s$, we write $r = (r - r')s - 1$ and
\begin{flalign} 
(r,s) \,\,& \rightsquigarrow\,\,(r - r' - 1,1)_0 \boxtimes (r - r',1)_0^{\boxtimes (s - 1)} \boxtimes (2,3)^{\boxtimes (s -1)} \,\,\textcolor{gray}{\boxtimes\,\, (2,3)^{\boxtimes {(s - 1)}}}\,, &&
\end{flalign}
where the first factor can be omitted if $r - r' = 2$.

The notation is in part intuitive but let us explain it in detail. The sequence
$$
(r_{p_1},s_{p_1}) \boxtimes \cdots \boxtimes (r_{p_\ell},s_{p_\ell})
$$
encodes the ramification data: each factor represents a ramification point $p$ with local parameters $(r_p,s_p)$ which contributes to topological recursion. This means that unramified points and ramified points with $s_p \leq -1$ are omitted. The $\boxtimes$-notation is compatible with the representation-theoretic content of the system of correlators of the topological recursion \cite{BBCCN18}: they generate a module for the $\prod_{p \in \mathsf{Ram}} \mathcal{W}(\mathfrak{gl}_{r_p})$ algebra, which is  isomorphic to the exterior tensor product of modules $M_p$ associated to the factor labelled $p$, and the isomorphism type of $M_p$ is specified by the local parameters $(r_p,s_p)$. If $p$ does not contribute to topological recursion, the corresponding module comes from the free-field module for the Heisenberg algebra of $\mathbb{C}^{r_p} \subset \mathfrak{gl}_{r_p}$ (generated by a constant), and we have omitted it from the exterior tensor product description.

We add an index $(r_i,s_i)_0$ to indicate the ramification points above $x = 0$. Apart from those, the other ramification points are located above pairwise disjoint branch points. The ramification data of the $(r,s)$-curve appears on the left,  the one of the generic fibre of $\mathcal{S}_T^{(r,s)}$ on the right. The $(r,s)$-curve has a ramification point above $x = \infty$ that does not contribute to topological recursion. In gray we indicate the ramification points of the generic fibre that collide when approaching the central fibre to this ramification point above $x = \infty$ (and thus disappear from the notation in the left-hand side).

\medskip

We observe that the ramification point above $x=0$ in the $(r,s)$-spectral curve generically splits into a collection of the simplest regular ramification points, i.e., of type $ (2,3) $, and maximally irregular ramification points, all having lower ramification order. It is interesting to note that all the maximally irregular ramification points remain above $ x = 0 $, while the points of type $ (2,3) $ move away to generic points.

\medskip

The type of ramification for non-generic fibres can be different. For instance, the well-known deformation of the $A_{r - 1}$-singularity $x = y^{r} + \sum_{k = 0}^{r - 2} t_k y^k$ is a deformation of the $(r,r+1)$-curve which generically has $ (r-1) $ simple regular ramification points and one irrelevant ramification point of order $r$ above $x = \infty$, while the generic fibre of  $ \mathcal S^{(r,r+1)}_T $ has $2(r - 1)$ simple regular ramification points and no ramification point above $x = \infty$. The latter can be presented as
\[
x = \frac{Q(y)}{R(y)}\,,\qquad  \omega_{0,1}(y) = y \dd x\,,\qquad \omega_{0,2}(y_1,y_2) = \frac{\dd y_1\,\dd y_2}{(y_1 - y_2)^2}\,.
\]
where $Q,R$ are polynomials such that $x$ is a map of degree $r$: it contains the families \eqref{negrcurve} studied in \cite{CCGG22}.

\subsection{Notations}

For $n \in \mathbb{Z}_{> 0}$ we denote $[n] = \{1,\ldots,n\}$ and $(n] = \{2,\ldots,n\}$. If $x \in \mathbb{R}$ we denote $\lfloor x \rfloor$ the largest integer $\leq x$ and $\lceil x \rceil$ the smallest integer $\geq x$.

For a tuple of variables $z_1,\dots,z_n$ and a subset $ I\subseteq [n]$, let $z_I = (z_i)_{i\in I}$. As such tuples will be inserted in symmetric functions/differentials, the order of the variables in the tuple will not matter. A partition $ \mathbf{L} $ of $ [n]$, denoted $\mathbf{L} \vdash [n]$, is a tuple of pairwise disjoint non-empty subsets (called parts) of $[n]$ whose union is $[n]$. Then $L \in \mathbf{L}$ denotes a part of the partition $\mathbf{L}$. We denote disjoint union by $\sqcup$ and write $\bigsqcup_{L \in \mathbf{L}} M_L = [m]$ to mean that $(M_L)_{L \in \mathbf{L}}$ is a tuple (indexed by the parts of $\mathbf{L}$) of pairwise disjoint (possibly empty) subsets of $[m]$ whose union is $[m]$. We write $\lambda \vdash n$ to say that $\lambda$ is a partition of $n$, i.e. a weakly decreasing tuple of positive integers summing up to $n$. We write $\mathbb{Y}_{\lambda} = \{(i,j) \in \mathbb{Z}_{> 0}^2\,\,|\,\,i \leq \lambda_j\}$ for the associated Young diagram.

By degree $i$ differential on a complex curve $S$, we mean a meromorphic section of $K_S^{\otimes i}$, i.e. an object of the form $f(t) \dd t^i$ with respect to a local coordinate $t$ for some meromorphic function $f$. Unless said otherwise, $\dd t^i$ should always be read as $(\dd t)^i$. By $n$-differential on a product $S = \prod_{i = 1}^n S_i$ of complex curves, we mean a meromorphic section of $\bigotimes_{i = 1}^n p_i^*K_{S_i}$ where $p_i \colon S \rightarrow S_i$ is the projection on the $i$-th factor, i.e. an object of the form $f(t_1,\ldots,t_n) \dd t_1 \cdots \dd t_n$ where $t_i$ is a local coordinate on $S_i$ and $f$ is a meromorphic function.

If $\phi$ is degree $i$ differential which is not identically zero on a complex curve $S$ and $\zeta$ a local coordinate centered at a point $p \in S$, we mean by $\phi(z) \approx \sum_{l \geq {\rm ord}_p \phi} \phi_l \zeta^{l} \dd \zeta^i$ that the right-hand side is the Laurent series expansion of the left-hand side when $z \rightarrow p$.

\subsection{Recent work}
\label{s:recent_work}

In this paper, by topological recursion we always mean the topological recursion of Eynard and Orantin \cite{EO07} and its generalization to higher ramification from \cite{BHLMR14,BE13}, sometimes known as ``Bouchard-Eynard'' topological recursion: see 
\cref{de:TR}. However, after the first version of this paper appeared, a new formulation of topological recursion was proposed, dubbed ``generalized topological recursion'' \cite{ABDKS25}. The idea of the new framework is to take symplectic duality --- which was a cornerstone of the original proposal of Eynard and Orantin --- as a defining axiom. The result is a recursive framework that coincides with the Eynard-Orantin topological recursion for spectral curves that are ``locally Airy'' (with  local parameters $r_p = 2$ and $s_p = 3$ at all ramification points, in the language of \cref{de:localparameters}), but does not always coincide with the generalization of \cite{BHLMR14,BE13} for arbitrary locally admissible spectral curves. In fact, generalized topological recursion is defined even for spectral curves that do not satisfy the local admissibility condition in \cref{de:localadm}.

The relevance of this new construction to the current paper is that generalized topological recursion is in general better behaved with respect to limits than the generalization of \cite{BHLMR14,BE13}. In a nutshell, if the input data of generalized topological recursion varies analytically with respect to a parameter $t$ (see \cite{BCGS25} for more details), then the differentials produced by generalized topological recursion also vary analytically with respect to $t$. This implies that the $t \to 0$ limit of the differentials produced by generalized topological recursion coincide with the differentials on the limiting spectral curve. This limit procedure was used in \cite{BCGS25} to show that generalized topological recursion on the $(r,s)$-spectral curves computes the descendant integrals of the so-called $\Theta^{r,s}$-classes, which are (non-semisimple) cohomological field theories obtained as top degree pieces of Chiodo classes. However, it is imoprtant to note that for the $(r,s)$-spectral curves, generalized topological recursion coincides with the recursion of \cite{BHLMR14,BE13} only when $s=r-1$ and $s=r+1$. Thus a geometric interpretation of what the recursion of \cite{BHLMR14,BE13} computes for other choices of $s$ remains a mystery, and the work in the current paper may help address this question.

\subsection*{Acknowledgments} We thank Elba Garcia-Failde, Alessandro Giacchetto, Paolo Gregori, Kohei Iwaki, and Paul Norbury for useful discussions and for suggesting relevant examples of limit curves, and Alessandro Chiodo and Ran Tessler for discussions about the Chiodo bundle. \par
V.B. and R.K. acknowledge support from the National Science and Engineering Research Council of Canada. N.K.C. was partially supported  by the Max Planck Institute for Mathematics Bonn and partially by the European Research Council (ERC) under the European Union’s Horizon 2020 research and innovation programme under grant agreement No 948885. R.K. acknowledges support from  the Pacific Institute for the Mathematical Sciences. The research and findings may not reflect those of these institutions. The University of Alberta respectfully acknowledges that they are situated on Treaty 6 territory, traditional lands of First Nations and Métis people. S.S was supported by the Netherlands Organisation for Scientific Research.

\section{Spectral curves and topological recursion}
\label{S2}
We review the standard framework of topological recursion, based on the notion of a spectral curve. Topological recursion was initially proposed by Chekhov, Eynard and Orantin for spectral curves with simple ramification in \cite{CEO06,EO07,EO09}, and generalised to spectral curves with arbitrary ramification in \cite{BHLMR14,BE13}. 

In fact, in this section we extend the existing theory a little bit, by: 1) allowing arbitrary base points in the recursion kernel; 2) allowing the first possibility in (lA2) and the last possibility in (lA3) of \cref{de:localadm} for locally admissible spectral curves; 3) including poles of $x$ as ramification points in the definition of topological recursion. While some of these possibilities were already considered in the literature, we will need a systematic investigation of the theory of topological recursion in this context. The main result of this section is \cref{th:ale}, which extends the known foundational result to this slightly more general setting. This turns out to be important in the study of limits of spectral curves.

\begin{remark}
In this work, when we talk about topological recursion we always refer to the version for spectral curves with arbitrary ramification formulated in \cite{BHLMR14,BE13} (sometimes known as ``Bouchard-Eynard topological recursion''). It is worth noting that there exist other generalizations of topological recursion; see for instance \cref{s:recent_work} for a discussion of the ``generalized topological recursion'' framework of \cite{ABDKS25} and its relation to the current work.
\end{remark}

\subsection{Spectral curves}

{}

\begin{definition}[Spectral curve]\label{de:sc}
	A \emph{spectral curve} is a quadruple $ \mathcal{S} = (\Sigma, x,\omega_{0,1},\omega_{0,2})$, where:
	\begin{itemize}
		\item $\Sigma = \bigsqcup_{i=1}^{N} \Sigma_i$ is a disjoint union of Riemann surfaces,  for some $N \in \mathbb{Z}_{>0}$;
		\item $x \colon \Sigma \rightarrow \mathbb{P}^1$ is a holomorphic map, whose restriction to each component $\Sigma_i$ is non-constant;
		\item $\omega_{0,1}$ is a meromorphic $1$-form on $\Sigma$, whose restriction to each component $\Sigma_i$ is not identically zero;
		\item $\omega_{0,2}$ is a \emph{fundamental bidifferential}, i.e. a symmetric meromorphic bidifferential on $\Sigma \times \Sigma$, whose only poles consist of a double pole on the diagonal with biresidue $1$.
		\end{itemize}
	The $1$-form $\omega_{0,1}$ is equivalently specified by a holomorphic map $y : \Sigma \rightarrow \mathbb{P}^1$ such that $\omega_{0,1} = y \dd x$. We say that the spectral curve is:
	\begin{itemize}
		\item  \emph{finite} if, for all $q \in \mathbb{P}^1$, $x^{-1}(q) \subset \Sigma$ is a finite subset;
		\item  \emph{connected} if $\Sigma$ is connected (i.e. $N=1$);
		\item \emph{compact} if $\Sigma$ is compact.
	\end{itemize}
	  For a compact connected spectral curve $ \Sigma$, the data of a \emph{Torelli marking}, i.e. a symplectic basis $(\mathcal{A}_i,\mathcal{B}_i)_{i}$ of $ H_1 (\Sigma ; \Z )$, determines a unique $ \omega_{0,2}$ by the normalisation condition $ \int_{\mathcal{A}_i} \omega_{0,2}(w_0,\cdot) = 0 $ for all $i$.
	  If $\Sigma$ is of genus $0$, there is a unique fundamental bidifferential $\omega_{0,2}^{{\rm std}}$ (``std" standing for standard), namely
	  \[
	 	 \omega_{0,2}^{{\rm std}}(w_1,w_2) = \frac{\dd w_1\,\dd w_2}{(w_1 - w_2)^2}
	  \]
	  in terms of any uniformising coordinate $w$ on $\Sigma \simeq \mathbb{P}^1$, and we always choose this one implicitly.
	  \end{definition}

Locally, every non-constant holomorphic map between Riemann surfaces looks like a power map. More precisely, let $p \in \Sigma$ and $x(p) \in \mathbb{P}^1$. Then there exists a local coordinate $\zeta$ on the component $\Sigma_i$ to which $p$ belongs, and a local coordinate on $\mathbb{P}^1$, centered respectively at $p$ and $x(p)$, in which the map $x \colon \Sigma \rightarrow \mathbb{P}^1$ takes the local normal form $\zeta \mapsto \zeta^{r_p}$ for some $r_p \in \mathbb{Z}_{> 0}$. Specifically, if $x(p) \in \mathbb{C}$ we can write $x = x(p) + \zeta^{r_p}$, and if $p$ is a pole of $x$, we can write $x= \zeta^{-r_p}$. The integer $r_p$ is uniquely defined and is called the \emph{ramification order} of $x$ at $p$. We call $\zeta$ a \emph{standard coordinate} at $p$. It is defined up to multiplication by a $r_p$-th root of unity. We say that $p \in \Sigma$ is a \emph{ramification point} of $x$ if $r_p \geq 2$. Let $\mathsf{Ram} \subset \Sigma$ be the set of ramification points, and $\mathsf{Br} = x (\mathsf{Ram}) $ be the set of \emph{branch points}; both sets are discrete. The ramification points correspond to the poles of $x$ of order $\geq 2$ and the zeros of its differential $\dd x$.

For $z \in \Sigma$, we define $\mathfrak{f}(z) = x^{-1}(x(z))$ and $\mathfrak{f}'(z) = \mathfrak{f}(z) \setminus \{ z \}$. If the spectral curve is finite, $\mathfrak{f}(z)$ and $\mathfrak{f}'(z)$ are finite subsets, but in general they may not be finite.

Let $q \in x(\Sigma) \subseteq \mathbb{P}^1$. Since branch points are isolated, there exists an open neighbourhood $V$ of $q$ and pairwise disjoint open neighbourhoods $U_{p}$ for each $p \in x^{-1}(q)$, such that $x^{-1}(V) = \bigsqcup_{p \in x^{-1}(q)} U_p$. We can choose $V$ and $U_p$ to be simply-connected and arbitrarily small. For any $z \in x^{-1}(V\setminus \{q\})$ and $p \in x^{-1}(q)$, we write $\mathfrak{f}_{p}(z) = \mathfrak{f}(z) \cap U_{p}$, which consists of the preimages of $x(z) \in V$ that remain in the neighbourhood $U_p$ of $p$. Even though $\mathfrak{f}(z)$ may not be finite, $\mathfrak{f}_p(z)$ always is, and $| \mathfrak{f}_{p}(z)|= r_{p}$. For $z \in U_p \setminus \{p\}$, we also write $\mathfrak{f}_{p}'(z) = \mathfrak{f}_{p}(z) \setminus \{ z \}$.  If $p$ is unramified, then $r_p = 1$ and $\mathfrak{f}'_p(z) = \emptyset$. Given a subset $P \subseteq x^{-1}(q)$, we will  use the notation $\mathfrak{f}_P(z) = \mathfrak{f}(z) \cap (\bigsqcup_{p \in P} U_p)$ for the preimages of $x(z)$ remaining near $P$, and the notation $\mathfrak{f}'_P(z) = \mathfrak{f}_P(z) \setminus \{z\}$.

\begin{remark}
	If the spectral curve is compact, $x \colon \Sigma \rightarrow \mathbb{P}^1$ is a branched covering of finite degree $d \in \mathbb{Z}_{>0}$. Then for all $z \notin \mathsf{Ram}$, we have $|\mathfrak{f}(z)| = d$. Besides, for any $q \in \mathbb{P}^1$, we have $\sum_{p \in x^{-1}(q)} r_p = d$.
\end{remark}

On $U_p$, the meromorphic $1$-form $\omega_{0,1}$ can be expanded in a standard coordinate $\zeta$. Since $\omega_{0,1}$ is assumed to be not identically zero on the component in which $p$ resides, we get
\begin{equation}\label{eq:expomega}
	\omega_{0,1} \approx \sum_{l \geq \bar s_p} \tau_{p,l}\, \zeta^{l} \frac{\dd \zeta}{\zeta}\,,
\end{equation}
for some $\bar s_p \in \mathbb{Z}$ and $\tau_{p,l} \in \mathbb{C}$, with $ \tau_p \coloneqq \tau_{p,\bar{s}_p} \neq 0$. We also define:
\begin{equation}
	s_{p} \coloneqq \min\{ l \in \mathbb{Z}~|~ \tau_{p,l} \neq 0 \,\, {\rm and} \,\, r_p \nmid l \}\,.
\end{equation}
When all exponents with non-vanishing coefficients in the expansion \eqref{eq:expomega} are divisible by $r_p$ (in particular, if $p$ is unramified, i.e. $r_p = 1$), there is no such minimum and we set $s_p = \infty$. In any case, we have $s_p \geq \bar s_p$. To sum up:

\begin{definition}[Local parameters and type]\label{de:localparameters}
	Let  $ \mathcal{S} = (\Sigma, x,\omega_{0,1}, \omega_{0,2})$ be a spectral curve and $p \in \Sigma$. We define the \emph{local parameters} $(r_p, s_p,\bar s_p,\tau_p)$  at $p$ as follows:
	\begin{itemize}
		\item $r_p$ is the ramification order of $x: \Sigma \rightarrow \mathbb{P}^1$ at $p$;
		\item $\bar s_p$ and $\tau_p$ give the minimal exponent and leading order coefficient of the expansion of $\omega_{0,1}$ near $p$ in a standard coordinate, as written in \eqref{eq:expomega};
		\item $s_p$ is the minimal exponent of the expansion of $\omega_{0,1}$, as written in \eqref{eq:expomega}, which is not divisible by $r_p$, if it exists, or is equal to $\infty$ otherwise.
	\end{itemize}
	The triple $(r_p,s_p,\bar s_p)$ is called the \emph{type} of $\mathcal{S}$ at $p$.
\end{definition}

\begin{remark}
\label{re:taupambiance} $\tau_p$ depends on the choice of a standard coordinate at $p$. Given a $r_p$-th root of unity $u$ we can replace $\zeta$ with $u\zeta$, and this replaces $\tau_p$ with $u^{\frac{\bar s_p}{r_p}} \tau_p$.
\end{remark}

We can now introduce the notion of a locally admissible spectral curve, which restricts the allowed behaviour of the spectral curve near its ramification points. 

\begin{definition}[Local admissibility]\label{de:localadm}
	Let $\mathcal{S} = (\Sigma, x, \omega_{0,1}, \omega_{0,2})$ be a spectral curve. We say that the spectral curve is \emph{locally admissible} at a point $p \in \Sigma$ if either $x$ is unramified at $p$, or $p$ is ramified and the local parameters satisfy three conditions:
	\begin{enumerate}[({lA}1)]
		\item $s_p$ and $r_p$ are coprime;
		\item either $s_p \leq -1$, or $s_p \in [r_p + 1]$ and $r_p = \pm 1\,\,{\rm mod}\,\,s_p$;
		\item $\bar s_p = s_p$ or $\bar s_p = s_p-1$.
	\end{enumerate}
	We say that the spectral curve is \emph{locally admissible} if  the set of ramification points $\mathsf{Ram} \subset \Sigma$ is finite and $\mathcal{S}$ is locally admissible at all $p \in \Sigma$.
\end{definition}

	This notion of local admissibility is tailored for \cref{th:ale} to hold.

If $p$ is a ramification point, (lA3) implies that $s_p$ is finite and if furthermore $\bar s_p \neq s_p$, we must have $s_p = 1\,\,{\rm mod}\,\,r_p$. We could drop the mention of  $s_p \in [r_p + 1]$ in (lA2) because for $s_p > 0$ it is implied by the congruence $r_p = \pm 1\,\,{\rm mod}\,\,s_p$; although it is redundant we have included it so that it remains evident to the reader. If $s_p > 0$, then the coprimality condition (lA1) is implied by (lA2) via the congruence condition, but this is not the case if $s_p \leq -1$. The only locally admissible type with $s_p = \infty$ corresponds to $p$ unramified.

	For $s_p > 0$ the necessary congruence condition in (lA2) was identified in \cite{BBCCN18}. The case $s_p \leq -1$ is usually not discussed in the literature, as those ramification points do not contribute to topological recursion (cf. \cref{th:ale}). We nevertheless include it here because such ramification points naturally arise in the context of compact spectral curves (cf. \cref{s:algcurves}) and they will have to be examined in the globalisation and limit procedures (Sections~\ref{S3} and \ref{S5}).

To complete our lexicon of spectral curves we introduce the notion of aspect ratio, which will become relevant when we discuss globalisation in \cref{S3}. Its relation to the notion of slope for algebraic curves is explained in \cref{Ssloperatio}.

	\begin{definition}[Aspect ratio]\label{de:aspectratio}
	The aspect ratio at $p \in \Sigma$ is $\nu_p = \frac{\bar{s}_p}{r_p}$.
	\end{definition}{}{}
	Note that if $p$ is ramified and the spectral curve is  locally admissible at $p$, then $\nu_p \in (-\infty,1]$.

\subsection{Correlators and topological recursion}

Topological recursion is a formalism that constructs symmetric $n$-differentials living on $\Sigma^n$, where $\Sigma$ is the complex curve underlying a spectral curve. These multi-differentials provide a solution to a system of equations known under the name of ``abstract loop equations''.

\subsubsection{Correlators}
\label{Sec:correldef}

\begin{definition}[System of correlators]\label{de:corr}
 Given a spectral curve $\mathcal{S} = (\Sigma,x,\omega_{0,1},\omega_{0,2})$, a \emph{system of correlators} is a family $(\omega_{g,n})_{g,n}$ of symmetric $n$-differentials on $\Sigma^n$ indexed by ${(g,n) \in \mathbb{Z}_{\geq 0} \times \mathbb{Z}_{> 0}}$, where $\omega_{0,1}$ and $\omega_{0,2}$ are already specified by $\mathcal{S}$, and for $2g - 2 + n > 0$, $\omega_{g,n}$ has poles only at ramification points of $x$ with vanishing residues. 
\end{definition}

To define topological recursion and abstract loop equations, we have to consider the following combinations of correlators:

\begin{definition}\label{de:calW}
	Given a system of correlators $(\omega_{g,n})_{(g,n) \in \mathbb{Z}_{\geq 0} \times \mathbb{Z}_{> 0}}$ on a spectral curve, we define
	\begin{equation}
		\label{Wprimes}\mathcal{W}_{g,i;n}(z_{[i]};w_{[n]}) = \sum_{\substack{\mathbf{L} \vdash [i] \\ \sqcup_{L \in \mathbf{L}} M_L = [n] \\ i + \sum_{L \in \mathbf{L}} (g_L - 1) = g}} \prod_{L \in \mathbf{L}} \omega_{g_L,|L| + |M_L|}(z_{L},w_{M_L})\,,
	\end{equation}
	and $\mathcal{W}'_{g,i;n}$ the similar quantity in which the terms containing a factor of $\omega_{0,1}$ are discarded.
\end{definition}

\subsubsection{Topological recursion}

We now define topological recursion.

\begin{definition}[Choice of local primitives]\label{de:recker}
For each $p \in \Sigma$, we choose $\alpha_{0,2}^{(p)}(w_0;z)$ which is a meromorphic $1$-form with respect to $w_0  \in \Sigma$ and a meromorphic function with respect to $z \in U_p$, such that $\dd_z \alpha_{0,2}^{(p)}(w_0;z) = \omega_{0,2}(w_0,z)$.  Such local primitive exist because $U_p$ is simply-connected and $\omega_{0,2}$ has no residues. The canonical local primitive is $\alpha_{0,2}^{(p)}(w_0;z) = \int_{p}^{z} \omega_{0,2}(w_0,\cdot)$.
\end{definition}

We observe that such local primitives admit a simple pole at $w_0 = z$ with residue $1$, but it could also admit other poles. For instance, the canonical local primitive admits $w_0 = p$ as  a simple pole as well with residue $-1$.
 
\begin{definition}[Recursion kernel]\label{de:kernel}
	Given a spectral curve $\mathcal{S} = (\Sigma,x,\omega_{0,1},\omega_{0,2})$, we introduce the meromorphic function $y = \frac{\omega_{0,1}}{\dd x}$ on $\Sigma$. If $Z$ is a finite tuple of points in $\Sigma$ (the order of the points in $Z$ will be irrelevant, so we use set notations), we define:
	\begin{equation}
		\label{UpsZZZ}\Upsilon_{|Z|}(Z;z) = \prod_{z' \in Z} (y(z') - y(z))\dd x(z)\, .
	\end{equation}
 The recursion kernel is then:
	\begin{equation}\label{eq:rkernel}
		K_{1+|Z|}^{(p)}(w_0; z,Z) = - \frac{\alpha_{0,2}^{(p)}(w_0;z)}{\Upsilon_{|Z|}(Z;z) } \,,
	\end{equation}
	where $\alpha_{0,2}^{(p)}$ is the choice of primitive made in \cref{de:recker}.
\end{definition}

These objects are well-defined in a union of simply-connected little neighbourhoods of ramification points, from which the ramification points are removed: $\Upsilon_{m}$ is a degree $m$ differential with respect to $z$ and a meromorphic function with respect to the points in $Z$, while $K_{1 + m}^{(p)}(w_0;z,Z)$ is a $1$-form with respect to $w_0$, a degree $(-m)$ differential with respect to $z$ and a meromorphic function with respect to the points in $Z$.
We will mostly be interested in $Z \subseteq \mathfrak{f}(z)$. In that case, we sometimes write the right-hand side of \eqref{UpsZZZ} as $\prod_{z' \in Z} (\omega_{0,1}(z') - \omega_{0,1}(z))$.

\begin{definition}[Topological recursion]\label{de:TR}
	We say that a system of correlators $(\omega_{g,n})_{(g,n) \in \mathbb{Z}_{\geq 0} \times \mathbb{Z}_{> 0}}$ on a spectral curve $\mathcal{S} = (\Sigma,x,\omega_{0,1},\omega_{0,2})$ satisfies \emph{topological recursion} if, for all $(g,n) \in \mathbb{Z}_{\geq 0}^2$ such that $2g-2+(1+n)>0$, 
	\begin{equation}\label{eq:TR}
		\omega_{g,1+n}(w_0, w_{[n]}) =  \sum_{p \in \Sigma} \Res_{z=p}\sum_{\substack{Z \subseteq  \mathfrak{f}_p'(z) \\ |Z| \geq 1}} K_{1+|Z|}^{(p)}(w_0; z, Z)\mathcal{W}'_{g,1+|Z|;n}(z, Z; w_{[n]}) \,,
	\end{equation}
	for some choice of local primitives. 
\end{definition}

	The sum is over all points $p \in \Sigma$ on the complex curve. However, whenever $p \not\in \mathsf{Ram}$, the set $\mathfrak{f}'_p(z)$ is empty, hence the integrand trivially vanishes. Therefore, only ramification points $p \in \mathsf{Ram}$ give nonzero contributions to the sum, which is why it is usually written as a sum over ramification points in the literature. We prefer to write it as a sum over all points $p \in \Sigma$ as it will be cleaner for globalisation later on. As observed in \cite[Remark~3.10]{BE17} and argued again in \cref{th:ale}, the system of correlators satisfy topological recursion for some choice of local primitives if and only if it satisfied it for any choice of local primitives.

If a system of correlators satisfies topological recursion, it is in fact uniquely specified by the spectral curve since \eqref{eq:TR} constructs $\omega_{g,n}$ by induction on $2g - 2 + n > 0$. We however stress that the notion of system of correlators (\cref{de:corr}) requires $\omega_{g,n}$ to be symmetric in their $n$ variables, while in \eqref{eq:TR} the first variable $w_0$ plays an asymmetric role compared to the others. In other words, the existence of a system of correlators satisfying topological recursion is equivalent to the statement that \eqref{eq:TR} produces symmetric differentials at each step of the recursion. Whether this statement holds or not depends on the local type of spectral curve (cf. \cref{de:localadm,th:ale}).

\subsubsection{Graphical intermezzo}

Although the above definition may seem involved at first (in particular the definition of $\mathcal{W}'_{g,i;n}$ in \eqref{Wprimes}), it can be tamed by resorting to a graphical description. This description is well known and explains the name ``topological recursion''. We review it to facilitate the orientation of the reader in the (sometimes heavy) algebra that we will have to handle.

We represent $\omega_{g,1+n}(w_0,\ldots,w_n)$ by a smooth compact oriented surface $\mathbf{S} = \mathbf{S}_{g,1 + n}$ of genus $g$ with $1 + n$ boundaries carrying the variables $w_0,\ldots,w_n$ as labels. We think of this up to label-preserving orientation-preserving diffeomorphisms. Terms in $\mathcal{W}'_{g;i,n}(z,Z;w_1,\ldots,w_n)$ are in one-to-one correspondence with the topological type of surfaces obtained by cutting off from $\mathbf{S}$ an embedded sphere $\mathbf{P}$ with $1 + i$ boundaries such that:
\begin{itemize}
\item $\mathbf{P}$ bounds the boundary of $\mathbf{S}$ labelled with $w_0$;
\item the other $i$ boundaries of $\mathbf{P}$ are essential (possibly boundary parallel) curves in the interior of $\mathbf{S}$ and are labelled by the elements of $Z \sqcup \{z\}$.
\end{itemize}
Contrarily to $\mathbf{S}$, the surface $\mathbf{S}_{{\rm cut}} = \overline{\mathbf{S} \setminus \mathbf{P}}$ may not be connected. The topological type of $\mathbf{S}_{{\rm cut}}$ is fully characterised by the data of the genus, number of boundaries and boundary labels of its connected components. The parts $L$ of the partition $\mathbf{L}$ in \eqref{Wprimes} collect the labels of $Z \sqcup \{z\}$ carried by boundaries belonging to the same connected component. For the connected component $\mathbf{S}_{{\rm cut},L}$ corresponding to $L$, the set $M_L$ collects the boundary labels (among $w_1,\ldots,w_n$) carried by the boundaries of $\mathbf{S}$ which are also boundaries of $\mathbf{S}_{{\rm cut},L}$. Contrarily to $L$, $M_L$ can be empty. Due to the ``essential'' condition, $\mathbf{S}_{{\rm cut},L}$ cannot be a disc.

This geometric picture allows for instance to identify the condition
$$
i - |\mathbf{L}| + \sum_{L \in \mathbf{L}} g_L = g\,
$$
with the relation between the genus $g$ of $\mathbf{S}$ and the genus $g_L$ of the connected components $\mathbf{S}_{{\rm cut},L}$ of $\mathbf{S}_{{\rm cut}}$. Observe that the number $|\mathbf{L}|$ of parts of $\mathbf{L}$ counts the connected components. The examples in \cref{fig:Wgraphic} should make clear how the formula \eqref{eq:TR} works in general.

\begin{figure}[!ht]
\begin{center}
\includegraphics[width=\textwidth]{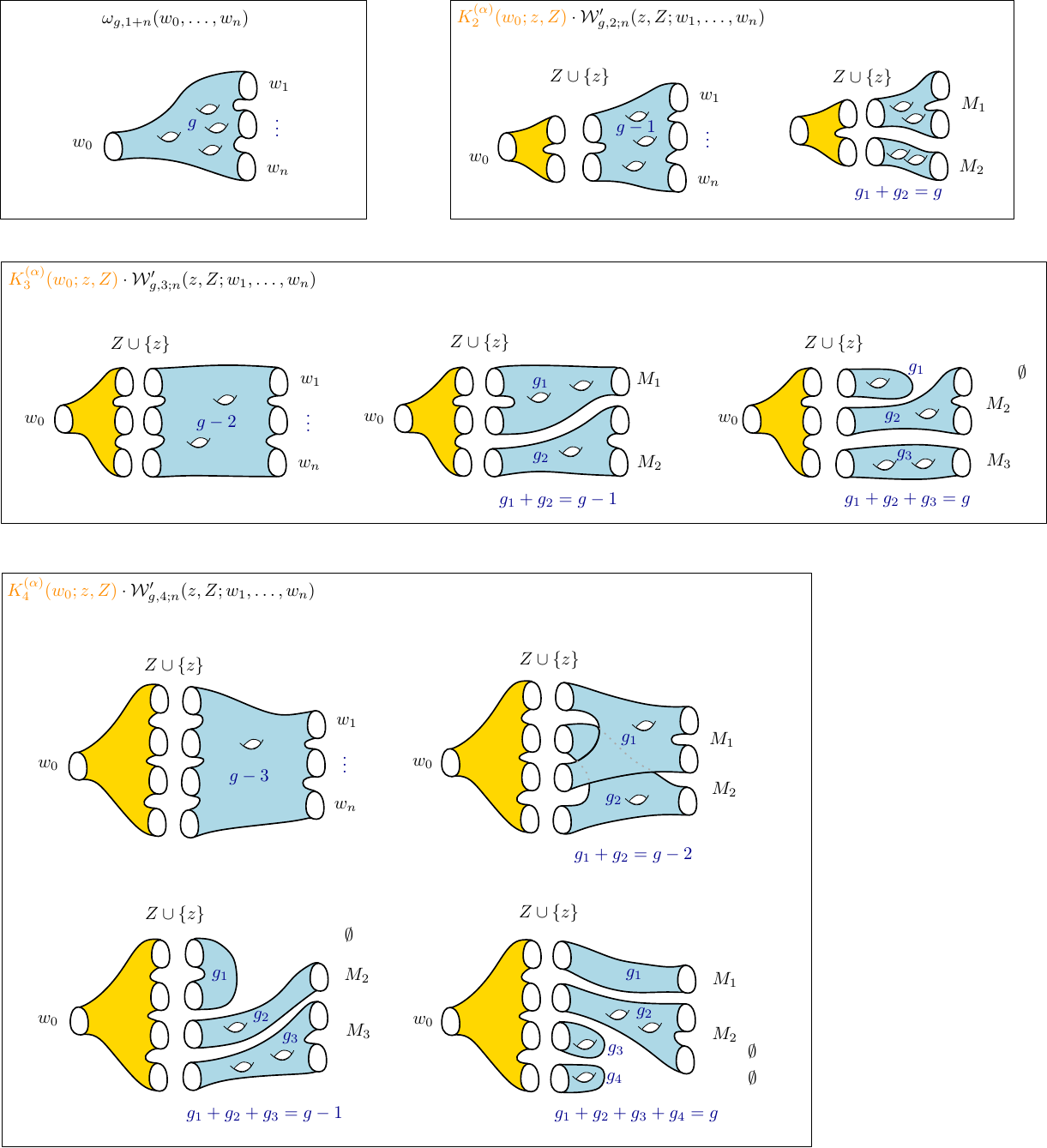}
\caption{\label{fig:Wgraphic} Examples of terms arising in the topological recursion formula \eqref{eq:TR} for $i = 2,3,4$, with all possible numbers of connected components for $\mathbf{S}_{{\rm cut}}$.}
\end{center}
\end{figure}

\subsubsection{Abstract loop equations}

Topological recursion is interesting because it constructs the unique solution (up to a projection property) of a system of equations known as abstract loop equations. This perspective was put forward in \cite{BEO13,BS17} for simple ramification points and generalised  to higher-order in \cite{BBCCN18,BKS20}.

Recall that given a point $q \in x(\Sigma) \subseteq\mathbb{P}^1$, we have an open neighbourhood $V$ of $q$ and disjoint open neighbourhoods $U_p$ for $p \in x^{-1}(q)$ such that $x^{-1}(V) = \bigsqcup_{p \in x^{-1}(q)} U_p$. We first define the following objects:

\begin{definition}\label{de:bigeps} 
	Let $(\Sigma,x,\omega_{0,1},\omega_{0,2})$ be a spectral curve, $(\omega_{g,n})_{(g,n) \in \mathbb{Z}_{\geq 0} \times \mathbb{Z}_{> 0}}$ a system of correlators, $q \in x(\Sigma)$ and $p \in x^{-1}(q)$. For $z \in x^{-1}(V)$ and $i \in [r_p]$, we define 
	\begin{equation}\label{eq:bigeps}
		\mathcal{E}_{g,i;n}^{(p)}(z; w_{[n]}) =  \sum_{\substack{Z \subseteq \mathfrak{f}_p(z) \\ |Z| = i}} \mathcal{W}_{g,i;n}(Z;w_{[n]}) \,.
	\end{equation}
\end{definition}

The object $\mathcal{E}_{g,i;n}^{(p)}(z; w_{[n]}) $ with respect to the variable $z \in x^{-1}(V)$ is the pullback by $x_{|U_p}$ of a degree $i$ differential on $V$ (the other variables $w_{[n]}$ are spectators), because the sum over the subsets $Z \subseteq \mathfrak{f}_p(z)$ with $|Z| = i$ is fully invariant under monodromy around the point $q = x(p) \in V$. In fact, the sum can be formulated as the pullback of the pushforward to $V$, see \cite[Section~5.2]{BKS20}.

\begin{remark}
	If $p \not\in \mathsf{Ram}$, then $r_p=1$ and the only nonzero object in \eqref{eq:bigeps} is
	\begin{equation}
		\mathcal{E}_{g,1;n}^{(p)}(z; w_{[n]}) = \mathcal{W}_{g,1;n}(\mathfrak{f}_p(z); w_{[n]}) = \omega_{g,1+n}(z,w_{[n]}) \,.
	\end{equation}
\end{remark}

The abstract loop equations are statements about the behaviour of these differentials on $x^{-1}(V)$.
\begin{definition}[Abstract loop equations]\label{de:localloop}
	A system of correlators $(\omega_{g,n})_{(g,n) \in \mathbb{Z}_{\geq 0} \times \mathbb{Z}_{> 0}}$ on a spectral curve $\mathcal{S} = (\Sigma,x,\omega_{0,1},\omega_{0,2})$ satisfies \emph{abstract loop equations} if, for all $(g,n) \in \mathbb{Z}_{\geq 0}^2$ such that $2g-2+(1+n)>0$, $q \in x(\Sigma)$, $p \in x^{-1}(q)$, and $i \in [r_p]$, the object $\mathcal{E}_{g,i;n}^{(p)}(z; w_{[n]}) $ defined for $z \in x^{-1}(V)$ is the pullback by $x$ of a meromorphic degree $i$ differential on $V$ \emph{that may have a pole of order at most $\mf{d}_p(i)$ at $q \in V$}, with\footnote{Our conventions for $\mathfrak{d}_p(i)$ and the abstract loop equations agree with \cite{BBCCN18}. A different convention for $\mathfrak{d}_p(i)$ is to leave the $(i - 1)$ out: it is used in \cite{BKS20} and is natural from the VOA perspective, as modes of fields are indexed including a shift by the conformal weight. Besides, in \cite{BKS20} $\delta_{i,1}$ is added to $\mathfrak{d}_p(i)$ to define the abstract loop equations. The abstract loop equations in \cite{BKS20} are equivalent to the present abstract loop equations after adding the condition (here moved to \cref{de:corr}) that correlators have no residues.}
	\begin{equation}
	\label{dpidef} \mf{d}_p(i) = i-1 - \left\lfloor \frac{s_p (i-1)}{r_p}  \right\rfloor \,.
	\end{equation} 
If $s_p = \infty$, we take as convention $\mathfrak{d}_p(1) = 0$ and $\mathfrak{d}_p(i) = -\infty$ if $i > 1$ (the latter means that $\mathcal{E}_{g,i;n}^{(p)}(z;w_{[n]}) = 0$ but this will never be needed for locally admissible spectral curves).
\end{definition}
The $i = 1$ and $i = 2$ are respectively called linear and quadratic loop equations.

In other words, for $\tilde p \in x^{-1}(q)$ distinct from $p$, we have

\begin{equation}
	\mathcal{E}_{g,i;n}^{(p)}(z; w_{[n]}) = \mc{O}\!\left( x^* \left( \frac{\dd t^i}{t^{\mf{d}_p(i)}} \right) \right) \quad {\rm as}\,\, z \rightarrow \tilde p\,,
\end{equation}
where $t$ is a coordinate on $V$ centered at $q \in V$. In particular, if we use the local normal form $\zeta \mapsto \zeta^{r_{\tilde p}}$ for $x$ on $U_{\tilde p}$,  we can write
\begin{equation}\label{eq:aleexp}
	\mathcal{E}_{g,i;n}^{(p)}(z; w_{[n]}) = \mc{O}\!\left( \frac{\dd \zeta^i}{\zeta^{r_{\tilde p} \mf{d}_p(i) - (r_{\tilde p}-1)i}} \right) \quad {\rm as}\,\, z \rightarrow \tilde p\,.
\end{equation}

\begin{remark}
It is important to note that even though we used a particular $p \in x^{-1}(q)$ to define the object $\mathcal{E}_{g,i;n}^{(p)}(z; w_{[n]})$ via the sum over subsets $Z \subseteq \mathfrak{f}_p(z)$, the behaviour \eqref{eq:aleexp} holds true as $z \rightarrow \tilde p$ for all $\tilde p \in x^{-1}(q)$, not just $\tilde p = p$. The dependence of $\mathfrak{d}_p(i)$ on $p$ then make the abstract loop equations subtle when there are several ramification points above the same branch point.
\end{remark}

To solve the abstract loop equations, we will require a specific property on the correlators.

\begin{definition}[Projection property]\label{de:projection}
	A system of correlators $(\omega_{g,n})_{(g,n) \in \mathbb{Z}_{\geq 0} \times \mathbb{Z}_{> 0}}$ on a spectral curve $\mathcal{S} = (\Sigma,x,\omega_{0,1},\omega_{0,2})$ satisfies the \emph{projection property}  if, for all $(g,n) \in \mathbb{Z}_{\geq 0}^2$ such that $2g - 2 + (1 + n) > 0$:
	\begin{equation}\label{eq:projection}
		\omega_{g,1+n}(w_0, w_{[n]}) = \sum_{p \in \Sigma} \Res_{z=p} \alpha_{0,2}^{(p)}(w_0;z)\omega_{g,1+n}(z,w_{[n]}) \,,
	\end{equation}
	where $\alpha_{0,2}^{(p)}$ is a choice of local primitive as in Definition~\ref{de:recker}. 
\end{definition}

Like in the topological recursion formula \eqref{eq:TR}, the sum is over all $p \in \Sigma$, but only ramification points $p \in \mathsf{Ram}$ give nonzero contributions as the correlators only have poles at the ramification points.
	
Since the $\omega_{g,n}$ have no residues at the ramification points, we can find a local primitive of $\omega_{g,n}(z,w_{[n]})$ with respect to $z$ and use it for an integration by parts in the residue at $p$. This shows that the projection property holds for a choice of local primitives $\alpha_{0,2}^{(p)}$  if and only if it holds for any choice of local primitives. 

\subsubsection{Topological recursion solves abstract loop equations}

The main reason behind the appearance and usefulness of topological recursion is that, for locally admissible spectral curves, it provides the unique solution to abstract loop equations that also satisfies the projection property. For spectral curves with simple regular ramification points (i.e. $(r_p, s_p) = (2,3)$ for all $ p\in\mathsf{Ram}$), this is a classical result of Eynard and Orantin in \cite{EO07}, which was generalised to positive $s_p = \bar s_p $ in \cite{BBCCN18} using the formalism of Airy structures of Kontsevich and Soibelman \cite{KS17}. Here we  extend the result to cover as well the cases $\bar s_p \leq -1$ and $\bar s_p = s_p - 1$, and allow arbitrary choice of local primitive of $\omega_{0,2}$ in the numerator of the recursion kernel.

\begin{theorem}[Topological recursion solves abstract loop equations]\label{th:ale}
	Let $\mathcal{S} = (\Sigma,x,\omega_{0,1},\omega_{0,2})$ be a locally admissible spectral curve. Then there exists a unique system of correlators on $\mathcal{S}$ that satisfies topological recursion for some choice of local primitives. It is also the unique system of correlators that satisfies the abstract loop equations and the projection property, and it satisfies topological recursion for any choice of local primitives. Moreover, in the topological recursion formula \eqref{eq:TR}, the residue at each point $p \in \Sigma$ such that $r_p = 1$ or $\bar s_p \leq -1$ vanishes.
\end{theorem}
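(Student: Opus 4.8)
The plan is to reduce \cref{th:ale} to the case already settled in the literature: a spectral curve each of whose ramification points has $\bar s_p=s_p\in[r_p+1]$ with $\gcd(r_p,s_p)=1$ and $r_p=\pm1\,\,{\rm mod}\,\,s_p$, equipped with the canonical local primitive, for which existence, symmetry, uniqueness, and the equivalence ``topological recursion $\Leftrightarrow$ abstract loop equations $+$ projection property'' follow from \cite{EO07} (all points of type $(2,3)$) and \cite{BBCCN18,BKS20} (general case, including several ramification points over one branch point). First I would split $\Sigma$ into the \emph{contributing} points ($r_p\geq2$ and $\bar s_p\geq0$) and the \emph{inert} points ($r_p=1$, or $r_p\geq2$ with $\bar s_p\leq-1$): by (lA3) a contributing point has $s_p\geq1$, whence by (lA1)--(lA2) it is of the type above, while an inert ramification point has $s_p\leq-1$. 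It then remains to \emph{(i)} absorb the case $\bar s_p=s_p-1$ into $\bar s_p=s_p$, \emph{(ii)} show that the inert points contribute nothing to \eqref{eq:TR} and \eqref{eq:projection} and create no poles of the correlators, and \emph{(iii)} eliminate the dependence on the choice of local primitive.

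For \emph{(i)}, the observation I would use is that $\omega_{0,1}$ enters \eqref{eq:TR}, the abstract loop equations and \eqref{eq:projection} only through the recursion kernel, and there only through $\Upsilon_{|Z|}(Z;z)=\prod_{z'\in Z}\bigl(\omega_{0,1}(z')-\omega_{0,1}(z)\bigr)$ with $Z\subseteq\mathfrak{f}'_p(z)$, that is, through differences of $\omega_{0,1}$ along a single $x$-fibre. Since $(x^*\eta)(z')=(x^*\eta)(z)$ for $z,z'$ in a common fibre near $p$, replacing $\omega_{0,1}$ by $\omega_{0,1}+x^*\eta$ near $p$ (with $\eta$ a meromorphic $1$-form near $x(p)$) changes none of these quantities. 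When $\bar s_p=s_p-1$, the remark after \cref{de:localadm} gives $r_p\mid\bar s_p$; more precisely every exponent $<s_p$ in the expansion \eqref{eq:expomega} is divisible by $r_p$, so the corresponding partial sum of $\omega_{0,1}$ at $p$ is exactly a pullback $x^*\eta$. Hence for the analysis of the $p$-contribution to \eqref{eq:TR}, of the loop equations at $x(p)$, and of the $p$-term in \eqref{eq:projection}, one may assume $\bar s_p=s_p$ at every ramification point, and then \cite{BBCCN18,BKS20} apply at each contributing point.

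For \emph{(ii)}, the point $r_p=1$ is immediate ($\mathfrak{f}'_p(z)=\emptyset$, and $p\notin\mathsf{Ram}$). For $r_p\geq2$ with $\bar s_p=s_p\leq-1$ I would prove by induction on $2g-2+n$ that \eqref{eq:TR} built from the canonical primitive produces a differential holomorphic at $p$ and that the residue at $p$ in \eqref{eq:TR} and in \eqref{eq:projection} vanishes. In a standard coordinate $\zeta$ at $p$, with fibre points $z\leftrightarrow\zeta$ and $z_a\leftrightarrow\rho^a\zeta$, $\rho=e^{2\pi\mathrm{i}/r_p}$, one has $\omega_{0,1}(z_a)-\omega_{0,1}(z)\approx\tau_p(\rho^{as_p}-1)\zeta^{s_p-1}\dd\zeta$ with $\rho^{as_p}\neq1$ for $a\not\equiv0$ by (lA1), so $\Upsilon_{|Z|}(Z;z)^{-1}$ has a zero of order $|Z|(1-s_p)\geq2|Z|$ at $p$; the numerator $\alpha_{0,2}^{(p)}(w_0;z)$ is holomorphic in $z$ at $p$ (shrink $U_p$ to keep $w_0$ out); and in $\mathcal{W}'_{g,1+|Z|;n}(z,Z;w_{[n]})$ the only factors singular as $z\to p$ are the copies of $\omega_{0,2}$ joining two fibre points --- by the inductive hypothesis the lower correlators are regular at $p$, and by \cref{de:corr} the remaining ones have no diagonal poles --- each of order $-2$, so $\mathcal{W}'_{g,1+|Z|;n}$ has order $\geq-(1+|Z|)$ at $p$. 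Combining, the $z$-integrand of \eqref{eq:TR} at $p$ is a $1$-form in $z$ of order $\geq|Z|(1-s_p)-(1+|Z|)\geq|Z|-1\geq0$, hence holomorphic: the residue at $p$ vanishes. Since moreover the terms with $p'\neq p$ in \eqref{eq:TR} are regular at $w_0=p$ for the canonical primitive, the new correlator $\omega_{g,1+n}$ is holomorphic at $p$, closing the induction; the $p$-term of \eqref{eq:projection} vanishes by the same estimate. It follows that \eqref{eq:TR} coincides with its restriction to the contributing points, which by \emph{(i)} and \cite{BBCCN18,BKS20} produces a symmetric system of correlators that is the unique solution of the abstract loop equations and the projection property \eqref{eq:projection}.

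For \emph{(iii)}, \eqref{eq:projection} is independent of the choice of local primitive (integrate by parts in $z$ against a local primitive of $\omega_{g,1+n}$) and the loop equations involve no primitive, so it suffices to run the standard chain ``\eqref{eq:TR} for some choice $\Rightarrow$ abstract loop equations $+$ \eqref{eq:projection} $\Rightarrow$ \eqref{eq:TR} for any choice''. In the last step, rewriting the residues of \eqref{eq:projection} at the contributing points by means of the loop equations reproduces \eqref{eq:TR} up to the terms $c^{(p)}(w_0)\,\Res_{z=p}\sum_Z\mathcal{W}'_{g,1+|Z|;n}(z,Z;w_{[n]})/\Upsilon_{|Z|}(Z;z)$, where $c^{(p)}$ is the $z$-independent difference of two local primitives; these vanish because the correlators have no residues at ramification points, which is part of \cite{BBCCN18,BKS20} at contributing points and follows from the estimate of \emph{(ii)} at inert points (the factor $\alpha_{0,2}^{(p)}$ only helped the bound there). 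This also gives the asserted vanishing of residues at points with $r_p=1$ or $\bar s_p\leq-1$. The main obstacle is the bookkeeping in \emph{(ii)}: pinning down the exact orders at $p$ of $\Upsilon_{|Z|}$ (using coprimality), of $\alpha_{0,2}^{(p)}$, and of $\mathcal{W}'_{g,1+|Z|;n}$ (using the absence of diagonal poles and, inductively, the regularity of lower correlators at inert points), and arranging the induction on $2g-2+n$ so that holomorphy of the new correlator and vanishing of the $p$-residue feed each other; the other ingredients are either soft (the $x^*$-pullback invariance in \emph{(i)}) or imported wholesale (\cite{BBCCN18,BKS20} and step \emph{(iii)}). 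The case $x(p)=\infty$, i.e.\ $p$ a pole of $x$, is handled identically throughout via the normal form $x=\zeta^{-r_p}$.
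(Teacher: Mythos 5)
Your strategy coincides with the paper's (reduce to the case $\bar s_p=s_p\in[r_p+1]$ settled in \cite{BBCCN18,BKS20}, show that ramification points with $s_p\leq-1$ contribute nothing, absorb $\bar s_p=s_p-1$ by a fibrewise shift of $\omega_{0,1}$, treat the choice of primitive last), and your handling of \eqref{eq:TR}, of the projection property, and of the vanishing of the inert residues is sound. However, there are two genuine gaps, both in the part of the statement asserting that the topological recursion correlators are \emph{the unique solution of the abstract loop equations} together with the projection property. First, your step (i) rests on the assertion that $\omega_{0,1}$ enters the abstract loop equations ``only through the recursion kernel'', i.e.\ only through the differences $\omega_{0,1}(z')-\omega_{0,1}(z)$ along a fibre. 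This is false: the loop equations are statements about $\mathcal{E}^{(p)}_{g,i;n}$, which is built from $\mathcal{W}_{g,i;n}$ (without the prime) and therefore contains factors of $\omega_{0,1}$ evaluated at single fibre points. Replacing $\omega_{0,1}$ by $\omega_{0,1}+x^*\eta$ changes $\mathcal{E}^{(p)}_{g,i;n}$ for $i\geq2$ by terms of the form $(x^*\eta)^{i-j}\mathcal{E}^{(p)}_{g,j;n}$ with $j<i$, so your reduction only yields the loop equations for the truncated $\omega_{0,1}$. One must still check that these extra terms obey the pole bound $\mathfrak{d}_p(i)$; this does hold, but only because $\bar s_p=kr_p$ and $s_p=kr_p+1$ make $\mathfrak{d}_p(i)=(i-1)(1-k)$ linear in $i$, so that $r_p\mathfrak{d}_p(j)-(r_p-1)j+(1-\bar s_p)(i-j)=r_p\mathfrak{d}_p(i)-(r_p-1)i$ --- an induction on $i$ that your proposal skips and dismisses as ``soft''.

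Second, importing \cite{BBCCN18,BKS20} on the contributing points only gives the abstract loop equations \emph{at those points}, whereas \cref{de:localloop} requires them at every $p\in x^{-1}(q)$, in particular at the inert ramification points with $r_p\geq2$ and $s_p\leq-1$. There, for $i\geq2$, the bound on $\mathcal{E}^{(p)}_{g,i;n}$ is a nontrivial statement: the poles come from up to $i-1$ factors of $\omega_{0,1}$, giving a naive pole order $(1-s_p)(i-1)$, which must be improved using the fact that $\mathcal{E}^{(p)}_{g,i;n}$ is a pullback by $x$ (so its order of vanishing is constrained modulo $r_p$) before it can be matched with $\mathfrak{d}_p(i)$. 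Without this verification, the characterisation ``unique solution of the abstract loop equations and the projection property'' is not established --- a priori the set of such solutions could even be empty. Both gaps are repairable (the paper's proof supplies exactly these two computations), but they are not cosmetic: they are the only places where the congruence structure of the local parameters at the non-standard points actually enters the loop-equation side of the theorem.
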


Let us comment on the necessity of the local admissibility conditions of \cref{de:localadm}. If the coprime condition (lA1) did not hold, we would have $r_p > 1$ and $\Upsilon_{r_p - 1}(\mathfrak{f}_p'(z);z)$ would vanish identically: the recursion kernel \eqref{eq:rkernel} would be ill-defined. If (lA2) was violated, namely if we had $s_p > r_p + 1$ or $s_p \in [r_p + 1]$ with $r_p \neq \pm 1\,\,{\rm mod}\,\,s_p$, it was found in \cite{BBCCN18} that the topological recursion formula \eqref{eq:TR}  typically gives non-symmetric correlators. If (lA3) was violated, i.e. $s_p > \bar s_p + 1$, the proof of \cref{th:ale} (look around \eqref{sunsetsun}) reveals that the system of correlators may not satisfy the abstract loop equations.

\begin{proof}
	The idea of the proof is to first show that there exists a unique system of correlators that solves the abstract loop equations and that satisfies the projection property, and then show that it must be given by topological recursion.

	We first note that for any $p \not\in\mathsf{Ram}$, the abstract loop equations reduce to the statement that
	\begin{equation}
		\mathcal{E}^{(p)}_{g,1;n}(z; w_{[n]}) = \mathcal{W}_{g,1;n}(\mathfrak{f}_p(z); w_{[n]}) = \omega_{g,1+n}(z, w_{[n]}) 
	\end{equation}
	is the pullback by $x$ of a holomorphic $1$-form on $V$. Using the local normal form on $U_p$, which takes the form $\zeta \mapsto \zeta$ as $p$ is unramified, the statement becomes (this is \eqref{eq:aleexp} with $r_p=1$):
	\begin{equation}
\mathcal{E}^{(p)}_{g,1;n}(z; w_{[n]}) = \mathcal{W}_{g,1;n}(\mathfrak{f}_p(z); w_{[n]}) = \omega_{g,1+n}(z, w_{[n]})  = \mc{O}(\dd \zeta) \quad {\rm as}\,\,z \rightarrow p\,.
	\end{equation}
In other words, the abstract loop equations at unramified points are satisfied if and only if the correlators $\omega_{g,n}$ are holomorphic away from the ramification points. This is the reason why we already included this condition in the definition of a system of correlators, \cref{de:corr}.

	Let us now consider the abstract loop equations at ramification points. Let us first assume that all $p \in \mathsf{Ram}$ are such that $s_p \in [r_p + 1]$ with $r_p = \pm 1\,\,{\rm mod}\,\,s_p$, and $\bar s_p=s_p$. This is the setup considered in \cite{BBCCN18}. Using Airy structures constructed as modules for $\mathcal{W}$-algebras, it is shown in \cite{BBCCN18} that there  always exists a unique system of correlators that satisfies the abstract loop equations and the projection property; this is essentially \cite[Theorem 5.27]{BBCCN18} and we do not repeat this part of the proof here. The proof that this system of correlators must then satisfy topological recursion was provided in \cite[Appendix C]{BBCCN18}; we sketch the proof here, since our recursion kernel is slightly different from the setup there\footnote{The difference in the recursion kernel is that we allow arbitrary choices of local primitives of $\omega_{0,2}$ in the numerator of the recursion kernel, while in \cite{BBCCN18} we took the canonical local primitive $\alpha_{0,2}^{(p)}(w_0;z) = \int_{p}^{z} \omega_{0,2}(w_0,\cdot)$. However, the proof goes through in exactly the same way.}.

	Assume that there exists a solution to the abstract loop equations, and that it satisfies the projection property. Then we claim that, for all $p \in \mathsf{Ram}$, the differential (in $z$)
	\begin{equation}\label{eq:tshol}
		\frac{1}{\Upsilon_{r_p-1}(\mathfrak{f}_p'(z); z)} \sum_{i=1}^{r_p} (- \omega_{0,1}(z))^{r_p-i} \mathcal{E}^{(p)}_{g,i;n}(z; w_{[n]})
	\end{equation}
	is holomorphic as $z \rightarrow  p$. Let us use the local normal form $\zeta \mapsto \zeta^{r_p}$ for $z \in U_p$. By the abstract loop equations (cf.  \eqref{eq:aleexp}), we know that
	\begin{equation}
		\mathcal{E}_{g,i;n}^{(p)}(z; w_{[n]}) = \mc{O}\!\left( \frac{\dd \zeta^i}{\zeta^{r_p \mathfrak{d}_p(i) - (r_p-1)i}} \right) \,.
	\end{equation}
By definition of the local parameters (cf. \eqref{eq:expomega}), we know that
		\begin{equation}
			(- \omega_{0,1}(z))^{r_p-i} = \mc{O}\!\left( \zeta^{(r_p-i)(s_p-1)} \dd \zeta^{r_p-i} \right) \,,
	\end{equation}
since we assumed that $\bar s_p = s_p$. Finally,  using \eqref{eq:expomega} again, we see that
	\begin{equation}
		\Upsilon_{r_p-1}(\mathfrak{f}_p'(z); z) = \mc{O}\!\left( \zeta^{(r_p-1)(s_p-1)} \dd \zeta^{r_p-1} \right) \,.
	\end{equation}
Putting all this together, we see that each term in the sum over $i$ in \eqref{eq:tshol} behaves like
	\begin{equation}
		\frac{(- \omega_{0,1}(z))^{r_p-i} \mathcal{E}^{(p)}_{g,i;n}(z; w_{[n]})}{\Upsilon_{r_p-1}(\mathfrak{f}_p'(z); z)} = \mc{O}\!\left( \zeta^{(r_p-i)(s_p-1) - (r_p-1)(s_p-1) +(r_p-1)i - r_p \mathfrak{d}_p(i)} \dd \zeta \right) \,.
	\end{equation}
Simplifying the exponent, we find that these terms are holomorphic as $z \rightarrow p$ if and only if
	\begin{equation}
		r_p + s_p -  s_p i - 1 + r_p \left \lfloor \frac{s_p}{r_p}(i-1) \right \rfloor \geq 0 \,.
	\end{equation}
But
	\begin{equation}
		r_p + s_p - s_p i - 1 + r_p \left \lfloor \frac{s_p}{r_p}(i-1) \right \rfloor  > r_p + s_p - s_p i - 1 + r_p \left( \frac{s_p}{r_p}(i-1)  - 1 \right) = -1\,,
	\end{equation}
	and since this is a strict inequality and the left-hand side is an integer, this left-hand side must be nonnegative. Therefore, each term in the sum over $i$ in \eqref{eq:tshol} is holomorphic as $z \rightarrow p$, and so is the sum.

	Since \eqref{eq:tshol} is holomorphic as $z \rightarrow p$ for all $ p \in \mathsf{Ram}$, we can write
	\begin{equation}\label{eq:weq}
		\sum_{p \in \mathsf{Ram}} \Res_{z=p} \frac{\alpha_{0,2}^{(p)}(w_0;z)}{\Upsilon_{r_p-1}(\mathfrak{f}_p'(z); z)} \sum_{i=1}^{r_p} (- \omega_{0,1}(z))^{r_p-i} \mathcal{E}^{(p)}_{g,i;n}(z; w_{[n]}) = 0
	\end{equation}
	for an arbitrary choice of local primitive $\alpha_{0,2}^{(p)}$ of $\omega_{0,2}$ (cf. \cref{de:recker}). We then use the combinatorial identity 
	\begin{equation}\label{eq:comb}
		\sum_{i=1}^{r_p} (-\omega_{0,1}(z))^{r_p-i}  \mathcal{E}^{(p)}_{g,i;n}(z;w_{[n]}) = \sum_{Z \subseteq \mathfrak{f}'_p(z)}  \mathcal{W}'_{g,1+|Z|;n}(z,Z; w_{[n]}) \Upsilon_{r_p - 1 - |Z|}(\mathfrak{f}'_p(z) \setminus Z; z)
	\end{equation}
	(see \cref{le:comb2} later in the text) to rewrite \eqref{eq:weq} as
	\begin{equation} 
\label{theproj22}
\sum_{p \in \mathsf{Ram}} \Res_{z=p} \alpha_{0,2}^{(p)}(w_0;z)\omega_{g,1+n}(z, w_{[n]}) = \sum_{p \in \mathsf{Ram}} \Res_{z=p} \sum_{\substack{Z \subseteq  \mathfrak{f}_p'(z) \\ |Z| \geq 1}} K_{1 + |Z|}^{(p)}(w_0; z, Z)\mathcal{W}'_{g,1+|Z|;n}(z, Z; w_{[n]}) \,,
	\end{equation}
	where we introduced the recursion kernel defined in \eqref{eq:rkernel} for the subsets $|Z| \subseteq \mathfrak{f}'_p(z)$ with $|Z| \geq 1$, and moved those terms to the right-hand side, keeping the term with $Z = \emptyset$ on the left-hand side. Finally, since the system of correlators satisfy the projection property \eqref{eq:projection}, the left-hand side is equal to $\omega_{g,1+n}(w_0, w_{[n]})$, and we arrive to the topological recursion formula \eqref{eq:TR}. This concludes the proof if $s_p \in [r_p + 1]$ with  $r_p = \pm 1\,\,{\rm mod}\,\,s_p$, and $\bar s_p = s_p$ for all $p \in \mathsf{Ram}$.

	Next, we keep the assumption that $\bar s_p = s_p$ for all $p \in \mathsf{Ram}$ but suppose that $s_p \in [r_p + 1]$  with $r_{p} = \pm 1\,\,{\rm mod}\,\,s_{p}$ only for $p \in \mathsf{Ram}' \subset \mathsf{Ram}$. In other words, the remaining ramification points  $p \in \mathsf{Ram}'' = (\mathsf{Ram} \setminus \mathsf{Ram}')$ satisfy $s_p \leq -1$ and $s_p$ coprime with $r_p$. First, we notice that the ramification points in $\mathsf{Ram}'' $ do not contribute to topological recursion. Indeed,  using the local normal form $\zeta \mapsto \zeta^{r_p}$ on $U_p$, the recursion kernel behaves as
	\begin{equation}
		\label{KZ1s}K_{1 + |Z|}^{(p)}(w_0; z, Z) = \mc{O}\!\left(\frac{\zeta^{(1-s_p)|Z|}}{\dd \zeta^{|Z|}} \right)
	\end{equation}
	as $z \rightarrow p$ while $w_0$ remains away from $\mathsf{Ram}''$. Let us show by induction that it implies that all residues at $z=p$ in the topological recursion formula vanish.
	
	We initialise with $2g - 2 + n = 1$, that is:
	\begin{equation*}
	\begin{split}
	\omega_{0,3}(w_0,w_1,w_2) & = \sum_{p \in \Sigma} \Res_{z = p} \! \sum_{z' \in \mathfrak{f}_p'(z)} \!\! K_{2}^{(p)}(w_0;z,z') \big(\omega_{0,2}(z,w_1)\omega_{0,2}(z',w_2) + \omega_{0,2}(z,w_2) \omega_{0,2}(z',w_1)\big)\,, \\
	\omega_{1,1}(w_0) & = \sum_{p \in \Sigma} \Res_{z = p} \sum_{z' \in \mathfrak{f}_p'(z)} K_2^{(p)}(w_0;z,z') \omega_{0,2}(z,z')\,.
	\end{split}
	\end{equation*}
We look at the integrand in the residue at $p \in \mathsf{Ram}''$ in these formulae: it contains in both cases a double zero at $p$ from the recursion kernel $K_2^{(p)}$, times a factor which is holomorphic in $p$ (in $\omega_{0,3}$) or contains at most a double pole in $p$. Therefore, the integrand is in both cases holomorphic at $p$ and the corresponding residue vanishes. As a result, $\omega_{0,3}(w_0,w_1,w_2)$ and $\omega_{1,1}(w_0)$ do not have poles at $p \in \mathsf{Ram}''$.

Now take $(g,n) \in \mathbb{Z}_{\geq 0} \times \mathbb{Z}_{> 0}$ such that $2g - 2 + n > 0$. We assume that for $2g' - 2 + n' < 2g - 2 + n$ the residues at points in $\mathsf{Ram}''$ never contribute to $\omega_{g',n'}$ and that the latter do not have poles at $\mathsf{Ram}''$. The topological recursion \eqref{eq:TR} expresses $\omega_{g,n}$ as a sum over residues where integrands involve $\omega_{g',n'}$ with $0 < 2g' - 2 + n' < 2g - 2 + n$ only, and $\omega_{0,2}$. By the induction hypothesis, the only source of poles in the integrand of the residue at $p \in \mathsf{Ram}''$ are $\omega_{0,2}$s evaluated at two points in the fibre $\mathfrak{f}_p(z)$. More precisely, the poles of highest possible order with respect to $z$ come from terms of the form $K_{2m}^{(p)}(w_0;z,z_2,\ldots,z_{2m}) \prod_{k = 1}^{m} \omega_{0,2}(z_{2k - 1},z_{2k})$ where $z_1 = z$ and $\{z_2,\ldots,z_{2m}\} \subseteq \mathfrak{f}'_p(z)$ for some $m \leq \lfloor \frac{r_p}{2} \rfloor$. The product of $\omega_{0,2}$s creates a pole of order $2m$ when $z \rightarrow p$, which is compensated by a zero of order at least $2m$ from the recursion kernel $K_{2m}^{(p)}$ (use \eqref{KZ1s} with $|Z| = 2m$ and $s_p \leq -1$). So, the integrands are always holomorphic at $p \in \mathsf{Ram}''$, the corresponding residue vanishes, and $\omega_{g,n}$ can never develop a pole at points of $\mathsf{Ram}''$.

By induction, we conclude that  topological recursion constructs the exact same system of correlators as if we had only included the ramification points in $\mathsf{Ram}'$ and omitted those in $\mathsf{Ram}''$ in the topological recursion formula. So, the resulting correlators only have poles at the ramification points in $\mathsf{Ram}'$.

	We know that the system of correlators constructed in this way satisfies the projection property including only the ramification points in $\mathsf{Ram}'$. Clearly, it also satisfies the projection property including all ramification points in $\mathsf{Ram}$ in the sum over residues of the right-hand side of \eqref{eq:projection}, since the correlators are holomorphic at the ramification points in $\mathsf{Ram}''$.

	The system of correlators satisfies the abstract loop equations for the ramification points in $\mathsf{Ram}'$, but we need to check that it also satisfies the abstract loop equations for the remaining ramification points. Let $p \in \mathsf{Ram}''$. The abstract loop equation states that
	\begin{equation}
	\label{Ebound}	\mathcal{E}_{g,i;n}^{(p)}(z; w_{[n]}) = \mc{O}\!\left( \frac{\dd \zeta^i}{\zeta^{r_p \mathfrak{d}_p(i) - (r_p-1)i}} \right) \,.
	\end{equation}

Since the correlators $\omega_{g,n}$ do not have poles at $z=p$, the only poles on the left-hand side come from $\omega_{0,1}$. There can be at most $i-1$ factors of $\omega_{0,1}$s on the left-hand side, so we know that the left-hand side cannot have a pole at $z=p$ of order more than $(1-s_p)(i-1) = -1 - (i-1)s_p + i$. But we can be stricter; we know that the left-hand side is the pullback via $x$ of a differential on $V$, and thus it must behave like $ \mc{O}(\zeta^{-r_pk - i}\dd \zeta^i)$ for some integer $k$. A differential which is $\mc{O}(\zeta^{-\ell - i}\dd \zeta^i)$ and is the pullback via $x$ of a differential on $V$ therefore behaves automatically like $\mc{O}(\zeta^{-r_p \big\lceil \frac{\ell}{r_p} \big\rceil - i}\dd \zeta^i)$. Coming back to the definition \eqref{dpidef} of $\mathfrak{d}_p(i)$, \eqref{Ebound} then implies
$$
\mathcal{E}_{g,i;n}^{(p)}(z;w_{[n]}) = \mc{O}\!\left(  \frac{\dd \zeta^i}{\zeta^{-r_p \left \lceil \frac{1 + s_p (i-1)}{r_p} \right \rceil+ i }} \right)\,.
$$
We must compare the exponents
$$
\theta_p \coloneqq - r_p \left\lceil \frac{1 + s_p(i - 1)}{r_p} \right\rceil + i \quad {\rm and}\quad \tilde{\theta}_p \coloneqq r_p \mathfrak{d}_p(i) - (r_p - 1)i = i - r_p - r_p \left \lfloor \frac{s_p(i - 1)}{r_p} \right\rfloor\,.
$$
We first treat the case $\frac{1 + s_p (i-1)}{r_p}  \in \mathbb{Z}$. Then  $\theta_p = -1 - s_p(i-1) + i$; besides, we have $\left \lfloor \frac{s_p(i - 1)}{r_p} \right \rfloor =   \frac{s_p(i - 1)}{r_p}  + \frac{1-r_p}{r_p}$; this implies $\tilde{\theta}_p = i - r_p - s_p(i-1) - 1 + r_p = \theta_p$. In the other case $\frac{1 + s_p (i-1)}{r_p}  \not\in \mathbb{Z}$, we have $\left \lceil \frac{1 + s_p (i-1)}{r_p} \right \rceil = 1 +  \left \lfloor  \frac{1+s_p(i-1)}{r_p} \right \rfloor = 1 +  \left \lfloor  \frac{s_p(i-1)}{r_p} \right \rfloor$ and $\theta_p = i-r_p - r_p  \left \lfloor  \frac{s_p(i-1)}{r_p} \right \rfloor = \tilde{\theta}_p$.
  
 In both cases we have $\theta_p = \tilde{\theta}_p$ and conclude that the abstract loop equations are satisfied at the ramification points in $\mathsf{Ram}''$. This concludes the proof for the general case with $\bar s_p = s_p$.

	Finally, suppose that $\bar s_p = s_p-1$ for some $p \in \mathsf{Ram}$. This can only happen when $r_p | \bar s_p$, and thus $s_p = k r_p+ 1$ for some $k \in \mathbb{Z}$. Then the leading term in $\omega_{0,1}$ does not contribute to topological recursion, since $\omega_{0,1}$ only appears through $\Upsilon_{|Z|}(Z;z)$ in the denominator of the recursion kernel, and for any $z' \in \mathfrak{f}_p'(z)$, the leading term with exponent $\bar s_p$ in $\omega_{0,1}$ drops out of $(\omega_{0,1}(z') - \omega_{0,1}(z))$. We conclude that topological recursion constructs the exact same system of correlators as if we had removed the leading term from the expansion of $\omega_{0,1}$ at $z=p$ in the local coordinate $\zeta$. The correlators thus satisfy the projection property, and the abstract loop equations as well, but for the expansion of $\omega_{0,1}$ with the leading term removed.

	We claim that they also satisfy the abstract loop equations with the full $\omega_{0,1}$. This can be proved by induction on $i \in [r_p]$. First, it is clear that the correlators satisfy the linear loop equations for $\mathcal{E}_{g,1;n}^{(p)}(z; w_{[n]})$, since $\omega_{0,1}$ does not enter in those. Now, assume that for all $j < i$ the abstract loop equations are satisfied:
	\begin{equation}
		\mathcal{E}_{g,j;n}^{(p)}(z; w_{[n]}) = \mc{O}\!\left( \frac{\dd \zeta^j}{\zeta^{r_p \mathfrak{d}_p(j) - (r_p-1)j}} \right)\,,
	\end{equation}
	where the full $\omega_{0,1}$ is included on the left-hand side. We know that the abstract loop equations are satisfied for all terms in $\mathcal{E}_{g,i;n}^{(p)}(z; w_{[n]}) $ that do not involve the leading order term in $\omega_{0,1}$, namely $\zeta^{\bar s_p - 1} \dd \zeta$. Since $r_p|\bar s_p$, this term is invariant under permutations of $\mathfrak{f}_p(z)$. Thus the extra contributions that appear in $ \mathcal{E}_{g,i;n}^{(p)}(z; w_{[n]}) $ when we include these terms all take the form
	\begin{equation}
		\label{sunsetsun} \left( \zeta^{\bar s_p - 1} \dd \zeta \right)^{i-j} \mathcal{E}_{g,j;n}^{(p)}(z; w_{[n]}) 
	\end{equation}
	for some $j \in [i-1]$. On the one hand, by the induction assumption, we know that
	\begin{equation}
		\mathcal{E}_{g,j;n}^{(p)}(z; w_{[n]}) = \mc{O}\!\left( \frac{\dd \zeta^j}{\zeta^{r_p \mathfrak{d}_p(j) - (r_p-1)j}} \right)\,.
	\end{equation}
Then
	\begin{equation}
		\left( \zeta^{\bar s_p - 1} \dd \zeta \right)^{i-j} \mathcal{E}_{g,j;n}^{(p)}(z; w_{[n]}) = \mc{O}\!\left( \frac{\dd \zeta^i}{\zeta^{r_p \mathfrak{d}_p(j) - (r_p-1)j  + (1- \bar s_p) (i-j)}} \right) \,.
	\end{equation}
But, since $\bar s_p =  k r_p$ and $s_p = k r_p + 1$ for some $k \in \mathbb{Z}$, we have
$$
\mathfrak{d}_p(i) = i - 1 - \Big\lfloor \frac{(kr_p + 1)(i - 1)}{r_p}\Big\rfloor = (i -1)(1-k)\,,
$$
and one can check $r_p \mathfrak{d}_p(j) - (r_p-1)j  + (1- \bar s_p) (i-j) = r_p \mathfrak{d}_p(i) - (r_p - 1)i$. Therefore, all new terms appearing in $\mathcal{E}_{g,i;n}^{(p)}(z; w_{[n]}) $ satisfy the abstract loop equations. By induction, we conclude that the abstract loop equations are satisfied with the full $\omega_{0,1}$ included. This concludes the proof of the two first points of the theorem for all locally admissible spectral curves.

The first time a choice of local primitives $\alpha_{0,2}^{(p)}$ of $\omega_{0,2}$ appears is \eqref{eq:weq}, and we carry it until \eqref{theproj22}, where we use the projection property to conclude that this expression is equal to $\omega_{g,1+n}$, thus proving topological recursion formula \emph{for this choice of local primitives}. But, as these manipulations can be done for any choice of local primitives and the validity of the projection property does not depend on which local primitive is chosen, we deduce that a system of correlators satisfy topological recursion for a choice of local primitives $\alpha_{0,2}^{(p)}$ if and only if it satisfies topological recursion for any choice of local primitives.

For the last point, we recall that $p$ unramified does not contribute to the residue formula since $\mathfrak{f}'_p(z) = \emptyset$. When $r_p \geq 2$ but $\bar s_p \leq -1$, local admissibility requires $\bar s_p \leq s_p - 1$, hence $s_p \leq 0$. As $s_p$ is by definition not divisible by $r_p$, we must have $s_p \leq -1$, and in the course of this proof we have already checked that such points have vanishing contribution to \eqref{eq:TR}. 
\end{proof}

\section{Globalising topological recursion}
\label{S3}

\subsection{Basic principles}

Recall the local formula of \cref{de:TR} for topological recursion
\begin{equation}\label{eq:TRbp}
	\omega_{g,1+n}(w_0, w_{[n]}) = \sum_{q \in x(\Sigma)} \sum_{p \in x^{-1}(q)} \Res_{z=p}
\sum_{\substack{Z \subseteq  \mathfrak{f}_p'(z) \\ |Z| \geq 1}} K_{1+|Z|}^{(p)}(w_0; z, Z)\mathcal{W}'_{g,1+|Z|;n}(z, Z; w_{[n]}) \,.
\end{equation}
Here we rewrote the sum over $p \in \Sigma$ as a double sum over $q \in x(\Sigma) \subseteq\mathbb{P}^1$ and $p \in x^{-1}(q)$, using the holomorphic map $x: \Sigma \rightarrow \mathbb{P}^1$. The integrand depends on the point at which we take the residue in two ways: first, we are summing over subsets of points in $\mathfrak{f}_p'(z)$ that lie within $U_p$, i.e. remain near the point $p \in \Sigma$ when $z$ is near $p$; second, the recursion kernel depends on $p$ via the choice of local primitive of $\omega_{0,2}$. The idea of globalisation is simple: it aims to rewrite the recursion so that the integrand remains the same within clusters of points $p$ (in the same $x$-fibre or not), and eventually convert the sum of residues within each cluster into a single contour integral. Besides, we want to do it so that the contours that we use are pullbacks from the base. This procedure has two steps, that we call vertical and horizontal globalisation, and will be useful when we consider families of spectral curves and allow points in given $x$-fibres to collide within each cluster (see also the comment in \cref{S:comm}).

\begin{definition}[Vertical globalisation]\label{de:globalTR}
	Let $(\omega_{g,n})_{g,n}$ be a system of correlators on a locally admissible spectral curve $\mathcal{S} = (\Sigma,x,\omega_{0,1},\omega_{0,2})$ that satisfies topological recursion. Let $q \in x(\Sigma) \subseteq \mathbb{P}^1$, and $P \subseteq x^{-1}(q)$ a finite subset. We say that topological recursion can be \emph{globalised over $P$} if, for all  $(g,n) \in \mathbb{Z}_{\geq 0}^2$ such that $2g-2+(1+n)>0$, we have
	\begin{multline}\label{eq:globalized}
		\sum_{p \in P} \Res_{z=p}\sum_{\substack{Z \subseteq  \mathfrak{f}_p'(z) \\ |Z| \geq 1}}K_{1+|Z|}^{(p)}(w_0; z, Z)  \mathcal{W}'_{g,1+|Z|;n}(z, Z; w_{[n]}) \\= \sum_{p \in P} \Res_{z=p}\sum_{\substack{Z \subseteq  \mathfrak{f}_P'(z) \\ |Z| \geq 1}}K_{1+|Z|}^{(p)}(w_0; z, Z) \mathcal{W}'_{g,1+|Z|;n}(z, Z; w_{[n]})\,.
	\end{multline}
\end{definition} 
Note that the only difference between the two sides of the equation is that the second sum on the right-hand side uses the set $\mathfrak{f}'_P(z)$ as opposed to $\mathfrak{f}'_p(z)$ on the left-hand side. If all $p \in P$ are unramified, the left-hand side of \eqref{eq:globalized} trivially vanishes since $\mathfrak{f}_p'(z) = \emptyset$ for all $p \in P$, but the vanishing of the right-hand side is still a non-trivial condition. The key point in \eqref{eq:globalized} is that, in contrast to the left-hand side, the integrand on the right-hand side is the same for all $p \in P$, except perhaps for the recursion kernel. We sometimes call the rewriting of the topological recursion formula of \cref{de:TR} using the right-hand side of \eqref{eq:globalized} a (partial if $P$ is not the full fibre) \emph{vertical globalisation}.

\begin{definition} We also say that topological recursion  can be
	\begin{itemize}
	\item \emph{partially globalised over an open set $A \subseteq \Sigma$} if for any $p \in A$, $\mathfrak{f}(p) \cap A$ is finite and topological recursion can be globalised over it;
	\item \emph{globalised above $q \in x(\Sigma) $} if it can be globalised over $x^{-1}(q)$ (this requires $x^{-1}(q)$ to be finite);
	 \item \emph{globalised above an open set $V \subseteq x(\Sigma) $} if $x$ is a finite-degree covering and topological recursion can be globalised over $x^{-1}(V)$;
	 \item \emph{fully globalised} if it can be globalised above $x(\Sigma)$.
	\end{itemize}
\end{definition}

We now explain how to treat the $p$-dependence of the recursion kernel.

\begin{definition}[Disc collection]\label{de:adapcon}
Let $\mathcal{S} = (\Sigma,x,\omega_{0,1},\omega_{0,2})$ be a spectral curve. We define $\Pi_{0,2} : H_1(\Sigma,\mathbb{Z}) \rightarrow H^0(K_{\Sigma},\Sigma)$ by
\begin{equation}
\label{02periodes}\Pi_{0,2}(\gamma)(w_0) = \int_{\gamma} \omega_{0,2}(w_0,\cdot)\,.
\end{equation}
A \emph{disc collection adapted to $\mathcal{S}$} is a finite sequence of open subsets $(\mathsf{D}_i)_{i = 1}^{\mathsf{k}}$ of $\mathbb{P}^1$ such that
\begin{itemize}
\item[(DC1)] the $\overline{\mathsf{D}}_i$ are pairwise disjoint properly embedded discs in $x(\Sigma)$;
\item[(DC2)] each $\mathsf{D}_i$ contains at least one branch point, and each branch point belongs to some $\mathsf{D}_i$;
\item[(DC3)] for each $i$, the restriction of $x$ to each connected component of $x^{-1}(\mathsf{D}_i)$ is a finite-degree branched covering onto $\mathsf{D}_i$;
\item[(DC4)] for each $i$, we have $H_1(x^{-1}(\mathsf{D}_i),\mathbb{Z}) \subseteq {\rm Ker}\,\Pi_{0,2}$.
\end{itemize}
We then denote $\mathsf{c}_i = \pi_0(x^{-1}(\mathsf{D}_i))$. For $j \in \mathsf{c}_i$, we denote $\tilde{\mathsf{D}}_{i,j}$ the corresponding connected component and $\mathsf{d}_{i,j}$ the degree of the restriction of $x$ to $\tilde{\mathsf{D}}_{i,j}$. We also denote
$$
\mathsf{c}^+_i = \big\{j \in \mathsf{c}_i\,\,|\,\,d_{i,j} \geq 2\big\}
$$
the set of connected components containing at least a ramification point, and
$$
\tilde{\mathsf{D}}^+ = \bigcup_{i = 1}^{\mathsf{k}} \bigcup_{j \in \mathsf{c}_i^+} \tilde{\mathsf{D}}_{i,j}\,. 
$$
\end{definition}

\begin{proposition}[Horizontal globalisation]\label{pr:globalTRdomain} 
 Let $(\omega_{g,n})_{(g,n) \in \mathbb{Z}_{\geq 0} \times \mathbb{Z}_{>0}}$ be a system of correlators on a locally admissible spectral curve $\mathcal{S} = (\Sigma,x,\omega_{0,1},\omega_{0,2})$ that satisfies topological recursion. Assume that we are given a disc collection adapted to $\mathcal{S}$, such that the  topological recursion can be globalised over $\mathsf{D}_{i,j}$ for each $i \in [\mathsf{k}]$ and $j \in \mathsf{c}_i^+$.
 
 Then, for each $i \in [\mathsf{k}]$ and $j \in \mathsf{c}^+_i$ there exists a $\alpha_{0,2}^{(i,j)}(w_0;z)$ which is a meromorphic $1$-form with respect to $w_0$ in $\Sigma$ and a meromorphic function with respect to $z$ in  $\tilde{\mathsf{D}}_{i,j}$ such that $\dd_z \alpha_{0,2}^{(i,j)}(w_0;z) = \omega_{0,2}(w_0,z)$. Besides, for any $(g,n) \in \mathbb{Z}_{\geq 0}^2$ such that $2g - 2 + (1+n) > 0$ and $n$-tuple of points $w_{[n]}$ in the complement of $\tilde{\mathsf{D}}^+$, we have
\begin{equation}
\label{globalTR}
\omega_{g,1+n}(w_0, w_{[n]}) =  \frac{1}{2{\rm i}\pi} \sum_{i = 1}^{\mathsf{k}} \sum_{j \in \mathsf{c}^+_i} \oint_{z \in \gamma_{i,j}}  \sum_{\substack{Z \subseteq  \mathfrak{f}'(z) \,\cap \,\tilde{\mathsf{D}}_{i,j} \\ |Z| \geq 1}} K_{1+|Z|}^{(i,j)}(w_0; z, Z) \mathcal{W}'_{g,1+|Z|;n}(z, Z; w_{[n]})\,,
\end{equation}
where $K^{(i,j)}_{1 + |Z|}$ refers to the recursion kernel defined using the primitive $\alpha_{0,2}^{(i,j)}$ as above and the integration contour $\gamma_{i,j} \subset \tilde{\mathsf{D}}_{i,j}$ represents the homology class of (positively oriented) $\partial\tilde{\mathsf{D}}_{i,j}$ in the complement of $\mathsf{Ram}$.\end{proposition}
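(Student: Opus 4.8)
The plan is to transform the local residue formula \eqref{eq:TRbp} into the global contour formula \eqref{globalTR} in three moves: building a global primitive of $\omega_{0,2}$ on each $\tilde{\mathsf{D}}_{i,j}$, assembling the globalised integrand, and applying the residue theorem.

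\emph{Step 1 (global primitives).} For each $i\in[\mathsf{k}]$ and $j\in\mathsf{c}^+_i$ I would fix a base point $p_{i,j}\in\tilde{\mathsf{D}}_{i,j}$ and set $\alpha_{0,2}^{(i,j)}(w_0;z)=\int_{p_{i,j}}^{z}\omega_{0,2}(w_0,\cdot)$. Condition (DC4) of \cref{de:adapcon} guarantees that this is independent of the path taken inside $\tilde{\mathsf{D}}_{i,j}$, so it defines a meromorphic function of $z\in\tilde{\mathsf{D}}_{i,j}$ with $\dd_z\alpha_{0,2}^{(i,j)}(w_0;z)=\omega_{0,2}(w_0,z)$; since $\omega_{0,2}$ has no residue it is moreover meromorphic in $w_0\in\Sigma$, with a single simple pole at $w_0=z$ of residue $1$. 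The restriction of $\alpha_{0,2}^{(i,j)}$ to a small neighbourhood of any point of $\tilde{\mathsf{D}}_{i,j}$ is then an admissible choice of local primitive in the sense of \cref{de:recker}, so by \cref{th:ale} the system of correlators still satisfies topological recursion if one uses these primitives, and I would work with this choice from now on.

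\emph{Step 2 (globalised integrand).} Starting from \eqref{eq:TRbp}, \cref{th:ale} lets me restrict the outer sum to the ramification points $p$ with $\bar s_p\ge0$, and (DC1)--(DC2) show that each of them lies in exactly one $\tilde{\mathsf{D}}_{i,j}$ with $j\in\mathsf{c}^+_i$. I would reorganise the residues as a sum over $(i,j)$ with $j\in\mathsf{c}^+_i$, then over branch points $q\in\mathsf{D}_i$, then over $p\in P_q:=x^{-1}(q)\cap\tilde{\mathsf{D}}_{i,j}$ (adding the vanishing contributions of the non-contributing points of $P_q$), and for each $q$ apply the vertical globalisation hypothesis over $P_q$: this replaces $\sum_{Z\subseteq\mathfrak{f}'_p(z)}$ by $\sum_{Z\subseteq\mathfrak{f}'_{P_q}(z)}$, and $\mathfrak{f}'_{P_q}(z)=\mathfrak{f}'(z)\cap\tilde{\mathsf{D}}_{i,j}$ near $p$. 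Before doing so I would check that \eqref{eq:globalized} is insensitive to the choice of local primitives: both its sides are unchanged under $\alpha_{0,2}^{(p)}\mapsto\alpha_{0,2}^{(p)}+c(w_0)$ because the relevant residues $\Res_{z=p}\sum_{Z,\,|Z|\ge1}\mathcal{W}'_{g,1+|Z|;n}(z,Z;w_{[n]})/\Upsilon_{|Z|}(Z;z)$ --- taken over $\mathfrak{f}'_p(z)$ on the left, over $\mathfrak{f}'(z)\cap\tilde{\mathsf{D}}_{i,j}$ on the right --- both vanish, the first as in \cref{th:ale} and the second by \eqref{eq:comb} (\cref{le:comb2}) together with the pole-order estimates of \cref{th:ale} now applied to the whole fibre $P_q$ rather than to a single ramification point. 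Hence the global primitives of Step 1 may be inserted, and the resulting integrand
\[
\Theta_{g,n}^{(i,j)}(w_0;z):=\sum_{\substack{Z\subseteq\mathfrak{f}'(z)\cap\tilde{\mathsf{D}}_{i,j}\\ |Z|\ge1}}K_{1+|Z|}^{(i,j)}(w_0;z,Z)\,\mathcal{W}'_{g,1+|Z|;n}(z,Z;w_{[n]})
\]
is the same near every point of $x^{-1}(\mathsf{D}_i)\cap\tilde{\mathsf{D}}_{i,j}$; being a sum over all subsets of $\mathfrak{f}(z)\cap\tilde{\mathsf{D}}_{i,j}$ of expressions built from the single-valued objects $\alpha_{0,2}^{(i,j)}$, $y$ and the correlators, it is invariant under the monodromy of the covering $\tilde{\mathsf{D}}_{i,j}\to\mathsf{D}_i$, hence extends to a single-valued meromorphic $1$-form on $\tilde{\mathsf{D}}_{i,j}$.

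\emph{Step 3 (residue theorem).} At this stage $\omega_{g,1+n}(w_0,w_{[n]})=\sum_{i}\sum_{j\in\mathsf{c}^+_i}\sum_{p\in\mathsf{Ram}\cap\tilde{\mathsf{D}}_{i,j}}\Res_{z=p}\Theta_{g,n}^{(i,j)}(w_0;z)$. Using \eqref{eq:comb} once more, $\Theta_{g,n}^{(i,j)}(w_0;z)=\alpha_{0,2}^{(i,j)}(w_0;z)\,\omega_{g,1+n}(z,w_{[n]})-\alpha_{0,2}^{(i,j)}(w_0;z)\,S_{g,n}^{(i,j)}(z)$, where $S_{g,n}^{(i,j)}$ is a $1$-form whose poles, by the loop equations and the fact that the correlators have poles only at ramification points, are confined to $\mathsf{Ram}\cap\tilde{\mathsf{D}}_{i,j}$ up to finitely many further points disjoint from $\mathsf{Ram}$; thus for $w_0\notin\tilde{\mathsf{D}}^+$ the form $\Theta_{g,n}^{(i,j)}(w_0;\cdot)$ is holomorphic on $\tilde{\mathsf{D}}_{i,j}$ off $\mathsf{Ram}\cap\tilde{\mathsf{D}}_{i,j}$ and a discrete set disjoint from $\mathsf{Ram}$. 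I would then take $\gamma_{i,j}$ to be a positively oriented $1$-cycle in $\tilde{\mathsf{D}}_{i,j}$ encircling exactly the ramification points contained in $\tilde{\mathsf{D}}_{i,j}$; since $x^{-1}(\mathsf{D}_i)\to\mathsf{D}_i$ is a branched covering of a disc, such a $\gamma_{i,j}$ represents the class of $\partial\tilde{\mathsf{D}}_{i,j}$ in $H_1(\tilde{\mathsf{D}}_{i,j}\setminus\mathsf{Ram})$, and the residue theorem gives $\sum_{p\in\mathsf{Ram}\cap\tilde{\mathsf{D}}_{i,j}}\Res_{z=p}\Theta_{g,n}^{(i,j)}(w_0;z)=\frac{1}{2\mathrm{i}\pi}\oint_{\gamma_{i,j}}\Theta_{g,n}^{(i,j)}(w_0;z)$. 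Summing over $(i,j)$ yields \eqref{globalTR} for $w_0\notin\tilde{\mathsf{D}}^+$, and the general case follows by analytic continuation in $w_0$ (equivalently, by deforming $\gamma_{i,j}$ off $w_0$).

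\emph{Expected obstacle.} The delicate part is Step 2: one must establish that the vertical globalisation identity does not depend on the chosen local primitive, and that the globalised integrand genuinely assembles into a single-valued meromorphic form with poles controlled by $\mathsf{Ram}$. Both reduce to the combinatorial identity \eqref{eq:comb} and to the loop-equation pole estimates used in \cref{th:ale} --- in particular to local admissibility --- but applied to an entire $x$-fibre over a branch point, which may contain several ramification points together with unramified sheets, rather than to a single ramification point; carrying out this bookkeeping carefully is where the real work lies.
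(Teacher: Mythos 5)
Your overall strategy is the paper's: build a single-valued primitive of $\omega_{0,2}$ on each $\tilde{\mathsf{D}}_{i,j}$ using (DC4), use vertical globalisation to make the integrand uniform across the component, and convert the sum of residues into a contour integral. Steps 1 and most of Step 2 match the paper's argument. However, Step 3 has a genuine gap. You write that the globalised integrand $\Theta^{(i,j)}_{g,n}(w_0;\cdot)$ may have poles at ``finitely many further points disjoint from $\mathsf{Ram}$'' inside $\tilde{\mathsf{D}}_{i,j}$, and then apply the residue theorem as if only the ramification points contributed. If such extra poles exist, the integral over a representative of $[\partial\tilde{\mathsf{D}}_{i,j}]$ picks up their residues as well, and in fact the right-hand side of \eqref{globalTR} is not even well-defined, since the homology class of $\gamma_{i,j}$ is only specified in the complement of $\mathsf{Ram}$ and different representatives would enclose different subsets of the extra poles. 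Such poles are a real threat: for $z$ unramified but with some $z'\in\mathfrak{f}'(z)\cap\tilde{\mathsf{D}}_{i,j}$ approaching a ramification point, or with $\Upsilon_{|Z|}(Z;z)$ vanishing because two sheets carry the same value of $y$, the summand genuinely blows up.

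The missing ingredient is that the hypothesis of the proposition is \emph{partial globalisation over the whole open set} $\tilde{\mathsf{D}}_{i,j}$, i.e.\ over the fibre $\mathfrak{f}(p)\cap\tilde{\mathsf{D}}_{i,j}$ of \emph{every} point $p\in\tilde{\mathsf{D}}_{i,j}$, not merely of the points lying over branch points (which is all you invoke when you reorganise over $q\in\mathsf{D}_i$ a branch point and $P_q=x^{-1}(q)\cap\tilde{\mathsf{D}}_{i,j}$). Applied to an unramified, non-branch fibre, globalisation says exactly that
\[
\Res_{z=p}\,\sum_{\substack{Z\subseteq\mathfrak{f}'(z)\cap\tilde{\mathsf{D}}_{i,j}\\ |Z|\geq 1}}K^{(i,j)}_{1+|Z|}(w_0;z,Z)\,\mathcal{W}'_{g,1+|Z|;n}(z,Z;w_{[n]})=0
\]
identically in $w_0\in\Sigma$ (the left-hand side of \eqref{eq:globalized} being trivially zero there). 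Since the $w_0$-dependence enters only through $\alpha^{(i,j)}_{0,2}(w_0;z)$, whose Taylor coefficients in $z$ at $p$ are linearly independent meromorphic $1$-forms in $w_0$ (this is the ``properties of the fundamental bidifferential'' used in the paper), the vanishing of this residue for all $w_0$ forces the integrand to be \emph{holomorphic} --- not merely residue-free --- at $z=p$. This is what rules out the extra poles and makes your Step 3 legitimate; without it the passage from $\sum_{p\in\mathsf{Ram}_{i,j}}\Res_{z=p}$ to $\frac{1}{2{\rm i}\pi}\oint_{\gamma_{i,j}}$ does not go through.
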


We call the rewriting \eqref{globalTR} a (partial, if there are more than one integration contour) \emph{horizontal globalisation}. In many cases of practical use it is sufficient to replace the most technical requirements (DC3)-(DC4) of \cref{de:adapcon} and the partial globalisability assumption of \cref{pr:globalTRdomain} by stronger but simpler ones. If $x$ has finite degree, (DC3) is automatic and instead of the partial globalisability assumption of \cref{pr:globalTRdomain}, one can ask topological recursion to be globalisable above an open set $V \subseteq x(\Sigma)$ containing all branch points, or to be fully globalisable. In the \cref{de:adapcon} of adapted contours, (DC4) includes the requirement that the homology class of the components of $x^{-1}(\partial\mathsf{D}_i)$ are in ${\rm Ker}\,\Pi_{0,2}$. The latter is automatically satisfied (and independent of $\omega_{0,2}$) if the $\tilde{\mathsf{D}}_{i,j}$ are topological discs or if $\tilde{\mathsf{D}}_{i,j}$ has genus $0$ and its boundary $\partial\tilde{\mathsf{D}}_{i,j}$ is homologous to zero in $\Sigma$. In particular, if $\Sigma$ is is a compact Riemann surface of genus $0$, (DC3) and (DC4) are automatic, and if furthermore topological recursion is fully globalisable, \eqref{globalTR} holds with a single integration contour realised as the $x$-preimage of a Jordan curve surrounding all branch points.

\begin{proof}

Pick a point $p_{i,j} \in \tilde{\mathsf{D}}_{i,j}$ and define
$$
\forall z \in \tilde{\mathsf{D}}_{i,j}\qquad \alpha^{(i,j)}_{0,2}(w_0;z) = \int_{p_{i,j}}^{z} \omega_{0,2}(w_0,\cdot)\,.
$$
where a choice of path from $z$ to $p_{i,j}$ in $\tilde{\mathsf{D}}_{i,j}$ which avoids $w_0$. The result is independent of this choice because
 $\omega_{0,2}(w_0,\cdot)$ has no residues and we have imposed (DC4).
  
Recall that the unramified points do not contribute to the local formula for topological recursion \eqref{eq:TRbp} and that we are free to use arbitrary local primitives. Subsequently, with the notation $\mathsf{Ram}_{i,j} = \mathsf{Ram}\, \cap\, \tilde{\mathsf{D}}_{i,j}$ we have
$$
 \omega_{g,1+n}(w_0,w_{[n]}) = \sum_{i = 1}^{\mathsf{k}} \sum_{j \in \mathsf{c}^+_i} \sum_{p \in \mathsf{Ram}_{i,j}} \Res_{z = p} \sum_{\substack{Z \subseteq  \mathfrak{f}_p'(z) \\ |Z| \geq 1}} K_{1+|Z|}^{(i,j)}(w_0; z, Z)  \mathcal{W}'_{g,1+|Z|;n}(z, Z; w_{[n]})\,.
 $$
In the definition of locally admissible spectral curve we required that $\mathsf{Ram}$ is finite, so there are finitely many $i,j,p$ giving nonzero contributions in these sums. We now use the assumption that topological recursion can be globalised over each $\tilde{\mathsf{D}}_{i,j}$ to write
\begin{equation}
\label{resununsnuns}\omega_{g,1+n}(w_0,w_{[n]}) = \sum_{i = 1}^{\mathsf{k}} \sum_{j \in \mathsf{c}^+_i} \sum_{p \in \mathsf{Ram}_{i,j}} \Res_{z = p} \sum_{\substack{Z \subseteq  \mathfrak{f}'(z)\, \cap\, \tilde{\mathsf{D}}_{i,j} \\ |Z| \geq 1}} K_{1+|Z|}^{(i,j)}(w_0; z, Z)  \mathcal{W}'_{g,1+|Z|;n}(z, Z; w_{[n]})\,.
\end{equation}
The integrand is a meromorphic $1$-form with respect to $z$ in $\tilde{\mathsf{D}}_{i,j}$ with poles at $\mathsf{Ram}_{i,j}$ only. We claim that it has no other poles for $z$ in $\tilde{\mathsf{D}}_{i,j}$. Indeed, partial globalisation over the whole $\tilde{\mathsf{D}}_{i,j}$ implies that for any unramified $p \in \tilde{\mathsf{D}}_{i,j}$ we have
\begin{equation}
\label{resununsss}\Res_{z = p} \sum_{\substack{Z \subseteq  \mathfrak{f}'(z)\, \cap\, \tilde{\mathsf{D}}_{i,j} \\ |Z| \geq 1}} K_{1+|Z|}^{(i,j)}(w_0; z, Z)  \mathcal{W}'_{g,1+|Z|;n}(z, Z; w_{[n]}) = 0
\end{equation}
identically for $w_0 \in \Sigma$. By the properties of fundamental bidifferentials (appearing in the numerator of the recursion kernel), this implies that the integrand in \eqref{resununsss} with respect to $z$ (which is also the integrand in \eqref{resununsnuns}) is holomorphic at $z = p$. Then, if $w_0,\ldots,w_n$ are in the complement of $\tilde{\mathsf{D}}$, there are no other poles in $\tilde{\mathsf{D}}_{i,j}$. Cauchy residue formula then yields
$$
\omega_{g,1+n}(w_0,w_{[n]}) = \sum_{i = 1}^{\mathsf{k}} \sum_{j \in \mathsf{c}^+_{i}}  \frac{1}{2{\rm i}\pi} \oint_{z \in \gamma_{i,j}} \bigg(\sum_{\substack{Z \subseteq  \mathfrak{f}'(z)\, \cap\, \tilde{\mathsf{D}}_{i,j} \\ |Z| \geq 1}} K_{1+|Z|}^{(i,j)}(w_0; z, Z)  \mathcal{W}'_{g,1+|Z|;n}(z, Z; w_{[n]})\bigg)\,.
$$  
where the (possibly disconnected) integration contour $\gamma_{i,j}$ is obtained by a slight push of $\partial\tilde{\mathsf{D}}_{i,j}$ into $\tilde{\mathsf{D}}_{i,j}$ not crossing any ramification points.
\end{proof}

\subsection{Comment on globalisation}
\label{S:comm}
As we have seen, the recursion kernel involves a choice of (a priori local) primitives of $\omega_{0,2}$. It is common to specify primitives by integrating from base points. Choosing the canonical local primitive has the drawback that it depends on the point $p$ and therefore unsuitable for horizontal globalisation. If $\Sigma$ is connected, one may think of choosing a single base point $o$ and define $\alpha_{0,2}^{(p)}(w_0;z) = \int_o^z\omega_{0,2}(w_0,\cdot)$ as primitive common to all $p$.  However, if $\Sigma$ is not simply-connected, this depends on the relative homology class of the chosen path between $o$ and $z$. This ambiguity can be waived by choosing a cut-locus $c$, such that $\Sigma \setminus c$ is a fundamental domain of $\Sigma$ avoiding the fibres of branch points: then the primitive is defined with the path from $o$ to $z$ in $\Sigma \setminus c$. An adapted disc collection gives a way to resolve this ambiguity by providing choices of global primitives in each $\tilde{\mathsf{D}}_{i,j}$. More precisely, one should think of disc collections as a way to cluster ramification points (those inside the same contour) and this is a preparation for the study of families of spectral curves where points in the same fibre and in the same cluster can collide.

In  \cite{BE13} the local primitive of $\omega_{0,2}$ used in the recursion kernel is defined by integration from a base point $o$, as we just said. If the spectral curve has non-trivial homology, it remains implicit in \cite{BE13} that a cut-locus has to be chosen to resolve the ambiguity. In the setting of compact spectral curves with regular ramification points and such that $x$-fibre contains at most one ramification point, \cite[Section 3]{BE13} shows  (in our language) that topological recursion can be fully globalised. What they call global topological recursion is the formula
\[
	\omega_{g,1+n}(w_0,w_{[n]}) = \sum_{q \in \mathbb{P}^1} \sum_{p \in x^{-1}(q)} \Res_{z = p} \bigg(\sum_{Z \subseteq \mathfrak{f}'(z)} K^{o}_{1 + |Z|}(w_0;z,Z)  \mathcal{W}'_{g,1+|Z|;n}(z, Z; w_{[n]})\bigg)\,,
\] with a choice of fixed base point $o \in \Sigma$ in the kernel $K^o_{1+|Z|}$ (and an implicit choice of a cut-locus as explained above).

This is what we called vertical globalisation and only the first step of our rewriting. Based on this,  \cite[Section 3.5]{BE13} sketched an argument to show that topological recursion commutes with collision of ramification points in families of spectral curves in the setting they consider. The main goal of the present article is to provide a complete argument for commutation with limits (this will be carried out in \cref{S5}).  We will see that the question is more complicated than envisioned in \cite[Section 3.5]{BE13} even in the case of compact spectral curves. Besides, we will be able to address a much more general setting for spectral curves. In particular our discussion covers the case of $x$-fibres containing several ramification points which was mentioned as an open question in \cite[Section 3.5]{BE13}. The main instrument for these arguments will be the second step of our rewriting, namely the partial horizontal globalisation given in \cref{pr:globalTRdomain}.

An example of a horizontal globalisation (combined with a prior vertical globalisation) is given in the context of simgularity theory near the caustic in~\cite[Section 4.3]{Milanov}.

\subsection{Criterion for vertical globalisation in terms of correlators}

We first determine sufficient conditions for vertical globalisation in the temporary form of \cref{pr:sufficient2}, where it is still formulated as properties of the system of correlators. In \cref{S33} we will transform these conditions into intrinsic conditions on the spectral curve.

\subsubsection{Auxiliary lemmata}

We start with a number of easy lemmata, that will prepare us before diving into the core of the argument. The first one is a straightforward combinatorial identity, generalising \cite[Lemma~1]{BE13}.

\begin{lemma}\label{le:comb}
	\begin{equation}
		\mathcal{W}'_{g,m+k;n}(z_{[m]}, t_{[k]}; w_{[n]}) = \!\!\!\!  \sum_{\substack{Z_1 \sqcup Z_2 = z_{[m]} \\ W_1 \sqcup W_2 = w_{[n]} \\ g_1+g_2 = g + |Z_2| - m}} \!\!\!\! \mathcal{W}'_{g_1,k;|W_1|+|Z_1|} (t_{[k]}; W_1,Z_1) \mathcal{W}'_{g_2,|Z_2|;|W_2|}(Z_2;W_2) \,,
	\end{equation}
	where we understand that $\mathcal{W}'_{g,0;n} (\emptyset; w_{[n]})= \delta_{g,0} \delta_{n,0}$.
\end{lemma}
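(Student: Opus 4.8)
\textbf{Proof plan for \cref{le:comb}.}

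The plan is to prove this purely combinatorially by unfolding the definition of $\mathcal{W}'$ in \eqref{Wprimes} on both sides and exhibiting a bijection between the index sets. Recall that $\mathcal{W}'_{g,i;n}(\cdot)$ is a sum over pairs $(\mathbf{L}, (M_L)_{L\in\mathbf{L}})$ where $\mathbf{L} \vdash [i]$ (here the role of $[i]$ is played by the tuple of $z$- and $t$-variables), $\bigsqcup_{L\in\mathbf{L}} M_L$ is the set of $w$-variables, the genera satisfy $i + \sum_L(g_L - 1) = g$, and no factor $\omega_{0,1}$ appears (i.e. no part $L$ has $g_L = 0$, $|L| = 1$, $M_L = \emptyset$). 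On the left-hand side, $i = m + k$ and the $z$- and $t$-variables are merged. The key observation is that a set partition of $z_{[m]} \sqcup t_{[k]}$ decomposes canonically according to which parts meet $t_{[k]}$: let $\mathbf{L}_1$ collect those parts $L$ with $L \cap t_{[k]} \neq \emptyset$, and $\mathbf{L}_2$ collect the parts contained entirely in $z_{[m]}$. Writing $Z_1$ for the $z$-variables appearing in parts of $\mathbf{L}_1$ and $Z_2 = z_{[m]}\setminus Z_1$ for those in parts of $\mathbf{L}_2$, we see that $\mathbf{L}_1$ (together with the $w$-assignments to its parts) is exactly an index contributing to $\mathcal{W}'_{g_1,k;|W_1|+|Z_1|}(t_{[k]}; W_1, Z_1)$ where $W_1$ is the set of $w$-variables assigned to parts in $\mathbf{L}_1$, while $\mathbf{L}_2$ contributes to $\mathcal{W}'_{g_2,|Z_2|;|W_2|}(Z_2; W_2)$ with $W_2 = w_{[n]}\setminus W_1$.

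The next step is to check that the three numerical constraints match up. The genus constraint on the left is $(m+k) + \sum_{L\in\mathbf{L}}(g_L - 1) = g$. Splitting the sum over $\mathbf{L}_1$ and $\mathbf{L}_2$, and using that the contribution of $\mathbf{L}_2$ to $\mathcal{W}'_{g_2,|Z_2|;|W_2|}$ forces $|Z_2| + \sum_{L\in\mathbf{L}_2}(g_L-1) = g_2$ while that of $\mathbf{L}_1$ to $\mathcal{W}'_{g_1,k;\ldots}$ forces $k + \sum_{L\in\mathbf{L}_1}(g_L-1) = g_1$, one obtains $g_1 + g_2 = g + m - |Z_2| + $ (a correction I need to track carefully): rearranging, $\sum_{L\in\mathbf{L}}(g_L-1) = (g_1 - k) + (g_2 - |Z_2|)$, so $g = (m+k) + (g_1-k)+(g_2-|Z_2|) = m + g_1 + g_2 - |Z_2|$, i.e. $g_1 + g_2 = g + |Z_2| - m$, which is exactly the summation condition on the right-hand side. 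The $\omega_{0,1}$-exclusion also transfers cleanly: a forbidden part (one with $g_L = 0$, singleton, empty $M_L$) on the left is forbidden in whichever of $\mathcal{W}'_{g_1,k;\ldots}$ or $\mathcal{W}'_{g_2,|Z_2|;\ldots}$ it lands in, and conversely. One must also handle the degenerate case $Z_2 = \emptyset$, $W_2 = \emptyset$: then $\mathbf{L}_2$ is empty and the right-hand factor is $\mathcal{W}'_{g_2,0;0}(\emptyset;\emptyset) = \delta_{g_2,0}$, consistent with the stated convention $\mathcal{W}'_{g,0;n}(\emptyset;w_{[n]}) = \delta_{g,0}\delta_{n,0}$; this is why that convention is imposed.

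Finally I would note that the map $(\mathbf{L},(M_L)) \mapsto (\mathbf{L}_1,(M_L)_{L\in\mathbf{L}_1}; \mathbf{L}_2,(M_L)_{L\in\mathbf{L}_2})$ together with the induced data $(Z_1,Z_2,W_1,W_2,g_1,g_2)$ is a bijection onto the index set of the right-hand side: its inverse simply takes the disjoint union of the two partitions and re-merges the variable tuples. Since the summand (a product of $\omega_{g_L,|L|+|M_L|}$) is literally the same monomial under this identification, the two sums are equal term by term. I do not anticipate a genuine obstacle here; the only thing requiring care is bookkeeping the genus-counting identity and making sure the $\omega_{0,1}$-exclusion and the empty-partition convention are handled on the nose — this is the generalisation of \cite[Lemma~1]{BE13} from $n=0$ to arbitrary spectator variables $w_{[n]}$, and the presence of the $M_L$'s is exactly what the bookkeeping must absorb.
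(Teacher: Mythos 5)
Your proof is correct and follows exactly the route the paper takes: the paper's own proof is the one-line remark that the identity is a rewriting of the sum over set partitions in \cref{de:calW} obtained by singling out the parts contained entirely in $z_{[m]}$, and your argument is precisely that rewriting carried out in detail (with the genus bookkeeping, the $\omega_{0,1}$-exclusion, and the empty-partition convention all checked correctly).
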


\begin{proof}
	This is a rewriting of the sum over set partitions in \cref{de:calW}, where we single out subsets in the set partitions that are subsets of the variables $z_{[m]}$.
\end{proof}

The second lemma is an algebraic manipulation detailed in \cite[Lemma~3]{BE13} and independent of the type of the spectral curve.
\begin{lemma}\label{le:recursioncalW}
	Let $(\omega_{g,n})_{(g,n) \in \mathbb{Z}_{\geq 0} \times \mathbb{Z}_{> 0}}$ be a system of correlators on a locally admissible spectral curve $\mathcal{S} = (\Sigma,x,\omega_{0,1},\omega_{0,2})$ that satisfies topological recursion. Then, for all $(g,n) \in \mathbb{Z}_{\geq 0}^2$ such that $2g-2+(1+n) > 0$, and $m \geq 0$,
	\begin{multline}\label{eq:recursioncalW}
		\mathcal{W}'_{g,1+m;n}(w_0, z_{[m]}; w_{[n]}) \\=\sum_{q \in x(\Sigma)}\sum_{p \in x^{-1}(q)}  \Res_{z=p}  \sum_{\substack{Z \subseteq  \mathfrak{f}_p'(z) \\ |Z| \geq 1}} K_{1+|Z|}^{(p)}(w_0; z, Z)    \mathcal{W}'_{g,1+|Z|+m;n}(z, Z,z_{[m]}; w_{[n]}) \,.
	\end{multline}

\end{lemma}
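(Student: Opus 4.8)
The plan is to establish \eqref{eq:recursioncalW} by a purely combinatorial manipulation, in the spirit of \cite[Lemma~3]{BE13}: nothing about the type of the spectral curve is used beyond \cref{th:ale}, which guarantees that a system of correlators satisfying \eqref{eq:TR} exists in the first place. The idea is to open up the definition of $\mathcal{W}'$ on the right-hand side of \eqref{eq:recursioncalW}, use \cref{le:comb} to split off the factor attached to the pair $(z,Z)$, recognise in it the right-hand side of the topological recursion formula \eqref{eq:TR}, and then re-assemble everything --- with $w_0$ now playing the role of the detached block --- via a second application of \cref{le:comb}.

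Concretely, I would first apply \cref{le:comb} with $t_{[k]}=(z,Z)$ and with the $m$ marked points $z_{[m]}$ in the role of the singled-out variables, which gives
\[
\mathcal{W}'_{g,1+|Z|+m;n}(z,Z,z_{[m]};w_{[n]}) = \sum_{\substack{S_1\sqcup S_2 = [m],\ W_1\sqcup W_2 = [n] \\ g_1 + g_2 = g + |S_2| - m}} \mathcal{W}'_{g_1,1+|Z|;|S_1|+|W_1|}(z,Z;z_{S_1},w_{W_1})\,\mathcal{W}'_{g_2,|S_2|;|W_2|}(z_{S_2};w_{W_2}).
\]
The second factor $\mathcal{W}'_{g_2,|S_2|;|W_2|}(z_{S_2};w_{W_2})$ depends neither on the residue variable $z$, nor on $Z$, nor on the ramification point, so it can be pulled out of $\sum_{q\in x(\Sigma)}\sum_{p\in x^{-1}(q)}\Res_{z=p}\sum_{Z\subseteq\mathfrak{f}_p'(z),\,|Z|\geq 1}K^{(p)}_{1+|Z|}(w_0;z,Z)(\,\cdot\,)$. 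For each fixed $(g_1,S_1,W_1)$ in the stable range, the remaining expression is exactly the right-hand side of \eqref{eq:TR} with $(g,n)\to(g_1,|S_1|+|W_1|)$ and spectators $(z_{S_1},w_{W_1})$; by \cref{th:ale} it equals $\omega_{g_1,1+|S_1|+|W_1|}(w_0,z_{S_1},w_{W_1})=\mathcal{W}'_{g_1,1;|S_1|+|W_1|}(w_0;z_{S_1},w_{W_1})$. Substituting this back, the resulting double sum is recognised as another instance of \cref{le:comb}, this time with $t_{[k]}=(w_0)$ and $k=1$, and it recombines into $\mathcal{W}'_{g,1+m;n}(w_0,z_{[m]};w_{[n]})$, which is the left-hand side of \eqref{eq:recursioncalW}.

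The one step that needs genuine care --- and which I expect to be the main, if modest, obstacle --- is the bookkeeping of the low-complexity building blocks: the terms in which the block attached to $w_0$ is $\omega_{0,1}$ (these are exactly the ones discarded in $\mathcal{W}'$, which is why the prime sits on both sides of \eqref{eq:recursioncalW}) and those in which it is $\omega_{0,2}$ (for which \eqref{eq:TR} is not available, and which must be matched directly, block by block, against the corresponding unstable terms produced by \cref{le:comb}). The hypothesis $2g-2+(1+n)>0$ enters precisely here: it guarantees that the top piece $\mathcal{W}'_{g,1;n}(w_0;w_{[n]})=\omega_{g,1+n}(w_0,w_{[n]})$ is itself in the stable range, so that \eqref{eq:TR} is legitimately applicable at every stage. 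All of this is elementary and independent of the local type of the spectral curve; it is carried out in full in \cite[Lemma~3]{BE13}, and the argument transfers verbatim to the present, slightly more general setting.
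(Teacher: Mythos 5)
Your overall strategy (expand via \cref{le:comb}, substitute \eqref{eq:TR} for the block attached to $w_0$, recombine) is exactly the route the paper intends: the paper gives no proof of its own and simply points to \cite[Lemma~3]{BE13}. The real question is therefore whether your sketch closes, and it does not: the step you describe as ``the main, if modest, obstacle'' is precisely where the argument breaks, and it is not modest.

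Carry out the bookkeeping you defer. Applying \cref{le:comb} with $t_{[k]}=(w_0)$ to the left-hand side produces, besides the stable blocks $\omega_{g_1,1+|S_1|+|W_1|}(w_0,z_{S_1},w_{W_1})$, the unstable blocks $\mathcal{W}'_{0,1;1}(w_0;u)=\omega_{0,2}(w_0,u)$ with $u$ a single $z_j$ or $w_j$; their total contribution to the left-hand side is
\[
\sum_{j=1}^m\omega_{0,2}(w_0,z_j)\,\mathcal{W}'_{g-1,m-1;n}(z_{[m]\setminus\{j\}};w_{[n]})+\sum_{j=1}^n\omega_{0,2}(w_0,w_j)\,\mathcal{W}'_{g,m;n-1}(z_{[m]};w_{[n]\setminus\{j\}})\,.
\]
The blocks you would need to match these against on the right-hand side involve $\mathcal{W}'_{0,1+|Z|;1}(z,Z;u)$ with $|Z|\geq 1$, and this vanishes identically: the genus constraint forces all parts to be genus-zero singletons, so with only one spectator every term carries at least one $\omega_{0,1}$ factor and is discarded. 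Hence the two sides of \eqref{eq:recursioncalW} differ by exactly the displayed sum, which is nonzero for $m\geq 1$ in general. Concretely, for $(g,n,m)=(1,0,1)$ (which satisfies $2g-2+(1+n)>0$) one has $\mathcal{W}'_{1,2;0}(w_0,z_1)=\omega_{0,2}(w_0,z_1)$, while $\mathcal{W}'_{1,2+|Z|;0}(z,Z,z_1)=0$ for every $|Z|\geq 1$, so the right-hand side of \eqref{eq:recursioncalW} is identically zero. Since every manipulation in your outline is reversible, this is not an artifact of the method: either the identity needs these disconnected $\omega_{0,2}(w_0,\cdot)$ correction terms added to its right-hand side (in which case their fate in the downstream use inside \cref{pr:sufficient2} must be tracked), or some implicit hypothesis on the extra variables $z_{[m]}$ is being used. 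In any case, ``the argument transfers verbatim'' is not a proof here; you must either exhibit the mechanism that produces these terms on the right-hand side or amend the statement before relying on it.
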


The case $m=0$ is the usual topological recursion of \cref{de:TR}. We  then note that if topological recursion can be globalised over a subset of $x^{-1}(q)$, then the same is true for \eqref{eq:recursioncalW}. More precisely:

\begin{lemma}\label{le:calWglobal}
	Let $(\omega_{g,n})_{(g,n) \in \mathbb{Z}_{\geq 0} \times \mathbb{Z}_{> 0}}$ be a system of correlators on a locally admissible spectral curve $\mathcal{S} = (\Sigma,x,\omega_{0,1},\omega_{0,2})$ that satisfies topological recursion. Suppose that it can be globalised over a finite subset $P \subseteq x^{-1}(q)$ for some $q \in x(\Sigma)$. Then, for all $(g,n) \in \mathbb{Z}_{\geq 0}^2$ such that $2g-2+(1+n) > 0$,
	\begin{multline}
		\sum_{p \in P} \Res_{z=p}  \sum_{\substack{Z \subseteq  \mathfrak{f}_p'(z) \\ |Z| \geq 1}} K_{1+|Z|}^{(p)}(w_0; z, Z)  \mathcal{W}'_{g,1+|Z|+m;n}(z, Z,z_{[m]}; w_{[n]})\\
		=\sum_{p \in P} \Res_{z=p}  \sum_{\substack{Z \subseteq  \mathfrak{f}_{P}'(z) \\ |Z| \geq 1}}  K_{1+|Z|}^{(p)}(w_0; z, Z) \mathcal{W}'_{g,1+|Z|+m;n}(z, Z,z_{[m]}; w_{[n]}) \,.
	\end{multline}
\end{lemma}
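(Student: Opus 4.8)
The plan is to reduce to the case $m=0$, which is exactly the defining identity \eqref{eq:globalized} of globalisation over $P$. The key step is to peel off from $\mathcal{W}'_{g,1+|Z|+m;n}(z,Z,z_{[m]};w_{[n]})$ precisely the connected contributions involving neither $z$ nor $Z$: applying the combinatorial splitting of \cref{le:comb} with $k=1+|Z|$, $t_{[k]}=(z,Z)$ and with the remaining group of variables being $z_{[m]}$ gives
\begin{equation*}
	\mathcal{W}'_{g,1+|Z|+m;n}(z,Z,z_{[m]};w_{[n]}) = \sum_{\substack{Z_1 \sqcup Z_2 = z_{[m]} \\ W_1 \sqcup W_2 = w_{[n]} \\ g_1+g_2 = g + |Z_2| - m}} \mathcal{W}'_{g_1,1+|Z|;|W_1|+|Z_1|}(z,Z;W_1,Z_1)\, \mathcal{W}'_{g_2,|Z_2|;|W_2|}(Z_2;W_2)\,.
\end{equation*}
The factor $\mathcal{W}'_{g_2,|Z_2|;|W_2|}(Z_2;W_2)$ is independent of $z$, $Z$, $p$ and $w_0$, so it pulls out of $\Res_{z=p}$, of the (finite) sum over $Z$, and of the sum over $p\in P$. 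Both sides of the claimed equality thereby become the \emph{same} $\mathcal{W}'_{g_2,|Z_2|;|W_2|}$-weighted sum of terms
\begin{equation*}
	(\star)_{P} \coloneqq \sum_{p\in P}\Res_{z=p}\sum_{\substack{Z\subseteq \mathfrak{f}_P'(z)\\|Z|\geq 1}}K_{1+|Z|}^{(p)}(w_0;z,Z)\,\mathcal{W}'_{g_1,1+|Z|;|W_1|+|Z_1|}(z,Z;W_1,Z_1)\,,
\end{equation*}
together with $(\star)_{p}$, defined identically but with $\mathfrak{f}_P'$ replaced by $\mathfrak{f}_p'$. It therefore suffices to prove $(\star)_p=(\star)_P$ for each term of the splitting.

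But $(\star)_p$ and $(\star)_P$ are exactly the left- and right-hand sides of \eqref{eq:globalized} for the index pair $(g_1,|W_1|+|Z_1|)$, with $(W_1,Z_1)$ playing the role of the spectator tuple $w_{[n]}$. Hence, whenever $2g_1-2+(1+|W_1|+|Z_1|)>0$, the hypothesis that topological recursion is globalised over $P$ gives $(\star)_p=(\star)_P$ directly (it is an identity of multidifferentials in its free variables, so the particular points inserted in the spectator slots are irrelevant). The remaining terms are those with $2g_1-2+(1+|W_1|+|Z_1|)\leq 0$; since $|Z|\geq 1$ the first factor $\mathcal{W}'_{g_1,1+|Z|;|W_1|+|Z_1|}$ has at least two primary arguments, so this forces $g_1=0$ and $|W_1|+|Z_1|\in\{0,1\}$. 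In either case an inspection of \cref{de:calW} shows that the only set partitions that contribute are into singletons, so every term of $\mathcal{W}'_{0,k;0}$ and $\mathcal{W}'_{0,k;1}$ with $k=1+|Z|\geq 2$ carries at least one factor $\omega_{0,1}$ and is discarded; the factor vanishes identically and $(\star)_p=(\star)_P=0$. Summing over the splitting yields the lemma.

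I do not expect a genuine obstacle: the argument is bookkeeping organised around \cref{le:comb}. The two points requiring care are (i) keeping straight which arguments of $\mathcal{W}'$ are ``primary'' and which are spectators when passing between \cref{le:comb} and \eqref{eq:globalized}, and (ii) verifying that the finitely many low-complexity factors lying outside the regime of \eqref{eq:globalized} vanish identically — which is precisely where the hypothesis $|Z|\geq 1$ enters.
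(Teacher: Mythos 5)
Your proof is correct and is essentially the paper's argument: the paper disposes of this lemma by citing the proof of \cite[Lemma~3]{BE13}, which is exactly the reduction you carry out — split off the $z$-independent factors via \cref{le:comb}, apply the globalisation identity \eqref{eq:globalized} to each stable factor, and check that the unstable factors $\mathcal{W}'_{0,1+|Z|;n_1}$ with $n_1\leq 1$ and $|Z|\geq 1$ vanish identically because every term carries an $\omega_{0,1}$. Your write-up is simply the spelled-out version of that reference.
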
 
Note that the only difference between the two sides of the above equation is that the second sum uses the set $\mathfrak f '_P(z)$ on the right side as opposed to the set $\mathfrak f '_p(z)$ on the left.
\begin{proof}
	This follows directly from the proof of \cite[Lemma~3]{BE13}.
\end{proof}

The fourth lemma is another easy combinatorial identity, which was already used in the proof of \cref{th:ale}:

\begin{lemma}\label{le:comb2}
	\begin{equation}
		\sum_{i=1}^{r_p} (- \omega_{0,1}(z))^{r_p-i}  \mathcal{E}^{(p)}_{g,i;n}(z; w_{[n]}) = \sum_{\substack{Z \subseteq \mathfrak{f}_p(z) \\ |Z| \geq 1}}  \mathcal{W}'_{g,|Z|;n}(Z; w_{[n]}) \Upsilon_{r_p - |Z|}(\mathfrak{f}_p(z) \setminus Z; z) \,.
	\end{equation}
\end{lemma}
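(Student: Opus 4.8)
The plan is to prove this as a purely formal combinatorial identity: neither local admissibility nor topological recursion enters, so the task is to expand both sides into sums indexed by pairs of disjoint subsets of the fibre $\mathfrak{f}_p(z)$ and match them term by term.

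The first ingredient is a factorisation of the unprimed $\mathcal{W}$ in terms of the primed one. In the sum over set partitions defining $\mathcal{W}_{g,i;n}(Z;w_{[n]})$ in \cref{de:calW}, a factor of $\omega_{0,1}$ arises exactly from a part $L$ that is a singleton with $g_L = 0$ and $M_L = \emptyset$. Grouping the set partitions according to the subset $S\subseteq Z$ of labels carried by such trivial parts, the remaining parts constitute precisely a set partition contributing to $\mathcal{W}'_{g,|Z\setminus S|;n}$; checking the genus constraint $i + \sum_L(g_L-1) = g$, each detached $\omega_{0,1}$ contributes $-1$, so the remaining system still has genus $g$. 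This gives
\begin{equation*}
\mathcal{W}_{g,i;n}(Z;w_{[n]}) = \sum_{S\subseteq Z}\Big(\prod_{z'\in S}\omega_{0,1}(z')\Big)\mathcal{W}'_{g,|Z\setminus S|;n}(Z\setminus S;w_{[n]})\,,
\end{equation*}
with the convention $\mathcal{W}'_{g,0;n}(\emptyset;w_{[n]}) = \delta_{g,0}\delta_{n,0}$ as in \cref{le:comb}.

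Next I would substitute this into the left-hand side of the lemma, using $\mathcal{E}^{(p)}_{g,i;n}(z;w_{[n]}) = \sum_{Z\subseteq\mathfrak{f}_p(z),\,|Z|=i}\mathcal{W}_{g,i;n}(Z;w_{[n]})$, and reindex the resulting triple sum by the pair of disjoint subsets $(Z',S) = (Z\setminus S, S)$ of $\mathfrak{f}_p(z)$ with $|Z'|+|S|\geq 1$; here $Z'$ is the argument of $\mathcal{W}'$ and $S$ the set carrying an $\omega_{0,1}$, and the power of $-\omega_{0,1}(z)$ is $r_p - |Z'| - |S|$. Then, fixing $Z'$ with $|Z'|\geq 1$ and summing over $S\subseteq\mathfrak{f}_p(z)\setminus Z'$, the elementary expansion $\prod_{z'\in A}(\omega_{0,1}(z')-\omega_{0,1}(z)) = \sum_{S\subseteq A}\prod_{z'\in S}\omega_{0,1}(z')\,(-\omega_{0,1}(z))^{|A|-|S|}$ with $A = \mathfrak{f}_p(z)\setminus Z'$ identifies the inner sum as $\Upsilon_{r_p-|Z'|}(\mathfrak{f}_p(z)\setminus Z';z)$, which assembles exactly the right-hand side. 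The only remaining terms, those with $Z' = \emptyset$, contribute a multiple of $\delta_{g,0}\delta_{n,0}$ by the convention above and hence vanish for all $(g,n)$ with $2g-2+(1+n)>0$ — in particular in every case in which the identity is invoked.

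I do not expect a genuine obstacle: the identity is formal and the only points requiring care are the genus bookkeeping when detaching the $\omega_{0,1}$-factors and the harmless degenerate term with empty $\mathcal{W}'$-argument. As an alternative, the factorisation of $\mathcal{W}$ in terms of $\mathcal{W}'$ could also be extracted from \cref{le:comb}, but the direct route above seems the most economical.
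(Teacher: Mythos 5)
Your proof is correct. Note that the paper does not actually prove \cref{le:comb2}: its ``proof'' consists of citations to \cite{BE17}, \cite{Kra19} and \cite{BBCCN18}, plus the remark that restricting to $z\in U_p$ kills the terms with $z\notin Z$ and yields \eqref{eq:comb}. So your self-contained derivation supplies exactly what the paper omits, and it is the standard argument one finds in those references: isolate the $\omega_{0,1}$-factors of $\mathcal{W}_{g,i;n}$ (which, as you correctly observe, can only come from singleton parts with $g_L=0$ and $M_L=\emptyset$, each shifting the genus constraint by $-1$ so that the residual partition contributes to $\mathcal{W}'_{g,|Z\setminus S|;n}$ at the \emph{same} $g$), reindex by the disjoint pair $(Z',S)$, and recognise the binomial-type expansion of $\Upsilon_{r_p-|Z'|}(\mathfrak{f}_p(z)\setminus Z';z)=\prod_{z'\in\mathfrak{f}_p(z)\setminus Z'}(\omega_{0,1}(z')-\omega_{0,1}(z))$ in the inner sum over $S$. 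Your handling of the degenerate $Z'=\emptyset$ term via the convention $\mathcal{W}'_{g,0;n}(\emptyset;w_{[n]})=\delta_{g,0}\delta_{n,0}$ is also the right caveat: the identity as displayed is used only for $2g-2+(1+n)>0$, where that term vanishes, and this is the one place where the unconditional statement of the lemma is slightly loose.
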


\begin{proof}
	It appears in various places, cf. for instance the proof of \cite[Theorem~3.26]{BE17}, \cite[Lemma~7.6.4]{Kra19}, or \cite[Appendix~C]{BBCCN18}. We note that if $z \in U_p$, all terms on the right-hand side with $z \notin Z$ vanish, and thus only the terms with $z \in  Z$ remain. This gives \eqref{eq:comb}, which is also how the identity was formulated for instance in \cite[Lemma~7.6.4]{Kra19}.
\end{proof}

We can also characterise when topological recursion can be globalised over a finite subset $P \subseteq x^{-1}(q)$ for some $q \in x(\Sigma)$.

\begin{lemma}\label{le:reformulation}
	Let $(\omega_{g,n})_{(g,n) \in \mathbb{Z}_{\geq 0} \times \mathbb{Z}_{> 0}}$ be a system of correlators on a locally admissible spectral curve  $\mathcal{S} = (\Sigma,x,\omega_{0,1},\omega_{0,2})$ that satisfies topological recursion. Let $P \subseteq x^{-1}(q)$ be a finite subset for some point $q \in x(\Sigma) $. Topological recursion can be globalised over $P$ if and only if, for all $(g,n) \in \mathbb{Z}_{\geq 0}^2$ such that  $2g-2+ (1 +n) > 0$ we have
	\begin{equation}\label{eq:toprove}
		\sum_{p \in P} \Res_{z=p}\sum_{\substack{Z \subseteq  \mathfrak{f}_P'(z) \\ Z \not\subset \mathfrak{f}_p'(z) \\ |Z| \geq 1}} K_{1+|Z|}^{(p)}(w_0; z, Z)  \mathcal{W}'_{g,1+|Z|;n}(z, Z; w_{[n]}) = 0 \,.
	\end{equation}
\end{lemma}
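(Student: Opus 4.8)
The plan is to simply compare the two sides of the vertical globalisation identity \eqref{eq:globalized} term by term, using the combinatorics of how subsets of the fibre decompose. Recall that on the right-hand side of \eqref{eq:globalized} the sum runs over $Z \subseteq \mathfrak{f}'_P(z)$, while on the left-hand side it runs over $Z \subseteq \mathfrak{f}'_p(z)$. Since $\mathfrak{f}'_p(z) \subseteq \mathfrak{f}'_P(z)$, the right-hand side contains all the terms of the left-hand side plus the extra terms indexed by those $Z$ which are \emph{not} contained in $\mathfrak{f}'_p(z)$ (and which of course still satisfy $Z \subseteq \mathfrak{f}'_P(z)$). So, writing $\mathrm{RHS} - \mathrm{LHS}$, the difference is exactly
\[
\sum_{p \in P} \Res_{z=p}\sum_{\substack{Z \subseteq \mathfrak{f}'_P(z) \\ Z \not\subset \mathfrak{f}'_p(z) \\ |Z| \geq 1}} K_{1+|Z|}^{(p)}(w_0; z, Z)\, \mathcal{W}'_{g,1+|Z|;n}(z, Z; w_{[n]})\,.
\]
Therefore \eqref{eq:globalized} holds for a given $(g,n)$ if and only if this difference vanishes, which is precisely \eqref{eq:toprove}. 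Doing this for all admissible $(g,n)$ gives the claimed equivalence.

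First I would make precise the set-theoretic decomposition. Fix $p \in P$ and $z$ near $p$ (so $z \in U_p$); then $\mathfrak{f}'_p(z) = \mathfrak{f}_p(z) \setminus \{z\}$ and $\mathfrak{f}'_P(z) = \mathfrak{f}_P(z) \setminus \{z\} = \bigsqcup_{p' \in P} \mathfrak{f}_{p'}(z) \setminus \{z\}$, a disjoint union since the $U_{p'}$ are pairwise disjoint. A subset $Z \subseteq \mathfrak{f}'_P(z)$ with $|Z| \geq 1$ either lies entirely in $\mathfrak{f}'_p(z)$ or it does not; these two cases partition the index set of the sum on the right-hand side. Summing the first group of terms over $p \in P$ reproduces the left-hand side of \eqref{eq:globalized} verbatim (the recursion kernel $K^{(p)}_{1+|Z|}$ is the same object in both expressions, since the choice of local primitive $\alpha^{(p)}_{0,2}$ attached to $p$ is unchanged). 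Summing the second group over $p \in P$ gives the left-hand side of \eqref{eq:toprove}. Hence the two equalities are literally equivalent, for each $(g,n)$ separately, and then also when quantified over all $(g,n)$ with $2g-2+(1+n)>0$.

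There is essentially no obstacle here: this is a bookkeeping lemma, and the only point requiring a modicum of care is that the residues and the finite sums over $Z$ may be freely split and recombined, which is legitimate because $P$ is finite, each $\mathfrak{f}_{p'}(z)$ is finite of cardinality $r_{p'}$ (so the sums over $Z$ are finite), and $\Res_{z=p}$ is linear. One should also note that the integrand on each side is a well-defined meromorphic $1$-form in $z$ on a punctured neighbourhood of $p$, so taking the residue at $p$ makes sense term by term; this is guaranteed by local admissibility via \cref{th:ale} (the relevant recursion kernels are well-defined). No further input is needed.
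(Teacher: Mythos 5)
Your proof is correct and takes essentially the same route as the paper, which simply records this as a ``direct comparison'' between the two sides of \eqref{eq:globalized}; your splitting of the sum over $Z \subseteq \mathfrak{f}'_P(z)$ according to whether $Z \subseteq \mathfrak{f}'_p(z)$ or not is exactly that comparison made explicit. Nothing further is needed.
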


\begin{proof}
Direct comparison between the local topological recursion (involving the fibre $\mathfrak{f}_p(x)$) and the topological recursion globalised over $P$ (involving the fibre $\mathfrak{f}_P(z)$), i.e. between the left-hand side and the right-hand side of \eqref{eq:globalized}.
\end{proof}

\subsubsection{The criterion}

We are now ready to answer our question and provide sufficient conditions under which topological recursion can be globalised over a finite subset $P \in x^{-1}(q)$.

\begin{proposition}\label{pr:sufficient2}
	Let $(\omega_{g,n})_{(g,n) \in \mathbb{Z}_{\geq 0} \times \mathbb{Z}_{> 0}}$ be a system of correlators on a locally admissible spectral curve $\mathcal{S} = (\Sigma,x,\omega_{0,1},\omega_{0,2})$ that satisfies topological recursion. Let $q \in x(\Sigma)$ and $ \{p_1, p_2\}$ a pair of points in $x^{-1}(q)$. Suppose that, for all $(g,n) \in \mathbb{Z}_{\geq 0}^2$  such that $2g-2+(1+n) > 0$,  the function (in $z$, defined locally near $p_1$)
	\begin{equation}\label{eq:conditiongg}
 	\frac{1}{\Upsilon_{r_{p_2}}(\mathfrak{f}_{p_2}(z);z)} \sum_{k=1}^{r_{p_2}} (- \omega_{0,1}(z))^{r_{p_2}-k}  \mathcal{E}^{(p_2)}_{g,k;n}(z; w_{[n]})
	\end{equation}
	is holomorphic as $z \rightarrow p_1$, and vice-versa with $p_1 \leftrightarrow p_2$. Then the system of correlators can be globalised over $\{p_1,p_2\}$.
	
	If $P \subseteq x^{-1}(q)$ is a finite set and this condition holds for any pair $\{p_1,p_2\}$ in $P$, then the system of correlators can be globalised over $P$.
\end{proposition}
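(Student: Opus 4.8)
The plan is to prove the statement first for a pair $\{p_1,p_2\}$ and then run the same argument for general $P$; the heart of the matter is a holomorphy statement for a ``globalised integrand''.

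\emph{Step 1 (reduction to holomorphy).} For a finite $Q\subseteq x^{-1}(q)$ set $\Phi_Q(z)=\sum_{Z\subseteq\mathfrak{f}'_Q(z)}\Upsilon_{|Z|}(Z;z)^{-1}\mathcal{W}'_{g,1+|Z|;n}(z,Z;w_{[n]})$ (including $Z=\emptyset$), and write $F^{(p)}(z)=\Phi_{\{p\}}(z)$. By \eqref{eq:comb}/\cref{le:comb2}, $F^{(p)}(z)$ equals the expression \eqref{eq:tshol}, which is holomorphic at $p$: this is established in the proof of \cref{th:ale} when $r_p=\pm1\bmod s_p$ and $\bar s_p=s_p$, and the remaining locally admissible cases ($\bar s_p\le -1$, resp.\ $\bar s_p=s_p-1$) reduce to it exactly as there. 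Hence $\Res_{z=p}\bigl(\alpha^{(p)}_{0,2}(w_0;z)F^{(p)}(z)\bigr)=0$ for every $p$. Since $\Phi_P$ and each $F^{(p)}$ share the same $Z=\emptyset$ term $\omega_{g,1+n}(z,w_{[n]})$, identity \eqref{eq:globalized} over $P$ is equivalent, via \cref{le:reformulation}, to $\sum_{p\in P}\Res_{z=p}\bigl(\alpha^{(p)}_{0,2}(w_0;z)\Phi_P(z)\bigr)=0$. It therefore suffices to prove that $\Phi_P(z)$ is holomorphic at every $p\in P$.

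\emph{Step 2 (the pair case).} Fix $z$ near $p_1$, so that $\mathfrak{f}'_{\{p_1,p_2\}}(z)=\mathfrak{f}'_{p_1}(z)\sqcup\mathfrak{f}_{p_2}(z)$ and $\Upsilon_{|Z|}(Z;z)$ factorises along this splitting. Writing $Z=S\sqcup T$ with $S\subseteq\mathfrak{f}'_{p_1}(z)$, $T\subseteq\mathfrak{f}_{p_2}(z)$ and applying \cref{le:comb} to $\mathcal{W}'_{g,1+|S|+|T|;n}$ so as to single out the $T$-variables, one expresses $\Phi_{\{p_1,p_2\}}(z)$ as a finite sum of products of two blocks. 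The first is a ``$p_1$-block'' obtained by summing over $S$: it is an $F^{(p_1)}$-type integrand carrying as spectator punctures the points of $T$ that remain attached to $z$ together with some $w_i$, hence holomorphic at $p_1$ by Step~1. The second is a ``$p_2$-block'' depending only on a subset of $\mathfrak{f}_{p_2}(z)$ and the remaining $w_i$; summed over that subset and divided by $\Upsilon_{r_{p_2}}(\mathfrak{f}_{p_2}(z);z)$ it becomes, by \cref{le:comb2} for the fibre $\mathfrak{f}_{p_2}(z)$ (read as $\sum_{\emptyset\ne T\subseteq\mathfrak{f}_{p_2}(z)}\Upsilon_{|T|}(T;z)^{-1}\mathcal{W}'_{g'',|T|;|W''|}(T;W'')=\Upsilon_{r_{p_2}}(\mathfrak{f}_{p_2}(z);z)^{-1}\sum_{k}(-\omega_{0,1}(z))^{r_{p_2}-k}\mathcal{E}^{(p_2)}_{g'',k;|W''|}(z;W'')$), precisely the function \eqref{eq:conditiongg} attached to $p_2$ with data $(g'',|W''|)$. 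For $2g''-2+(1+|W''|)>0$ this is holomorphic at $p_1$ by hypothesis; the finitely many other cases are $(g'',|W''|)\in\{(0,0),(1,0),(0,1)\}$, where $(0,0)$ gives $0$, $(1,0)$ is again covered, and $(0,1)$ gives $\sum_{t\in\mathfrak{f}_{p_2}(z)}\Upsilon_1(\{t\};z)^{-1}\omega_{0,2}(t,w_j)$, which one checks directly to be holomorphic at $p_1$ using that the $x$-pushforward of $\omega_{0,2}(\cdot,w_j)$ over $\mathfrak{f}_{p_2}$ is holomorphic and that $\Upsilon_{r_{p_2}}(\mathfrak{f}_{p_2}(z);z)^{-1}dx(z)^{r_{p_2}}$ is holomorphic and nonzero at $p_1$. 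Multiplying the two holomorphic blocks and summing shows $\Phi_{\{p_1,p_2\}}$ is holomorphic at $p_1$; by symmetry it is holomorphic at $p_2$, so Step~1 gives globalisation over $\{p_1,p_2\}$.

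\emph{Step 3 (general $P$) and the obstacle.} For a finite $P\subseteq x^{-1}(q)$ and $p\in P$, run the argument of Step~2 with $\bigsqcup_{p'\in P\setminus\{p\}}\mathfrak{f}_{p'}(z)$ in the role of $\mathfrak{f}_{p_2}(z)$: after the single-out step this ``other'' contribution decomposes further (again by \cref{le:comb}) into one block per $p'\in P\setminus\{p\}$, each of which after resummation over $\mathfrak{f}_{p'}(z)$ is the function \eqref{eq:conditiongg} attached to $p'$, holomorphic at $p$ by the pairwise hypothesis for $\{p,p'\}$ (together with the boundary check for $(0,1)$). Hence $\Phi_P$ is holomorphic at each $p\in P$ and Step~1 concludes. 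The step I expect to be the main obstacle is the combinatorial bookkeeping in Step~2: one must verify that the sums over the other fibres of the spectator $p_1$-block variables, together with the $\Upsilon^{-1}$ prefactors and the powers of $\omega_{0,1}(z)$, genuinely reassemble into the functions \eqref{eq:conditiongg} — this relies on the monodromy invariance of $\mathcal{E}^{(p')}_{g'',k;n}$ around $q$ and on a careful matching of exponents — and that no term, in particular none of the low-complexity boundary terms not directly covered by the hypothesis, escapes the holomorphy control.
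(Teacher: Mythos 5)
Your Step~1 is a correct and rather clean reformulation: since $F^{(p)}=\Phi_{\{p\}}$ coincides with \eqref{eq:tshol} (which the proof of \cref{th:ale} shows is holomorphic at $p$), and since $-\alpha_{0,2}^{(p)}(w_0;z)\bigl(\Phi_P-F^{(p)}\bigr)$ is exactly the summand of \eqref{eq:toprove}, globalisation over $P$ would indeed follow from holomorphy of $\Phi_P$ at each $p\in P$. (This target is in fact true, and it is what the paper's argument yields a posteriori, since it shows the residue at each $p$ vanishes separately for all $w_0$.) The problem is Step~2, and it is not mere bookkeeping: the claimed factorisation into a holomorphic $p_1$-block times the function \eqref{eq:conditiongg} only holds for the terms of \cref{le:comb} in which \emph{no} point of $\mathfrak{f}_{p_2}(z)$ is attached to the component of $z$ (your $T_1=\emptyset$). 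For $T_1\neq\emptyset$ three things go wrong simultaneously: (a) the ``$p_1$-block'' $\sum_S \Upsilon_{|S|}(S;z)^{-1}\mathcal{W}'_{g_1,1+|S|}(z,S;W_1,T_1)$ has spectator points $T_1(z)\subseteq\mathfrak{f}_{p_2}(z)$ that move to $p_2$ as $z\to p_1$; the holomorphy of \eqref{eq:tshol} rests on the abstract loop equations with \emph{fixed} spectators, and the correlators have poles precisely when an argument approaches a ramification point such as $p_2$, so Step~1 does not cover this block; (b) the prefactor $\Upsilon_{|T_1|}(T_1;z)^{-1}$ is absorbed into neither block; (c) the remaining sum runs over $T_2\subseteq\mathfrak{f}_{p_2}(z)\setminus T_1$ rather than over all of $\mathfrak{f}_{p_2}(z)$, so \cref{le:comb2} cannot convert it into \eqref{eq:conditiongg}. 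These $T_1\neq\emptyset$ terms are exactly the ``mixed'' subsets $Z\not\subset\mathfrak{f}'_{p_1}(z)$, $Z\not\subset\mathfrak{f}_{p_2}(z)$, i.e.\ the actual content of the proposition; your argument as written only disposes of the easy part.

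The paper resolves the mixed terms by a mechanism absent from your proposal: it re-expands the pure-$\mathfrak{f}_{p_2}$ contribution using the topological recursion itself (\cref{le:recursioncalW}), exchanges the order of the two residues, and the extra residue at $z=\lambda$ picked up in the exchange (coming from the simple pole of $\alpha_{0,2}^{(p_1)}(z;\lambda)$) reproduces \emph{minus} the mixed terms, which therefore cancel; the hypothesis \eqref{eq:conditiongg} is used only to kill the commuted double residue. This is a residue-level cancellation, not an integrand-level factorisation, and it is the key idea your proof is missing. (Your separate treatment of the unstable $(0,1)$ block also quietly uses that $\Upsilon_{r_{p_2}}(\mathfrak{f}_{p_2}(z);z)/\dd x(z)^{r_{p_2}}$ is nonzero at $p_1$, a non-resonance-type condition not among the stated hypotheses, but this is a secondary issue compared to the mixed terms.)
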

We note that $z$ in the expression \eqref{eq:conditiongg}  is a local coordinate in a neighbourhood of the point $p_1 \in \Sigma$. Thus, $\mathfrak f_{p_2}(z)$ does not contain  $z$, and as a result $\Upsilon_{r_{p_2}}(\mathfrak{f}_{p_2}(z);z)$ does not vanish identically. Consequently, \eqref{eq:conditiongg} is a well-defined function of $z$ locally near $p_1$.

\begin{proof} (\textit{Argument for a pair}) We start with $P = \{p_1,p_2\}$. In that case $\mathfrak{f}_P(z) = \mathfrak{f}_{p_1}(z) \sqcup \mathfrak{f}_{p_2}(z)$ and we want to prove \eqref{eq:toprove}.  Consider first the residue at  $z=p_1$. Its contribution to the left-hand side of \eqref{eq:toprove} can be decomposed as:
	\begin{multline}\label{eq:subsets}
		\Res_{z=p_1}   \sum_{\substack{Z \subseteq  \mathfrak{f}_{p_2}(z)\\ |Z| \geq 1}} K_{1+|Z|}^{(p_1)}(w_0; z, Z)  \mathcal{W}'_{g,1+|Z|;n}(z, Z; w_{[n]})  \\
  	+   \Res_{z=p_1}   \sum_{\substack{Z \subseteq  \mathfrak{f}'_P(z)\\ Z \not\subset \mathfrak{f}_{p_1}'(z) \\ Z \not\subset \mathfrak{f}_{p_2}(z)\\ |Z| \geq 2}}K_{1+|Z|}^{(p_1)}(w_0; z, Z)  \mathcal{W}'_{g,1+|Z|;n}(z, Z; w_{[n]}) \,.
  \end{multline}
Here the first sum ranges over subsets of $\mathfrak{f}_{p_2}(z)$, while the second sum ranges over ``mixed'' subsets that contain at least one variable in $\mathfrak{f}_{p_1}'(z)$ and one variable in $\mathfrak{f}_{p_2}(z)$. Consider the first sum in \eqref{eq:subsets}. We use \cref{le:recursioncalW} to write
	\begin{equation}
		\mathcal{W}'_{g,1+|Z|;n}(z, Z; w_{[n]}) \\= \sum_{q \in x(\Sigma) }\sum_{\tilde{p} \in x^{-1}(q)} \Res_{\lambda=\tilde{p}}\sum_{\substack{\Lambda \subseteq  \mathfrak{f}_{\tilde{p}}'(\lambda) \\ |\Lambda| \geq 1}}K_{1+|\Lambda|}^{(\tilde{p})}(z; \lambda, \Lambda)  \mathcal{W}'_{g,1+|\Lambda|+|Z|;n}(\lambda, \Lambda,Z; w_{[n]}) \,.
	\end{equation}
	Inserting this in the first term of \eqref{eq:subsets}, we get an expression:
	\begin{equation}\label{eq:subsets2}
		\sum_{q \in x(\Sigma) }\sum_{\tilde{p} \in x^{-1}(q)} \Res_{z=p_1}\Res_{\lambda=\tilde{p}} \sum_{\substack{Z \subseteq  \mathfrak{f}_{p_2}(z)\\ |Z| \geq 1}} \sum_{\substack{\Lambda \subseteq  \mathfrak{f}_{\tilde{p}}'(\lambda) \\ |\Lambda| \geq 1}}K_{1+|Z|}^{(p_1)}(w_0; z, Z)  K_{1+|\Lambda|}^{(\tilde{p})}(z; \lambda, \Lambda) 
		\mathcal{W}'_{g,1+|\Lambda|+|Z|;n}(\lambda, \Lambda,Z; w_{[n]}) \,.
	\end{equation}  
	Next, we want to exchange the order of the residues. We can think of the residues as contour integrals along small circles centred at the points $z=p_1$ and $\lambda=\tilde{p}$. We can exchange the order of the contour integrals without problem if $p_1 \neq \tilde{p}$, but if $p_1 = \tilde{p}$, when we exchange the order we must pick up residues at $z = \lambda'$ for all $\lambda' \in \mathfrak{f}_{p_1}(\lambda)$. Looking at the integrand, we see that the only extra residue comes from the simple pole at $z=\lambda$ of $\alpha_{0,2}^{(p_1)}(z;\lambda)$ in the numerator of $K_{1+|\Lambda|}^{(p_1)}(z; \lambda, \Lambda) $ (cf. \cref{de:kernel}).   Therefore, we can rewrite  \eqref{eq:subsets2} as
	\begin{multline}\label{eq:subsets3}
		\!\!\!\! \sum_{q \in x(\Sigma) }\sum_{\tilde{p} \in x^{-1}(q)}  \Res_{\lambda=\tilde{p}}  \Res_{z=p_1} \sum_{\substack{Z \subseteq  \mathfrak{f}_{p_2}(z)\\ |Z| \geq 1}} \sum_{\substack{\Lambda \subseteq  \mathfrak{f}_{\tilde{p}}'(\lambda) \\ |\Lambda| \geq 1}}K_{1+|Z|}^{(p_1)}(w_0; z, Z)  K_{1+|\Lambda|}^{(\tilde{p})}(z; \lambda, \Lambda) 
 		\mathcal{W}'_{g,1+|\Lambda|+|Z|;n}(\lambda, \Lambda,Z; w_{[n]}) 
 		\\
		+\Res_{\lambda=p_1}\Res_{z=\lambda} \sum_{\substack{Z \subseteq  \mathfrak{f}_{p_2}(z)\\ |Z| \geq 1}} \sum_{\substack{\Lambda \subseteq  \mathfrak{f}_{p_1}'(\lambda) \\ |\Lambda| \geq 1}}K_{1+|Z|}^{(p_1)}(w_0; z, Z)  K_{1+|\Lambda|}^{(p_1)}(z; \lambda, \Lambda)  \mathcal{W}'_{g,1+|\Lambda|+|Z|;n}(\lambda, \Lambda,Z; w_{[n]}) \,.
	\end{multline}
	To evaluate the first sum in \eqref{eq:subsets3}, we use \cref{le:comb} to write:
	\begin{equation}
		\mathcal{W}'_{g,1+|\Lambda|+|Z|;n}(\lambda, \Lambda,Z; w_{[n]}) = \sum_{\substack{\Lambda_1 \sqcup \Lambda_2 = \{\lambda,\Lambda\} \\ W_1 \sqcup W_2 = w_{[n]} \\ g_1+g_2 = g + |\Lambda_2| - j-1}} \mathcal{W}'_{g_1,|Z|;|W_1|+|\Lambda_1|} (Z; W_1,\Lambda_1) \mathcal{W}'_{g_2,|\Lambda_2|;|W_2|}(\Lambda_2;W_2) \,.
	\end{equation}
Inserting this in the first sum of \eqref{eq:subsets3} and taking out the $z$-independent factors, the residue in $z$ amounts to
\begin{equation}
\label{eq:res}
\Res_{z=p_1} \sum_{\substack{Z \subseteq  \mathfrak{f}_{p_2}(z)\\ |Z| \geq 1}} \alpha_{0,2}^{(p_1)}(z;\lambda) K_{1+|Z|}^{(p_1)}(w_0; z, Z)   \mathcal{W}'_{g_1,|Z|;|W_1|+|\Lambda_1|} (Z; W_1,\Lambda_1) \,.
\end{equation}
By \cref{le:comb2}, the assumption \eqref{eq:conditiongg} can be rewritten as the statement that the function
	\begin{equation}
	\label{the:as}	\sum_{\substack{ Z \subseteq \mathfrak{f}_{p_2}(z) \\ |Z| \geq 1}}  \frac{1}{\Upsilon_{|Z|}(Z;z)}   \mathcal{W}'_{g,|Z|,n}(Z; w_{[n]}) 
	\end{equation}
	is holomorphic as $z \rightarrow p_1$. Recalling that:
	\begin{equation}
	\label{reckern}
	K_{1+|Z|}^{(p_1)}(w_0;z,Z) = -\frac{\alpha_{0,2}^{(p_1)}(w_0;z)}{\Upsilon_{|Z|}(Z;z)}\,,
	\end{equation}
	where $\alpha_{0,2}^{(p_1)}(w_0;z)$ is a local primitive of $\omega_{0,2}(w_0,z)$ with respect to $z$, we deduce from \eqref{the:as} that the residue in \eqref{eq:res} vanishes.
	
	We are thus left with the second sum in \eqref{eq:subsets3}. Since the only pole of the expression at $z=\lambda$ is the simple pole of $-\alpha_{0,2}^{(p_1)}(z;\lambda)$ in the numerator of $ K_{1+|\Lambda|}^{(p_1)}(z; \lambda, \Lambda)$, which has residue $-1$, taking the residue at $z=\lambda$ simply amounts to replacing $z$ by $\lambda$ everywhere in the expression and change the sign. Using again the expression \eqref{reckern} of the recursion kernel, \eqref{eq:subsets3} then becomes
	\begin{equation}\label{eq:subsets4}
		\Res_{\lambda=p_1}\sum_{\substack{\Lambda' \subseteq  \mathfrak{f}_{p_2}(\lambda)\\ |\Lambda'| \geq 1}} \sum_{\substack{\Lambda \subseteq  \mathfrak{f}_{p_1}'(\lambda) \\ |\Lambda| \geq 1}}  \frac{ \alpha_{0,2}^{(p_1)}(w_0;\lambda)}{\Upsilon_{|\Lambda'|}(\Lambda';\lambda)\Upsilon_{|\Lambda|}(\Lambda;\lambda)} \mathcal{W}'_{g,1+|\Lambda|+|\Lambda'|;n}(\lambda, \Lambda,\Lambda'; w_{[n]}) \,.
	\end{equation}
We observe that $\Upsilon_{|\Lambda|}(\Lambda;\lambda)\Upsilon_{|\Lambda'|}(\Lambda';\lambda) = \Upsilon_{|\tilde{\Lambda}|}(\tilde{\Lambda};\lambda)$ where $\tilde{\Lambda} = \Lambda \sqcup \Lambda'$, and thus recognise the recursion kernel $K_{1 + |\tilde{\Lambda}|}^{(p_1)}(w_0;\lambda,\tilde{\Lambda})$ from \eqref{eq:rkernel}. The sum over $\Lambda,\Lambda'$ can then be rewritten as a sum over subsets $\tilde{\Lambda} \subseteq \mathfrak{f}'_P(\lambda)$ containing at least one variable in $\mathfrak{f}_{p_1}'(\lambda)$ and one variable in $\mathfrak{f}_{p_2}(\lambda)$. This transforms \eqref{eq:subsets4} into
	\begin{equation}
		- \Res_{\lambda=p_1}\sum_{\substack{\tilde{\Lambda} \subseteq  \mathfrak{f}'_P(\lambda)\\ \tilde{\Lambda} \not\subset \mathfrak{f}'_{p_1}(\lambda) \\ \tilde{\Lambda}\not\subset \mathfrak{f}_{p_2}(\lambda)\\ |\tilde{\Lambda}| \geq 2}}  K^{(p_1)}_{1+|\tilde{\Lambda}|}(w_0; \lambda,\tilde{\Lambda})\mathcal{W}'_{g,1+|\tilde{\Lambda}|;n}(\lambda, \tilde{\Lambda}; w_{[n]}) \,,
	\end{equation}
	which precisely cancels out the second term in \eqref{eq:subsets}. Therefore, the residue at $z=p_1$ in \eqref{eq:toprove} vanishes.

	We can do the same calculation for the contribution of the residue at $z=p_2$, with the role of $p_1$ and $p_2$ swapped everywhere. The result is that \eqref{eq:toprove} is satisfied for $\{p_1, p_2\}$, and topological recursion can be globalised over $\{p_1,p_2\}$.
	
	\medskip
	
\noindent (\textit{General argument}) Take a finite set $P \subseteq x^{-1}(q)$ and assume that topological recursion can be globalised over each pair of points in $P$.  We want to argue that it can be globalised over the whole $P$.

Pick any three distinct points $p_i, p_j, p_k \in P$, and write $P_{ijk} = \{p_i, p_j, p_k\} \subseteq P$. We know that topological recursion can be globalised over $P_{ij}$, over $P_{ik}$, and over $P_{jk}$. We want to show that it can be globalised over $P_{ijk}$, and for this we need to prove \eqref{eq:toprove} for $P_{ijk} = \{p_i, p_j, p_k\} \subseteq P$. We proceed as in the argument for pairs. Consider the residue at $z=p_i$ in \eqref{eq:toprove}. We rewrite the left-hand side as:
\begin{multline}\label{eq:subsetsd}
		  \quad \Res_{z=p_i}   \sum_{\substack{Z \subseteq  \mathfrak{f}_{p_k}(z)\\ |Z| \geq 1}}  K_{1+|Z|}^{(p_i)}(w_0; z, Z) \mathcal{W}'_{g,1+|Z|;n}(z, Z; w_{[n]})  \\
  	  +  \Res_{z=p_i}   \sum_{\substack{Z \subseteq  \mathfrak{f}'_{P_{ijk}}(z)\\ Z \not\subset \mathfrak{f}_{P_{ij}}'(z) \\ Z \not\subset \mathfrak{f}_{p_k}(z)\\ |Z| \geq 2}} K_{1+|Z|}^{(p_i)}(w_0; z, Z)  \mathcal{W}'_{g,1+|Z|;n}(z, Z; w_{[n]}) \,.
\end{multline}

	Here, we singled out the subsets of $\mathfrak{f}_{p_k}(z)$ in the first term, and in the second sum we used the fact that the system of correlators can be globalised over $P_{ij}$, and hence the sum over subsets of sheets in $P_{ij}$ vanishes. 

	Consider the first sum in \eqref{eq:subsetsd}. This time, we use \cref{le:calWglobal} to rewrite it, using the fact that we can globalise the system of correlators over $P_{ij}$:
\begin{equation*}
\begin{split}
		\mathcal{W}'_{g,1+|Z|;n}(z, Z; w_{[n]})  & =  \sum_{q \in x(\Sigma) }\sum_{\substack{\tilde{p} \in x^{-1}(q) \\ \tilde{p} \notin P_{ij}}} \Res_{\lambda=\tilde{p}}\sum_{\substack{\Lambda \subseteq  \mathfrak{f}_{\tilde{p}}'(\lambda) \\ |\Lambda| \geq 1}}K_{1+|\Lambda|}^{(\tilde{p})}(z; \lambda, \Lambda)  \mathcal{W}'_{g,1+|\Lambda|+|Z|;n}(\lambda, \Lambda,Z; w_{[n]})
		\\
		& \quad + \Res_{\lambda=p_i} \sum_{\substack{\Lambda \subseteq  \mathfrak{f}_{P_{ij}}'(\lambda) \\ |\Lambda| \geq 1}}K_{1+|\Lambda|}^{(p_i)}(z; \lambda, \Lambda) 
		\mathcal{W}'_{g,1+|\Lambda|+|Z|;n}(\lambda, \Lambda,Z; w_{[n]}) \, \\
		& \quad  + \Res_{\lambda = p_j}  \sum_{\substack{\Lambda \subseteq  \mathfrak{f}_{P_{ij}}'(\lambda) \\ |\Lambda| \geq 1}}K_{1+|\Lambda|}^{(p_j)}(z; \lambda, \Lambda) 
		\mathcal{W}'_{g,1+|\Lambda|+|Z|;n}(\lambda, \Lambda,Z; w_{[n]})\,.
		\end{split}
\end{equation*}
Inserting this in the first sum of \eqref{eq:subsetsd}, we get an expression:
	\begin{multline*}
 		\!\!\!\!\!\!\sum_{q \in x(\Sigma) }\sum_{\substack{\tilde{p} \in x^{-1}(q) \\ \tilde{p} \notin P_{ij}}} \Res_{z=p_i}\Res_{\lambda =\tilde{p}} \sum_{\substack{Z \subseteq  \mathfrak{f}_{p_k}(z)\\ |Z| \geq 1}} \sum_{\substack{\Lambda \subseteq  \mathfrak{f}_{\tilde{p}}'(\lambda) \\ |\Lambda| \geq 1}}  
		K_{1+|Z|}^{(p_i)}(w_0; z, Z)K_{1+|\Lambda|}^{(\tilde{p})}(z; \lambda, \Lambda)
		\mathcal{W}'_{g,1+|\Lambda|+|Z|;n}(\lambda, \Lambda,Z; w_{[n]}) 
		\\
		\!\!\!\!\!\!\!\!\!\!\!\!\!\!\!\!\!\!+ \Res_{z=p_i} \Res_{\lambda=p_i} \sum_{\substack{Z \subseteq  \mathfrak{f}_{p_k}(z)\\ |Z| \geq 1}} \sum_{\substack{\Lambda \subseteq  \mathfrak{f}_{P_{ij}}'(\lambda) \\ |\Lambda| \geq 1}}  K_{1+|Z|}^{(p_i)}(w_0; z, Z)K_{1+|\Lambda|}^{(p_i)}(z; \lambda, \Lambda)
		\mathcal{W}'_{g,1+|\Lambda|+|Z|;n}(\lambda, \Lambda,Z; w_{[n]}) \\
		\!\!\!\!\!\!\!\!\!\! + \Res_{z = p_i}  \Res_{\lambda=p_j} \sum_{\substack{Z \subseteq  \mathfrak{f}_{p_k}(z)\\ |Z| \geq 1}} \sum_{\substack{\Lambda \subseteq  \mathfrak{f}_{P_{ij}}'(\lambda) \\ |\Lambda| \geq 1}}  K_{1+|Z|}^{(p_i)}(w_0; z, Z)K_{1+|\Lambda|}^{(p_j)}(z; \lambda, \Lambda)
		\mathcal{W}'_{g,1+|\Lambda|+|Z|;n}(\lambda, \Lambda,Z; w_{[n]})\,.
	\end{multline*}
As before, we exchange the order of the residues. Using assumption \eqref{eq:conditiongg} for the pair $P_{ij}$, after this exchange  we see that the residue at $z=p_i$ vanishes. What remains is the residue at $z=\lambda$ for the case where both residues in $z$ and $\lambda$ are at $p_i$. We evaluate this residue, and the result is a sum over mixed subsets of sheets in $\mathfrak{f}'_{P_{ijk}}(\lambda)$ with at least one sheet in $\mathfrak{f}_{p_k}(\lambda)$ and one sheet in $\mathfrak{f}_{P_{ij}}'(\lambda)$, which precisely cancels out with the second term in \eqref{eq:subsetsd}. Therefore, the residue at $z=p_i$ in \eqref{eq:toprove} vanishes.

	We can do the same thing for the residue at $z=p_j$, and we conclude that the condition \eqref{eq:toprove} is satisfied on $P_{ijk}$ for the residues at $z=p_i$ and $z=p_j$.

	It remains to show that \eqref{eq:toprove} is also satisfied for the residue at $z=p_k$. To do this, we proceed as above, but instead of starting with the pair $P_{ij}$, we start with the pair $P_{ki}$. We conclude that \eqref{eq:toprove} is also satisfied for the residue at $z=p_k$, and hence topological recursion can be globalised over $P_{ijk}$.

	To get the whole $P$ (and not just $P_{ijk}$), we iterate the process, adding one more point in $P$ at each step and checking that globalisation holds as we did.
	\end{proof}

\subsection{Criterion for vertical globalisation in terms of the spectral curve}

\label{S33}

\Cref{pr:sufficient2} provides sufficient conditions for topological recursion to be globalisable over a finite subset $P \subseteq x^{-1}(q)$ for some $q \in x(\Sigma) \subseteq\mathbb{P}^1$. These are conditions bearing on all pairs of points $p_1,p_2$ in $P$: a certain function constructed from the local behaviour of correlators at $p_2$ (cf. \eqref{eq:conditiongg}) must be holomorphic as $z \rightarrow p_1$. Our next task is to find an intrinsic criterion for this to happen, depending on the pair and on the spectral curve but not on the knowledge of the correlators.

\subsubsection{Primitive form}

Recall the definition of the aspect ratio $\nu_p = \frac{\bar s_p}{r_p}$. In the following, we will also denote $\tau_i := \tau_{p_i}$, where $\tau_p$ is the leading order coefficient of the expansion of $\omega_{0,1} $ near $p$ in a standard coordinate (cf. \cref{de:localparameters}). As the following property will appear often, we give it a name.

\begin{definition}[Non-resonance]\label{de:nonres}
A pair of points $p_1,p_2$ in $x^{-1}(q)$ is \emph{non-resonant} if either $\nu_1 \neq \nu_2$, or  $\nu_1 = \nu_2$ and $(\frac{\tau_1/r_1}{\tau_2/r_2}\big)^{r'} \neq 1$, where we have set $r' = \frac{r_1}{{\rm gcd}(r_1,\bar s_1)} = \frac{r_2}{{\rm gcd}(r_2,\bar s_2)}$. 
\end{definition}

Denoting $\delta_k = {\rm gcd}(r_k,\bar s_k)$, we have $r_k = \delta_k r_k'$ and $\bar s_k = \delta_k \bar s_k'$. Therefore, if $\nu_1 = \nu_2$ we get $\frac{\bar s_1'}{r_1'} = \frac{\bar s_2'}{r_2'}$ which is an equality of irreducible fractions, hence $r_1' = r_2'$ and we denoted $r'$ this common value. The non-resonance condition is unaffected by the choice of standard coordinates. Indeed, due to \cref{re:taupambiance}, different choices of standard coordinates at $p_k$ would replace $\tau_k$ with $u_k\tau_k$ for some $r_k$-th root of unity $u_k$, but $(u_k^{\bar s_k})^{r'} = (u_k^{\bar s_k'})^{r} = 1$.

We now prove a primitive form of an intrinsic criterion for globalisation.

\begin{lemma}\label{le:pairs}
	Let $(\omega_{g,n})_{(g,n) \in \mathbb{Z}_{\geq 0} \times \mathbb{Z}_{> 0}}$ be a system of correlators on a locally admissible spectral curve $\mathcal{S} = (\Sigma,x,\omega_{0,1},\omega_{0,2})$ that satisfies topological recursion. Let $q \in x(\Sigma)$ and  $p_1, p_2$ a pair of points $x^{-1}(q)$. Let $(r_k,s_k,\bar s_k,\tau_k)$ be the local parameters at $p_k$ (cf. \cref{de:localparameters}). If the following conditions are simultaneously satisfied
	\begin{enumerate}[({C}1)]
		\item $\forall i \in [r_2] \qquad \bar s_1 r_2 - \bar s_1 i +r_1 +  r_{1} \left\lfloor \frac{s_{2} (i-1)}{r_{2}} \right\rfloor - \min(r_1 \bar s_2, r_2 \bar s_1) \geq 0$;
		\item $\forall i \in [r_1] \qquad \bar s_2 r_1 - \bar s_2 i +r_2 +  r_{2} \left\lfloor \frac{s_{1} (i-1)}{r_{1}} \right\rfloor - \min(r_1 \bar s_2, r_2 \bar s_1) \geq 0$;
		\item $p_1,p_2$ is non-resonant;
	\end{enumerate}
	then topological recursion can be globalised over $\{p_1,p_2\}$. If $P \subseteq x^{-1}(q)$ is a finite set such that the above conditions are satisfied for any pair of distinct points in $P$, then topological recursion can be globalised over $P$.
	\end{lemma}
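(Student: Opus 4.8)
The plan is to derive the statement directly from \cref{pr:sufficient2}. For a pair $\{p_1,p_2\}$, that proposition reduces everything to showing that the function \eqref{eq:conditiongg} is holomorphic as $z\to p_1$ together with its $p_1\leftrightarrow p_2$ analogue as $z\to p_2$; for a finite $P$ all of whose pairs satisfy (C1)--(C3), the conclusion then follows from the ``globalisable over all pairs $\Rightarrow$ globalisable over $P$'' half of \cref{pr:sufficient2}. Since $\mathcal S$ is locally admissible and the correlators satisfy topological recursion, \cref{th:ale} guarantees that they also satisfy the abstract loop equations, so the estimate \eqref{eq:aleexp} is at our disposal. Thus the whole argument is a local order-of-vanishing count at $p_1$ (the count at $p_2$ being symmetric, governed by (C2) instead of (C1)).

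Fix a standard coordinate $\zeta$ at $p_1$ and a base coordinate $t$ centered at $q$ with $x^*t=\zeta^{r_1}$. I would collect, as $z\to p_1$, three estimates: by the abstract loop equation \eqref{eq:aleexp} applied at $\tilde p=p_1$, $\mathcal E^{(p_2)}_{g,k;n}(z;w_{[n]})$ is $\mathcal O\big(\dd\zeta^{k}/\zeta^{\,r_1\mathfrak d_{p_2}(k)-(r_1-1)k}\big)$ with $\mathfrak d_{p_2}(k)=k-1-\lfloor s_2(k-1)/r_2\rfloor$; from \eqref{eq:expomega}, $(-\omega_{0,1}(z))^{r_2-k}$ vanishes to order $(\bar s_1-1)(r_2-k)$; and -- the crucial input -- writing $\Upsilon_{r_2}(\mathfrak f_{p_2}(z);z)=\dd x(z)^{r_2}\prod_{z'\in\mathfrak f_{p_2}(z)}(y(z')-y(z))$ as in \eqref{UpsZZZ},
\[
{\rm ord}_{p_1}\,\Upsilon_{r_2}(\mathfrak f_{p_2}(z);z)=\min(r_1\bar s_2,\,r_2\bar s_1)-r_2 .
\]
Granting this, the $k$-th summand of \eqref{eq:conditiongg} is a function whose order at $p_1$ is at least
\[
(\bar s_1-1)(r_2-k)+(r_1-1)k-r_1\mathfrak d_{p_2}(k)-\min(r_1\bar s_2,r_2\bar s_1)+r_2 ,
\]
and substituting $\mathfrak d_{p_2}(k)$ this collapses to exactly the left-hand side of (C1) with $i=k$, hence is $\geq 0$. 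So every summand of \eqref{eq:conditiongg}, and therefore \eqref{eq:conditiongg} itself, extends holomorphically across $p_1$; interchanging $p_1$ and $p_2$ and invoking (C2) gives holomorphy of the companion function at $p_2$, and \cref{pr:sufficient2} finishes the pair case, after which the passage to a finite $P$ is immediate.

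I expect the main obstacle to be the displayed formula for ${\rm ord}_{p_1}\Upsilon_{r_2}(\mathfrak f_{p_2}(z);z)$, which is precisely where non-resonance (C3) is used. The product over $z'\in\mathfrak f_{p_2}(z)$ is symmetric, hence single-valued near $p_1$; on the $p_j$-sheet $\omega_{0,1}\sim\frac{\tau_j}{r_j}t^{\nu_j-1}\dd t$, so $y(z)\sim\frac{\tau_1}{r_1}\zeta^{\bar s_1-r_1}$ while, along the $r_2$ branches of $\eta^{r_2}=\zeta^{r_1}$, $y(z')\sim\frac{\tau_2}{r_2}\eta(z')^{\bar s_2-r_2}$. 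A case split on the sign of $\nu_1-\nu_2$ then does it: when $\nu_1<\nu_2$ each factor is dominated by $-y(z)$ and the product has order $r_2(\bar s_1-r_1)$; when $\nu_1>\nu_2$ each factor is dominated by $y(z')$ and the product of the attendant roots of unity is nonzero, giving order $r_1(\bar s_2-r_2)$; adding ${\rm ord}_{p_1}\dd x(z)^{r_2}=r_2(r_1-1)$ yields $\min(r_1\bar s_2,r_2\bar s_1)-r_2$ in both cases. The delicate case is $\nu_1=\nu_2$: the two contributions then have equal order $\bar s_1-r_1$, the leading coefficient of the $j$-th factor is $\frac{\tau_2}{r_2}a\,\xi^{j\bar s_2}-\frac{\tau_1}{r_1}$ for a primitive $r_2$-th root of unity $\xi$ and a phase $a$ which -- using that $\bar s_1/r_1=\bar s_2/r_2$ in lowest terms equals $\bar s_2'/r'$ -- one checks satisfies $a^{r'}=1$; and as $j$ runs over $\{0,\dots,r_2-1\}$, $\xi^{j\bar s_2}$ runs over the $r'$-th roots of unity, so these leading coefficients are all nonzero precisely when $(\tfrac{\tau_1/r_1}{\tau_2/r_2})^{r'}\neq 1$, i.e.\ exactly under (C3); hence no cancellation occurs, the product again has order $r_2(\bar s_1-r_1)$, and since $\nu_1=\nu_2$ forces $r_1\bar s_2=r_2\bar s_1$ we land once more on $\min(r_1\bar s_2,r_2\bar s_1)-r_2$. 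Tracking the phase $a$ across the $r_2$ sheets and matching the vanishing of these leading coefficients with \cref{de:nonres} is the one point that genuinely needs care; the rest is manipulation of floor functions.
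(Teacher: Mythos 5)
Your proposal is correct and follows essentially the same route as the paper's proof: reduce to \cref{pr:sufficient2}, combine the abstract-loop-equation estimate \eqref{eq:aleexp} with the local expansion of $\omega_{0,1}$ and the order count for $\Upsilon_{r_2}(\mathfrak f_{p_2}(z);z)$ at $p_1$, and use non-resonance exactly where $\nu_1=\nu_2$ to rule out cancellation of the leading coefficient. The exponent bookkeeping collapses to (C1)/(C2) just as in the paper, so there is nothing to add.
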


\begin{proof}
	We will show that these conditions are sufficient to apply \cref{pr:sufficient2}. Consider the function in \eqref{eq:conditiongg} as $z \rightarrow p_1$ and use a standard coordinate in $U_{p_1}$. First, we know that 
	\begin{equation}
		(-\omega_{0,1}(z))^{r_2-i} = \mc{O}\!\left(\zeta^{(\bar s_1-1)(r_2-i)} \dd \zeta^{r_2-i} \right) \quad {\rm as}\,\, z \rightarrow p_1\,.
	\end{equation}
	Second, by the abstract loop equations \eqref{eq:aleexp}, we know that
	\begin{equation}
		\mathcal{E}_{g,i;n}^{(p_2)}(z; w_{[n]}) = \mc{O}\!\left( \frac{\dd \zeta^i}{\zeta^{r_1 \mathfrak{d}_{p_2}(i) - (r_1-1)i}} \right)  = \mc{O}\!\left( \zeta^{r_1 \left \lfloor \frac{s_2}{r_2}(i-1) \right \rfloor + r_1-i} \dd \zeta^i\right) \quad {\rm as}\,\,z \rightarrow p_1\,.
	\end{equation}

	Consider now a standard coordinate $\zeta_2$ in $U_{p_2}$. For any $z' \in \mathfrak{f}_{p_2}(z)$, since $z' \rightarrow p_2$ as $z \rightarrow p_1$ and by comparison of the two local normal forms and definition of the fibres of $x$, the set of $\zeta_2$-coordinates of the points in $\mathfrak{f}_{p_2}(z)$ is
	$$
	\Big\{\zeta^{\frac{r_1}{r_2}}e^{\frac{2{\rm i}\pi m}{r_2}}\,\,|\,\,m \in [r_2]\}\Big\}\,.
	$$
As $\omega_{0,1}(z) \sim \tau_2 \zeta_2^{\bar s_2 - 1}\dd \zeta_2$ when $z \rightarrow p_2$, we have when $z \rightarrow p_1$ with $z' \in \mathfrak{f}_{p_2}(z)$ in the $m$-th branch:
$$
\omega_{0,1} (z') \sim \tau_2\,\frac{r_1}{r_2}\,e^{\frac{2{\rm i}\pi \bar s_2 m}{r_2}} \zeta^{\frac{r_1 \bar s_2}{r_2} - 1}\, \dd \zeta\,.
$$
Therefore, assuming that $\frac{\bar s_1}{r_1} \neq \frac{\bar s_2}{r_2}$,  and since $\omega_{0,1}(z) = \mc{O}(\zeta^{\bar s_1 - 1} \dd \zeta)$ as $z \rightarrow p_1$, we get
	\begin{equation}
		\Upsilon_{r_{2}}(\mathfrak{f}_{p_2}(z);z) = \mc{O}\!\left( \zeta^{\min(r_1 \bar s_2, r_2 \bar s_1) - r_2} \dd \zeta^{r_2} \right) \quad {\rm as}\,\, z \rightarrow p_1\,.
	\end{equation}
We want to make sure that this estimate remains the same if $\frac{\bar s_1}{r_1} = \frac{\bar s_2}{r_2}$. In this case, we find that the coefficient of $\zeta^{\min(r_1\bar s_2,r_2 \bar s_1) - r_2} \dd \zeta^{r_1}$ in $\Upsilon_{r_2}(\mathfrak{f}_{p_2}(z);z)$ when $z \rightarrow p_1$ is
	$$
	c = \prod_{m = 1}^{r_2} \left( \tau_2\, \frac{r_1}{r_2}\,e^{\frac{2{\rm i}\pi \bar{s}_2 m}{r_2}} - \tau _1 \right) = \prod_{m = 1}^{r'} \left(\tau_2\,\frac{r_1}{r_2}\,e^{\frac{2{\rm i}\pi \bar{s}_2' m}{r'}} - \tau_1 \right)^{\delta_2}\,,
	$$  
where $\delta_2 = {\rm gcd}(r_2,\bar s_2)$, so that $r_2 = \delta_2 r_2'$ and $\bar s_2 = \delta_2 \bar s_2'$. Thus:
$$
c = \left((-1)^{r_2'}\Big[\big(\tau_2\,\tfrac{r_1}{r_2}\big)^{r_2'} - \tau_1^{r_2'}\Big]\right)^{\delta_2} = (-\tau_1)^{r_2} \left[1 - \Big(\frac{\tau_2 r_1}{\tau_1 r_2}\Big)^{r_2'}\right]^{\delta_2}\,.
$$
To ensure that $c \neq 0$, we must impose $\big(\frac{\tau_1/r_1}{\tau_2/r_2}\big)^{r'} \neq 1$.

	This shows that the $i$-th term (with $i \in [r_2]$) in \eqref{eq:conditiongg} behaves like
	\begin{equation*}
	\begin{split}
		\frac{ (-  \omega_{0,1} (z))^{r_{2}-i} }{\Upsilon_{r_{2}}(\mathfrak{f}_{p_2}(z);z)} \mathcal{E}^{(p_2)}_{g,i;n}(z; w_{[n]}) 
		&= \mc{O}\!\left(\zeta^{(\bar s_{1}-1) (r_{2}-i) +r_{1} - i +  r_{1} \left\lfloor \frac{s_{2} (i-1)}{r_{2}} \right\rfloor - \min(r_1 \bar s_2, r_2 \bar s_1) + r_2} \right)
		\\
		&= \mc{O}\!\left( \zeta^{\bar s_1 r_2 -  \bar s_1 i +r_1 +  r_{1} \left\lfloor \frac{s_{2} (i-1)}{r_{2}} \right\rfloor - \min(r_1 \bar s_2, r_2 \bar s_1) } \right)
	\end{split}
	\end{equation*}
	as $z \rightarrow p_1$. Each of these terms will be holomorphic as $z \rightarrow p_1$ (and hence the sum over $i$ will also be holomorphic) provided that
$$
\forall i \in [r_2]\qquad 		\bar s_1 r_2 -  \bar s_1 i +r_1 +  r_{1} \left\lfloor \frac{s_{2} (i-1)}{r_{2}} \right\rfloor - \min(r_1 \bar s_2, r_2 \bar s_1) \geq 0\,,
$$
	with the extra requirement that $\big(\frac{\tau_1/r_1}{\tau_2/r_2}\big)^{r'} \neq 1$ in case $\frac{\bar s_1}{r_1} = \frac{\bar s_2}{r_2}$.

	The same calculation goes through with $p_1 \leftrightarrow p_2$, and we obtain the second condition
$$
\forall i \in [r_1]\qquad 		\bar s_2 r_1 - \bar s_2 i +r_2 +  r_{2} \left\lfloor \frac{s_{1} (i-1)}{r_{1}} \right\rfloor - \min(r_1 \bar s_2, r_2 \bar s_1) \geq 0\,,
$$
with the extra requirement that $\big(\frac{\tau_1/r_1}{\tau_2/r_2}\big)^{r'} \neq 1$ in case $\frac{\bar s_1}{r_1} = \frac{\bar s_2}{r_2}$, i.e. the pair is non-resonant.
\qedhere

\end{proof}

\subsubsection{Definitive form}

\Cref{le:pairs} is a starting point, but the pairwise conditions are still fairly complicated to check. They can be further simplified as follows.

\begin{definition}\label{de:rij}
If $r_i,r_j \geq 1$ we denote $r_{ij} = \max(r_i,r_j)$.
\end{definition}

\begin{theorem}[Conditions for vertical globalisation]\label{th:rewriting}
	Let $(\omega_{g,n})_{(g,n) \in \mathbb{Z}_{\geq 0} \times \mathbb{Z}_{> 0}}$ be a system of correlators on a locally admissible spectral curve $\mathcal{S} = (\Sigma,x,\omega_{0,1},\omega_{0,2})$ that satisfies topological recursion. Let $q \in x(\Sigma)$ and $\{p_1,p_2\}$ a pair of points in $x^{-1}(q)$. Let $(r_k,s_k,\bar s_k, \tau_k)$ be the local parameters at $p_k$ and $\nu_k = \frac{\bar s_k}{r_k}$ the aspect ratio. Assume without loss of generality that $ \nu_1 \leq \nu_2 $. Then conditions (C1) and (C2) in \cref{le:pairs} are satisfied if and only if \textit{one of} the following conditions holds:
	\begin{enumerate}[({C}-i)]
		\item $\nu_1 \leq \frac{1}{m} \leq \nu_2$ for some integer $m \in [r_{12} - 1]$;
		\item $\nu_1 \leq \frac{1}{r_{12}}$.
	\end{enumerate}
Topological recursion can be globalised over a finite set $P \subseteq x^{-1}(q)$ provided that for any pair of points in $P$, ``(C-i) or (C-ii)'' holds and the pair is non-resonant.
\end{theorem}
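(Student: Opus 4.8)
The plan is to derive the theorem in two stages: first the equivalence ``(C1) and (C2) $\Longleftrightarrow$ (C-i) or (C-ii)'', and then the globalisation statement, which follows immediately. Indeed, once the equivalence is established the globalisation claim is nothing but \cref{le:pairs} with its hypotheses (C1)--(C2) rewritten as the dichotomy and its hypothesis (C3) restated as ``non-resonant''; so non-resonance plays no part in the argument below, which is entirely arithmetic. Throughout I would assume, as permitted, that $\nu_1 \le \nu_2$, so that $\min(r_1\bar s_2, r_2\bar s_1) = r_2\bar s_1$, and abbreviate $\mu_k = 1-\nu_k = (r_k - \bar s_k)/r_k$, noting that $\mu_k r_k \in \Z$ and $\mu_1 \ge \mu_2$.

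The first move I would make is to put $\mathfrak{d}_p$ in closed form. Using \eqref{dpidef} together with (lA3), one checks that
\begin{equation}
\label{eq:dpclosed}
\mathfrak{d}_p(i) = \big\lceil \mu_p(i-1)\big\rceil \qquad \text{for all } i \in [r_p]\,,
\end{equation}
the two admissible regimes being handled separately: if $\bar s_p = s_p$, then \eqref{eq:dpclosed} is the identity $n - \lfloor x\rfloor = \lceil n - x\rceil$ with $n = i-1$ and $x = s_p(i-1)/r_p$; if $\bar s_p = s_p - 1$, then $r_p \mid \bar s_p$, so $\mu_p \in \Z$ and the term $\lfloor (i-1)/r_p\rfloor$ that occurs in $\mathfrak{d}_p(i)$ vanishes for $i \le r_p$. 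Substituting $\lfloor s_p(i-1)/r_p\rfloor = (i-1) - \mathfrak{d}_p(i)$ into the inequalities of \cref{le:pairs}, using $\min(r_1\bar s_2, r_2\bar s_1) = r_2\bar s_1$, and dividing through, (C1) and (C2) become respectively
\begin{align}
\label{eq:C1bis}
&\forall i \in [r_2]\colon \qquad \big\lceil \mu_2(i-1)\big\rceil \le \mu_1\, i\,, \\
\label{eq:C2bis}
&\forall i \in [r_1]\colon \qquad \big\lceil \mu_1(i-1)\big\rceil \le \mu_2\, i + (\mu_1 - \mu_2)\, r_1\,.
\end{align}
Since the left-hand sides are integers, each instance is equivalent to the assertion that the closed interval bounded by the two sides meets $\Z$.

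The analysis of \eqref{eq:C1bis}--\eqref{eq:C2bis} I would organise by the size of $\mu_1$. If $\mu_1 < 0$ (i.e. $\nu_1 > 1$), the $i=1$ instance of \eqref{eq:C1bis} reads $0 \le \mu_1 < 0$ and fails, while (C-i) and (C-ii) also fail, so the equivalence is vacuous. If $\mu_1 \ge 1$ (i.e. $\nu_1 \le 0$), then the interval in \eqref{eq:C1bis} at index $i$ has length $(\mu_1-\mu_2)i + \mu_2 = 1 + (\mu_1-\mu_2)(i-1) + (\mu_1-1) \ge 1$, and the one in \eqref{eq:C2bis} has length $(\mu_1-\mu_2)(r_1-i) + \mu_1 \ge 1$; a closed interval of length $\ge 1$ always contains an integer, so (C1)--(C2) hold, and (C-ii) holds since $\nu_1 \le 0 < 1/r_{12}$. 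The substantive case is $0 \le \mu_1 < 1$, i.e. $0 < \nu_1 \le 1$ (so also $\nu_2 > 0$). For the forward implication I would argue by contraposition: assuming both (C-i) and (C-ii) fail, set $\ell = \lfloor 1/\nu_1\rfloor$; failure of (C-ii) gives $\ell \le r_{12}-1$, and then failure of (C-i) forces $1/\ell > \nu_2$, hence $\tfrac{1}{\ell+1} < \nu_1 \le \nu_2 < \tfrac{1}{\ell}$; since $\nu_2 < 1/\ell$ implies $r_2 \ge \ell+1$, the index $i = \ell+1$ is in range and a direct computation gives $\lceil\mu_2\ell\rceil = \ell > \mu_1(\ell+1)$, so \eqref{eq:C1bis} fails. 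For the converse I would plug the witnessing unit fraction back in: if (C-i) holds with $\nu_1 \le 1/m \le \nu_2$ and $m \le r_{12}-1$, then $\nu_1 \le 1/m$ already forces $m \le r_1/\bar s_1 \le r_1$, and monotonicity of the ceiling together with the elementary bound $\lfloor (i-1)/m\rfloor \ge i/m - 1$ yields $\lceil\mu_2(i-1)\rceil \le \lceil(1-\tfrac1m)(i-1)\rceil = (i-1) - \lfloor(i-1)/m\rfloor \le (1-\tfrac1m)\,i \le \mu_1\, i$, which is \eqref{eq:C1bis}; \eqref{eq:C2bis} is obtained the same way, now tracking the slack term $(\mu_1-\mu_2)(r_1-i)$, which absorbs the ceiling excess of $\lceil\mu_1(i-1)\rceil$ except near $i = r_1$, where \eqref{eq:C2bis} is tight (both sides equal $r_1 - \bar s_1$); finally, the case where (C-ii) holds with $\nu_1 > 0$ reduces to $\bar s_1 = 1$, $\nu_1 = 1/r_1 = 1/r_{12}$, $r_1 = r_{12} \ge r_2$, handled by the same estimate.

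The hard part will be precisely this floor/ceiling bookkeeping in the substantive case. The delicate points are: (i) exhibiting, for each $(\nu_1,\nu_2)$ outside the dichotomy, an index $i$ that genuinely witnesses the failure of \eqref{eq:C1bis}, and checking that it lies in $[r_2]$ --- this is where the bound on admissible denominators comes out as $r_{12}-1 = \max(r_1,r_2)-1$ and not, say, $\min(r_1,r_2)-1$, the point being that \eqref{eq:C1bis} runs over $[r_2]$ while \eqref{eq:C2bis} runs over $[r_1]$, so a separating unit fraction may have denominator as large as the larger of $r_1,r_2$; and (ii) the equality and boundary configurations ($\nu_k$ itself a unit fraction, $\bar s_k = 1$, $\nu_2 = 1$), where the interval in \eqref{eq:C1bis}--\eqref{eq:C2bis} only touches $\Z$ at an endpoint, which must be inspected one at a time using the integrality of $\bar s_k$, the bound $r_k \ge 2$, and the coprimality (lA1). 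This is the same kind of elementary-but-careful discrete analysis as in the proof of \cref{th:ale} around \eqref{eq:aleexp}, and as in the Farey-sequence arguments of \cref{S4}.
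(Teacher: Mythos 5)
Your proposal is correct, and up to the reduction step it travels the same road as the paper's proof: both first eliminate $s_k$ in favour of $\bar s_k$ (your closed form $\mathfrak{d}_p(i)=\lceil \mu_p(i-1)\rceil$ with $\mu_p=1-\nu_p$, checked in both regimes of (lA3), is exactly the paper's observation that $\lfloor s_p(i-1)/r_p\rfloor$ may be replaced by $\lfloor \bar s_p(i-1)/r_p\rfloor$), and both reduce (C1)--(C2) of \cref{le:pairs} to the integer inequality $\lceil\mu_2(i-1)\rceil\le\mu_1 i$ over $[r_2]$ together with a companion inequality over $[r_1]$. Where you genuinely diverge is the combinatorial endgame. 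The paper applies the index reversal $i\mapsto r_1+1-i$ to the second inequality, observes that it becomes \emph{literally the same} inequality as the first (only the range changes), merges the two into a single condition over $[r_{12}]$, and then classifies $\nu_1\in[0,1)$ via the partition of the interval into the sets $T$ and $\hat T=\bigsqcup_{m}\left(\tfrac{1}{m+1},\tfrac{1}{m}\right)$. You instead dispatch the ranges $\mu_1<0$ and $\mu_1\ge 1$ by an interval-length argument (correct: a closed interval of length $\ge 1$ meets $\Z$), and in the substantive range you produce the explicit failure witness $i=\ell+1$ with $\ell=\lfloor 1/\nu_1\rfloor$ --- together with the nice observation that this witness always lies in $[r_2]$ because $\nu_2\ge 1/r_2$ --- and a direct sufficiency estimate from the separating unit fraction. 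Both endgames are elementary and of comparable length; yours is more economical on the necessity side, the paper's is more uniform in $i$.

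One step needs to be tightened: the sufficiency argument for your second inequality. The claim that the slack term $(\mu_1-\mu_2)(r_1-i)$ absorbs the ceiling excess of $\lceil\mu_1(i-1)\rceil$ ``except near $i=r_1$, where the inequality is tight'' is not accurate as stated: for instance with $(r_1,\bar s_1)=(5,2)$ and $(r_2,\bar s_2)=(2,1)$ (so $m=2$ separates) the inequality is tight at $i=3$ as well as at $i=r_1=5$, and a direct absorption argument degenerates when $\mu_1=\mu_2$ (zero slack). The clean repair is precisely the index reversal the paper uses: substituting $i\mapsto r_1+1-i$ and using $\mu_1 r_1=r_1-\bar s_1\in\Z$ turns your second inequality into $\lceil\mu_2(j-1)\rceil\le\mu_1 j$ for $j\in[r_1]$, i.e.\ your first inequality over a different range, so your unit-fraction estimate (which you establish for all indices $i\ge 1$) applies verbatim. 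With that substitution made explicit the proof closes; everything else in the proposal checks out.
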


\begin{proof}
	Let us recall the conditions (C1) and (C2) of \cref{le:pairs}, using that $\min(r_1 \bar s_2, r_2 \bar s_1) = r_2 \bar s_1$ because we choose $\nu_1 \leq \nu_2$:
	\begin{enumerate}[({C}1)]
		\item $\forall i \in [r_2] \qquad 1 +  \left\lfloor \frac{s_{2} (i-1)}{r_{2}} \right\rfloor -  \nu_1 i \geq 0$ ,
		\item $\forall i \in [r_1] \qquad 1- \bar s_1+\left\lfloor \frac{s_{1} (i-1)}{r_{1}} \right\rfloor  + \nu_2 (r_1 - i) \geq 0$ .
	\end{enumerate}
	First, we argue that we can replace $s_k$ by $\bar s_k$ everywhere in these conditions. This is of course true if $s_k = \bar s_k$. Now assume that $r_1 \neq 1$ and $s_1 = \bar s_1 + 1$. Then $r_1|\bar s_1$ and:	\begin{equation}
		\left\lfloor \frac{s_{1} (i-1)}{r_{1}} \right\rfloor  = \left\lfloor \frac{\bar s_{1} (i-1)}{r_{1}}  + \frac{i-1}{r_1}\right\rfloor  = \left\lfloor \frac{\bar s_{1} (i-1)}{r_{1}} \right\rfloor \,,
	\end{equation}
	since $\frac{\bar s_{1} (i-1)}{r_{1}} \in \mathbb{Z}$ and $i \in [r_1]$. In the special case $r_1=1$, the only value of $i$ is $i=1$ and in \cref{de:localloop} we had taken the convention that $\left \lfloor \frac{s_1(i-1)}{r_1} \right \rfloor$ is equal to $0$ (even if formally $s_1 = \infty$), which is of course equal to $\left \lfloor \frac{\bar s_1 (i-1)}{r_1} \right \rfloor$. A similar argument justifies the replacement of $s_2$ with $\bar{s}_2$.
	
Eventually, we can rewrite the conditions as
		\begin{enumerate}[({C}1)]
		\item $\forall i \in [r_2] \qquad 1 +  \left\lfloor \nu_2(i-1)  \right\rfloor-  \nu_1 i \geq 0,$
		\item $\forall i \in [r_1] \qquad 1- \bar s_1+\left\lfloor \nu_1 (i-1)  \right\rfloor + \nu_2 (r_1 - i) \geq 0.$
	\end{enumerate}

	Now let us rewrite condition (C2) with $j =r_1 + 1 -i$. The conditions become
	\begin{enumerate}[({C}1)]
		\item $\forall i \in [r_2] \qquad 1 +  \left\lfloor \nu_2(i-1)  \right\rfloor-  \nu_1 i \geq 0,$
		\item $\forall j \in [r_1] \qquad 1+\left\lfloor - \nu_1 j   \right\rfloor + \nu_2 (j-1) \geq 0.$
	\end{enumerate}
	Since in this inequality two terms out of three are integers, using the fact that for any  $y \in \mathbb{R}$ and $A \in \mathbb{Z}$ the inequality $y \geq A$ is equivalent to $\lfloor y \rfloor \geq A$, we obtain the two equivalent conditions	\begin{enumerate}[({C}1)]
		\item $\forall i \in [r_2] \qquad 1 +  \left\lfloor \nu_2(i-1)  \right\rfloor+ \left \lfloor- \nu_1 i \right \rfloor \geq 0,$
		\item $\forall j \in [r_1] \qquad 1+\left\lfloor -  \nu_1j   \right\rfloor + \left \lfloor  \nu_2 (j-1)  \right \rfloor\geq 0.$
	\end{enumerate}
	In other words, the two conditions are the same, except for the range of $i$ and $j$. So let us consider condition (C1) only, with the index $i$ running from $1$ to $r_{12}$ and call it (C*).
	
	For $i=1$, we get $1 +  \left \lfloor-  \nu_1 \right \rfloor \geq 0$, which is satisfied if and only if $\nu_1 \leq 1$. If $r_1 = r_2 = 1$, we stop here. Otherwise, we have to examine the remaining conditions $1 +  \left\lfloor \nu_2(i - 1)  \right\rfloor+ \left \lfloor- \nu_1 i \right \rfloor \geq 0$ for $i \in (r_{12}]$. Since $\nu_2 \geq \nu_1$ and $\nu_1 \leq 1$ (from the case $i = 1$), we observe that
\begin{equation}
\label{3ineq}
		1 +  \left\lfloor \nu_2 (i - 1)  \right\rfloor+ \left \lfloor- i \nu_1 \right \rfloor \geq 1 +  \left\lfloor (i-1)\nu_1  \right\rfloor+ \left \lfloor- i \nu_1 \right \rfloor \geq \left \lfloor (i-1)\nu_1 - i\nu_1 \right \rfloor  \geq -1 \,.
\end{equation}
Thus, (C*) is satisfied if and only if for each $i \in (r_{12}]$ one of the inequalities in \eqref{3ineq} is strict. Introducing $f_i(x) = 2 + \lfloor x(i - 1) \rfloor + \lfloor -x i \rfloor$, condition (C*) is equivalent to having simultaneously $\nu_1 \leq 1$ and
\begin{equation}
\label{alternt0}
\forall i \in (r_{12}],\qquad f_i(\nu_1) > 0 \quad {\rm or} \quad \big(f_i(\nu_1) = 0 \quad {\rm and}\quad \lfloor \nu_2(i - 1) \rfloor > \lfloor\nu_1 (i - 1) \rfloor\big)\,.
\end{equation}
If $\nu_1 < 0$, we have
$$
f_i(\nu_1) = 2 + \lfloor -|\nu_1|(i-1) \rfloor + \lfloor |\nu_1|i \rfloor \geq 2  + \big(-|\nu_1|(i - 1) - 1\big) + (|\nu_1|i - 1) = |\nu_1| > 0\,,
$$
so the inequality $f_i(\nu_1) > 0$ is always realised. In the limit case $\nu_1 = 1$, we have $f_i(\nu_1) = 1 > 0$. To analyse the case $\nu_1 \in [0,1)$ we decompose
$$
[0,1) = \bigsqcup_{\ell = 1}^{i - 1} \big(T_{\ell}^{(i)} \sqcup \hat{T}_{\ell}^{(i)}\big),\qquad T_{\ell}^{(i)} = \left[\frac{\ell - 1}{i - 1},\frac{\ell}{i}\right],\quad \hat{T}_\ell^{(i)} = \left(\frac{\ell}{i},\frac{\ell}{i - 1}\right)\,.
$$
If $\nu_1 \in T_\ell^{(i)}$ we find that $\lfloor \nu_1(i - 1) \rfloor = \ell -1$ and $\lfloor -\nu_1 i \rfloor = - \ell$, therefore $f_i(\nu_1) = 1 > 0$. If $\nu_1 \in \hat{T}_\ell^{(i)}$ we find that $\lfloor \nu_1 (i - 1) \rfloor = \ell - 1$ while $\lfloor -\nu_1 i \rfloor = -\ell-1$, therefore $f_i(\nu_1) = 0$. In the latter case, the alternative condition $\lfloor \nu_2(i - 1) \rfloor > \lfloor \nu_1(i - 1)\rfloor = \ell - 1$ from \eqref{alternt0} reads $\nu_2 \geq \frac{\ell}{i - 1}$.

Introducing
$$
\hat{T} = \bigcup_{i = 2}^{r_{12}} \bigcup_{\ell = 1}^{i - 1} \hat{T}^{(i)}_\ell,\qquad T = [0,1] \setminus \hat{T}\,,
$$
what we have established so far, is that Condition (C*) is satisfied if and only if one of the two following conditions is satisfied
\begin{enumerate}[(i)]
\item $\nu_1 \in (-\infty,0) \sqcup T$
\item $\forall i \in (r_{12}] \quad \forall \ell \in [i-1] \qquad \nu_1 \in \hat{T}_\ell^{(i)} \,\,\, \Rightarrow\,\,\, \nu_2 \geq \frac{\ell}{i - 1}$.
\end{enumerate}

We claim that
\begin{equation}
\label{TThar} T = \left[0,\frac{1}{r_{12}}\right] \cup \left\{\frac{1}{r_{12} - 1},\frac{1}{r_{12} - 2},\ldots,\frac{1}{2},1\right\},\quad \hat{T} = \bigsqcup_{m = 1}^{r_{12} - 1} \left(\frac{1}{m + 1},\frac{1}{m}\right)\,.
\end{equation}
This comes from three basic observations. First, we have $T_1^{(r_{12})} = [0,\frac{1}{r_{12}}]$, therefore $\hat{T} \subseteq (\frac{1}{r_{12}},1)$ and thus its complement $T$ contains $[0,\frac{1}{r_{12}}]$. Second, any $x \in (\frac{1}{r_{12}},1)$ which is not of the form $\frac{1}{m}$ for some $m \in (r_{12}]$ must belong to $\hat{T}_1^{(i)} = (\frac{1}{i},\frac{1}{i - 1})$ for some $i \in (r_{12}]$, therefore does not belong to $T$. Third, for any given $m \in (r_{12}]$, $\frac{1}{m}$ belongs to $\bigcap_{i = 2}^{r_{12}} T_{\ell_i}^{(i)}$, where $\ell_i = 1 + \lfloor \frac{i}{m} \rfloor$, therefore $\frac{1}{m} \in T$. It results that condition (i) is equivalent to $\nu_1 \leq \frac{1}{r_{12}}$ or $\nu_1 = \frac{1}{m}$ for some $m \in [r_{12}]$.

We now claim that, under the assumption $\nu_1 \in \hat{T}$, condition (ii) is equivalent to the simpler condition
\begin{enumerate}[(ii')]
\item $\exists m \in [r_{12}-1] \quad \nu_1 <\frac{1}{m} \leq \nu_2$.
\end{enumerate}

First assume condition (ii) is satisfied. Since $\nu_1 \in \hat{T}$, due to \eqref{TThar} there exists $m \in [r_{12}-1]$ such that $\nu_1 \in (\frac{1}{m + 1},\frac{1}{m})$. Using (ii) with $i = m + 1$ and $\ell = 1$ shows that $\nu_2 \geq \frac{1}{m}$, therefore (ii') is satisfied.

Conversely, assume condition (ii') is satisfied. We then have $m \in [r_{12}-1]$ such that
\begin{equation}
\label{mu1j}\nu_1 < \frac{1}{m} \leq \nu_2\,,
\end{equation}
and we want to check that condition (ii) is satisfied. For $i \in [m]$, we have $\nu_1 < \frac{1}{m} \leq \frac{1}{i}$ therefore $\nu_1 \in T_1^{(i)}$, so (ii) is automatically satisfied. For $i \geq m + 1$ and $\ell \in [i - 1]$, assume that $\nu_1 \in \hat{T}_\ell^{(i)}$, i.e.
\begin{equation}
\label{mu2j} \frac{\ell}{i} < \nu_1 < \frac{\ell}{i - 1}\,.
\end{equation}
Combining \eqref{mu1j} and \eqref{mu2j} we see that $\frac{\ell}{i} < \frac{1}{m}$. This gives $\ell m < i$, and as this inequality involves only integers, we have actually $\ell m \leq i - 1$, which gives $\frac{\ell}{i - 1} \leq \frac{1}{m}$. The second inequality in \eqref{mu1j} yields $\frac{\ell}{i - 1} \leq \frac{1}{m} \leq \nu_2$, which is what we needed to check.

All in all, we have proved that, under the assumption $\nu_1 \in \hat{T}$, (ii) is equivalent to (ii'). Consequently, ``(i) or (ii)'' is equivalent to ``(i) or (ii')''. Eventually, condition (i) where we remove  the cases $\nu_1 = \frac{1}{m}$ for some $m \in [r_{12} - 1]$ is exactly condition (C-ii), and condition (ii') where we include those cases is exactly condition (C-i).
\end{proof}

\subsubsection{Applying the local criterion}

We illustrate how the globalisation criterion of \cref{th:rewriting} applies in some typical situations. Suppose that we have a non-resonant pair of points $\{p_1,p_2\} \subseteq x^{-1}(q)$. Then, globalisation holds provided (C-i) or (C-ii) holds.
\begin{itemize}
\item If $\omega_{0,1}$ has a pole at $p_1$ (i.e. $\bar s_1 \leq 0$), (C-ii) holds. 
\item If $\omega_{0,1}$ is holomorphic but nonzero at $p_1$ (i.e. $\bar s_1 =s_1= 1$) and $r_2 > r_1$, then (C-i) holds with $m=r_1 < r_{12} = r_2$. Likewise if $\omega_{0,1}$ is holomorphic but nonzero at $p_2$, with $m=r_2$.
\item If $\nu_2 \geq 1$ and we want (C-i) or (C-ii), then we must have $\nu_1 \leq 1$ (this realises (C-ii) if $r_1=r_2=1$ or (C-i) with $m=1$ if $r_{12} > 1$). In particular, if $p_1$ and $p_2$ are unramified ($r_1=r_2=1$), this means that $\omega_{0,1}$ cannot have a zero both at $p_1$ and $p_2$. 
\item Suppose that $\bar s_1, \bar s_2 \geq 2$, $r_1, r_2 \geq 2$,  and $0 < \nu_1 \leq \nu_2 < 1$. Then (C-ii) is never satisfied. We have $\bar s_1 = s_1$ and $\bar s_2 = s_2$ (because $\nu_k \in (0,1)$ amounts to $\bar{s}_k \in (0,r_k)$, and thus $\bar{s}_k$ cannot be divisible by $r_k$). By local admissibility, we can write $r_k = a_k s_k + b_k$, with $0 \leq a_k < r_k$ and $b_k \in \{1, s_k-1\}$. We claim that (C-i) is satisfied if and only if $a_1 > a_2$. 
\end{itemize}

To justify the last claim, we rewrite (C-i) as the existence of $m \in [r_{12}-1]$ such that $\frac{r_2}{s_2} \leq m \leq \frac{r_1}{s_1}$, which is equivalent to $a_2 + \frac{b_2}{s_2} \leq m \leq a_1 + \frac{b_1}{s_1}$.  This is clearly satisfied if $a_1 > a_2$, by taking $m = a_1 < r_1 \leq r_{12} - 1$. This is clearly not satisfied for $a_1 < a_2$. And, it were satisfied for $a_1 = a_2$, we would have an integer $m' = m - a_1$ such that $\frac{b_2}{s_2} \leq m' \leq \frac{b_1}{s_1}$, which is impossible since $\frac{b_k}{s_k} \in (0,1)$ for $k = 1,2$. 

Here is a noteworthy consequence of the two first points. Assume that $P \subseteq P' \subseteq x^{-1}(q)$ are given, that we already know that globalisation over $P$ holds, and that $P' \setminus P$ consists only of points where $\omega_{0,1}$ does not have a zero. Then, if any pair $\{p_1,p_2\}$ with $p_1 \in P' \setminus P$ and $p_2 \in P'$ is non-resonant, we can conclude that globalisation over $P'$ holds.

\subsection{Globalisation above an open set}

We go back to \cref{pr:globalTRdomain}. Given a spectral curve, to globalise topological recursion above an open set $V \subseteq x(\Sigma) \subseteq \mathbb{P}^1$ (or above the whole $x(\Sigma)$), we need to verify the conditions of \cref{th:rewriting} for all pairs of points in $x^{-1}(q)$, and this for each $q \in V$ (or $q \in x(\Sigma)$). This looks like a daunting task. Fortunately, for all points $q \in x(\Sigma) $ that are not branch points of $x$, the conditions that guarantee vertical globalisation are rather simple and easy to check. This is the content of the next lemma.

\begin{lemma}[Conditions for unramified vertical globalisation]\label{th:unramified}
	Let $(\omega_{g,n})_{(g,n) \in \mathbb{Z}_{\geq 0} \times \mathbb{Z}_{>0}}$ be a system of correlators on a locally admissible spectral curve $\mathcal{S} = (\Sigma,x,\omega_{0,1},\omega_{0,2})$ that satisfies topological recursion. Suppose that $q \in x(\Sigma) $ is not a branch point  and let $x^{-1}(q) = \{ p_1, \ldots, p_d\}$. Then topological recursion can be globalised above $q$ if the following two conditions are satisfied:
	\begin{itemize}
		\item At most one of the $p_i$ has $\bar s_i \geq 2$ (i.e. the $1$-form $\omega_{0,1}$ cannot vanish at two distinct points in the same fibre of $x$).
		\item For any $i,j$ such that $\bar s_i = \bar s_j$, we have $\tau_i \neq \tau_j$ (i.e. if the $1$-form $\omega_{0,1}$ has the same order at $p_i$ and at $p_j$, it must have distinct leading order coefficients when read in the standard coordinate).
	\end{itemize}
	When these two conditions are met we say that $\omega_{0,1}$ \emph{separates fibres at $q$}.
\end{lemma}

\begin{proof}
	Since $q \not\in \mathsf{Br}$, we have $r_i=1$ for all $i \in [d]$. In particular the standard coordinate is unambiguously defined. Therefore, by \cref{th:rewriting},  for all pairs $\{p_i, p_j\} \in x^{-1}(q)$, with the ordering $\bar s_i \leq \bar s_j$,  we would like to verify $\bar s_i \leq 1$ (condition (C-i) or (C-ii)). Since this has to hold for all pairs of points in $x^{-1}(q)$, we conclude that at most one $p_i$ can have $\bar s_i \geq 2$. Finally, condition (C3) of \cref{le:pairs} imposes that $\tau_i \neq \tau_j$ whenever $\bar s_i = \bar s_j$.
\end{proof}

Daunting it is no more! To show that topological recursion can be globalised above an open set $V \subseteq x(\Sigma)$, or fully globalised, all we have to check is that topological recursion can be globalised vertically above all branch points (by verifying the conditions in \cref{th:rewriting}, this is a finite number of tests), and that $\omega_{0,1}$ separates fibres at other points, in the sense of \cref{th:unramified}.

\subsection{Digression: local vs. global topological recursion}
\label{Sec:intermz}
In order to handle topological recursion on the normalisation of possibly singular spectral curves, \cite{BKS20} identified sufficient conditions as well as necessary conditions for the global topological recursion to be well defined, i.e. for the residue formula  with a sum over subsets $Z \subseteq \mathfrak{f}'(z)$ of the whole fibre  (instead of just subsets $Z \subseteq \mathfrak{f}'_p(z)$ of the local fibre at $p$ in \eqref{eq:TR}) to produce symmetric multi-differentials.  This is a weaker property than globalisation: the latter requires the correlators produced by global topological recursion to coincide with those produced by local topological recursion \eqref{eq:TR}.

It is worth pausing for a moment to compare our results with theirs. While the sufficient conditions found in \cite{BKS20} could potentially be weaker than the sufficient conditions for globalisation given in \cref{th:rewriting} (since global topological recursion could potentially produce symmetric differentials that differ from those produced by local topological recursion), the necessary conditions found in \cite{BKS20} should be implied by the sufficient conditions of \cref{th:rewriting} (since if topological recursion can be globalised, then certainly global topological recursion should produce symmetric differentials). Yet, there are three small differences between the setting of spectral curves adopted in the present article and in \cite{BKS20}. First, they allow $\bar s_p = \infty$, i.e. $\omega_{0,1}$ can be identically zero on some connected component, but we do not. Second, we allow $\bar s_p \neq s_p$, while they impose $\bar s_p = s_p$ (i.e. $r_p$ does not divide $\bar s_p$). Third, we allow $s_p \leq -1$, while they impose $s_p > 0$. For this reason, we quote results from \cite{BKS20} only after specialisation to the intersection of the two setting.

Let us start by checking that the necessary conditions for global topological recursion to be well-defined are implied by the sufficient conditions for globalisation above any branch point found in \cref{th:rewriting}.

\begin{theorem}\cite[Theorem 2.13 and Proposition 2.16]{BKS20} \label{th:BKSnecessary}
  Let $\mathcal{S}$ be a finite spectral curve such that, for any $p \in \mathsf{Ram}$, we have $s_p = \bar s_p \in \mathbb{Z}_{> 0}$ and any pair of points in the fiber $x^{-1}(x(p))$ is non-resonant. Then the global topological recursion produces symmetric $n$-differentials $ \omega_{0,n}$ for any $n \geq 3$, if and only if for any branch point $q$ the three following conditions are simultaneously satisfied
  \begin{itemize}
    \item[(C-a)] for any $p \in x^{-1}(q)$, we have $ r_p = \pm 1\,\,{\rm mod}\,\,s_p$;
    \item[(C-b)] for all pairs $\{p_1,p_2\} \subseteq x^{-1}(q)$ such that $\min(s_1,s_2) \geq 3$ and for which there exists $\epsilon \in \{\pm 1\}$ with $r_k = \epsilon\,\,{\rm mod}\,\,s_k$ for $k = 1,2$, we have $\lfloor \frac{1}{\nu_1} \rfloor \neq \lfloor \frac{1}{\nu_2} \rfloor$;
    \item[(C-c)] for all triplets $\{p_1,p_2,p_3\} \subseteq x^{-1}(q)$ such that $ \lfloor \frac{1}{\nu_1} \rfloor = \lfloor \frac{1}{\nu_2} \rfloor = \lfloor \frac{1}{\nu_3} \rfloor$, then $s_{k}= 1$ for some $k \in \{1,2,3\}$.
  \end{itemize}
\end{theorem}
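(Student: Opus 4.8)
This statement is \cite[Theorem 2.13 and Proposition 2.16]{BKS20} read in our language; I will only sketch the argument and then explain how it dovetails with \cref{th:rewriting}. The plan for the ``only if'' direction is to reduce everything to $\omega_{0,3}$: the global topological recursion builds $\omega_{0,n}$ by induction on $n$, and the first place where non-symmetry can appear is $n=3$, so it suffices to determine when the global recursion formula for $\omega_{0,3}(w_0,w_1,w_2)$ produces a differential symmetric in its three arguments. Writing this formula as a sum of residues over the points of $x^{-1}(q)$, expanding the recursion kernel and $\omega_{0,2}$ in a standard coordinate at each such point, and evaluating the residues, the symmetry requirement turns into a finite list of identities among the leading coefficients $\tau_p$ and the integers $r_p,s_p$. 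I would then sort the summation subsets $Z\subseteq\mathfrak{f}'(z)$ by how they meet the local fibres: those contained in a single $\mathfrak{f}'_p(z)$ reproduce the computation of \cite{BBCCN18} and force $r_p=\pm1\,\,{\rm mod}\,\,s_p$, which is (C-a); those straddling exactly two local fibres $\mathfrak{f}'_{p_1}(z)$ and $\mathfrak{f}'_{p_2}(z)$ must contribute a vanishing residue, an analysis of the same shape as the pairwise computation in the proofs of \cref{pr:sufficient2} and \cref{le:pairs}, with the non-resonance hypothesis used to discard a potentially vanishing leading coefficient, and this yields (C-b); the genuinely three-fibre contributions give (C-c). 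For the converse, the same identities, once (C-a)--(C-c) are granted, suffice to carry symmetry through the induction in $n$; this is exactly the output of the Airy-structure construction of \cite{BBCCN18}.

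It is worth double-checking consistency with \cref{th:rewriting}. Since globalisation of topological recursion is strictly stronger than the bare statement ``the global recursion produces symmetric $\omega_{0,n}$'' (globalisation additionally forces the global correlators to agree with the local ones of \eqref{eq:TR}), the sufficient conditions of \cref{th:rewriting} must imply (C-a)--(C-c), and I would verify this directly. Condition (C-a) is precisely (lA2) of \cref{de:localadm}, hence built into the hypothesis of \cref{th:rewriting}. For (C-b), in the present setting $\nu_k=s_k/r_k$, and writing $r_k=a_ks_k+b_k$ with $b_k\in\{1,s_k-1\}$ one has $\lfloor 1/\nu_k\rfloor=a_k$; by the arithmetic already carried out at the end of \cref{S33} and encapsulated in \eqref{TThar}, for a pair with $\min(s_1,s_2)\geq 3$ the alternative ``(C-i) or (C-ii)'' of \cref{th:rewriting} holds if and only if $a_1\neq a_2$, that is if and only if $\lfloor 1/\nu_1\rfloor\neq\lfloor 1/\nu_2\rfloor$, which is (C-b). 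For (C-c), if a triple satisfies $\lfloor 1/\nu_1\rfloor=\lfloor 1/\nu_2\rfloor=\lfloor 1/\nu_3\rfloor$ and every $s_k\geq 2$, then no pair among the three satisfies (C-i) (that would need the corresponding $a$'s to differ) nor (C-ii) (which in this regime forces an $s_k=1$), so topological recursion cannot be globalised over the triple; hence some $s_k=1$. The pairs in which some point has $s_k\leq 2$ turn out to be admissible for free, which is why the floor condition in (C-b) is only imposed for $\min(s_1,s_2)\geq 3$.

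The main obstacle is the explicit local expansion of $\omega_{0,3}$ and the bookkeeping of which residue terms survive: this is where the three conditions are genuinely produced, and it is combinatorially heavy, so I would quote \cite{BKS20} for it rather than reproduce it. A secondary point demanding care is that the spectral-curve conventions of \cite{BKS20} differ slightly from ours (they allow $\bar s_p=\infty$ but forbid $\bar s_p\neq s_p$ and $s_p\leq-1$), which is why the statement is phrased only over the intersection of the two settings, namely under the hypothesis $s_p=\bar s_p\in\mathbb{Z}_{>0}$.
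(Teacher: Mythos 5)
This statement is quoted in the paper directly from \cite[Theorem 2.13 and Proposition 2.16]{BKS20} with no proof supplied, and your proposal ultimately does the same thing: the substantive symmetry analysis of the global recursion is deferred to \cite{BKS20}, so the approaches coincide. Your sketch of how that external proof goes (reduction to $\omega_{0,3}$, sorting the subsets $Z$ by how many local fibres they meet) is plausible but is neither carried out by you nor by the paper, so it cannot be checked here. The consistency check you append is not part of this theorem at all --- it is precisely the content of the lemma that immediately follows it in the paper (``(C-i) or (C-ii)'' implies (C-a), (C-b), (C-c)) --- and your argument for it matches the paper's: (C-a) from (lA2), (C-b) via the observation that $\tfrac{1}{\nu_k}=\tfrac{r_k}{s_k}$ is a non-integer irreducible fraction when $s_k\geq 3$ so that (C-i) forces the floors to differ, and (C-c) by showing that equal floors together with ``(C-i) or (C-ii)'' force some $s_k=1$.
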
 

\begin{lemma}{}{}
  Let $\mathcal{S}$ be a locally admissible finite spectral curve such that $\bar s_p = s_p \in \mathbb{Z}_{> 0}$ for any $p \in x^{-1}(\mathsf{Br})$. The condition ``(C-i) or (C-ii)'' of \cref{th:rewriting} imply the conditions ``(C-a) and (C-b) and (C-c)'' of \cref{th:BKSnecessary}.
\end{lemma}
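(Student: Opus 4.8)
The plan is to exploit that, under the standing hypothesis $\bar s_p = s_p \in \mathbb{Z}_{>0}$ for all $p \in x^{-1}(\mathsf{Br})$, every point over a branch point is in fact a ramification point, since $s_p$ being finite forces $r_p \geq 2$. Local admissibility then supplies, for each such $p$, the coprimality $\gcd(r_p,s_p)=1$, the bound $s_p \leq r_p+1$, the congruence $r_p \equiv \pm 1 \bmod s_p$, and the inequality $\nu_p = \tfrac{\bar s_p}{r_p} \leq 1$. Condition (C-a) is then exactly the congruence just recorded, so it requires no argument. For (C-b) and (C-c) I would isolate one pairwise statement and then feed into it the hypothesis that ``(C-i) or (C-ii)'' of \cref{th:rewriting} holds for every pair of points above every branch point.

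The pairwise statement I would prove is: if $q$ is a branch point and $p_1,p_2 \in x^{-1}(q)$ both have $s_k \geq 2$, then, labelling them so that $\nu_1 \leq \nu_2$, validity of ``(C-i) or (C-ii)'' for this pair forces $\lfloor 1/\nu_1 \rfloor > \lfloor 1/\nu_2 \rfloor$. The proof is the computation already run in the discussion following \cref{th:rewriting}, specialised to $s_1,s_2 \geq 2$: coprimality rules out $s_k = r_k$ and $\nu_k \leq 1$ rules out $s_k = r_k+1$, so $s_k < r_k$ and $\nu_k \in (0,1)$; writing $r_k = a_k s_k + b_k$ with $b_k \in \{1,s_k-1\}$, one has $b_k/s_k \in (0,1)$, hence $\lfloor 1/\nu_k \rfloor = \lfloor r_k/s_k \rfloor = a_k$; condition (C-ii) is impossible because $\nu_1 = s_1/r_1 \geq 2/r_1 > 1/r_1 \geq 1/r_{12}$; so (C-i) holds, i.e. there is an integer $m \in [r_{12}-1]$ with $r_2/s_2 \leq m \leq r_1/s_1$, and since $\lceil r_2/s_2 \rceil = a_2+1$ while $\lfloor r_1/s_1 \rfloor = a_1$, this is possible only when $a_1 > a_2$, that is $\lfloor 1/\nu_1 \rfloor > \lfloor 1/\nu_2 \rfloor$.

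Granting the pairwise statement, (C-b) follows at once: any pair with $\min(s_1,s_2) \geq 3$ has $s_1,s_2 \geq 2$, so $\lfloor 1/\nu_1 \rfloor \neq \lfloor 1/\nu_2 \rfloor$ --- the auxiliary clause in (C-b) about a common $\epsilon$ is not even used. For (C-c), take a triple $\{p_1,p_2,p_3\} \subseteq x^{-1}(q)$ with $\lfloor 1/\nu_1 \rfloor = \lfloor 1/\nu_2 \rfloor = \lfloor 1/\nu_3 \rfloor$; if every $s_k$ were $\geq 2$, then applying the pairwise statement to two of them (after ordering the corresponding aspect ratios) would contradict the equality of the three floors. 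Hence $s_k = 1$ for some $k$, which is (C-c).

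I do not expect a genuine obstacle: the argument is a repackaging of the pairwise analysis underlying \cref{th:rewriting}, and the care needed is purely in the bookkeeping. Two small points are worth watching. First, the identity $\lfloor 1/\nu_k \rfloor = a_k$ must be checked to survive the borderline value $s_k = 2$, where $\{1,s_k-1\}$ collapses to $\{1\}$ but $b_k/s_k = 1/2$ still lies in $(0,1)$. Second, one should note that it is the hypothesis placing $\bar s_p = s_p > 0$ at \emph{every} point over a branch point --- not merely at the ramification points, as in \cref{th:BKSnecessary} --- that lets one treat each $p \in x^{-1}(q)$ as a ramification point with $\nu_p \leq 1$, and thereby discard (C-ii) inside the pairwise statement.
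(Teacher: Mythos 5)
Your argument is essentially the paper's: (C-a) is read off directly from local admissibility, and (C-b), (C-c) come from the same floor/ceiling arithmetic on $\tfrac{1}{\nu_k}=\tfrac{r_k}{s_k}$ that underlies \cref{th:rewriting}. Packaging it as a single pairwise statement ($s_1,s_2\ge 2$ together with ``(C-i) or (C-ii)'' forces $\lfloor 1/\nu_1\rfloor>\lfloor 1/\nu_2\rfloor$) is a clean way to obtain both (C-b) and (C-c) at once, and your remark that the $\epsilon$-clause in (C-b) is never used is correct.

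One step needs repair. You assert that local admissibility supplies $\nu_p\le 1$ and use this to rule out $s_k=r_k+1$, so as to place yourself in the regime $0<\nu_1\le\nu_2<1$ of the discussion following \cref{th:rewriting}. That inequality is false in general: the type $(r_p,s_p)=(r_p,r_p+1)$ (a simple zero of $y$ at a ramification point) is locally admissible, satisfies $\bar s_p=s_p>0$, and has $\nu_p=1+\tfrac{1}{r_p}>1$ (compare the first line of \cref{Fig:sum}, notwithstanding the remark after \cref{de:aspectratio}). The gap is harmless but should be closed explicitly. If $\nu_1>1$, then $\tfrac1m\le 1<\nu_1$ for every $m\ge 1$ and $\tfrac{1}{r_{12}}<\nu_1$, so both (C-i) and (C-ii) fail and the pairwise statement is vacuous for that pair. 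If instead $\nu_2>1\ge\nu_1$, the decomposition $r_2=a_2s_2+b_2$ with $b_2\in\{1,s_2-1\}$ still holds, now with $a_2=0$ and $b_2=s_2-1$, so $\lfloor 1/\nu_2\rfloor=a_2$ and $\lceil r_2/s_2\rceil=a_2+1$ remain valid and the criterion $\lceil r_2/s_2\rceil\le\lfloor r_1/s_1\rfloor$ still reads $a_1>a_2$. With that caveat addressed, the proof is complete and matches the paper's.
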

\begin{proof}
Since $s_p > 0$, local admissibility imposes $r_p = \pm 1 \,\,{\rm mod}\,\,s_p$ , so (C-a) is part of the assumptions.

If a pair $\{p_1,p_2\} \subseteq x^{-1}(q)$ has $\min(s_1,s_2) \geq 3$ and satisfy $r_k = \pm 1\,\,{\rm mod}\,\,s_k$ for $k = 1,2$, then $\frac{1}{\nu_k} = \frac{r_k}{s_k}$ is an irreducible fraction which cannot be an integer. If (C-i) is satisfied this implies that there is an integer in $(\frac{1}{\nu_1},\frac{1}{\nu_2})$ therefore $\lfloor \frac{1}{\nu_1} \rfloor = \lfloor \frac{1}{\nu_2} \rfloor$ which is the conclusion of (C-b). If (C-ii) holds, then the premise of (C-b) does not and there is nothing to check.

Assume we have a triplet $\{p_1,p_2,p_3\} \subseteq x^{-1}(q)$ such that $\lfloor \frac{1}{\nu_1} \rfloor = \lfloor \frac{1}{\nu_2} \rfloor = \lfloor \frac{1}{\nu_3} \rfloor$. If (C-i) is satisfied for the pair $\{p_1,p_2\}$, then $\nu_1 = \nu_2$ is an integer, in particular $s_1 = s_2 = 1$. If (C-ii) is satisfied for the pair $\{p_1,p_2\}$, then $\nu_1 > 0$ imposes $r_1 \geq r_2$ and $\nu_1 = \frac{1}{r_1}$, hence $s_1 = 1$. So, the conclusion of (C-c) holds.
\end{proof}

While sufficient conditions for globalisation should imply necessary conditions for global topological recursion to be well defined, there need not be a logical relation between sufficient conditions for globalisation and sufficient conditions for  global topological recursion to be well-defined. In fact, the next lemma shows that the sufficient conditions for global topological to be well defined from \cite{BKS20} are weaker than the sufficient conditions for globalisation from \cref{th:rewriting}: there are exceptional cases that do not satisfy the conditions of \cref{th:rewriting} for which global topological recursion produces symmetric differentials.

\begin{theorem}\cite[Theorem 2.11]{BKS20} \label{th:BKSsufficient}
  Let $\mathcal{S}$ be a locally admissible finite spectral curve such that, for any $p \in x^{-1}(\mathsf{Br})$, we have $s_p = \bar s_p \in \mathbb{Z}_{> 0}$ and any pair of points in the fiber $x^{-1}(x(p))$ is non-resonant. Above any branch point $q$, order the points in $x^{-1}(q)$ as  $p_1,\ldots,p_d$ such that $\nu_1 \leq \nu_2 \leq \cdots \leq \nu_d$. Assume that above any branch point $q$ we have
$$
r_1 = -1 \,\,{\rm mod}\,\,s_1\qquad {\rm and}\quad \big(\forall k \in \{2,\ldots,d-1\}\quad\,\,\, s_k = 1\big)\qquad {\rm and} \qquad r_d = 1\,\,{\rm mod}\,\,s_d\,.
$$
Then global topological recursion (with sum over subsets $Z \subseteq \mathfrak{f}'(z)$ of the full fibre) is well defined.
\end{theorem}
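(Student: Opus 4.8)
The statement is quoted from \cite[Theorem~2.11]{BKS20}, so strictly speaking its proof is just a pointer to that reference; let me nonetheless sketch the strategy and first explain why it does \emph{not} reduce to \cref{th:rewriting}. The hypotheses here are strictly weaker than ``(C-i) or (C-ii) holds for every pair in the fibre'': for instance the fibre above a branch point may consist of just two points $p_1,p_2$ with $\bar s_k=s_k$ and local parameters $(r_1,s_1)=(5,3)$, $(r_2,s_2)=(4,3)$, so that $\nu_1=\tfrac{3}{5}<\nu_2=\tfrac{3}{4}$, $r_1\equiv-1\pmod{s_1}$, $r_2\equiv1\pmod{s_2}$, and the pair is non-resonant; but then $r_{12}=5$, the interval $[1/\nu_2,1/\nu_1]=[\tfrac{4}{3},\tfrac{5}{3}]$ contains no integer, and $\nu_1=\tfrac{3}{5}>\tfrac{1}{5}=1/r_{12}$, so neither (C-i) nor (C-ii) is satisfied. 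Hence \cref{th:rewriting} gives nothing in such a case, symmetry of the global correlators cannot be inherited from that of the local ones through \cref{th:ale}, and a genuinely different argument is needed.

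The plan is to run the proof of \cref{th:ale} \emph{mutatis mutandis}, replacing the local fibres $\mathfrak{f}_p$ (and $\mathfrak{f}'_p$) by the full fibres $\mathfrak{f}$ (and $\mathfrak{f}'$) throughout. One defines the global correlators by the global residue formula --- which makes sense because $\mathcal{S}$ is locally admissible (so the recursion kernels $K^{(p)}$ are well-defined) and finite (so each residue is taken over a finite set) --- and then proves by induction on $2g-2+n$ that they are symmetric. As in the local case this rests on two ingredients: \emph{(i)} the global abstract loop equations, asserting that near each branch point $q$ the monodromy-invariant combinations $\mathcal{E}_{g,i;n}(z;w_{[n]})=\sum_{Z\subseteq\mathfrak{f}(z),\,|Z|=i}\mathcal{W}_{g,i;n}(Z;w_{[n]})$ (now summed over the whole fibre) are pullbacks by $x$ of degree-$i$ differentials on a neighbourhood of $q$ whose pole order at $q$ is bounded by a single $\mathfrak{d}(i)$ valid at every sheet of $x^{-1}(q)$; and \emph{(ii)} a projection property for the global correlators. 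Granting (i) and (ii), the correlators are symmetric by the same two-residue manipulation that deduces symmetry from the loop equations and the projection property in the local setting (see \cite{BBCCN18,BKS20}).

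The main obstacle is ingredient (i): the global quadratic (and higher) loop equations. Estimating the order of $\mathcal{E}_{g,i;n}(z;w_{[n]})$ as $z\to p_j$ for each sheet $p_j$ of the fibre --- expanding in a standard coordinate and using the inductive hypothesis together with the global recursion to control every monomial contribution (the $\omega_{0,1}(z')$ factors from the kernel denominators, the products of lower correlators evaluated at points of the fibre, and so on) --- yields, sheet by sheet, a family of floor-function inequalities of exactly the type of (C1)--(C2) in \cref{le:pairs} and of those in the proof of \cref{th:ale}. The ordering $\nu_1\leq\cdots\leq\nu_d$ together with $r_1\equiv-1\pmod{s_1}$, $r_d\equiv1\pmod{s_d}$, and $s_k=1$ (hence $\nu_k=1/r_k$, the maximally irregular case) at the intermediate sheets is precisely the combinatorial input making all of these inequalities hold simultaneously: the two extreme sheets carry the congruences that make the kernel contributions there admissible, while a maximally irregular intermediate sheet absorbs the remaining slack. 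Carrying out this exponent bookkeeping --- verifying that the minimum over the fibre of the relevant exponent is nonnegative for each $i$ --- is the technical heart of the proof; this is what \cite[Section~2]{BKS20} does, and I would follow it line by line, seeing no essentially shorter route.
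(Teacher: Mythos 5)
This statement is an external citation (\cite[Theorem~2.11]{BKS20}); the paper supplies no proof of it, only the surrounding comparison in \cref{pr:notthesame}. You correctly recognise this, and your treatment is appropriate: you defer the technical content to the reference while explaining why the result cannot be obtained as a corollary of \cref{th:rewriting}. Your example with local parameters $(r_1,s_1)=(5,3)$, $(r_2,s_2)=(4,3)$ is a valid instance of the exceptional case (one checks $5\equiv-1 \bmod 3$, $4\equiv 1\bmod 3$, $\lfloor 1/\nu_1\rfloor=\lfloor 1/\nu_2\rfloor=1$ with $\min(s_1,s_2)\geq 2$, so the BKS hypotheses hold while ``(C-i) or (C-ii)'' fails); this is exactly the phenomenon the paper records in \cref{pr:notthesame}, whose own example is $(3,2)$, $(4,3)$. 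Your sketch of the BKS strategy (global residue formula, global abstract loop equations plus projection property, symmetry by the two-residue exchange, with the congruences at the extreme sheets and $s_k=1$ at intermediate sheets driving the exponent bookkeeping) is a fair account of the cited proof; the only imprecision is that the pole-order bounds in the loop equations remain sheet-dependent rather than governed by ``a single $\mathfrak{d}(i)$'', but since you are summarising an external argument rather than supplying one, this does not constitute a gap.
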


\begin{lemma}\label{pr:notthesame}
Let $\mathcal{S}$ be a locally admissible finite spectral curve such that, for any $p \in x^{-1}(\mathsf{Br})$, we have $s_p = \bar s_p \in \mathbb{Z}_{> 0}$ and any pair of points in the fiber $x^{-1}(x(p))$ is non-resonant.  Let $q \in \mathsf{Br}$ and order the points in $x^{-1}(q)$ as above.

If $|x^{-1}(q)| > 2$, then the sufficient conditions from \cref{th:BKSsufficient} imply the sufficient conditions ``(C-i) or (C-ii)'' for any pair of points in $x^{-1}(q)$ from \cref{th:rewriting}.

The same holds for $|x^{-1}(q)| = 2$, except when $ (r_1, s_1, r_2, s_2)$ are such that $\min(s_1,s_2) \geq 2$  and $\lfloor \frac{1}{\nu_1} \rfloor = \lfloor \frac{1}{\nu_2} \rfloor$, in which case the pair $x^{-1}(q)$ does not satisfy ``(C-i) or (C-ii)''.
\end{lemma}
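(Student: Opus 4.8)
The plan is to prove both halves by a direct case analysis over the unordered pairs of points in $x^{-1}(q)$, exploiting the explicit shape of the hypotheses. First note that the assumption $s_p = \bar s_p \in \mathbb{Z}_{>0}$ on $x^{-1}(\mathsf{Br})$ forces every point of $x^{-1}(q)$ to be ramified — an unramified point has $s_p = \infty$ — so $r_k \geq 2$ for all $k$, and by local admissibility (lA1) $\gcd(s_k,r_k)=1$, hence each $\nu_k = \bar s_k/r_k = s_k/r_k$ is a fraction in lowest terms. Order the points as $p_1,\dots,p_d$ with $\nu_1\leq\cdots\leq\nu_d$; then \cref{th:BKSsufficient} gives $r_1\equiv -1\bmod s_1$, $r_d\equiv 1\bmod s_d$, and $s_k=1$ (so $\nu_k=1/r_k$) for each $k\in\{2,\dots,d-1\}$. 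Finally recall that ``(C-i) or (C-ii)'' of \cref{th:rewriting} for a pair $\{p_a,p_b\}$ with $\nu_a\leq\nu_b$ reads: \emph{either} there is an integer $m\in[\max(r_a,r_b)-1]$ with $1/\nu_b\leq m\leq 1/\nu_a$, \emph{or} $\nu_a\leq 1/\max(r_a,r_b)$.

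\textbf{Pairs not of the form $\{p_1,p_d\}$.} For $\{p_i,p_j\}$ with $1<i,j<d$, say $r_i\geq r_j$, one has $\nu_i=1/r_i=1/\max(r_i,r_j)$, so (C-ii) holds. For $\{p_1,p_k\}$ with $1<k<d$: from $\nu_1\leq\nu_k=1/r_k$ we get $r_k\leq r_1/s_1\leq r_1$, hence $\max(r_1,r_k)=r_1$; if $s_1=1$ then $\nu_1=1/r_1$ and (C-ii) holds, while if $s_1\geq2$ then $m:=\lfloor r_1/s_1\rfloor$ lies in $[r_k,\,r_1/s_1]\subseteq[1,\,r_1-1]$ and realises (C-i) since $1/\nu_k=r_k\leq m\leq 1/\nu_1$. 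The pair $\{p_k,p_d\}$ is handled symmetrically (taking $m:=\lceil r_d/s_d\rceil$ when $s_d\geq2$, and falling back to the interior argument when $s_d=1$, as then $\nu_d=1/r_d$). So all these pairs are fine regardless of $d$.

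\textbf{The extreme pair $\{p_1,p_d\}$.} If $d\geq3$, pick any interior $p_k$; by the ordering $\nu_1\leq\nu_k\leq\nu_d$ the ramification order $r_k$ is an integer with $1/\nu_d=r_d/s_d\leq r_k\leq r_1/s_1=1/\nu_1$. When $s_1=1$ and $r_1\geq r_d$ one has $\max(r_1,r_d)=r_1$ and $\nu_1=1/r_1$, so (C-ii) holds; in every other case ($s_1\geq2$, or $s_1=1$ with $r_1<r_d$) one checks $r_k<\max(r_1,r_d)$, so $m:=r_k$ meets the range constraint and gives (C-i). If $d=2$: when $\min(s_1,s_2)=1$ the same elementary estimates as above (distinguishing $s_1=1$ from $s_2=1$, and using (C-ii) or $m=\lceil r_2/s_2\rceil$, $m=\lfloor r_1/s_1\rfloor$ accordingly) show ``(C-i) or (C-ii)'' holds. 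When $\min(s_1,s_2)\geq2$, (C-ii) fails because $\nu_1=s_1/r_1\geq 2/r_1>1/\max(r_1,r_2)$, and (C-i) amounts to the existence of an integer in $[r_2/s_2,\,r_1/s_1]$ (any such integer automatically lies in $[1,\max(r_1,r_2)-1]$); since $\gcd(s_k,r_k)=1$ with $s_k\geq2$ both endpoints are non-integral, so such an integer exists if and only if $\lfloor r_2/s_2\rfloor\neq\lfloor r_1/s_1\rfloor$, i.e.\ $\lfloor 1/\nu_1\rfloor\neq\lfloor 1/\nu_2\rfloor$. This produces the positive statement for $d\geq3$ and, for $d=2$, exactly the stated exception.

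I expect the only genuine work to be the bookkeeping in the last two steps: pinning down the floor/ceiling estimates so that the chosen integer $m$ always lands in the admissible window $[1,\max(r_a,r_b)-1]$, and isolating the boundary situations in which $m$ would hit $\max(r_a,r_b)$ — these are precisely the configurations that force some $s_\bullet=1$ and are then rescued by (C-ii). Conceptually, the one observation that makes the $d\geq3$ case of $\{p_1,p_d\}$ work is that the ramification order of \emph{any} intermediate point is an integer lying in $[1/\nu_d,1/\nu_1]$; this supply of a ``free'' integer disappears when $d=2$, which is exactly why the exception is genuine there.
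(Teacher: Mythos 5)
Your proof is correct and follows essentially the same route as the paper's: for $d\geq 3$ the extremal pair is handled by using the ramification order of an interior point as the integer $m$ in (C-i), and for $d=2$ with $\min(s_1,s_2)\geq 2$ the exception is characterised by the (non-)existence of an integer between $\frac{1}{\nu_2}$ and $\frac{1}{\nu_1}$, whose endpoints are non-integral by coprimality. If anything you are slightly more careful than the paper's write-up, in that you explicitly verify the window constraint $m\leq r_{12}-1$ in each case and you cover the $d=2$ sub-case $s_1\geq 2$, $s_2=1$, which the paper's proof passes over silently.
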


\begin{proof}
Assume the premises of \cref{th:BKSsufficient}. First consider $|x^{-1}(q)| > 2$. For any pair of points $\{p_i,p_j\} \in x^{-1}(q)$ there are two possibilities:
\begin{itemize}
\item either one of them is equal to $1$. Say it is $s_i$. Then $\nu_i = \frac{1}{r_i}$, which implies that the pair satisfies``(C-i) or (C-ii)'';
\item or the pair is $\{p_1,p_d\}$. Then we have $\nu_1 \leq \nu_2 = \frac{1}{r_2} \leq \nu_d$, hence (C-i) is satisfied with $m = r_2$.
\end{itemize}
We now turn to $|x^{-1}(q)| = 2$. If $s_1 = 1$ , then  for $r_1 \geq r_2$ we have $\nu_1 = \frac{1}{r_{12}}$ and (C-ii) is satisfied, and for $r_1 < r_2$ we have $\nu_1 = \frac{1}{r_{1}} \leq \nu_2$ so (C-i) is satisfied with $m = r_1$. If $\min(s_1,s_2) \geq 2$, we may write $r_1 = r_1's_1 + s_1 - 1$ and $r_2 = r_2's_2 + 1$ for $r_1',r_2' \in \mathbb{Z}_{\geq 0}$. Then $r_i' = \lfloor \frac{r_i}{s_i} \rfloor$ and
$$
r_1' + \frac{s_1 - 1}{s_1} = \frac{1}{\nu_1} \geq  \frac{1}{\nu_2} = r_2' + \frac{1}{s_2}\,.
$$
If $r_1' \neq r_2'$, then (C-i) is satisfied with $m = r_1'$. However, if $r_1' = r_2'$, then (C-i) is not satisfied, and $\frac{s_1}{r_1} > \frac{1}{r_2} \geq \frac{1}{r_{12}}$ prevents (C-ii) to be satisfied. 
\end{proof}

In the exceptional cases of \cref{pr:notthesame}, global topological recursion produces well-defined symmetric multi-differentials, but we do not know if these are the same as the multi-differentials produced by local topological recursion. We do not study this further here but it would be interesting to understand what happens for such spectral curves. The exception is realised for instance with $(r_1,s_2) = (3,2)$ and $(r_2,s_2) = (4,3)$.

\section{Spectral curves, algebraic curves and normalisations}

\label{s:algcurves}
\label{S4}

So far we talked about spectral curves abstractly, according to \cref{de:sc}. In practice, many spectral curves come from algebraic curves. In the literature on topological recursion, spectral curves are often obtained by first considering an affine curve $P(x,y)=0$ and then ``parametrising it'' by thinking of $x$ and $y$ as two meromorphic functions on a compact Riemann surface. There are subtleties with this statement, in particular if the affine curve (or its projectivisation) is singular, which is usually the case. In this section we construct spectral curves from algebraic curves carefully, and determine which kind of algebraic curves give rise to spectral curves: (1) which are locally admissible; (2) on which topological recursion can be fully globalised. Along the way we review classical aspects of the resolution of singularities of algebraic curves which are relevant to the construction.

\subsection{Non-compact spectral curves from affine curves}
\label{S4noncom}

\subsubsection{Setting}

Consider an affine curve
\begin{equation}
	\mathfrak{C}= \{(x,y) \in \mathbb{C}^2 \,\,|\,\, P(x,y) = 0 \} \,,
\end{equation}
where $P(x,y)$ is a polynomial of positive degrees $d_x$ in $x$ and $d_y$ in $y$. We assume that $P$ is reduced, which means that it can be decomposed into pairwise distinct irreducible factors $P = P_1 \cdots P_N$ for some positive integer $N$. We assume that $P$ has no factors in $\mathbb{C}[x]$, and that $\{y = 0\} \not\subseteq \mathfrak{C}$. Let us write $\mathfrak{C}_i = \{p \in \mathbb{C}^2 \,\,|\,\, P_i(p) = 0\}$ for the irreducible affine curves corresponding to each irreducible component. We denote by $\pi_x \colon \mathfrak{C} \rightarrow \mathbb{C}$ and $\pi_y \colon \mathfrak{C} \rightarrow \mathbb{C}$ the two projections $\pi_x(x,y) = x$ and $\pi_y(x,y) = y$. 

To define a spectral curve from this data, we need to obtain a Riemann surface (or a disjoint union thereof). However, the affine curve $\mathfrak{C}$ is a Riemann surface (or a disjoint union of Riemann surfaces) if and only if it is smooth. Yet, we often need to deal with singular affine curves. How can we proceed?

The singular locus
\begin{equation}
	\mathfrak{s} = \left\{ p \in \mathbb{C}^2 \quad | \quad P(p)= \partial_{x} P(p) = \partial_{y} P(p) = 0\right\} \subset \mathfrak{C}\,.
\end{equation}
 is a finite subset. It contains the points at which some irreducible component $\mathfrak{C}_i$ is singular, and the intersection points between the $N$ components (if the curve is reducible). If $\mathfrak{s} \neq \emptyset$, $\mathfrak{C}$ is not a Riemann surface. However, $\mathfrak{C}^{\circ} = \mathfrak{C} \setminus \pi_x^{-1}(\pi_x(\mathfrak{s}))$, obtained by removing the whole $\pi_x$-fibres of singular points, is a disjoint union of $N$ non-compact Riemann surfaces. As $\mathfrak{C}$ has no factors in $\mathbb{C}[x]$, $\pi_x$ is non-constant on any of the components of $\mathfrak{C}^{\circ}$. In fact, it defines a finite-degree branched covering\footnote{If we had only removed the singular points from $\mathfrak{C}$, instead of their whole $\pi_x$-fibre, we would have obtained a disjoint union of Riemann surfaces, but $\pi_x$ may not have been a finite-degree branched covering.}. The curve $\mathfrak{C}^{\circ}$ is connected if and only if  $\mathfrak{C}$ is irreducible. Let
\begin{equation} 
	\mathfrak{r} =  \left\{ p \in \mathbb{C}^2 \quad | \quad P(p)= \partial_{y} P(p) = 0 \right\},\quad \mathsf{Ram} \coloneqq \big(\mathfrak{r} \setminus \pi_x^{-1}(\pi_x(\mathfrak{s}))\big) \subset \mathfrak{C}^{\circ}\,.
\end{equation} 
$\mathsf{Ram}$ is the set of ramification points of $\pi_x \colon \mathfrak{C}^{\circ} \rightarrow \mathbb{C}$.
 
\begin{definition}[Non-compact spectral curve associated to a reduced affine curve]\label{de:redac}
	Let $\mathfrak{C} = \{P(x,y) = 0 \} \subset \mathbb{C}^2$ be a reduced affine curve that has no factors in $\mathbb{C}[x]$ and such that $\{ y=0 \} \not\subseteq \mathfrak{C}$, $\pi_x$ and $\pi_y$ the two canonical projections, $\mathfrak{s}$ the set of singular points and $\mathfrak{C}^{\circ} = \mathfrak{C} \setminus \pi_x^{-1}(\pi_x(\mathfrak{s}))$. Given a fundamental bidifferential $\omega_{0,2}$ on $\mathfrak{C}^{\circ}$, we define the \emph{non-compact spectral curve associated to $\mathfrak{C}$ and $\omega_{0,2}$} to be the quadruple $(\mathfrak{C}^{\circ},\pi_x, \pi_y \dd \pi_x, \omega_{0,2})$. 
 \end{definition}

\subsubsection{Local admissibility and globalisation}

Spectral curves obtained by removing singularities from affine curves are nice: the next lemma says that they are always locally admissible, and topological recursion can always be fully globalised on them,  regardless of the type and location of the ramification points of $\pi_x$ (even if some fibres contain several ramification points).

\begin{lemma}\label{le:redacla}
	Let $\mathcal{S}=(\mathfrak{C}^{\circ},\pi_x, \pi_y \dd \pi_x,\omega_{0,2})$ be a non-compact spectral curve associated to a reduced affine curve $\mathfrak{C} = \{P(x,y) = 0 \} \subset \mathbb{C}^2$ as in \cref{de:redac}. Then $\mathcal{S}$ is locally admissible and topological recursion can be fully globalised on $\mathcal{S}$.
\end{lemma}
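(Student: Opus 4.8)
The strategy is to verify the two assertions separately, using the fact that $\mathfrak{C}^{\circ}$ has been obtained from the affine curve $\mathfrak{C}$ by deleting the \emph{entire} $\pi_x$-fibres over the branch points that lie under singular points, so that each remaining component is a smooth affine curve on which $\pi_x$ is a finite-degree branched covering, and $\pi_y = y$ is a genuine meromorphic (in fact holomorphic) function. For local admissibility, fix $p \in \mathsf{Ram}$ with ramification order $r_p \geq 2$, and work in a standard coordinate $\zeta$ at $p$, so $x = x(p) + \zeta^{r_p}$ (or $x = \zeta^{-r_p}$ if $p$ is a pole of $\pi_x$, which cannot happen on the affine part but is included for uniformity). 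Since $y$ is holomorphic at $p$ on the affine curve, $\omega_{0,1} = y\,\dd x = y(\zeta)\,r_p\zeta^{r_p - 1}\dd\zeta$; writing $y(\zeta) = \sum_{k\geq 0} c_k \zeta^k$ one reads off that $\bar s_p = r_p - 1 + (\text{order of vanishing of } y - y(p) \text{ if } y(p)=0, \text{ else } r_p)$. Crucially, because $\zeta$ parametrises a \emph{smooth point of a plane curve} $P(x,y)=0$ with $\partial_y P(p)=0$ but $\partial_x P(p)\neq 0$ (as $p\notin\mathfrak{s}$), the pair $(x-x(p), y-y(p))$ expands as $\zeta^{r_p}$ times a unit and $\zeta^{m}$ times a unit with $\gcd(r_p,m)=1$ for the branch; this coprimality is exactly what forces $s_p$ and $r_p$ to be coprime (lA1). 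One then checks that for such curves $s_p = \bar s_p$ or $\bar s_p = s_p - 1$ automatically (lA3) — indeed $s_p$ is the order of the first term of $y\,\dd x/\dd\zeta$ not divisible by $r_p$, which here is at most one less than the minimal order itself — and that the local shape of a smooth plane branch forces $s_p \in [r_p+1]$ with $r_p \equiv \pm 1 \pmod{s_p}$ (lA2): this is the standard fact that a smooth branch of $P(x,y)=0$ at a point where $\dd x$ vanishes to order $r_p - 1$ has $y$ behaving with the complementary exponent $r_p \pm 1$ after the substitution $x = \zeta^{r_p}$. Here I would cite the classification of local parameters for smooth points of plane curves that the paper develops in \cref{S4com}/\cref{pr:ndga}, or argue directly: on a smooth branch, $y$ is a holomorphic function of $x^{1/r_p}=\zeta$, and $\dd x = r_p\zeta^{r_p-1}\dd\zeta$, so $\omega_{0,1}$ has the form $\zeta^{r_p-1}(\text{holomorphic})\,\dd\zeta$, giving $\bar s_p \geq r_p$; moreover if $y$ is non-constant then its first non-constant term $\zeta^j$ with $j\geq 1$ contributes $\zeta^{r_p - 1 + j}$, and since $j$ can be taken coprime to $r_p$ after reparametrising (again smoothness of the branch), one lands in the admissible range. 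Since $\mathsf{Ram}$ is finite (it is a subset of $\mathfrak r$, a finite set because $P$ is reduced with no factor in $\mathbb{C}[x]$), and unramified points are automatically admissible, $\mathcal{S}$ is locally admissible.

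For the full globalisation, I would invoke \cref{th:unramified} and \cref{th:rewriting}. At a non-branch point $q$, every preimage has $r_p = 1$; I need to check that $\omega_{0,1}$ separates fibres at $q$. Here the key observation is that on each smooth affine component $y$ is holomorphic, so $\bar s_p \in \{0\} \cup \mathbb{Z}_{>0}$ at unramified points, with $\bar s_p \geq 1$ exactly when $y(p) = 0$; but two distinct points of $\mathfrak{C}^{\circ}$ in the same $\pi_x$-fibre have distinct $y$-coordinates (they are literally distinct points of the plane curve with the same $x$), so at most one can have $y(p)=0$, giving the first bullet of \cref{th:unramified}, and two points with the same value $y(p)=y(p')=0$ are impossible, while two points with $\bar s_p = \bar s_{p'} = 0$ have $\tau_p = y(p) \neq y(p') = \tau_{p'}$ since the leading coefficient of $y\,\dd x/\dd\zeta$ in the unramified standard coordinate is just $y(p)$; this gives the second bullet. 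Hence vertical globalisation holds at all non-branch points. At branch points $q$, I apply \cref{th:rewriting}: for each pair $\{p_1,p_2\}\subseteq x^{-1}(q)$ with $\nu_1 \leq \nu_2$, the aspect ratios satisfy $\nu_k = \bar s_k/r_k$ with $\bar s_k \geq r_k - 1$ (from $\omega_{0,1} = y\,\dd x$ with $y$ holomorphic, as computed above), hence $\nu_k \geq \frac{r_k-1}{r_k} = 1 - \frac{1}{r_k} \geq \frac{1}{2}$ when $r_k\geq 2$, and $\nu_k \geq 0$ always — wait, I must be more careful: the relevant inequality for (C-i)/(C-ii) with $\nu_2$ possibly $\geq 1$ is $\nu_1 \leq 1$, which always holds on affine curves since $\bar s_1 \leq r_1$ there (as $y$ is holomorphic, $\bar s_1 = r_1 - 1 + \mathrm{ord}_p(y - y(p))$, and if $y(p)\neq 0$ then $\bar s_1 = r_1$, if $y(p)=0$ then $\mathrm{ord}_p(y)\geq 1$ but could exceed $1$...). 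So I need: if $r_1=r_2=1$ then $\nu_1\leq 1$ reduces to the non-branch-point analysis already done; if $\max(r_1,r_2)\geq 2$ then $\nu_1 \leq 1$ realises (C-i) with $m=1$ provided $\nu_2 \geq 1$, and if $\nu_2 < 1$ one uses the finer structure. The cleanest route is to show directly that on a smooth plane branch $\nu_p \in \{1 - \tfrac1{r_p}\} \cup (\text{values from } y \text{ vanishing})$ and that $(C\text{-i})$ holds because one of $\nu_1,\nu_2$ equals $1-\tfrac1{r_{12}}$...

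Actually, the honest and robust plan is: prove that at \emph{every} point $p$ of an affine-derived spectral curve one has $\bar s_p \geq r_p - 1$, equivalently $\nu_p \geq 1 - \tfrac1{r_p}$ (immediate from $\omega_{0,1}=y\,\dd x$, $y$ holomorphic, and $\dd x = r_p\zeta^{r_p-1}\dd\zeta$ in a standard coordinate). Hence for any pair with $\nu_1 \leq \nu_2$ and $r_{12}=\max(r_1,r_2)$, if $r_{12} = 1$ we are at an unramified configuration and \cref{th:unramified}'s hypotheses (which I verified above) apply; if $r_{12}\geq 2$ then $\nu_2 \geq 1 - \tfrac1{r_2}\geq 1-\tfrac1{r_{12}}$ and $\nu_1 \leq 1$, so I must exhibit $m\in[r_{12}-1]$ with $\nu_1\leq \tfrac1m\leq\nu_2$, or else $\nu_1 \leq \tfrac1{r_{12}}$. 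If $\nu_1 \leq \tfrac1{r_{12}}$, done via (C-ii). Otherwise $\nu_1 > \tfrac1{r_{12}}$, i.e. $\bar s_1 > r_1/r_{12}$... and since $\nu_2 \geq 1 - \tfrac1{r_{12}} = \tfrac{r_{12}-1}{r_{12}} \geq \tfrac12$, taking $m=1$ works whenever $\nu_1 \leq 1$, which always holds — so (C-i) holds with $m=1$ unless $r_{12}=1$, and the case $r_{12}=1$ is exactly the separating-fibres situation. The remaining obligation is non-resonance of every pair in $x^{-1}(q)$: this holds because the $y$-coordinates are distinct, so $\tau_{p_k}$ (which, on an affine branch, captures the leading behaviour of $y$) differ appropriately; more precisely when $\nu_1=\nu_2$ one compares $\tau_1/r_1$ and $\tau_2/r_2$ using that the branches are distinct points of a reduced plane curve, a resultant/distinctness argument. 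I would bundle this as: distinct branches of $P=0$ over $q$ have distinct Newton--Puiseux expansions, hence cannot be resonant. The main obstacle is this last non-resonance verification and the bookkeeping of $\bar s_p$ vs $s_p$ in the admissibility check — i.e., extracting the congruence (lA2) from smoothness of the plane branch — which requires either invoking the singularity-theory material of \cref{S4com} or a careful direct Newton--Puiseux argument; the globalisation inequalities themselves are then routine given $\nu_p \geq 1-\tfrac1{r_p}$.
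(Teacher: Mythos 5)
Your verification of local admissibility has a genuine gap at condition (lA2). Everything hinges on the exact order $j = \mathrm{ord}_p\big(y - y(p)\big)$ in a standard coordinate $\zeta$ at a ramification point $p$: since $\omega_{0,1} = y\,\dd x = \big(r_p\, y(p)\,\zeta^{r_p} + r_p c\,\zeta^{r_p+j} + \cdots\big)\tfrac{\dd\zeta}{\zeta}$, the first exponent not divisible by $r_p$ is $s_p = r_p + j$, and (lA2) demands $s_p \in [r_p+1]$, i.e.\ $j=1$ \emph{exactly}. Your argument only delivers $\gcd(j,r_p)=1$, and the step ``$j$ can be taken coprime to $r_p$ after reparametrising'' is not valid: $j$ is the intrinsic order of vanishing of $y-y(p)$ on the curve and no change of local coordinate alters it. With $j\geq 2$ coprime to $r_p$ you would get $s_p = r_p + j > r_p+1$, so (lA2) fails (and the congruence fails too, since $r_p \equiv -j \bmod s_p$ forces $j=1$); such a point is exactly a unibranched singularity of the plane curve, cf.\ \cref{le:singularities}. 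The missing step --- which is the entire content of the paper's proof of the first assertion --- is that $p$ is a \emph{smooth point of the plane curve} $\mathfrak{C}$ (the singular fibres having been deleted), so $(x,y)$ embeds a neighbourhood of $p$ into $\mathbb{C}^2$; since $\dd x$ vanishes at $p$ (as $r_p\geq 2$), $\dd y$ cannot, whence $j=1$. Equivalently, as you observe but never use, $\partial_y P(p)=0$ together with $p\notin\mathfrak{s}$ gives $\partial_x P(p)\neq 0$, so $y$ is a local coordinate on the curve at $p$. This yields $y = b + c\zeta + o(\zeta)$ with $c\neq 0$, hence $s_p = r_p+1$ in all cases (with $\bar s_p = r_p$ if $b\neq 0$ and $\bar s_p = r_p+1$ if $b=0$), and (lA1)--(lA3) follow immediately.

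The globalisation half of your argument, once the detours are stripped away, is essentially the paper's: at most one point per fibre has $y=0$, that point has $\nu>1$ while all others have $\nu=1$ with $\tau_p/r_p = y(p)$ pairwise distinct, so every pair is non-resonant and satisfies (C-i) with $m=1$ (or trivially (C-ii) when both points are unramified). Two cautions: your values $\bar s_p \geq r_p - 1$ and ``$\bar s_p = 0$ at unramified points with $y(p)\neq 0$'' are off by one because you expand against $\dd\zeta$ rather than $\zeta^{l}\tfrac{\dd\zeta}{\zeta}$ as in \eqref{eq:expomega}, and the thresholds in \cref{th:unramified} and \cref{th:rewriting} are stated in the latter convention; and there is no need to split into branch versus non-branch fibres or to invoke Newton--Puiseux distinctness for non-resonance --- distinctness of the $y$-values $b_i$ in a fibre does all the work uniformly, as in the paper.
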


\begin{proof}
	We first show that $\mathcal{S}$ is locally admissible.
	The ramification set $\mathsf{Ram} \subset \mathfrak{C}^{\circ}$ is finite because $P$ is a polynomial. Let $p \in \mathsf{Ram} \subset \mathfrak{C}^{\circ}$ and $r$ the order of $\pi_x$ at $p$. Let $b = \pi_y(p)$. Then, near $p$, we have $y = b + c \zeta + o(\zeta)$ with some $c \neq 0$ since $p$ is a smooth point. It implies
	$$
	\pi_y \dd \pi_x = \big(rb \zeta^{r - 1} + rc_1 \zeta^{r} + o(\zeta^r)\big) \dd \zeta \quad {\rm near}\,\,p\,.
	$$
	If $b \neq 0$, the local parameters (\cref{de:localparameters}) at $p$ are $(r_p,s_p,\bar s_p) = (r,r+1,r)$. If $b=0$, the local parameters are $r_p = r$ and $\bar s_p = s_p = r+1$. We conclude that the spectral curve is locally admissible.

	We move on to globalisation. Let $q \in \pi_x(\mathfrak{C}^{\circ}) \subseteq \mathbb{C}$. We need to show that topological recursion can be globalised at $q$. Let $\{p_1, \ldots, p_d\} = \pi_x^{-1}(q) \subset \mathfrak{C}^{\circ}$ and $b_i = \pi_y(p_i)$. Then $b_i \neq b_j$ for $i \neq j$ as these correspond to distinct points on the curve with same $\pi_x$-coordinate $q$. If $r_i$ is the order of $\pi_x$ at $p_i$, we have in a standard coordinate $y = b_i + c_i \zeta + o(\zeta)$ near $p_i$ for some $c_i \neq 0$. It implies
	$$
	\pi_y \dd \pi_x = \big(r_ib_i \zeta^{r_i - 1} + r_ic_i\zeta^{r_i} + o(\zeta^{r_i})\big)\dd \zeta \quad {\rm near}\,\,p\,.
	$$
	As the $b_i$ are pairwise distinct, at most one of them can vanish. If this happens, for this particular point $p_{i_0}$ we have $\bar s_{i_0} > r_{i_0}$, and hence $\nu_{i_0} = \frac{\bar s_{i_0}}{r_{i_0}} > 1$, and for $j \neq i_0$  we have $\bar s_j = r_j$ and $\tau_j = r_j b_j$, and hence $\nu_j = \frac{\bar s_j}{r_j } = 1$ and $\frac{\tau_j}{r_j} = b_j$. As a result, for any pairs of points $\{p_j,p_k\} \subseteq \pi_x^{-1}(q)$, either $\nu_j  = \nu_k=1$ and $\frac{\tau_j}{r_j} =b_j\neq b_k= \frac{\tau_k}{r_k}$, or $\nu_j = 1$ and $\nu_k> 1$ (or vice versa). Those satisfy the conditions of \cref{th:rewriting} ((C-i) with $m=1$), and topological recursion can be globalised above $q$.
	
	As $\pi_x$ is a finite-degree branched covering, this shows that topological recursion can be fully globalised.
\end{proof}

\subsection{Compact spectral curves after projectivisation and normalisation}
\label{S4com}

Whenever possible it is preferable to work with compact spectral curves, which is not the case for the spectral curves described in \cref{S4noncom}. This is the setting in which the Hitchin integrable system on the Higgs moduli space is usually studied, and we refer to \cite{DM14,BH19,CNST20} for the relevance of topological recursion in this context. It is also interesting to have compact spectral curves when studying deformation of curves and limits in families, as we do in \cref{S5}.

As a prototypical example, consider the affine curve $\mathfrak{C}_{r,s} \subset \mathbb{C}^2$ defined as the vanishing locus of 
\begin{equation}
	P_{r,s}(x,y) = x^{r-s} y^r - 1\quad {\rm with} \quad r \geq 2\,\,{\rm and}\,\, s \in [r-1]\,.
\end{equation}
We see that neither $\partial_x P$ nor $\partial_y P$ vanish on $\mathfrak{C}_{r,s}$, since the lines $\{x=0\}$ and $\{y=0\}$ do not intersect in $\mathfrak{C}_{r,s}$. Thus $\mathfrak{C}_{r,s}$ is smooth as an affine curve: $\mathfrak{C}^{\circ}_{r,s} = \mathfrak{C}_{r,s}$. It is connected if and only if $r$ and $s$ are coprime. However, since $\partial_yP$ never vanishes on $\mathfrak{C}_{r,s}$, $\pi_x$ is unramified, hence topological recursion on the non-compact spectral curve associated to $\mathfrak{C}_{r,s}$ as in \cref{de:redac} produces correlators $\omega_{g,n}$ that vanish identically for $2g - 2 + n > 0$. This simply comes from the fact that ramification points at $\infty$ were not taken into account. To get a non-trivial output from topological recursion, a solution is to projectivise the affine curve and resolve its singularities via normalisation. We note that taking into account ramification points at $x = \infty$ in the topological recursion and working on the normalisation first appeared in the work of Dumitrescu and Mulase \cite{DM14}.

In this section we would like to understand systematically how this procedure affects local admissibility (\cref{S423}) and the possibility of globalisation (\cref{S424}). On the negative side, we find  in \cref{le:singularities} an obstruction to obey the criterion of globalisation, which pertains to the location of the singularities of $C$. On the positive side, we introduce a notion of global admissibility (\cref{de:globaladm}), tailored to show in \cref{th:ga} that topological recursion is fully globalisable on globally admissible spectral curves.

\subsubsection{Setting}
 
Start with a reduced affine curve $\mathfrak{C} = \{(x,y)  \in \mathbb{C}^2 \,\, | \,\, P(x,y) = 0 \}$, where $P(x,y)$ is a polynomial of positive degrees $d_x$ in the variable $x$ and $d_y$ in  the variable $y$, with no factors in $\mathbb{C}[x]$, and such that $\{y = 0\} \not\subseteq \mathfrak{C}$.

The first step is to compactify the curve by projectivising it. It is standard to projectivise in $\mathbb{P}^2$, but this is not what we want to do. The reason is that we want to keep the structure of the map given by projection on the $x$-axis. It is therefore more natural to projectivise in $\mathbb{P}^1 \times \mathbb{P}^1$, so that the $x$-projection becomes projection on the first $\mathbb{P}^1$ factor.

Let $([X_0:X_1], [Y_0:Y_1])$ be homogeneous coordinates on $\mathbb{P}^1 \times \mathbb{P}^1$.
We use notations $\mathbf{0} = [0:1]$ and $\boldsymbol{\infty} = [1:0]$ for the special points in $\mathbb{P}^1$ and $p_{\epsilon\epsilon'} = (\boldsymbol{\epsilon},\boldsymbol{\epsilon}')$ with $\epsilon,\epsilon' \in \{0,\infty\}$ for the special points in $\mathbb{P}^1 \times \mathbb{P}^1$. The bihomogenisation of $P$ is
\begin{equation}
	F(X_0,X_1,Y_0,Y_1) = X_1^{d_x} Y_1^{d_y} P\left( \frac{X_0}{X_1}, \frac{Y_0}{Y_1} \right)\,.
\end{equation}
This is a bihomogeneous polynomial of bidegree $(d_x,d_y)$: all terms in $F$ have total degree $d_x$ in $(X_0,X_1)$ and total degree $d_y$ in $(Y_0,Y_1)$. We write
$$
C = \{p \in \mathbb{P}^1 \times \mathbb{P}^1 \quad |\quad F(p) =0\}\,.
$$
The polynomial $F$ is reduced; we write $F = F_1 \cdots F_N$ for a decomposition in irreducible factors, and $C_i = \{F_i =0 \} \subset \mathbb{P}^1 \times \mathbb{P}^1$ for its irreducible components. Let $(d_x^i, d_y^i)$ be the bidegree of the bihomogeneous irreducible polynomial $F_i$. From the assumptions on $P$ we know that $F$ has no factor in $\mathbb{C}[X_0,X_1]$, $\{ Y_0 = 0\} \not\subset C$, and $\{ Y_1 = 0\} \not\subset C$.

The natural projection $\pi_X \colon C \rightarrow \mathbb{P}^1$, defined by $\pi_X([X_0:X_1],[Y_0:Y_1])  =  [X_0:X_1]$ is non-constant on each component of $C$ since $F$ has no factor in $\mathbb{C}[X_0,X_1]$. The bihomogenisation of the 1-form $\pi_y \dd \pi_x$ is
\begin{equation}
\alpha = \frac{Y_0}{Y_1} \dd \left (\frac{X_0}{X_1} \right) \, .
\end{equation}
 Since $\{ Y_0 = 0\} \not\subset C$ and $\{ Y_1 = 0\} \not\subset C$, $\alpha$ is a well-defined meromorphic $1$-form on $C$, whose restriction to each component is not identically zero.

In general, the curve $C$ may be singular. Let
\begin{equation}\label{eq:singular}
	\mathsf{Sing} = \left\{p\in \mathbb{P}^1 \times \mathbb{P}^1 \quad | \quad F(p) = \partial_{X_0} F(p) = \partial_{X_1} F(p) =  \partial_{Y_0} F(p) = \partial_{Y_1} F(p) = 0\right\} \subset C
\end{equation}
be its singular locus. It is finite, and contains as before the points at which some irreducible component $C_i$ is singular, and the intersection points between the components.

If $\mathsf{Sing} \neq \emptyset$, we could proceed as in the previous section and remove the singularities from $C$, but the result would not be compact. Instead, we resolve the singularities by normalising the curve.

Given a projective curve $C$ as above, there exists a $\tilde C$, which is a disjoint union of $N$ compact Riemann surfaces, and a holomorphic map $\eta : \tilde C \rightarrow C$, such that $\eta^{-1}(\mathsf{Sing})$ is finite and $\eta : \tilde C \setminus \eta^{-1}(\mathsf{Sing}) \rightarrow C \setminus \mathsf{Sing}$ is biholomorphic. $(\tilde C, \eta)$ is called \emph{normalisation} of $C$ --- it is unique up to unique isomorphism. Roughly speaking, each singularity $p \in C$ is replaced with $\mathsf{b}_p$ points in $\tilde C$, where $\mathsf{b}_p$ is the number of branches meeting at the singularity. The $\delta$-invariant of $p$ is by definition $\delta_p = \dim_{\mathbb{C}}\big(\mathcal{O}_{\tilde{C},p}/\mathcal{O}_{C,p}\big)$. It can be calculated using Milnor's formula $2 \delta_p = \mathsf{j}_p + \mathsf{b}_p - 1$, where  $\mathsf{j}_p$ is the dimension of the Jacobi algebra at $p$. In a local affine patch $f(u,v) = 0$ where $p$ has coordinates $(u,v) = (0,0)$, this is $\mathsf{j}_p = \dim_{\mathbb{C} }\big(\mathbb{C}[\![u,v]\!]/\langle \partial_u f,\partial_v f\rangle \big)$. 
 
The normalisation procedure splits the components of $C$ and normalises each component separately; we have $\tilde C = \tilde C_1 \sqcup \cdots \sqcup \tilde C_{N}$, where $\tilde C_i$ is the normalisation of the irreducible curve $C_i$. 
The \emph{geometric genus} of $C_i$, which we denote by $\mathsf{g}(C_i)$, is the genus of its normalisation $\tilde C_i$. The \emph{arithmetic genus} of $C_i$, which we denote by $\mathsf{p}_{{\rm a}}(C_i)$, is equal to $\mathsf{p}_{{\rm a}}(C_i) = (d_x^i-1)(d_y^i-1)$, where $(d_x^i,d_y^i)$ is the bidegree of the $i$-th component. By the adjunction formula, the difference between the arithmetic and the geometric genus is the sum of the $\delta$-invariants at the singularities. 
 \begin{equation}\label{eq:gg}
	\mathsf{g}(C_i) = \mathsf{p}_{{\rm a}}(C_i) - \sum_{p \in \mathsf{Sing} \cap C_i} \delta_p\,.
\end{equation}
On $\tilde{C}_i$, the set of fundamental bidifferentials is an affine space of dimension $\frac{1}{2}\mathsf{g}(C_i)(\mathsf{g}(C_i) + 1)$: its underlying vector space is the space of holomorphic bidifferentials on $\tilde{C}_i$, symmetric in their two variables. In particular if $\mathsf{g}(C_i) = 0$ we have a unique fundamental bidifferential $\omega_{0,2}(z,z') = \frac{\dd z\, \dd z'}{(z - z')^2}$ in terms of the uniformising coordinate $z$. This allows us to construct a compact spectral curve.

\begin{definition}[Compact spectral curve associated to a reduced affine curve]\label{de:irredpc}
	Let $\mathfrak{C} \subset \mathbb{C}^2$ be a reduced affine curve cut out by a polynomial $P(x,y)$ that has no factors in $\mathbb{C}[x]$ and such that $\{ y=0 \} \not\subseteq \mathfrak{C}$, and
	\begin{equation}
		C = \Big\{ X_1^{d_x} Y_1^{d_y} P\left( \frac{X_0}{X_1}, \frac{Y_0}{Y_1} \right) = 0 \Big\} \subset \mathbb{P}^1 \times \mathbb{P}^1
	\end{equation}
	its bihomogenisation. Let $\eta \colon \tilde C \rightarrow C$ be the normalisation of $C$, $\pi_X \colon C \rightarrow \mathbb{P}^1$ be the natural projection. Given a fundamental bidifferential $\omega_{0,2}$ on $\tilde C$, we define the \emph{compact spectral curve associated to $C$ and $\omega_{0,2}$} to be the quadruple $(\tilde C,  x, \omega_{0,1}, \omega_{0,2})$, where
	$$
	x = \pi_X \circ \eta \qquad \text{and} \qquad \omega_{0,1} = \eta^* \left(\frac{Y_0}{Y_1} \dd\left(\frac{X_0}{X_1}\right)\right)\,.
	$$
	Note that $\tilde C$ is a disjoint union of $N$ compact Riemann surfaces and $x \colon \tilde C \rightarrow \mathbb{P}^1$ is a finite-degree branched covering. The spectral curve is connected if and only if $\mathfrak{C}$ is irreducible.
\end{definition}

This construction can be reversed.

\begin{proposition}\label{pr:everycurve}
	 Every compact spectral curve in which $\omega_{0,1}$ separates fibres (cf. \cref{th:unramified}) can be obtained (up to the choice of a fundamental bidifferential) from a unique reduced affine curve as in \cref{de:irredpc}.
	\end{proposition}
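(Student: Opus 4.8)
The plan is to run the construction of \cref{de:irredpc} in reverse: given a compact spectral curve $\mathcal{S} = (\Sigma, x, \omega_{0,1}, \omega_{0,2})$ in which $\omega_{0,1}$ separates fibres, produce the polynomial $P$, verify the resulting compact spectral curve associated to $C = \{F = 0\}$ recovers $(\Sigma, x, \omega_{0,1})$, and establish uniqueness of $P$. First I would use the meromorphic function $y = \omega_{0,1}/\dd x \colon \Sigma \to \mathbb{P}^1$ already attached to the spectral curve. The pair $(x, y)$ gives a map $\Sigma \to \mathbb{P}^1 \times \mathbb{P}^1$. I would argue that this map is generically injective on each connected component: this is exactly where the hypothesis that $\omega_{0,1}$ \emph{separates fibres} is used. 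Indeed, if $z_1 \neq z_2$ lie over a common non-branch point $q$ with $y(z_1) = y(z_2)$, then in the standard (here unramified) coordinate the leading exponents $\bar s_{z_1}, \bar s_{z_2}$ and leading coefficients $\tau_{z_1}, \tau_{z_2}$ of $\omega_{0,1}$ would have to satisfy the failure of one of the two separation conditions of \cref{th:unramified}; more precisely, $y(z_1) = y(z_2)$ at a generic point forces $\bar s_{z_1} = \bar s_{z_2} = 1$ and $\tau_{z_1} = \tau_{z_2}$, contradicting fibre separation. Hence $(x,y)$ is birational onto its image on each component, and the image $C_i \subseteq \mathbb{P}^1 \times \mathbb{P}^1$ is an irreducible algebraic curve (image of a compact Riemann surface under a holomorphic map to a projective variety).

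Next I would extract the polynomial. Each $C_i$ is the zero locus of an irreducible bihomogeneous polynomial $F_i$ of some bidegree $(d_x^i, d_y^i)$, unique up to scalar; set $F = \prod_i F_i$ (the components are distinct because $\Sigma$'s components are, combined with birationality), which is reduced. I must check $F$ has no factor in $\mathbb{C}[X_0,X_1]$ — equivalently no component $C_i$ is a "vertical line" $\{X = c\}$ — which holds because $x$ is non-constant on every component of $\Sigma$. I must also check $\{Y_0 = 0\} \not\subseteq C$ and $\{Y_1 = 0\} \not\subseteq C$, i.e. $y$ is not identically $0$ or $\infty$ on any component; the first follows from $\omega_{0,1} \not\equiv 0$ on each component, and to handle $y \equiv \infty$ one uses that $\omega_{0,1} = y\,\dd x$ is a genuine meromorphic $1$-form (so $y$ is a genuine meromorphic function, not constant $\infty$). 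Dehomogenising in the affine chart $X_1 = Y_1 = 1$ gives $P(x,y) = F(x,1,y,1)$, a reduced polynomial with the required properties, and by construction its bihomogeneisation is (a scalar multiple of) $F$, so $C$ as in \cref{de:irredpc} is exactly $\bigcup_i C_i$. The normalisation $\eta \colon \tilde C \to C$ is characterised by its universal property; since each $\Sigma_i \to C_i$ is a holomorphic map from a smooth compact curve that is birational onto $C_i$, it factors through an isomorphism $\Sigma_i \xrightarrow{\sim} \tilde C_i$, and under this identification $x = \pi_X \circ \eta$ and $\omega_{0,1} = \eta^*\bigl(\tfrac{Y_0}{Y_1}\dd(\tfrac{X_0}{X_1})\bigr)$ by construction of $y$. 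Thus the compact spectral curve associated to $C$ and $\omega_{0,2}$ recovers $\mathcal{S}$ up to the (freely chosen) fundamental bidifferential.

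For uniqueness: suppose $\mathfrak{C}' = \{P'=0\}$ is another reduced affine curve producing $\mathcal{S}$ up to fundamental bidifferential. Then the normalisation of its bihomogeneisation $C'$ is isomorphic to $\Sigma$ compatibly with $x$ and $\omega_{0,1}$, hence with $y$; this forces the image of $(x,y)\colon \Sigma \to \mathbb{P}^1\times\mathbb{P}^1$ to equal $C'$, so $C' = C$, and since a reduced curve in $\mathbb{P}^1 \times \mathbb{P}^1$ with no vertical components and full $Y$-degree determines its reduced bihomogeneous equation up to scalar, $P'$ is a scalar multiple of $P$; as affine curves these are equal, giving uniqueness. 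The main obstacle I anticipate is the birationality/injectivity step — carefully showing that fibre separation of $\omega_{0,1}$ (a condition about leading exponents and coefficients at generic points) really does rule out $(x,y)$ identifying two distinct points of $\Sigma$, and handling the finitely many branch points and poles where $x$ or $y$ degenerates so that the map to $\mathbb{P}^1\times\mathbb{P}^1$ is still well-defined and still generically injective there. The rest is largely a matter of bookkeeping with projectivisation, dehomogenisation, and the universal property of normalisation.
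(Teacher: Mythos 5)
Your proposal is correct and follows essentially the same route as the paper's proof: take the image of $(x,\omega_{0,1}/\dd x)$ in $\mathbb{P}^1\times\mathbb{P}^1$, verify the defining reduced bihomogeneous polynomial has no forbidden factors because $x$ is non-constant and $y$ is a genuine nonzero meromorphic function on each component, and use fibre separation to get generic injectivity so that $\Sigma$ is identified with the normalisation via its universal property. The additional detail you supply (the explicit case analysis behind generic injectivity and the uniqueness paragraph) only makes explicit what the paper leaves terse.
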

\begin{proof} 
	Given a compact spectral curve $ \mc{S} = ( \Sigma, x, \omega_{0,1}, \omega_{0,2})$, set $y = \frac{\omega_{0,1}}{\dd x}$.  The image  $ (x, y) (\Sigma ) \subseteq \P^1 \times \P^1 $ is a closed curve, and therefore it is cut out by a reduced bihomogeneous polynomial $F$, unique up to rescaling. As $ x $ is not allowed to be constant on any component of $ \Sigma$, $F$ cannot have factors in $ \C [X_0, X_1]$. In particular it has no factor $ X_1 $, and therefore comes from the homogenisation of a polynomial in $x$. 
	
	As $ \omega_{0,1} $ is a meromorphic $1$-form, not identically zero on any component, it is also not identically $\infty$ on any component, and again as $x$ is not allowed to be constant on a component, $\dd x $ is not identically zero, so $ y = \frac{\omega_{0,1}}{\dd x}$ is a non-zero meromorphic function on any component. By the argument already used for $x$, this shows that $F$ comes from homogenisation of a polynomial in $y$.
	
	Therefore, the zero locus of $F$ is a reduced curve $C \subset \mathbb P^1 \times \mathbb P^1$ obtained from the procedure of \cref{de:irredpc}. By the universal property of its normalisation $ \eta \colon \tilde{C} \rightarrow C$, the map $ (x, y) \colon \Sigma \rightarrow C$ factors through $ \eta$. The resulting map $\Sigma \to \tilde{C}$ is now a surjective map between smooth compact curves. It is generically injective, as $ \omega_{0,1}$ separates fibres. Therefore, it must be injective, and hence an isomorphism.
\end{proof}

\begin{example}[The $(r,s)$-spectral curve]
\label{s:rscurve}
We illustrate the construction for the $(r,s)$-curve $\mathfrak{C}^{(r,s)} = \{x^{r - s}y^r - 1 = 0\} \subset \mathbb{C}^2$, where $r \geq 2$ and $s \in [r-1]$ are coprime. Its bihomogenisation is
	\begin{equation}
		C^{(r,s)} = \{X_0^{r-s} Y_0^r - X_1^{r-s} Y_1^r = 0 \} \subset \mathbb{P}^1 \times \mathbb{P}^1 \,.
	\end{equation}
If $s=r-1$,  the curve is smooth and has genus $0$. If $s < r-1$, the curve has arithmetic genus $(r - 1)(r - s - 1)$ but is singular at the two points $p_{0\infty}$ and $p_{\infty 0}$. These singularities are unibranched as $r$ and $s$ are coprime: each of them is replaced by a single point in the normalisation $\tilde{C}^{(r,s)}$. A local affine equation centered at any of these singularities is $f(u,v) = u^{r - s} - v^{r}$, so the local $\mathsf{j}$ is $(r - 1)(r - s - 1)$. This leads to a total $\delta = (r - 1)(r - s - 1) = \mathsf{p}_{{\rm a}}(C^{(r,s)})$ and geometric genus is $\mathsf{g}(C^{(r,s)}) = 0$. In other words $\tilde{C}^{(r,s)} \simeq \mathbb{P}^1$. We can write the normalisation map explicitly by parametrising the equation
	\begin{equation}
		\begin{array}{llcll} \eta & \colon & \tilde{C}^{(r,s)} & \longrightarrow & C^{(r,s)}  \\ & &  [w_0: w_1] & \longmapsto &  ([w_0^r: w_1^{r}], [w_1^{r-s}:w_0^{r-s}]) \end{array}\,.
	\end{equation}
	Then
	\begin{equation}
	\begin{array}{llcll} x = \pi_X \circ \eta &  \colon & \tilde{C}^{(r,s)} & \longrightarrow & \mathbb{P}^1 \\
	& & [w_0:w_1] & \longmapsto & [w_0^r:w_1^{r}] \end{array}
	\end{equation}
	is a branched covering of degree $r$, with ramification points at $\mathbf{0}$ and $\boldsymbol{\infty}$, both of order $r$. The data of the spectral curve is completed by giving the $1$-form 
$$
\omega_{0,1} = \eta^* \alpha = \frac{w_1^{r-s}}{w_0^{r-s}} \dd \left ( \frac{w_0^r}{w_1^r} \right) \,,
$$
and the unique fundamental bidifferential existing on $\tilde{C}^{(r,s)} = \mathbb{P}^1$.
$$
\omega_{0,2}(p,p') = \frac{\dd(w_0/w_1) \dd (w_0'/w_1')}{(w_0/w_1 - w_0'/w_1')^2},\qquad p = [w_0:w_1],\quad p' = [w_0':w_1']\,.
$$
In the global coordinate $w = \frac{w_0}{w_1}$, the curve can be parametrised as
\begin{equation}
\label{paramrs}
x(w) = w^r, \qquad y(w) = w^{s - r},\qquad \omega_{0,2}(w,w') = \frac{\dd w\, \dd w'}{(w - w')^2}\,.
\end{equation}
At the ramification point $\mathbf{0}$ a standard coordinate is $\zeta = \frac{w_0}{w_1} = w$. As $\omega_{0,1} = r\zeta^{s - 1} \dd \zeta$ the local parameters are
$$
r_{\mathbf{0}} = r,\qquad s_{\mathbf{0}} = \bar s_{\mathbf{0}} = s, \qquad \tau_{\mathbf{0}} = r\,.
$$
At the ramification point $\boldsymbol{\infty}$, we rather have $\zeta = \frac{w_1}{w_0} = \frac{1}{w}$ as standard coordinate. Then $x = \zeta^{-r}$ and $\omega_{0,1} = \zeta^{r - s} \dd(\zeta^{-r}) = -r \zeta^{-s - 1} \dd \zeta$. From there we read the local parameters:
$$
r_{\boldsymbol{\infty}} = r,\qquad s_{\boldsymbol{\infty}} = \bar s_{\boldsymbol{\infty}} = -s,\qquad\tau_{\boldsymbol{\infty}} = -r.
$$
Comparing with \cref{de:localadm}, we see that the spectral curve $\tilde{C}^{(r,s)}$ is locally admissible if and only if $r = \pm 1\,\,{\rm mod}\,\,s$. Due to \cref{th:ale}, the ramification point at $\boldsymbol{\infty}$ does not contribute to the topological recursion.

This spectral curve in parametrised form \eqref{paramrs} was first considered in \cite{BBCCN18}, where it was proved that topological recursion is well-defined if and only if $r = \pm 1\,\,{\rm mod}\,\,s$. This explains the origin of this condition in \cref{de:localadm}.

\end{example}

\subsubsection{Local admissibility and globalisation}
\label{S423}

The next question we address is: when are the compact spectral curves obtained in \cref{de:irredpc} locally admissible, and when can topological recursion on them be globalised? For this purpose, we need to analyse in detail the local behaviour of $\omega_{0,1}$.

\begin{definition}\label{de:linesinf}
Let $L_Y = \{\boldsymbol{\infty}\} \times \mathbb{P}^1$ and $L_X = \mathbb{P}^1 \times \{\boldsymbol{\infty}\}$ be the two lines at infinity in $\mathbb{P}^1 \times \mathbb{P}^1$. We introduce the finite sets
$$
E = (L_X \cup L_Y) \cap C,\qquad E' = \big(L_X \cup \{p_{\infty 0}\}\big)\cap C\,.
$$
\end{definition}

The set $E$ contains the points that are added when we go from the affine curve $\mathfrak{C}$ to the projective curve $C$. That is: $C \setminus E = \mathfrak{C}$. The set $E' \subseteq E$ contains only those additional points which are either poles of $y$, or poles of $x$ that are zeros of $y$. The set $E'$ is illustrated in \cref{fig:Eprime}. It turns out that, if we want $\tilde{C}$ to give a locally admissible spectral curve and be suitable for globalisation, the singularities of $C$ are restricted to appear in $E'$, as we show in the next lemma. In particular, $\mathfrak{C}$ must be smooth.

\begin{figure}[!ht]
\begin{center}
\includegraphics[width=0.3\textwidth]{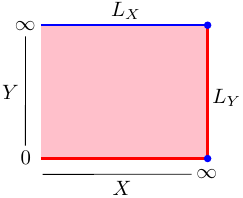}
\caption{The locus $E'$ is shown in blue. According to \cref{le:singularities}, the possible singularities of $C$ can only be in $E'$.}
\label{fig:Eprime}
\end{center}
\end{figure}

\begin{lemma}\label{le:singularities}
	Let $\mathcal{S}=(\tilde C,  x, \omega_{0,1}, \omega_{0,2})$ be the compact spectral curve associated to a reduced affine curve $\mathfrak{C}$ as in \cref{de:irredpc}. Suppose that $p \in C \setminus E'$ is a singular point.
	\begin{itemize}
		\item If $p$ is unibranched, then the spectral curve is not locally admissible.
		\item If $p$ is multibranched, then either the spectral curve is not locally admissible or the conditions of \cref{th:rewriting} for globalisation at $q = \pi_X(p) \in \mathbb{P}^1$ are not satisfied.
	\end{itemize}
\end{lemma}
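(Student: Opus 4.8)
The strategy is to analyze the local behaviour of $\omega_{0,1} = \eta^*\alpha$ at the points of $\tilde C$ lying over a singularity $p \in C \setminus E'$, and to show that the resulting local parameters $(r,s,\bar s)$ violate either local admissibility (lA1)--(lA3) or the globalisation criterion of \cref{th:rewriting}. First I would set up local affine coordinates $(u,v)$ centred at $p$ so that $x = q + u$ (resp.\ $x = u^{-1}$ if $q = \boldsymbol\infty$, but note $q=\boldsymbol\infty$ forces $p\in L_X\subseteq E'$ which is excluded, so $q\in\mathbb{C}$) and $y = b + v$ with $b = \pi_Y(p)$. The key point is that $p \notin E'$ means $p$ is not a pole of $y$ and not simultaneously a pole of $x$ and a zero of $y$; since also $p\notin L_X$, the point $p$ is at finite $x$, so in fact $p$ is a smooth-ambient point with $y$ finite at $p$, i.e.\ $b \in \mathbb{C}$, and the only relevant data is whether $b = 0$ or $b \neq 0$. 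On each branch of $C$ through $p$, pick a local uniformiser $\zeta$ (the standard coordinate on the corresponding sheet of $\tilde C$), so that $u = \zeta^{r} + O(\zeta^{r+1})$ for the ramification order $r\ge 2$ of $x$ on that branch (it must be $\ge 2$: a singular point cannot have all branches unramified and $x$-separated — this is where unibranched vs.\ multibranched enters).

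For the \emph{unibranched} case: there is a single point $\tilde p \in \tilde C$ over $p$, with ramification order $r = \mathsf{b}_p\cdot(\text{something})$; more simply, since $p$ is singular and unibranched, the branch is singular, so the Newton--Puiseux parametrisation of that branch has $u \sim \zeta^{r}$ with $r \geq 2$ and $v = y - b$ vanishing to some order $\geq 1$. Then $\omega_{0,1} = y\,\dd x = (b + O(\zeta))\cdot (r\zeta^{r-1} + \cdots)\dd\zeta$. If $b \neq 0$ we get $\bar s = r$, and $\dd x/x$-expansion shows $\omega_{0,1} = r b\,\zeta^{r-1}\dd\zeta + \cdots$ with next term governed by $v$; since the branch is singular, the exponent of $v$ is not $\equiv 1 \bmod r$ in a way that keeps $(r,s,\bar s)$ admissible — concretely $s_p$ will be forced $> r+1$ or coprimality will fail. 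If $b = 0$ then $y$ vanishes, and since $p\notin E'$ we are \emph{not} in the poles-of-$x$ situation, so $x$ is finite; then $\omega_{0,1}$ vanishes to order $\ge r$, giving $\bar s \ge r+1$ with $r\mid \bar s$ or not — again one checks against (lA1)--(lA3) that singularity of the branch breaks admissibility. The cleanest way to organise this is to invoke that for a \emph{smooth} branch with $x$ of ramification order $r$ and $y$ finite nonzero, one gets exactly $(r, r+1, r)$, and with $y$ finite zero one gets $(r, \geq r+1, \geq r+1)$; any deviation caused by a singular branch pushes $s_p$ up past $r+1$ or destroys coprimality.

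For the \emph{multibranched} case: there are $\mathsf{b}_p \geq 2$ points $\tilde p_1,\dots,\tilde p_{\mathsf{b}_p}$ over $p$, all in the same $x$-fibre $x^{-1}(q)$, and all with the same $y$-value $b = \pi_Y(p)$ at the limit. If any branch is itself singular, the unibranched argument applied to that branch already kills admissibility. So assume all branches are smooth; then each $\tilde p_k$ has local parameters $(r_k, r_k+1, r_k)$ if $b\neq 0$, or $(r_k, \ge r_k+1, \ge r_k+1)$ if $b = 0$. If $b \neq 0$: all $\tilde p_k$ have $\nu_k = 1$ and leading coefficient $\tau_k/r_k = b$, the \emph{same} value — so the pair $\{\tilde p_1,\tilde p_2\}$ is \emph{resonant} (\cref{de:nonres}), and \cref{th:rewriting} cannot be applied; moreover one checks directly from \cref{le:pairs}(C1)--(C2) with $\nu_1=\nu_2=1$ that the non-resonance condition (C3) is exactly what fails, i.e.\ globalisation at $q$ genuinely fails. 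If $b = 0$: all $\tilde p_k$ have $\bar s_k \ge r_k + 1$, hence $\nu_k > 1$, and then condition ``(C-i) or (C-ii)'' of \cref{th:rewriting} applied to the pair $\{\tilde p_1,\tilde p_2\}$ (ordered $\nu_1\le\nu_2$) requires $\nu_1 \le 1$, which is violated — so globalisation fails (or, if one prefers, $\nu_k>1$ already means the spectral curve sits outside the admissible range, $\nu_p\in(-\infty,1]$, and it is not locally admissible). Either branch of the dichotomy yields the claim.

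\textbf{Main obstacle.} The delicate part is the unibranched case: I must show rigorously that a \emph{singular} smooth-ambient-data branch — i.e.\ a branch on which $x$ has ramification order $r$ but the branch itself is not smooth — forces the local parameters $(r_p, s_p, \bar s_p)$ out of the admissible region. This requires a careful Newton--Puiseux analysis: write the branch as $v = c\,u^{a/r} + \cdots$ in lowest terms (or the reverse), translate into the standard coordinate $\zeta$ with $u = \zeta^r$, expand $\omega_{0,1} = (b+v)\,\mathrm{d}u$, and read off that the smallest exponent not divisible by $r$ is controlled by $a$; singularity of the branch means $a \notin r\mathbb{Z}+\{0, r\}$ in the relevant normalisation, and one must check this is incompatible with $s_p \in [r_p+1]$ together with $r_p \equiv \pm 1 \bmod s_p$ and $\gcd(r_p,s_p)=1$. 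Handling the two sub-cases $b\neq 0$ and $b = 0$ uniformly, and making sure the "next exponent" in $\omega_{0,1}$ is correctly identified when $b\neq 0$ (where the leading term $rb\zeta^{r-1}$ is harmless but the \emph{correction} coming from $v$ may or may not be divisible by $r$), is the computational heart of the argument. I expect the cleanest route is to reduce everything to the statement: for a branch of $C$ at finite $x$ with $y$ finite, the spectral curve is locally admissible at the corresponding point of $\tilde C$ \emph{only if} that branch is smooth — which is essentially the content of \cref{le:redacla} read in reverse — and then the multibranched resonance/aspect-ratio argument is comparatively short.
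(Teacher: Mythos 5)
There is a genuine gap: you have misread the definition of $E'$ and consequently omitted an entire case. In \cref{de:linesinf}, $L_X = \mathbb{P}^1 \times \{\boldsymbol{\infty}\}$ is the locus where $y=\infty$, while $L_Y = \{\boldsymbol{\infty}\}\times \mathbb{P}^1$ is the locus where $x = \infty$; and $E' = (L_X \cup \{p_{\infty 0}\})\cap C$. Your claim that ``$q = \boldsymbol{\infty}$ forces $p \in L_X \subseteq E'$'' is false: a point with $\pi_X(p) = \boldsymbol{\infty}$ and $\pi_Y(p) = b \in \mathbb{C}^*$ lies on $L_Y$, not on $L_X$, and is not $p_{\infty 0}$, so it belongs to $C\setminus E'$ and the lemma must be proved for it. The paper treats this case separately: for a unibranched such singularity one has $x = \zeta^{-r}$, $y = b + c\zeta^k + o(\zeta^k)$ with $k\geq 2$ and $b\neq 0$, hence $\bar s_{\tilde p} = -r$ but $s_{\tilde p} \geq -r+2$, violating (lA3); for a multibranched one, two preimages share $\nu_1=\nu_2=-1$ and $\frac{\tau_1}{r_1}=\frac{\tau_2}{r_2}=b$, so the pair is resonant and the hypotheses of \cref{th:rewriting} fail. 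Your proof as written simply never reaches these points.

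The part you flag as the ``main obstacle'' (the unibranched case at finite $(x,y)=(q,b)$) is in fact the easy part and requires no delicate Newton--Puiseux analysis. A unibranched point $p$ is singular if and only if \emph{both} coordinates of its branch parametrisation vanish to order $\geq 2$: in a standard coordinate at the unique preimage $\tilde p$ one has $x = q + \zeta^{r}$ with $r\geq 2$ \emph{and} $y = b + c\zeta^{k}+o(\zeta^k)$ with $k \geq 2$ (if $k=1$ then $y-b$ is a local coordinate and the branch, hence $p$, is smooth). Then $\omega_{0,1} = r\zeta^{r-1}\bigl(b + c\zeta^{k} + o(\zeta^k)\bigr)\dd\zeta$, so the smallest exponent not divisible by $r$ satisfies $s_{\tilde p} \geq r+2$, which is incompatible with local admissibility whether $b=0$ or $b\neq 0$ (it forces $s_{\tilde p} > r_{\tilde p}+1$ with $s_{\tilde p}>0$, and in the case $b\neq 0$ additionally $\bar s_{\tilde p} = r \leq s_{\tilde p}-2$). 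Your speculation that ``coprimality will fail'' is a red herring. Your treatment of the multibranched case at finite $(x,y)$ (resonance when $b\neq 0$; both aspect ratios exceeding $1$ when $b=0$) does match the paper's argument, but the two issues above mean the proof is incomplete as it stands.
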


\begin{proof}
	First, let $p$ be a singular point in $\mathfrak{C} = C \setminus E$. We write $(\pi_X(p),\pi_Y(p)) = (q,b) \in \mathbb{C}^2$.
	
	Suppose that $p$ is unibranched. This means that $\eta^{-1}(p)$ contains only one point $\tilde p$ in the normalisation $\tilde C$. Moreover, by the definition of normalisation, in a standard coordinate near $\tilde{p}$ we have $x = q + \zeta^r$  for some $r \geq 2$ and $y = b + c \zeta^{k} + o(\zeta^{k})$ for some $k \geq 2$ (otherwise $p$ would not be singular) with $c \neq 0$. This implies:
	$$
	\omega_{0,1} = y \dd x = r \zeta^{r-1} \big( b+ c \zeta^k + o(\zeta^k)\big)\dd \zeta\quad {\rm near}\,\,\tilde{p}\,.
	$$
	Regardless of whether $b=0$ or $b \neq 0$, the local parameter $s$ at $\tilde p$ must satisfy $s > r+1$, since $k \geq 2$. Therefore the condition (lA3) in \cref{de:localadm} is violated and the spectral curve is not locally admissible at $\tilde p$.

	Now suppose that $p$ is multibranched. Then there are at least two distinct points $\tilde p_1, \tilde p_2 \in \eta^{-1}(p)$ in the normalisation $\tilde C$. In a standard coordinate near $\tilde{p}_i$ we have $x = q + \zeta^{r_i}$ for some $r_i \geq 1$ and $y = b + c_i \zeta^{k_i} + o(\zeta^{k_i})$ for some $k_i \geq 1$ with $c_i \neq 0$. This implies
	$$
	\omega_{0,1} = r_i \zeta^{r_i-1} \big( b + c_i \zeta^{k_i} + o(\zeta^{k_i})\big)\dd \zeta \quad {\rm near} \,\,\tilde p_i\,.
	$$
	The important fact is that $b$ is the same for both $\tilde p_1$ and $\tilde p_2$, as they project to the same point $p \in C$. Thus, if $b \neq 0$, we have two points $\tilde p_1$ and $\tilde p_2$ in the fibre $x^{-1}(q)$ with same aspect ratio $\nu_1=\frac{\bar s_1}{r_1} = 1 = \frac{\bar s_2}{r_2}=\nu_2$ and $\frac{\tau_1}{r_1} = b = \frac{\tau_2}{r_2}$, which contradicts the sufficient conditions given in \cref{th:rewriting} for vertical globalisation above $q=\pi_X(p)$.  If $b = 0$, either the curve is not locally admissible (if $k_1 \geq 2$ or $k_2 \geq 2$, like in the unibranched case), or it does not satisfy the conditions for globalisation of \cref{th:rewriting} (if $k_1 = k_2 = 1$, then $\bar s_1 = r_1+1$ and $\bar s_2 =r_2+1$, leading to $\nu_1>1$ and $\nu_2>1$).
		
 Alternatively, let $p$ be a singular point in $E \setminus E'$. This means that $\pi_X(p) = \boldsymbol{\infty}$ and $\pi_Y(p) = b \in \mathbb{C}^*$.

Suppose that $p$ is unibranched. $\eta^{-1}(p)$ contains only one point $\tilde p$. In a standard coordinate near $\tilde{p}$ we have $x = \zeta^{-r}$  for some $r \geq 2$ and $y = b + c \zeta^k + o(\zeta^k)$ for some $k \geq 2$ with $c \neq 0$. This implies
$$
\omega_{0,1}= -r \zeta^{-r-1} \big( b + c \zeta^k + o(\zeta^k)\big)\dd \zeta \quad {\rm near}\,\,\tilde p\,.
$$
Since $b \neq 0$, the local parameters at $\tilde p$ are $\bar s = -r$, but $s > -r+1$, since $k \geq 2$. Therefore the spectral curve is not locally admissible at $\tilde p$: it violates condition (lA3) in \cref{de:localadm}.

	Now suppose that $p$ is multibranched. Then there are at least two distinct points $\tilde p_1, \tilde p_2 \in \eta^{-1}(p)$. In a standard coordinate near $\tilde{p}_i$ we have $x = \zeta^{-r_i}$ for some $r_i \geq 1$ and $y = b + c_i \zeta^{k_i} + o(\zeta^{k_i})$ for some $k_i \geq 1$ with $c_{i} \neq 0$. This implies
	$$
	\omega_{0,1} = -r_i \zeta^{-r_i-1} \big( b + c_i\zeta^{k_i} + o(\zeta^{k_i})\big) \dd \zeta \quad {\rm near}\,\,\tilde p_i\,.
	$$
	The constant $b$ is the same for both $\tilde p_1$ and $\tilde p_2$. Since $b \neq 0$, then we have two points $\tilde p_1$ and $\tilde p_2$ in the fibre $x^{-1}(q)$ with same aspect ratio $\nu_1=\frac{\bar s_1}{r_1} = -1 = \frac{\bar s_2}{r_2}=\nu_2$ and $\frac{\tau_1}{r_1} = b = \frac{\tau_2}{r_2}$, which contradicts the conditions of \cref{th:rewriting} for vertical globalisation above $\pi_X(p) = \boldsymbol{\infty}$.  
\end{proof}

We summarize the local parameters and whether the curve is locally admissible at points $p\in C$ that are either smooth ramification points or unibranched singularities in \cref{Fig:sum}.

\begin{table}[!ht]
\begin{center}
\begin{tabular}{|c||c|c||c|c||c||c|}
\hline
$p \in C$ & $x(p)$ & $y(p)$ & $\bar s$ & $s$ & $\nu$ & {\rm loc. adm.} \\
\hline
\multirow[c]{6}{2cm}{smooth ramified} & $\in \mathbb{C}$ & $0$ & $r + 1$ & $r + 1$ & $1+\frac{1}{r}$ & yes \\
\cline{2-7}
& $\in \mathbb{C}$  & $ \in \mathbb{C}^*$  & $r$ & $r + 1$ & $1$ & yes \\
\cline{2-7}
& $\in \mathbb{C}$  & $\infty$ &   $r - 1$ & $r - 1$ & $1-\frac{1}{r}$ & yes \\
\cline{2-7}
& $\infty$ & $0$ & $-r + 1$ & $-r + 1$ & $-1 + \frac{1}{r}$ & yes \\
\cline{2-7}
& $\infty$ & $ \in \mathbb{C}^*$  & $-r$ & $-r + 1$ & $-1$ & yes \\
\cline{2-7}
& $\infty$ & $\infty$ &   $-r -1$ & $-r - 1$ & $-1-\frac{1}{r}$ & yes \\
\hline\hline
\multirow[c]{5}{2cm}{unibranched singular} & $\in \mathbb{C}$  & $0$ & $> r + 1$ & $\textcolor{red}{>r + 1}$ & $> 1$ & no \\
\cline{2-7}
& $\in \mathbb{C}$  & $ \in \mathbb{C}^*$ & $r$ & $\textcolor{red}{> r + 1}$ & $1$ & no  \\
\cline{2-7}
& $\in \mathbb{C}$  & $\infty$ & $r - k$ & & $< 1$ & maybe \\ 
\cline{2-7}
& $\infty$ & $0$  & $-r+k$ &  & $>-1$ & maybe  \\
\cline{2-7}
& $\infty$ & $\in \mathbb{C}^*$  & $-r$ & $\textcolor{red}{> -r + 1}$ & $-1$ & no  \\
\cline{2-7}
& $\infty$ & $\infty$ & $-r - k$ & & $<-1$ & maybe \\
\hline
\end{tabular}
\vspace{1em}
\end{center}
\caption{\label{Fig:sum} Local parameters for smooth ramification points and unibranched singularities of projectivised affine curves. In red we emphasise the reason for not being locally admissible. In the table, $k$ is an integer $\geq 2$.}
\end{table}

\begin{remark}
We \emph{did not prove} that topological recursion can be globalised above $q \in \mathbb{P}^1$ \emph{if and only if} the conditions of \cref{th:rewriting} hold; the conditions of \cref{th:rewriting} are only sufficient conditions. Thus we cannot conclude from the previous lemma that topological recursion cannot be globalised if the affine curve $\mathfrak{C}$ contains multibranched singularities. However, in this case we believe that the implication holds in both directions, and that vertical globalisation does not work if the affine curve has multibranched singularities. Henceforth we will require that $\mathfrak{C}$ is smooth.
\end{remark}

\subsubsection{Global admissibility}
\label{S424}
\Cref{le:singularities} leads us to propose the following notion of`global admissibility.

\begin{definition}[Global admissibility]\label{de:globaladm}
	Let $\mathcal{S}=(\tilde C,  x, \omega_{0,1}, \omega_{0,2})$ be the compact spectral curve associated to a reduced affine curve $\mathfrak{C}$ as in \cref{de:irredpc}. We say that the spectral curve is \emph{globally admissible} if, for any singular point $p$ of $C$:
\begin{enumerate}[({gA}1)]
		\item  $p \in E'$.
		\item  $\mathcal{S}$ is locally admissible at all points in $\eta^{-1}(p)$.
		\item Every pair of points in $\eta^{-1}(p)$ is non-resonant and satisfies ``(C-i) or (C-ii)'' from \cref{th:rewriting}.
		\end{enumerate}
\end{definition}

Global admissibility is all about the singular points of the projective curve $C$. This definition is tailored for the following theorem.

\begin{theorem}[Global admissibility implies local admissibility and globalisation]\label{th:ga}
	Let $\mathcal{S} = (\tilde C,  x, \omega_{0,1}, \omega_{0,2})$ be the compact spectral curve associated to a reduced affine curve $\mathfrak{C}$ as in \cref{de:irredpc}. If $\mathcal{S}$ is globally admissible, then $\mathcal{S}$ is locally admissible and topological recursion can be fully globalised on $\mathcal{S}$.
\end{theorem}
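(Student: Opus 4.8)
The strategy is to reduce the global statement to the local criteria established earlier, namely \cref{th:ale} for local admissibility and \cref{th:unramified} together with \cref{th:rewriting} for vertical globalisation. The key geometric input is that, by \cref{de:irredpc}, the spectral curve $\mathcal{S}$ is built from a smooth affine curve $\mathfrak{C}$ whose projectivisation $C$ in $\mathbb{P}^1\times\mathbb{P}^1$ may acquire singularities only along the lines at infinity. Global admissibility (gA1) forces all these singularities to lie in the locus $E'$, i.e. above $\boldsymbol{\infty}\in\mathbb{P}^1$ at points where $y$ has a pole, or where both $x$ and $y$ are simultaneously at special values in the way prescribed by $E'$.

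First I would establish local admissibility. At a point $\tilde p\in\tilde C$, either $\eta(\tilde p)$ is a smooth point of $C$, or it is a singular point. At smooth points one reads off the local parameters directly from the bihomogeneous equation: since $\mathfrak{C}$ is smooth, the only smooth ramification of $x$ happens at points where the tangent line to $C$ is vertical or at the lines at infinity, and \cref{Fig:sum} records that all such smooth points (both in $\mathbb{C}^2$ and at infinity) are automatically locally admissible --- this is the content of the ``yes'' entries in the upper half of the table, and follows from the fact that for a smooth branch $y$ has a simple zero/pole or a nonzero finite value, giving $s_p\in\{r_p-1,r_p,r_p+1\}$ (up to sign at infinity), which satisfies (lA1)--(lA3). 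At singular points of $C$, by (gA1) we have $p\in E'$, and by (gA2) $\mathcal{S}$ is locally admissible at every point of $\eta^{-1}(p)$ by hypothesis. Since $\mathsf{Ram}$ is finite (as $C$ is a projective algebraic curve and ramification of $x$ is cut out by an algebraic condition), $\mathcal{S}$ is locally admissible.

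Next I would prove full globalisability, i.e. vertical globalisation above every $q\in x(\Sigma)\subseteq\mathbb{P}^1$; since $x$ is a finite-degree branched covering, \cref{th:unramified} and the discussion after it then upgrade this to full globalisation. For $q$ not a branch point, I invoke \cref{th:unramified}: I must check that $\omega_{0,1}$ separates fibres at $q$. For $q\in\mathbb{C}$ not a branch point, the points of $x^{-1}(q)$ are unramified smooth points of $C$ with distinct $y$-values (distinct points of $\mathfrak{C}$ over the same $x$), so $y(p_i)\ne y(p_j)$; reading $\omega_{0,1}=y\,\dd x$ in the standard coordinate gives $\bar s_i=1$ with leading coefficient $y(p_i)$ when $y(p_i)\ne 0$, and $\bar s_i\geq 2$ only at the (at most one) point where $y$ vanishes, while distinct nonzero $y$-values give distinct $\tau_i$ --- so both conditions of \cref{th:unramified} hold. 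For $q=\boldsymbol{\infty}$ not a branch point, the same argument applies after using the standard coordinate $\zeta$ with $x=\zeta^{-1}$: the points of $x^{-1}(\boldsymbol{\infty})$ are smooth points of $C$ (by (gA1), singularities at infinity are in $E'$, and an unramified point of $x^{-1}(\boldsymbol\infty)$ in $E'$ would have $r_p=1$), with distinct $y$-values governing distinct $\bar s_p$ and $\tau_p$. Hence $\omega_{0,1}$ separates fibres at every non-branch point.

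For $q$ a branch point, I would split $x^{-1}(q)$ according to whether a point $\tilde p$ lies over a singular or a smooth point of $C$. Over smooth points of $C$, the table \cref{Fig:sum} shows the local parameters are of the form $(r,r\pm 1)$ or $(r,r)$ up to sign, with aspect ratio $\nu_p\in\{1\pm\frac1r,\,1\}$ at $q\in\mathbb{C}$ and $\nu_p\in\{-1\pm\frac1r,\,-1\}$ at $q=\boldsymbol\infty$; moreover distinct smooth branches over $q$ have distinct $y$-values. Over a singular point of $C$ above $q$, all of $\eta^{-1}(p)$ consists of points that, by (gA3), are pairwise non-resonant and satisfy ``(C-i) or (C-ii)''. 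It remains to verify the conditions of \cref{th:rewriting} for a \emph{mixed} pair $\{\tilde p_1,\tilde p_2\}$ with $\tilde p_1$ over a smooth point and $\tilde p_2$ over a singular point, and for a pair of points over two distinct smooth points. Here I use the analysis following \cref{th:rewriting} (the bulleted ``Applying the local criterion'' list): if $\omega_{0,1}$ has a pole at one of the two points, (C-ii) holds; if $\omega_{0,1}$ is holomorphic nonzero at one of them, (C-i) holds; and if $\nu_2\geq 1$ one needs $\nu_1\leq 1$, which holds because a point with $\nu_p>1$ at $q\in\mathbb{C}$ (respectively $\nu_p<-1$ at $q=\boldsymbol\infty$) is precisely a point where $y$ vanishes (resp. has a pole), and by the table at most one branch over $q$ can have $y=0$ (resp. $y=\infty$) since $y$ is constant on... no, rather since the $y$-values of distinct branches are distinct. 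Non-resonance of the mixed pairs follows because when the aspect ratios coincide the relevant leading coefficients are the distinct $y$-values of the branches (or $\tau_p/r_p=y(p)$), exactly as in the proof of \cref{le:singularities} read in the globally admissible direction, or by (gA3) when both points sit over the same singular point.

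\textbf{Main obstacle.} The delicate point is the bookkeeping of \emph{mixed} pairs in a branch-point fibre: one point coming from a smooth branch of $C$ and another from (a branch over) a singular point in $E'$, or two points coming from two different singular points above the same $q$. Global admissibility only directly controls pairs of points above \emph{the same} singular point; for the cross terms one must go back to the explicit local form $\omega_{0,1}=y\,\dd x$ and exploit that $y$ takes distinct values on distinct branches of $C$ over $q$, together with the structural fact (from \cref{Fig:sum}) that the aspect ratio at a smooth branch is $1$, $1\pm\frac1r$ (over $\mathbb{C}$) or $-1$, $-1\pm\frac1r$ (over $\boldsymbol\infty$), so it can only ``resonate'' with another branch of the same type, in which case the distinct $y$-values break the resonance and, for the (C-i)/(C-ii) alternative, the inequalities $\nu_1\leq\frac1{r_{12}}$ or $\nu_1\leq\frac1m\leq\nu_2$ are satisfied by inspection of the finitely many possible aspect-ratio combinations. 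Once this case analysis is organised, \cref{th:rewriting} and \cref{th:unramified} close the argument, and full globalisability follows since $x$ has finite degree and the branch points are finite in number.
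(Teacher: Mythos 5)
Your proposal is correct and follows essentially the same route as the paper's proof: local admissibility is checked separately at smooth affine points, smooth points at infinity, and (via (gA2)) at preimages of singular points, and globalisation is reduced to \cref{th:unramified} away from branch points and to the aspect-ratio case analysis of \cref{th:rewriting} (with (gA3) covering pairs over a common singular point and the structural values $\nu_p=\pm1,\ \pm1\pm\tfrac1{r}$ at smooth branches covering the mixed pairs) at branch points. The only minor imprecision is your assertion that all points of $x^{-1}(\boldsymbol\infty)$ are smooth when $\boldsymbol\infty$ is not a branch point --- a multibranched singularity at $p_{\infty 0}$ or $p_{\infty\infty}$ can have only unramified branches --- but this is harmless since those pairs are again handled by (gA3), exactly as in your branch-point case.
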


\begin{proof}
	We start with local admissibility. First, since the affine curve $\mathfrak{C}$ is smooth, by \cref{le:redacla}, we know that the spectral curve is locally admissible at all ramification points in  $\eta^{-1}(\mathfrak{C})$. By global admissibility, we also know that it is locally admissible at all points in $\eta^{-1}(p)$ for singular points $p \in \mathsf{Sing} \subset C$. It remains to check local admissibility at the smooth points in $E$ which are ramified for $x$.  For such points $p$, $\eta^{-1}(p)$ contains a single point $\tilde p$. Let $r \geq 2$ be the order of ramification.
	
\noindent (i) \textit{Case $\pi_X(p) = \boldsymbol{\infty}$ and $\pi_Y(p) = b \in \mathbb{C}$}. In a standard coordinate near $\tilde{p}$, we have $x = \zeta^{-r}$ and $y = b + c \zeta + \mathcal{O}(\zeta^2)$ for some $c \neq 0$. This implies
\[\
\omega_{0,1} = - r \zeta^{-r-1}\big( b + c \zeta + \mathcal{O}(\zeta^2)\big)\dd \zeta\quad {\rm near}\,\,\tilde p\,.
\]
If $b\neq 0$, the local parameters at $\tilde p$ are $\bar s = -r$ and $s = -r+1$, since $c \neq 0$. If $b=0$, the local parameters are $\bar s = s = -r+1$. In both cases the curve is locally admissible at $\tilde p$.

\noindent (ii) \textit{Case $\pi_X(p) = q \in \mathbb{C}$ and $\pi_Y(p) = \boldsymbol{\infty}$}. In a standard coordinate near $\tilde{p}$, we have $x = q + \zeta^r$ and $y \sim \frac{c}{\zeta}$ for some $c \neq 0$. This implies
\[
\omega_{0,1} \sim c r \zeta^{r-2}\dd \zeta\quad {\rm near}\,\,\tilde p\,.
\]
The local parameters are $\bar s = s = r-1$, showing locally admissibility at $p$.
	
\noindent (iii) \textit{Case $\pi_X(p) = \pi_Y(p) = \boldsymbol{\infty}$}. In a standard coordinate near $\tilde{p}$, we have $x = \zeta^{-r}$ and $y \sim \frac{c}{\zeta}$ for some $c \neq 0$. This implies
\[
\omega_{0,1} \sim - c r \zeta^{-r - 1} \dd \zeta\quad {\rm near}\,\,\tilde p\,.
\]
The local parameters are $\bar s = s = -r-1$, and the curve is locally admissible at $\tilde p$.

This concludes the proof of local admissibility, and we move on to globalisation. Let $q \in \mathbb{P}^1$. We want to show that topological recursion can be globalised above $q$. If $q \neq \boldsymbol{\infty}$ and $\boldsymbol{\infty} \notin \pi_Y(\pi_X^{-1}(q))$, then  all the preimages in $\pi_X^{-1}(q)$ are in fact on the affine curve $\mathfrak{C}$ and by \cref{le:redacla}, we know that topological recursion can be globalised above $q$.

	Suppose next that $q \neq \boldsymbol{\infty}$ but that there is one special point $p_0 \in \pi_X^{-1}(q)$ such that $\pi_Y(p_0) = \boldsymbol{\infty}$. All other points $p \in \pi_X^{-1}(q)$ are on $\mathfrak{C}$, and thus are smooth and have a unique preimage $\tilde p$ in the normalisation. From the proof of \cref{le:redacla} we know that all such $\tilde p$ except potentially one have aspect ratio $\nu = \frac{\bar s}{r} = 1$ and $\omega_{0,1}$ separates fibres;  there can be one point $\tilde p$ with $\nu  > 1 $ (this happens when there is a zero of $\pi_Y$ in $\pi_X^{-1}(q)$). 
	If $p_0$ is a smooth point in $C$, then it has a unique preimage $\tilde p_0$ in $\tilde{C}$ and we have learned in (ii)  that $\bar s = s = r-1$ so $\nu < 1$. If $p_0$ is singular, all preimages in $\eta^{-1}(p_0)$ have $\bar s < r - 1$, so we still have $\nu < 1$, and global admissibility tells us that we can globalise over $\eta^{-1}(p_0)$.
	Putting all this together, if we pick a pair of points $\{\tilde p_i, \tilde p_j\} \subseteq x^{-1}(q)$, then:
	\begin{itemize}
	\item either $\nu_i \leq 1$ and $\nu_j \geq 1$ (or vice-versa) and if $\nu_i = \nu_j = 1$ then $\omega_{0,1}$ separates fibres;
	\item or $\nu_i < 1$ and $\nu_j <1$ but both points are in $\eta^{-1}(p_0)$ where $p_0$ is a singular point, and by global admissibility the points satisfy the conditions of \cref{th:rewriting}.
	\end{itemize}
Therefore topological recursion can be globalised above $q \in \mathbb{C}$.

	It remains to consider the special point $q = \boldsymbol{\infty}$. If $p \in \pi_X^{-1}(q)$ is such that $\pi_Y(p) \in \mathbb{C}^*$, then $p$ must be smooth (due to condition (gA1)) and have a unique preimage $\tilde p$ in the normalisation. By (i) it has $\bar s = -r$ and hence $\nu = -1$. Moreover, such points $\tilde p$ map to different points $p$ in $C$, so $\omega_{0,1}$ separates fibres for all such points. If $p_{\infty 0} \in \pi_X^{-1}(q)$, then there are two situations. If it is smooth, then it has a unique preimage $\tilde p_{\infty 0}$, and by (i) the local parameter is $\bar{s} = -r+1$ and hence $\nu = -1 + \frac{1}{r} > -1$. If it is singular, we find $\bar s > -r + 1$ and thus $\nu > -1$ for all preimages in $\eta^{-1}(p_{\infty 0})$, and the assumption of global admissibility guarantees that topological recursion can be globalised over these preimages. Finally, if $p_{\infty\infty} \in \pi_X^{-1}(q)$, then there are two situations. If it is smooth, by (iii) the corresponding point in $\tilde{C}$ has $\bar s = -r -1$ and hence $\nu = -1 - \frac{1}{r} < -1$. If it is singular, we find $\bar s < -r-1$ and thus $\nu< -1$ at all preimages in $\eta^{-1}(p_{\infty\infty})$, and by global admissibility topological recursion can be globalised over these preimages. Putting all this together, we conclude that if we pick a pair of points $\{\tilde p_i, \tilde p_j\} \subseteq x^{-1}(\boldsymbol{\infty})$, then
\begin{itemize}
\item either $\nu_i \leq -1$ and $\nu_j \geq -1$ (or vice-versa) and if $\nu_i=\nu_j=-1$ then $\omega_{0,1}$ separates fibres;
\item or $\nu_i < -1$ and $\nu_j < -1$ with both points in $\eta^{-1}(p_{\infty\infty})$; by global admissibility the pair satisfy the conditions of \cref{th:rewriting};
\item or $\nu_i >-1$ and $\nu_j > -1$ with both points in $\eta^{-1}(p_{\infty 0})$; by global admissibility the pair satisfy the conditions of \cref{th:rewriting}.
\end{itemize}
Therefore topological recursion can be globalised as well above $q= \boldsymbol{\infty}$.
\end{proof}

\subsection{Global admissibility and Newton polygons}

In this section we study how global admissibility of spectral curves can be understood from the point of view of the Newton polygon of the original affine curve. Recall from \cref{de:globaladm} that for a compact spectral curve associated to an affine curve to be globally admissible, the affine curve must be smooth, and, in particular, irreducible. Thus from now we assume that we start with an irreducible affine curve.

\subsubsection{Generalities on Newton polygons}
\label{sec:conrern}
We first review classical facts about the resolution of singularities seen from the point of view of the Newton polygon, see for instance \cite{K78,BP00}. Let $\mathfrak{C} = \{P(x,y) = 0 \} \subset \mathbb{C}^2$ be an irreducible affine curve of positive degree $d_x$ in $x$ and $d_y$ in $y$.  Write
\begin{equation}
P(x,y) = \sum_{(i,j) \in \mathbb{Z}_{\geq 0}^2} c_{ij} x^i y^j\,,
\end{equation}
and denote by ${\rm supp}(P) = \{(i,j) \in  \mathbb{Z}_{\geq 0}^2~|~c_{ij} \neq 0 \}$ the support of $P$.

\begin{definition}[Newton polygon and diagram]\label{de:Np}
The \emph{Newton polygon} $\Delta(P)$ is the convex hull of ${\rm supp}(P)$ in $\mathbb{R}^2_{\geq 0}$. Let $\Gamma^+(P) \subseteq \mathbb{R}_{\geq 0}^2$ be the convex hull of the union of the quadrants $(i,j) + \mathbb{R}_{\geq 0}^2$ over $(i,j) \in {\rm supp}(P)$. The \emph{Newton diagram} $\Gamma(P)$ associated to $P$ is the closure of the complement of $\Gamma^+(P)$ in $\mathbb{R}_{\geq 0}^2$. We denote by $\hat{\Gamma}(P) = \Gamma(P) \cap \mathbb{R}_{> 0}^2$ the set of points in $\Gamma(P)$ that are not on the $x$- or $y$-axes.
\end{definition}

It will be useful for us to distinguish between edges and maximal edges of $\Delta(P)$, vertices and extremal vertices.

\begin{definition}[Discrete geometry conventions]\label{de:edges} 
An \emph{integral point} is a point in $\mathbb{Z}^2$.  If $A \subseteq \mathbb{R}^2$ is bounded, we denote $|A|_{\mathbb{Z}} = | A \cap \mathbb{Z}^2|$ its number of integral points.

An \emph{edge} of a Newton polygon $\Delta(P)$ is a closed segment in the boundary of $\Delta(P)$ whose only integral points are located at the endpoints. A \emph{maximal edge} is a closed segment obtained by a maximal union of edges (cf. \cref{fig:edgem}). Maximal edges are edges if and only if they do not contain any integral interior points. An \emph{extremal vertex} is the endpoint of a maximal edge (in particular, $\Delta(P)$ is the convex hull of its extremal vertices). A \emph{vertex} is any integral point in the boundary of $\Delta(P)$ (it is not required to be in the support of $P$). 
\end{definition}

The Newton diagram of $P$ characterises a potential singularity of $\mathfrak{C}$ at the origin $(0,0)$. More precisely, the $\delta$-invariant of the origin satisfies $\delta_{(0,0)} \geq |\hat{\Gamma}(P)|_{\mathbb{Z}}$,  see e.g. \cite[Section 3]{BP00}.

\begin{figure}[!ht]
\begin{center}
\includegraphics[width=0.7\textwidth]{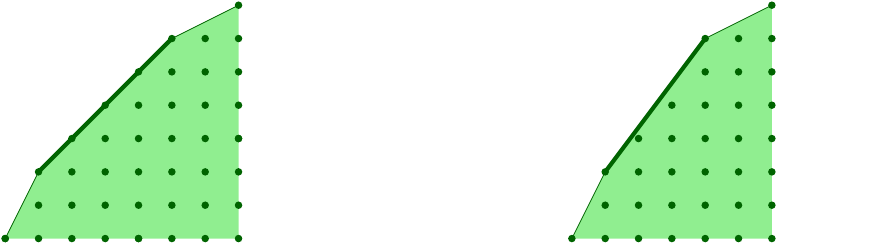}
\caption{\label{fig:edgem} Left panel: the maximal edge with slope $1$ consists of 4 edges. Right panel: the maximal edge of slope $\frac{4}{3}$ consists of a single edge.}
\end{center} 
\end{figure}

Let us return to the projectivised curve
\begin{equation}
C = \bigg\{F(X_0,X_1,Y_0,Y_1) = X_1^{d_x} Y_1^{d_y} P\left( \frac{X_0}{X_1}, \frac{Y_0}{Y_1} \right) = 0 \bigg\} \subset \mathbb{P}^1 \times \mathbb{P}^1\,.
\end{equation}
Let $\square(d_x,d_y)$ be the rectangle in $\mathbb{R}^2$ with vertices $(0,0)$, $(d_x,0)$, $(0, d_y)$ and $(d_x, d_y)$. Clearly, $\Delta(P) \subseteq \square(d_x,d_y)$. Since $P$ is irreducible, we know that $\Delta(P)$ intersects each of the four maximal edges of $\square(d_x,d_y)$. The rectangle $\square(d_x,d_y)$ naturally splits into five regions: the interior $\mathring{\Delta}(P)$ of the Newton polygon and the four corners (cf. \cref{Newpolyfig}):
\begin{itemize}
\item $\Gamma_{00}(P)$ is the Newton diagram of $F(x,1,y,1)$;
\item $\Gamma_{0\infty}(P)$ is obtained from the Newton diagram of $F(x,1,1,y)$ by a horizontal reflection;
\item $\Gamma_{\infty 0}(P)$ is obtained from the Newton diagram of $F(1,x,y,1)$ by a vertical reflection;
\item $\Gamma_{\infty\infty}(P)$ is obtained from the Newton diagram of $F(1,x,1,y)$ by a rotation.
\end{itemize}
By the discussion above, the $\delta$-invariants of the singularities of $C$ at these four special points must satisfy
 \begin{equation}
\label{dpj}
\forall \epsilon,\epsilon' \in \{0,\infty\}\qquad \delta_{p_{\epsilon\epsilon'}} \geq |\hat{\Gamma}_{\epsilon\epsilon'}(P)|_{\mathbb{Z}}\,,
\end{equation}
where by $\hat{\Gamma}_{\epsilon\epsilon'}(P)$ we mean the points in $\Gamma_{\epsilon\epsilon'}(P)$ which are not on the boundary of $\square(d_x,d_y)$.

\begin{figure}[!ht]
\begin{center}
\includegraphics[width=0.34\textwidth]{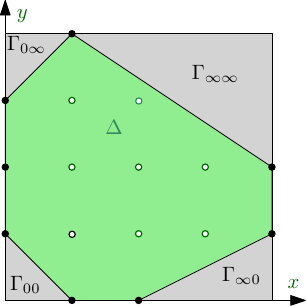}
 \end{center}
\caption{\label{Newpolyfig} Example of a Newton polygon and its four corners. We have $|\hat{\Gamma}_{\infty\infty}|_{\mathbb Z} = 1$ and  $|\hat{\Gamma}_{00}|_{\mathbb{Z}} = |\hat{\Gamma}_{0\infty}|_{\mathbb{Z}} = |\hat{\Gamma}_{\infty0}|_{\mathbb{Z}} = 0$.}
 \end{figure} 
 
These five regions do not overlap in the interior $\mathring{\square}(d_x,d_y)$ of the rectangle:
$$
\mathring{\square}(d_x,d_y) = \mathring{\Delta}(P) \sqcup \left( \bigsqcup_{\epsilon,\epsilon' \in \{0,\infty\}} \hat{\Gamma}_{\epsilon\epsilon'}(P) \right)\,.
$$
This implies for the number of integral points
\begin{equation}\label{eq:ggcd}
(d_x-1)(d_y-1) = |\mathring{\Delta}(P)|_{\mathbb{Z}} + \sum_{\epsilon,\epsilon' \in \{0,\infty\} } |\hat{\Gamma}_{\epsilon\epsilon'}(P)|_{\mathbb{Z}}\,.
\end{equation}
Since the geometric genus of $C$ is $\mathsf{g}(C) = (d_x-1) (d_y-1) - \sum_{p \in \mathsf{Sing}} \delta_p$ where $\mathsf{Sing}$ is the singular locus of $C$, \eqref{dpj} implies that $\mathsf{g}(C) \leq |\mathring{\Delta}(P)|_{\mathbb Z}$ (this is known as Baker's formula, see e.g. \cite[Theorem 4.2]{BP00}).

\subsubsection{Nondegenerate curves and properties of Puiseux series}

For generic curves, the above bounds are equalities, and the geometric genus and the type of singularities of the generic curve  can be directly read off its Newton polygon. More precisely, this is true for curves that are nondegenerate, which is defined as follows.

Let $\Delta(P) \subset \mathbb{R}_{\geq 0}^2$ be the Newton polygon of an irreducible affine curve $\mathfrak{C} = \{P(x,y)= \sum_{i,j} c_{ij}\, x^iy^j = 0 \}$ as in the previous section. Denoting by $\tau$ a maximal edge of $\Delta(P)$ or $\Delta(P)$ itself, we write
\begin{equation}\label{Ptau}
P_\tau(x,y) = \sum_{(i,j) \in \tau \cap \mathbb{Z}_{\geq 0}^2} c_{ij} \,x^i y^j\,,
\end{equation}

\begin{definition}[Nondegenerate curves]\label{de:nd}
$\mathfrak{C} = \{P(x,y) = 0\} \subset \mathbb{C}^2$ is \emph{nondegenerate} if for any $\tau$ which is a maximal edge of $\Delta(P)$ or $\Delta(P)$ itself, the system $P_\tau(x,y) = \partial_x P_{\tau}(x,y) = \partial_y  P_{\tau}(x,y) = 0$
has no solution in $(\mathbb{C}^*)^2$.
\end{definition}

In the literature this is usually called ``nondegenerate with respect to its Newton polygon''. Some references, for instance  \cite[Definition 4.1]{BP00}, do not include a condition on $\tau = \Delta(P)$ in the definition, which then refers to a weaker notion. For a fixed Newton polygon, nondegenerate curves are generic. Nondegenerate curves are nice as many properties of a nondegenerate curve can be read off directly from its Newton polygon.

\begin{remark}
If $\mathfrak{C} = \{(x,y) \in \mathbb{C}^2 \,\,|\,\,P(x,y) = 0\} \subset \mathbb{C}^2$ is an irreducible and nondegenerate affine curve, then the polynomial $P$ has positive degrees in $x$ and in $y$, does not have factors in $\mathbb{C}[x]$ and is such that $\{y = 0\} \not\subseteq \mathfrak{C}$. Therefore, by \cref{de:irredpc} we obtain from it a connected and compact spectral curve.
\end{remark}

\begin{lemma}\label{le:ndnp}
Let $\mathfrak{C} = \{P(x,y) = 0\} \subset \mathbb{C}^2$ be an irreducible and nondegenerate affine curve. Let $C \subset \mathbb{P}^1 \times \mathbb{P}^1$ the projectivisation and $\mathsf{Sing}$ its singular locus. Then:
\begin{itemize} 
\item $\mathsf{g}(C) = |\mathring{\Delta}(P)|_{\mathbb{Z}}$;
\item $\mathsf{Sing}  \subseteq \{p_{00},p_{\infty 0},p_{0\infty},p_{\infty\infty}\}$;
\item $\delta_{p_{\epsilon\epsilon'}} = |\hat{\Gamma}_{\epsilon\epsilon'}(P)|_{\mathbb{Z}}$ for $\epsilon,\epsilon' \in \{0,\infty\}$.
\end{itemize}
\end{lemma}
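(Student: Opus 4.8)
\textbf{Proof plan for \cref{le:ndnp}.}
The plan is to reduce everything to a local, corner-by-corner analysis, using the fact that for a nondegenerate curve the geometry near each of the four special points $p_{\epsilon\epsilon'}$ is entirely governed by the part of the Newton polygon sitting in the corresponding corner $\Gamma_{\epsilon\epsilon'}(P)$. First I would observe that, away from the four special points $p_{\epsilon\epsilon'}$, nondegeneracy directly forces smoothness: at an affine point the condition on $\tau=\Delta(P)$ in \cref{de:nd} says $P=\partial_xP=\partial_yP=0$ has no solution in $(\mathbb{C}^*)^2$, which rules out affine singularities with both coordinates nonzero; at a point on one of the coordinate axes (or on a line at infinity but not at a corner), one passes to the appropriate chart — e.g.\ $F(x,1,y,1)$ near the $x$-axis — and notes that a singularity there with the other coordinate nonzero would, after restricting to the relevant maximal edge $\tau$ of the Newton polygon, produce a solution of $P_\tau=\partial_xP_\tau=\partial_yP_\tau=0$ in $(\mathbb{C}^*)^2$, again contradicting nondegeneracy. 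This gives the second bullet, $\mathsf{Sing}\subseteq\{p_{00},p_{\infty0},p_{0\infty},p_{\infty\infty}\}$.

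Next I would establish the third bullet, $\delta_{p_{\epsilon\epsilon'}}=|\hat\Gamma_{\epsilon\epsilon'}(P)|_{\mathbb{Z}}$. By symmetry (the four corners are interchanged by the coordinate swaps $x\mapsto1/x$, $y\mapsto1/y$, which are automorphisms of $\mathbb{P}^1\times\mathbb{P}^1$ preserving the structure), it suffices to treat $p_{00}$, i.e.\ the Newton diagram $\Gamma_{00}(P)=\Gamma(P)$ of $P$ itself at the origin. We already have the general inequality $\delta_{(0,0)}\geq|\hat\Gamma(P)|_{\mathbb{Z}}$ recalled in \cref{sec:conrern} (from \cite{BP00}); the content is the reverse inequality under nondegeneracy. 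For this I would invoke the classical formula for the $\delta$-invariant of a Newton-nondegenerate plane curve singularity in terms of the area and boundary lattice points of the Newton diagram — Pick's theorem applied to $\Gamma(P)$ gives exactly $\delta_{(0,0)}=|\hat\Gamma(P)|_{\mathbb{Z}}$ (equivalently, the Milnor number is $2|\hat\Gamma|_{\mathbb{Z}}+(\text{boundary contributions})-\mathsf{b}_{(0,0)}+1$, and Milnor's formula $2\delta=\mathsf{j}+\mathsf{b}-1$ converts this into the stated count). Concretely, since the curve is nondegenerate, the singularity is Newton-nondegenerate, so its $\delta$-invariant equals the number of integral points strictly below the Newton diagram and strictly inside the positive quadrant, which is $|\hat\Gamma_{00}(P)|_{\mathbb{Z}}$; the same argument in each of the other three charts gives the other three equalities.

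Finally, the first bullet follows by combining the previous two with the genus formula \eqref{eq:gg} and the lattice-point identity \eqref{eq:ggcd}: since $\mathsf{Sing}\subseteq\{p_{\epsilon\epsilon'}\}$ and $\delta_{p_{\epsilon\epsilon'}}=|\hat\Gamma_{\epsilon\epsilon'}(P)|_{\mathbb{Z}}$, we get
\[
\mathsf{g}(C)=(d_x-1)(d_y-1)-\sum_{\epsilon,\epsilon'\in\{0,\infty\}}\delta_{p_{\epsilon\epsilon'}}=|\mathring\Delta(P)|_{\mathbb{Z}}+\sum_{\epsilon,\epsilon'}|\hat\Gamma_{\epsilon\epsilon'}(P)|_{\mathbb{Z}}-\sum_{\epsilon,\epsilon'}|\hat\Gamma_{\epsilon\epsilon'}(P)|_{\mathbb{Z}}=|\mathring\Delta(P)|_{\mathbb{Z}}\,,
\]
where the irreducibility of $P$ guarantees $C$ is irreducible so that $(d_x-1)(d_y-1)=\mathsf{p}_{\mathrm a}(C)$ is the right arithmetic genus. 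I expect the main obstacle to be the second bullet: ruling out \emph{all} singularities except those at the four corners requires a careful, chart-by-chart bookkeeping of how a maximal edge of $\Delta(P)$ degenerates in each of the eight "edge" charts (the four lines at infinity minus the corners, plus the two coordinate axes), and making sure that nondegeneracy with respect to $\tau=\Delta(P)$ versus $\tau=$ a maximal edge is invoked in the right place — this is the step where one most easily overlooks a case. The $\delta$-invariant computation, by contrast, is a citation to standard toric/Newton-diagram singularity theory once nondegeneracy is in hand.
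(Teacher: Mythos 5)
Your proposal is correct and follows essentially the same route as the paper: rule out singularities away from the four corner points via nondegeneracy with respect to $\Delta(P)$ (for $(\mathbb{C}^*)^2$) and with respect to the relevant axis-edge polynomial (for the coordinate axes and lines at infinity, repeated in each chart of the bihomogeneisation), then quote the standard Newton-diagram formula $\delta_{p_{\epsilon\epsilon'}}=|\hat\Gamma_{\epsilon\epsilon'}(P)|_{\mathbb{Z}}$ from \cite[Proposition 3.17]{BP00}, and conclude the genus statement by combining \eqref{eq:gg} with \eqref{eq:ggcd}. No gaps to report.
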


\begin{proof}
First, take $\tau = \Delta(P)$. Then nondegeneracy implies that $\mathfrak{C} \cap (\mathbb{C}^*)^2$ is smooth. Now suppose that $\mathfrak{C}$ is singular at $(x,y) = (0,b)$ for some $b \in \mathbb{C}^*$. Then $\Delta(P)$ contains a vertical maximal edge $e$ issuing from the origin. If we write $P(x,y) = \sum_{i=0}^{d_x} c_i(y) x^i$, then we must have $c_0(b) = c_0'(b) = 0$. But $P_{e}(x,y) = c_0(y) = 0$. Thus, $P_e(0,b) = c_0(b) = 0$, $\partial_x P_e (0,b) = 0$ and $\partial_y P_{e}(0,b) =  c'_0(b) = 0$. This contradicts the nondegeneracy condition for this particular $e$. A similar argument applies to $(x,y) = (a,0)$ with $a \in \mathbb{C}^*$, the role of $e$ being now played by the horizontal maximal edge issuing from the origin. We conclude that $\mathfrak{C}$ can only have a singularity at the origin.

The bihomogenisation $F$ of $P$ has the property that, if $P(x,y) = F(x,1,y,1)$ is nondegenerate, then $F(x,1,1,y)$, $F(1,x,y,1)$ and $F(1,x,1,y)$ are also nondegenerate. Therefore, the argument of the previous paragraph goes through in the other affine patches, and we conclude that singularities can only be found among the points $ \{p_{00}, p_{0\infty}, p_{\infty0}, p_{\infty \infty} \}$.

Finally, from \cite[Proposition 3.17]{BP00} we know that $\delta_{p_{\epsilon\epsilon'}} = |\hat{\Gamma}_{\epsilon\epsilon'}(P)|_{\mathbb{Z}}$ for all $\epsilon,\epsilon' \in \{0,\infty\}$. Since there are no other singularities, from \eqref{eq:gg} and \eqref{eq:ggcd} it follows that $\mathsf{g}(C) = |\mathring{\Delta}(P)|_{\mathbb{Z}}$.
\end{proof}

Our goal is to extract from the Newton polygon information about the local parameters for compact spectral curves obtained of \cref{de:irredpc}, for which the affine curve is nondegenerate. To this end, we recall some basic facts about the Newton--Puiseux algorithm (see e.g. \cite{Tsing}). The algorithm determines the Puiseux series $y(x)$ satisfying $P(x,y(x)) = 0$. They take the form
\begin{equation}\label{k0r}
	y(x) = \sum_{m \in \mathbb{Z}_{\geq 0}} c_m x^{\frac{m}{r}}\,,\quad k = \min \{m\,\,|\,\,c_m \neq 0\} \in \mathbb{Z}_{\geq 0}
	\end{equation}
	for some minimal $r \in \mathbb{Z}_{> 0}$, where there is at least one $m$ not divisible by $r$ such that $c_l \neq 0$.  We will refer to $ (r,k) $ as the \textit{parameters of the Puiseux series}. In general, $k$ and $r$ may not be coprime. The ratio $\frac{k}{r}$ is often called the valuation, $-\frac{r}{k}$ coincides with a slope of some maximal edge $e$ in the corner $\Gamma_{00}(P)$, and $ c_{k} $ is a nonzero root of the \emph{slope polynomial} $Q_e(w) = w^{-l_e}P_e(1,w)$, where $l_e$ is the degree of the monomial of lowest degree in $P_e(1,w)$. There are $ (r-1) $ other Galois-conjugate Puiseux series, obtained by multiplying $x^{\frac{1}{r}} $ by a $r$-th root of unity. A similar discussion applies in the affine patches at the other points $p_{\epsilon\epsilon'}$ with $ \epsilon,\epsilon' \in \{0,\infty\}$, by considering maximal edges in the corners $\Gamma_{\epsilon\epsilon'}(P)$ and the corresponding slope polynomials. Although we always take $k \in \mathbb{Z}_{\geq 0}$ and $r \in \mathbb{Z}_{> 0}$ in the definition, the sign of the slope and its expression in terms of $k$ and $r$ depends on the corner.

\begin{remark}
	We shall soon see that the parameter of the Puiseux series named $ r $ coincides with the local parameters of the spectral curve named $r$ in \cref{de:localparameters}.
\end{remark}

\begin{remark}
Nondegeneracy is equivalent to the property that all slope polynomials have only simple roots \cite[Remark 3.16]{BP00}, and that the system $ P(x,y) = \partial_x P(x,y) = \partial_y P(x,y) = 0$ has no roots in $(\mathbb{C}^*)^2  $. Again, note  that our definition of nondegeneracy has the additional condition that $ P(x,y) = \partial_x P(x,y) = \partial_y P(x,y) = 0$ has no roots in $(\mathbb{C}^*)^2  $,
as compared to \cite{BP00}.
\end{remark}

We summarise some basic properties below.

\begin{lemma}\label{le:coprime}
Let $C$ be the projectivisation of a nondegenerate irreducible affine curve, $\mathcal{S}$ the spectral curve and let $\epsilon,\epsilon' \in \{0,\infty\}$.
	\begin{itemize}
	\item[(a)] Every branch above $p_{\epsilon\epsilon'}$ is associated to a maximal edge in $\Delta(P) \cap \Gamma_{\epsilon\epsilon'}(P)$ and a Puiseux series with parameters $(r,k)$ as in \eqref{k0r}. There are as many branches associated to a maximal edge $e$ as there are edges in $e$.
	\item[(b)] At a branch $p \in \eta^{-1}(p_{\epsilon\epsilon'})$, the local parameters $(r_p,\bar s_p)$  in the sense of \cref{de:localparameters} can be expressed in terms of the parameters $(r,k)$ of the corresponding Puiseux series. Namely, $r_p = r$ and
\begin{equation}
\label{allvaluesbarp}
\bar s_p = \left\{\begin{array}{lll} r+k && {\rm if} \,\,(\epsilon,\epsilon') = (0,0) \\ r - k && {\rm if} \,\,(\epsilon,\epsilon') = (0,\infty) \\ -r + k &&  {\rm if} \,\,(\epsilon,\epsilon') = (\infty,0) \\  -r - k && {\rm if} \,\,(\epsilon,\epsilon') = (\infty,\infty) \end{array}\right.
\end{equation}
	In all cases, $r_p$ and $\bar s_p$ are coprime and if $r_p \geq 2$, we have $\bar s_p = s_p$.
\item[(c)] Any pair of branches associated to the same maximal edge is non-resonant in the sense of \cref{de:nonres}.
\end{itemize}
\end{lemma}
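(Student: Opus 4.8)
The statement to prove is \cref{le:coprime}, which has three parts (a), (b), (c) relating branches of the projectivised nondegenerate curve above the four special points $p_{\epsilon\epsilon'}$ to Newton--Puiseux data. The plan is to work one corner at a time and reduce everything to the corner $\Gamma_{00}(P)$ by the affine-patch symmetries already invoked in the proof of \cref{le:ndnp} (namely that nondegeneracy of $P(x,y)=F(x,1,y,1)$ passes to $F(x,1,1,y)$, $F(1,x,y,1)$, $F(1,x,1,y)$, up to reflections/rotation of the polygon). So the real content is: prove (a)--(c) near $p_{00}$, then translate.

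For part (a), I would invoke the Newton--Puiseux algorithm as recalled in \cref{sec:conrern}: the branches of $\mathfrak{C}$ through the origin are in bijection with the Galois orbits of Puiseux series solving $P(x,y(x))=0$, each such orbit attached to a maximal edge $e$ in $\Gamma_{00}(P)\cap\Delta(P)$ with slope $-r/k$ and to a root of the slope polynomial $Q_e(w)=w^{-l_e}P_e(1,w)$. Under nondegeneracy, by \cite[Remark 3.16]{BP00} the slope polynomial $Q_e$ has only simple roots, so it has exactly $\deg Q_e$ of them; the standard fact is that $\deg Q_e$ equals the number of edges composing the maximal edge $e$ (each primitive lattice segment of slope $-r/k$ contributes one, because the lattice length of $e$ in the direction $(k,r)/\gcd(k,r)$ is precisely the number of edges), and each simple root yields one Galois orbit of $r$ conjugate Puiseux series, i.e. one branch of ramification order $r$. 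After normalisation $\eta\colon\tilde C\to C$, branches correspond to points of $\eta^{-1}(p_{00})$, giving the claimed count. I would cite the relevant statements from \cite{Tsing} and \cite{BP00} rather than reproving the Newton--Puiseux correspondence.

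For part (b), at a branch $p\in\eta^{-1}(p_{00})$ a standard coordinate $\zeta$ can be chosen so that $x=\zeta^{r}$ and $y=c_k\zeta^{k}+o(\zeta^{k})$ with $c_k\neq 0$ (this is exactly \eqref{k0r} pulled back along $x=\zeta^r$). Then $\omega_{0,1}=y\,\dd x = r\,c_k\,\zeta^{r+k-1}\dd\zeta+\cdots$, so $r_p=r$ and $\bar s_p=r+k$, which is the $(\epsilon,\epsilon')=(0,0)$ line of \eqref{allvaluesbarp}; the other three lines follow by the patch symmetries, substituting $x\mapsto 1/x$ (contributing a sign and $r\mapsto -r$ in the exponent of $x=\zeta^{-r}$) and/or $y\mapsto 1/y$ (contributing $k\mapsto -k$), exactly as in the unibranched cases already computed in the proof of \cref{th:ga}(i)--(iii) and \cref{le:singularities}. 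Coprimality of $r_p$ and $\bar s_p$: since $\bar s_p=\pm r\pm k$, $\gcd(r_p,\bar s_p)=\gcd(r,k)$; but the minimality of $r$ in the Puiseux expansion \eqref{k0r}, combined with irreducibility of the branch, forces $\gcd(r,k)=1$ when there is a genuine fractional exponent — more carefully, one uses that a maximal edge $e$ of a nondegenerate polygon with simple slope polynomial decomposes into $\gcd(k,r)$ primitive edges, each giving a separate branch with \emph{coprime} parameters, so at the level of a single branch $\gcd(r,k)=1$. Finally, if $r_p\ge 2$ then $r_p\nmid\bar s_p$ (as $\gcd(r_p,\bar s_p)=1<r_p$), hence $s_p=\bar s_p$ by the definition of $s_p$ in \cref{de:localparameters}.

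For part (c), two branches $p_1,p_2$ attached to the \emph{same} maximal edge $e$ have the same slope, hence the same $(r,k)$ up to the common $\gcd$, hence the same ratio $\bar s/r$, i.e. $\nu_1=\nu_2$ (and $r'=r_1/\gcd(r_1,\bar s_1)=r_2/\gcd(r_2,\bar s_2)$ is the common value by the computation in \cref{de:nonres}). Non-resonance then amounts to $(\frac{\tau_1/r_1}{\tau_2/r_2})^{r'}\neq 1$. Here $\tau_k/r_k$ is, up to a universal nonzero constant depending only on the corner, the leading Puiseux coefficient $c_k^{(k)}$ of the $k$-th branch, i.e. a root of the slope polynomial $Q_e$ (after the appropriate normalisation of standard coordinate). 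Since two distinct branches on the same edge correspond to two \emph{distinct} roots of $Q_e$, and one checks that $(\tau_1/r_1)^{r'}$ and $(\tau_2/r_2)^{r'}$ are exactly the distinct roots of the \emph{reduced} slope polynomial (the one whose roots are the $r'$-th powers — this is the content of the algebra around $c=(-\tau_1)^{r_2}[1-(\tau_2 r_1/\tau_1 r_2)^{r_2'}]^{\delta_2}$ in the proof of \cref{le:pairs}), their ratio to the power $r'$ cannot be $1$. I expect \textbf{the main obstacle} to be precisely this last point: pinning down the exact relationship between $\tau_p/r_p$ in a standard coordinate and the root of the slope polynomial $Q_e$, including tracking the universal constant and the $r$-th-root-of-unity ambiguity of the standard coordinate, so that "distinct roots of $Q_e$" translates cleanly into "non-resonant pair". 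Everything else is bookkeeping with Newton--Puiseux normal forms and the corner symmetries.
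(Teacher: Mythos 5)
Your proposal follows essentially the same route as the paper: corner-by-corner reduction to $\Gamma_{00}(P)$ via the affine-patch symmetries, the Newton--Puiseux algorithm with simple roots of the slope polynomial $Q_e$ supplying the branch count, the local computation $\omega_{0,1}\sim r c\,\zeta^{r+k-1}\dd\zeta$ giving \eqref{allvaluesbarp}, and non-resonance for branches on a common maximal edge coming from distinctness of the $n_e$-th powers of the leading Puiseux coefficients --- which is exactly how the paper resolves the point you flag as the main obstacle (there $\tau_e/r_e=c_e$, $r'=r_e$ by coprimality, and nondegeneracy gives $c_e^{r_e}\neq c_{e'}^{r_e}$ for distinct edges in the same maximal edge). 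Two small caveats: your first justification of coprimality (``minimality of $r$ forces $\gcd(r,k)=1$'') is false as stated --- the paper explicitly notes $k$ and $r$ need not be coprime for a general Puiseux series --- though your fallback via the primitive slope $(m_e,n_e)$ of the edge is the correct argument; and you gloss over why the ramification order of a branch is exactly $n_e$ rather than a proper multiple, which the paper settles by counting all $\ell_e n_e$ Puiseux series produced by the algorithm against the degree of $Q_e$.
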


\begin{proof}
	Consider a maximal edge $e$ in the corner $ \Gamma_{00}(P)$, of slope $ -\frac{n_e}{m_e}  $, where $m_e,n_e \in \mathbb{Z}_{> 0}$  are coprime. It is the union of $\ell_e$ edges. By the Newton--Puiseux algorithm, Puiseux series associated to this maximal edge must have as leading term $c_e x^{\frac{m_e}{n_e}}$  and $c_e$ is a nonzero root of $Q_e(w) = w^{-l_e}P_e(1,w)$. The latter is a polynomial in the variable $ w^{n_e}$ of degree $\ell_e$. Due to nondegeneracy, roots of $Q_e$ come in $\ell_e$ disjoint groups. Within a group, the roots differ by multiplication by a $n_e$-th root of unity. With the Newton--Puiseux algorithm we find at least $\ell_en_e$ distinct Puiseux series (associated to $e$) that are roots of $P(x,y)$, and for degree reasons there cannot be more.

By \cite[Remark 3.18]{BP00}, the sets of conjugate Puiseux series we have described are in bijection with the branches of the singularity at $p_{00}$. For each branch $\tilde{p}_e \in \eta^{-1}(p_{00})$, call $(r_e,k_e,c_e)$ the parameters of the corresponding the Puiseux series. By comparison we must have $r_e = n_e $ representing the number of conjugate Puiseux series within the corresponding group. As $\frac{m_e}{n_e} = \frac{k_e}{r_e}$, we deduce that $ k_e = m_e$. In particular this means that $k_e,r_e$ are coprime, and by the above description if $e'$ is an edge in the same maximal edge we have
\begin{equation}
\label{ccprime} c_{e'}^{r_e} \neq c_e^{r_e}\,.
\end{equation}

Clearly, $x$ has order $r_e$ at $\tilde{p}_e$. And in a standard coordinate near $p$ we have $x = x(p) + \zeta^{r_e}$ and $y \sim c_e \zeta^{\frac{k_e}{r_e}}$. This implies
$$
\omega_{0,1} \sim r_ec_e \zeta^{r_e + k_e - 1} \dd \zeta\qquad {\rm near}\,\,p
$$
Therefore $\bar s_e = r_e + k_e$ and it is coprime with $r_e$. This implies that $s_e = \bar s_e$ as soon as $r_e \geq 2$. Besides $\frac{\tau_e}{r_e} = c_e$ and the fact that $r_e$ and $\bar s_e$ are coprime together with \eqref{ccprime} implies that all pairs of points above the same branch of $p_{00}$ (corresponding to all edges in the same maximal edge) are non-resonant.

For branches corresponding to edges at other corners, the argument is similar. We only indicate how to compute the local parameters. For a branch $\tilde{p}_e \in \eta^{-1}(p_{0\infty})$ corresponding to an edge $e$, the Puiseux series gives the series expansion of $y^{-1}$ near $\tilde{p}_e$ in the variable $x$. Then, in a standard coordinate near $p$ we have $x = x(p) + \zeta^{r_e}$ and $y^{-1} \sim c_e \zeta^{\frac{k_e}{r_e}}$, therefore
$$
\omega_{0,1} \sim r_ec_e \zeta^{r_e - k_e - 1} \dd \zeta\quad {\rm near}\,\,p\,.
$$
and we read $\bar s_e = r_e - k_e$. For a branch $p \in \eta^{-1}(p_{\infty0})$, the Puiseux series expresses the series expansion of $y$ in the variable $x^{-1}$. Then, in a standard coordinate near $p$ we have $x = \zeta^{-r_e}$ and $y \sim c_e \zeta^{\frac{k_e}{r_e}}$, therefore
$$
\omega_{0,1} \sim -r_ec_e \zeta^{-r_e + k_e - 1} \dd \zeta \quad {\rm near}\,\,p\,.
$$
 For a branch $p \in \eta^{-1}(p_{\infty\infty})$, the Puiseux series expresses the series expansion of $y^{-1}$ in the variable $x^{-1}$. Then, in standard coordinate near $p$ we have $x = \zeta^{-r_e}$ and $y^{-1} \sim c_e \zeta^{\frac{k_e}{r_e}}$, therefore
 \[
	 \omega_{0,1} \sim -r_e c_e \zeta^{-r_e - k_e - 1} \dd \zeta \quad {\rm near}\,\,p\,. \qedhere
 \]
\end{proof}

\subsubsection{Global admissibility for nondegenerate curves}

We can now formulate a characterisation of global admissibility for compact spectral curves coming from nondegenerate curves.

\begin{proposition}[Global admissibility for nondegenerate spectral curves]\label{pr:ndga}
Let $\mathfrak{C} = \{P(x,y) = 0\} \subset \mathbb{C}^2$ be an irreducible and nondegenerate affine curve, and $\mathcal{S}=(\tilde C,  x, \omega_{0,1}, \omega_{0,2})$ be a compact spectral curve as in \cref{de:irredpc}. Then $\mathcal{S}$ is globally admissible if and only if the following conditions are satisfied:
\begin{enumerate}[({GA}1)]
\item $|\hat{\Gamma}_{00}(P)|_{\mathbb{Z}} = 0$, i.e. $\mathfrak{C}$ is smooth at the origin;
\item $\mathcal{S}$ is locally admissible at all points in $\eta^{-1}(p_{0\infty})$ and any pair of points in $\eta^{-1}(p_{0\infty})$ satisfies ``(C-i) or (C-ii)'' from \cref{th:rewriting};
\item $\mathcal{S}$ is locally admissible at all points in $\eta^{-1}(p_{\infty0})$ and any pair of points in $\eta^{-1}(p_{\infty 0})$ satisfies ``(C-i) or (C-ii)'' from \cref{th:rewriting}.
\end{enumerate}
\end{proposition}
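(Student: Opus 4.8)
The plan is to reduce \cref{de:globaladm} to conditions at the four corner points $p_{\epsilon\epsilon'}$, $\epsilon,\epsilon'\in\{0,\infty\}$, using \cref{le:ndnp} and \cref{le:coprime}. First I would invoke \cref{le:ndnp}: for an irreducible nondegenerate curve, $\mathsf{Sing}(C)\subseteq\{p_{00},p_{0\infty},p_{\infty 0},p_{\infty\infty}\}$ and $p_{\epsilon\epsilon'}$ is singular exactly when $\delta_{p_{\epsilon\epsilon'}}=|\hat\Gamma_{\epsilon\epsilon'}(P)|_{\mathbb{Z}}\geq 1$. Then I would observe, from \cref{de:linesinf}, that of these four points only $p_{00}$ fails to lie in $E'=(L_X\cup\{p_{\infty 0}\})\cap C$: indeed $p_{0\infty}$ and $p_{\infty\infty}$ lie on $L_X$, and $p_{\infty 0}$ is in $E'$ by construction. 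Consequently condition (gA1) of \cref{de:globaladm} --- every singular point of $C$ lies in $E'$ --- holds if and only if $p_{00}$ is a smooth point of $C$, which by \cref{le:ndnp} is exactly (GA1). This settles the first condition in both directions and reduces the problem to $p_{0\infty}$, $p_{\infty 0}$, $p_{\infty\infty}$.

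Next I would check that the requirements (gA2)--(gA3) at $p_{\infty\infty}$ are automatic. By \cref{le:coprime}(b), every branch $p\in\eta^{-1}(p_{\infty\infty})$ has $\bar s_p=s_p=-r_p-k_p$ with $k_p\geq 0$ and $\gcd(r_p,\bar s_p)=1$; so $p$ is either unramified (hence locally admissible by \cref{de:localadm}) or satisfies $s_p\leq -r_p\leq -2$, so that (lA1), (lA2), (lA3) all hold because $s_p\leq -1$. Moreover the aspect ratio $\nu_p=\bar s_p/r_p\leq -1$ is negative, so any pair over $p_{\infty\infty}$ satisfies (C-ii) of \cref{th:rewriting}; and any such pair is non-resonant --- by \cref{le:coprime}(c) if the two branches belong to the same maximal edge, and because distinct maximal edges have distinct slopes and hence distinct aspect ratios otherwise. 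The same non-resonance argument applies word for word to pairs over $p_{0\infty}$ and over $p_{\infty 0}$, which deletes the non-resonance clause from (gA3) at those two points.

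Hence (gA2)$\wedge$(gA3) at $p_{0\infty}$ is precisely ``$\mathcal{S}$ locally admissible along $\eta^{-1}(p_{0\infty})$ and every pair there satisfies (C-i) or (C-ii)'', i.e. (GA2), and likewise (gA2)$\wedge$(gA3) at $p_{\infty 0}$ is (GA3). To close the loop when $p_{0\infty}$ (resp. $p_{\infty 0}$) is \emph{not} singular, I would note that then $\eta^{-1}$ of that point is a single point which is either unramified or a smooth ramification point covered by case (ii) (resp. case (i)) in the proof of \cref{th:ga}, where $\bar s=s=r-1$ (resp. $\bar s=s=-r+1$) is locally admissible and the pairwise conditions are vacuous; so (GA2) (resp. (GA3)) holds automatically. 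Assembling these observations, (GA1)$\wedge$(GA2)$\wedge$(GA3) holds if and only if (gA1)$\wedge$(gA2)$\wedge$(gA3) holds at every singular point of $C$, which is exactly global admissibility.

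I expect the whole argument to be bookkeeping once \cref{le:ndnp}, \cref{le:coprime} and the case analysis inside the proof of \cref{th:ga} are in hand. The only spots that need a little care are the ``automatic'' claims --- that within each corner non-resonance is free (resting on \cref{le:coprime}(c) together with the fact that distinct maximal edges have distinct slopes, hence distinct aspect ratios), and that local admissibility plus condition (C-ii) cost nothing at $p_{\infty\infty}$ and at smooth corner points --- together with the elementary check that $p_{00}$ is the unique corner point of $\square(d_x,d_y)$ not lying in $E'$.
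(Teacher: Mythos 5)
Your proposal is correct and follows essentially the same route as the paper: reduce to the four corner points via \cref{le:ndnp}, observe that $p_{00}$ is the only corner outside $E'$ so that (gA1) becomes (GA1), use \cref{le:coprime} for non-resonance and for the local parameters at $p_{\infty\infty}$ (where $\nu<-1$ makes local admissibility and (C-ii) automatic), and invoke the smooth-corner case analysis from the proof of \cref{th:ga}. You in fact spell out a few steps the paper leaves implicit (the direct implication, non-resonance across distinct maximal edges via distinct aspect ratios, and the (C-ii) check at $p_{\infty\infty}$), all of which are sound.
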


\begin{proof}
	The direct implication is clear. Conversely, in view of \cref{th:ga} and assuming (GA1)-(GA2)-(GA3), we need to check that if $p_{\infty\infty}$ is a singular point of $C$, $\mathcal S $ is locally admissible at all points $ \eta^{-1}(p_{\infty\infty})$, and  every pair of points in $\eta^{-1}(p_{\infty \infty})$ is non-resonant and satisfies ``(C-i) or (C-ii)''. We also need to check that all pairs of points in $\eta^{-1}(p_{0\infty})$ (resp. in $\eta^{-1}(p_{\infty 0})$) are non-resonant (cf. \cref{de:nonres}).
	
	The absence of resonance is given by \cref{le:coprime}. Besides, if $p_{\infty\infty}$ is a singular point of $C$, take $\tilde{p} \in \eta^{-1}(p_{\infty\infty})$. If $p$ is unramified, the curve is locally admissible at $\tilde{p}$. Otherwise, $r_{\tilde{p}} \geq 2$ and in \cref{le:coprime} we have learned that $s_{\tilde{p}} = \bar s_{\tilde{p}} = -r_{\tilde{p}} - k_{\tilde{p}}$ in terms of the parameters of a corresponding Puiseux series. Since $k_{\tilde{p}} > 0$ by definition, this gives $\nu_p = - 1 - \frac{k_p}{r_p} < -1$ and thus the curve is locally admissible at $p$. So the property of global admissibility concerning $p_{0\infty}$ (resp. $p_{\infty 0}$) reduces to checking ``(C-i) or (C-ii)'' of \cref{th:rewriting}. 
	 \end{proof}

\subsubsection{Global admissibility in terms of slopes}
\label{Ssloperatio}

Consider a nondegenerate spectral curve. We now reformulate (GA2) and (GA3) in \cref{pr:ndga} in terms of slopes of the edges in
\begin{equation}
E_{0\infty} = \Gamma_{0\infty}(P) \cap \Delta(P) \qquad {\rm and}\qquad E_{\infty 0} = \Gamma_{\infty 0}(P)  \cap \Delta(P)\,.
\end{equation}
For this we make a choice of bijection between $\eta^{-1}(p_{0\infty})$ (set of branches) and the set of edges in $E_{0\infty}$, so that each branch is mapped to an edge contained in the maximal edge associated with this branch. We do the same for $\eta^{-1}(p_{\infty 0})$ and the set of edges in $E_{\infty 0}$. 

$E_{0\infty}$ is a point or a union of edges. In the latter situation, let $0 < \mu_1 \leq \cdots \leq \mu_{\ell} < \infty$ be the slopes of the edges, which appear in weakly increasing order from right to left in $E_{0\infty}$. When a slope appear several times in this list, it means that we have a maximal edge made of several edges. By our choice of bijection, to an edge $e$ we have associated a point $\tilde{p}_e \in \eta^{-1}(p_{0\infty})$. We have local parameters  $(r_e,\bar s_e,s_e)$ for the spectral curve at $\tilde{p}_e$, and local parameters $(k_e,r_e)$ of the group of Puiseux series associated with $\tilde{p}_e$. From \cref{le:coprime} we can read them from the slope of $e$. More precisely, we have $\bar s_e = r_e - k_e$ with $\bar s_e < r_e$ and $\bar s_e,r_e$ coprime, and the relation between the aspect ratio and the slope reads
\begin{equation}
\label{aspslo} \mu_e = \frac{r_e}{k_e} = \frac{r_e}{r_e - \bar{s}_e},\qquad \nu_e = \frac{\bar{s}_e}{r_e} = 1 - \frac{k_e}{r_e} = 1 - \frac{1}{\mu_e}\,.
\end{equation}
Likewise, for $E_{\infty 0}$ the slopes $0 < \mu_1 \leq \ldots \leq \mu_{\ell} < \infty$ appear in weakly increasing order from left to right. For each edge $e$ we have a point $\tilde p$ in $\eta^{-1}(p_{\infty 0})$ and the local parameters satisfy $\bar s_e = k_e - r_e$ with $\bar s_e > -r_e$ and $\bar s_e,r_e$ coprime, and
\begin{equation}
\label{aspslo2}
 \mu_e = \frac{r_e}{k_e} = \frac{r_e}{\bar s_e + r_e},\qquad \nu_e = - 1 + \frac{k_e}{r_e} = -1 + \frac{1}{\mu_e}\,.
\end{equation}
In both corners, the slope faithfully encodes the local parameters, we have $s_e = \infty$ whenever $r_e = 1$, and $\bar s_e = s_e$ whenever $r_e \geq 2$. For an edge in $E_{0\infty}$ we note that  $\mu_e > 1$ implies $r_e \geq 2$.

\begin{definition}[Local and global admissibility for edges in $E_{0 \infty}$]\label{de:edgeadm}
	Consider an edge of slope $ \mu $ in $ E_{0\infty}$ with local parameters $(r,s)$ as above. We say that this edge is \emph{locally admissible} if $\mu \leq 1$ or $r = \pm 1\,\,{\rm mod}\,\,s$.

A pair of edges in $E_{0\infty}$ with slopes $ \mu_i \leq \mu_j $ is \emph{globalisable} if			\begin{enumerate}[({G}1)]
			\item$  1 < \mu_i \leq \mu_j$ and there exists $m \in (r_{ij} - 1]$ such that $\frac{1}{\mu_j} \leq \frac{m-1}{m} \leq \frac{1}{\mu_i}$; or 
			\item $ 1 < \mu_i \leq \mu_j$ and $ \frac{1}{\mu_i} \geq  \frac{r_{ij}-1}{r_{ij}}$; or 
			\item $ \mu_i \leq 1 $.
		\end{enumerate}
		A sequence of edges in $E_{0\infty}$ is \emph{globally admissible} if any edge in the sequence if locally admissible and any pair of edges in the sequence is globalisable.
		\end{definition}
		
\begin{lemma}\label{le:coincm}
A pair of edges in $E_{0\infty}$ having the same slope $\mu$ is globalisable if and only if $\mu \leq 1$ or $\mu = \frac{r}{r - 1}$ for some $r \geq 2$.
\end{lemma}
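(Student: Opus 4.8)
\textbf{Proof plan for \cref{le:coincm}.}

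The plan is to apply \cref{de:edgeadm} to a pair of edges $e_i,e_j$ in $E_{0\infty}$ with the same slope $\mu$, hence the same local parameters $(r,s)$ by \eqref{aspslo}, so that $r_{ij} = \max(r_i,r_j) = r$. The case $\mu \le 1$ is immediate: condition (G3) holds by definition, so such a pair is automatically globalisable, and moreover $\mu = \frac{r}{r-1}$ with $r \ge 2$ is precisely the case $\nu = \frac{1}{r}$ which also falls under the claim, so we may assume from now on that $\mu > 1$ (equivalently $r \ge 2$ and $0 < s < r$, with $s,r$ coprime, using $\mu = \frac{r}{r-s}$ from \eqref{aspslo}). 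Under this assumption (G3) is excluded and we must analyse (G1) and (G2).

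First I would dispose of (G2): it requires $\frac{1}{\mu} \ge \frac{r-1}{r}$, i.e. $\frac{r-s}{r} \ge \frac{r-1}{r}$, i.e. $s \le 1$, i.e. $s = 1$. But $s = 1$ together with $\mu = \frac{r}{r-1}$ is exactly one of the cases in the statement, so (G2) holds if and only if $\mu = \frac{r}{r-1}$ for some $r \ge 2$. It then remains to check that, when $\mu > 1$ and $s \ge 2$, condition (G1) fails — this is the substance of the lemma. Condition (G1) (with $\mu_i = \mu_j = \mu$) asks for an integer $m \in (r-1]$ with $\frac{1}{\mu} \le \frac{m-1}{m} \le \frac{1}{\mu}$, which forces $\frac{m-1}{m} = \frac{1}{\mu} = \frac{r-s}{r}$, hence $m(r - s) = r(m-1)$, i.e. $ms = r$. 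Since $\gcd(r,s) = 1$ and $s \ge 2$, this is impossible (it would force $s \mid r$ with $1 < s < r$). Hence (G1) fails whenever $s \ge 2$.

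Putting the three cases together: a same-slope pair is globalisable exactly when $\mu \le 1$ (via (G3)), or $\mu > 1$ with $s = 1$ i.e. $\mu = \frac{r}{r-1}$ for some $r \ge 2$ (via (G2)); and in all other cases ($\mu > 1$, $s \ge 2$) none of (G1)--(G3) holds. This is precisely the asserted equivalence. I do not anticipate any real obstacle here — the only point requiring a moment's care is the bookkeeping that $\mu = \frac{r}{r-1}$ is consistent with $\mu \le 1$ being false, so the two branches of the ``or'' in the statement are genuinely the two sub-cases $s=1,\ \mu>1$ and $\mu\le 1$; once the translation $\mu = \frac{r}{r-s}$ via \eqref{aspslo} is in place, everything reduces to the elementary arithmetic fact that $ms = r$ has no solution with $\gcd(r,s)=1$ and $2 \le s < r$.
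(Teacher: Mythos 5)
Your proof is correct and follows essentially the same route as the paper's: both dispose of (G3) as the case $\mu\le 1$, show (G2) forces $s=1$ (i.e.\ $\mu=\frac{r}{r-1}$) because $\frac{1}{\mu}=\frac{r-s}{r}$ already has denominator $r$, and show (G1) fails by the same coprimality argument (the paper phrases it as forcing $m=r\notin(r-1]$, you phrase it as $ms=r$ contradicting $\gcd(r,s)=1$ for $s\ge 2$ — the same arithmetic). No gaps.
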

\begin{proof} Assume we have a pair of edges in $E_{0\infty}$ have the same slope $\mu_i = \mu_j = \mu$. (G3) is the condition $\mu \leq 1$. If $\mu > 1$, we have $r_i,k_i$ coprime and likewise for $r_j,k_j$. Then $\mu = \frac{r_i}{k_i} = \frac{r_j}{k_j}$ implies $(k_i,r_i) = (k_j,r_j)$ and we simply denote it $(k,r)$. (G1) is then equivalent to the existence of $m \in (r-1]$ such that $\frac{1}{\mu} = \frac{m - 1}{m}$, but this would force $m = r$ and is thus impossible. (G2) states that $\frac{1}{\mu} \geq \frac{r - 1}{r}$, but as $\frac{1}{\mu} < 1$ has denominator $r$ the only possibility is $\frac{1}{\mu} = \frac{r - 1}{r}$.
\end{proof} 
		
\begin{definition}[Local and global admissibility for edges in $E_{\infty 0}$]\label{de:edgeadm2}
Consider an edge of slope $ \mu $ in $ E_{\infty 0}$ with local parameters $(r,s)$ as above. We say that this edge is \emph{locally admissible} if $\mu \geq 1$ or $r = \pm 1\,\,{\rm mod}\,\,s$.
A pair of edges in $E_{\infty 0}$ with slopes $ \mu_i \leq \mu_j $ is \emph{globalisable} if
			\begin{enumerate}[({G}1')]
			\item$  0 < \mu_i \leq \mu_j < 1$ and there exists $m \in [r_{ij} - 1]$ such that $\frac{1}{\mu_j} \leq \frac{m+1}{m} \leq \frac{1}{\mu_i}$, or 
			\item $ 0 < \mu_i \leq \mu_j < 1$ and $ \frac{1}{\mu_j} \leq  \frac{r_{ij}+1}{r_{ij}}$, or 
			\item $ \mu_j \geq 1 $.
		\end{enumerate}
An edge $e$ in $E_{0\infty}$ is \emph{globally admissible} if it is locally admissible and for any other edge $e'$ in $E_{\infty 0}$, the pair $e,e'$ is globalisable.
\end{definition}

\begin{lemma}\label{le:coincm2}
A pair of edges in $E_{\infty 0}$ having the same slope $\mu$ is globalisable if and only if $\mu \geq 1$ or $\mu = \frac{r }{r+1}$ for some $r \geq 1$.
\end{lemma}

\begin{proof} Similar to \cref{le:coincm} and thus omitted. \end{proof}

\begin{theorem}[Conditions for global admissibility in terms of Newton polygon]\label{th:globsimp}
	Let $\mathfrak{C} = \{P(x,y) = 0\} \subset \mathbb{C}^2$ be an irreducible and nondegenerate affine curve, and $\mathcal{S}=(\tilde C,  x, \omega_{0,1}, \omega_{0,2})$ be a compact spectral curve as in \cref{de:irredpc}. Then $\mathcal{S}$ is globally admissible if and only if the following three conditions are satisfied:
	\begin{enumerate}[({$\Gamma$A}1)]
		\item $|\hat{\Gamma}_{00}(P)|_{\mathbb{Z}} = 0$, i.e. $\mathfrak{C}$ is smooth at the origin.
		\item The edges of $\Delta(P)$ in $E_{0\infty}$  are globally admissible.
		\item The edges of $\Delta(P)$ in $ E_{\infty0}$ are globally admissible.
	\end{enumerate}
\end{theorem}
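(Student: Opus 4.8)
The plan is to reduce Theorem ($\Gamma$A1)--($\Gamma$A3) to \cref{pr:ndga} by translating its three conditions into the slope language of \cref{de:edgeadm,de:edgeadm2}, using the dictionary \eqref{aspslo}, \eqref{aspslo2} between aspect ratios and slopes supplied by \cref{le:coprime}. Since \cref{pr:ndga} already characterises global admissibility for nondegenerate curves by (GA1)--(GA3), it suffices to show, condition by condition, that (GA$i$) $\Leftrightarrow$ ($\Gamma$A$i$). The first equivalence is trivial: (GA1) and ($\Gamma$A1) are literally the same statement ($|\hat\Gamma_{00}(P)|_{\mathbb Z}=0$).

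For the second equivalence, (GA2) asserts that $\mathcal S$ is locally admissible at every point of $\eta^{-1}(p_{0\infty})$ and that every pair of such points satisfies ``(C-i) or (C-ii)''. First I would use the chosen bijection between $\eta^{-1}(p_{0\infty})$ and the edges of $E_{0\infty}$ (each branch mapped to an edge contained in its associated maximal edge), and invoke \cref{le:coprime}(b): a branch $\tilde p_e$ has $r_{\tilde p_e}=r_e$ and $\bar s_{\tilde p_e}=s_{\tilde p_e}=r_e-k_e$ when $r_e\ge 2$, with $\bar s_e=s_e=\infty$-convention handled when $r_e=1$. Local admissibility at $\tilde p_e$ then reads: either $\bar s_e\le -1$ (impossible here since $\bar s_e=r_e-k_e$ and for $\mu_e\le 1$ one gets $\bar s_e\le 0$ only in degenerate boundary cases, which I would check directly against (lA1)--(lA3)), or $s_e\in[r_e+1]$ with $r_e=\pm1\bmod s_e$; via \eqref{aspslo}, $\mu_e\le 1$ forces $k_e\ge r_e$ hence $\nu_e\le 0$, and in that regime (lA3) is automatic since $\bar s_e=s_e$, so local admissibility of the edge is exactly ``$\mu_e\le 1$ or $r_e=\pm1\bmod s_e$'' — which is \cref{de:edgeadm}. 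For pairs, I would apply \cref{th:rewriting}: with $\nu_1\le\nu_2$, i.e. (using \eqref{aspslo}) $\mu_i\le\mu_j$ after reindexing, ``(C-i) or (C-ii)'' becomes the disjunction (C-i) $\nu_1\le\frac1m\le\nu_2$ for some $m\in[r_{12}-1]$, or (C-ii) $\nu_1\le\frac{1}{r_{12}}$. Substituting $\nu_e=1-\frac{1}{\mu_e}$ turns $\nu_1\le\frac1m\le\nu_2$ into $\frac{1}{\mu_j}\le\frac{m-1}{m}\le\frac{1}{\mu_i}$ (when $1<\mu_i$; the case $\mu_i\le 1$ gives $\nu_i\le 0$, so (C-ii) or (C-i) with $m=1$ holds, which is (G3)), and turns $\nu_1\le\frac{1}{r_{12}}$ into $\frac{1}{\mu_i}\ge\frac{r_{12}-1}{r_{12}}$. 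This is exactly (G1)--(G3) of \cref{de:edgeadm}. Finally, non-resonance of every pair of branches over $p_{0\infty}$ is automatic by \cref{le:coprime}(c) (for branches in the same maximal edge) and, for branches over distinct maximal edges, by $\nu$ being distinct or the $r'$-th powers of the $\tau/r$ ratios differing because the $k/r$ are distinct irreducible fractions — so the non-resonance clause in (GA2)/(gA3) is free. Hence (GA2) $\Leftrightarrow$ ($\Gamma$A2). The third equivalence (GA3) $\Leftrightarrow$ ($\Gamma$A3) is identical after replacing \eqref{aspslo} by \eqref{aspslo2}, $\nu_e=-1+\frac{1}{\mu_e}$, and $E_{0\infty}$ by $E_{\infty 0}$; the substitution $\nu_1\le\frac1m\le\nu_2$ now reads $\frac{1}{\mu_j}\le\frac{m+1}{m}\le\frac{1}{\mu_i}$ and $\nu_1\le\frac{1}{r_{12}}$ becomes $\frac{1}{\mu_j}\ge\frac{r_{12}+1}{r_{12}}$, matching (G1')--(G3') of \cref{de:edgeadm2}.

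Assembling: by \cref{pr:ndga}, $\mathcal S$ is globally admissible iff (GA1)--(GA3) hold; by the three equivalences just established this is iff ($\Gamma$A1)--($\Gamma$A3) hold. One subtlety I would be careful about is whether the chosen bijections between branches and edges affect the statement — they do not, since ``globally admissible sequence of edges'' quantifies over \emph{all} edges and \emph{all} pairs, so it is independent of the bijection, and likewise (GA2)/(GA3) quantify over all points of the fibre. The main obstacle I anticipate is bookkeeping the boundary/degenerate slope values — $\mu_e=1$, vertical or horizontal extremal edges, and edges with $r_e=1$ — where the $\infty$-convention for $s_p$ in \cref{de:localloop} and the ``$\mu\le 1$ automatically admissible'' clauses must be checked by hand to line up with (lA1)--(lA3) and with the floor-function gymnastics in \cref{th:rewriting}; \cref{le:coincm} (and its $E_{\infty 0}$ analogue) already dispatches the equal-slope case, so the residual work is confirming that no spurious non-admissible edge hides at $\mu=1$ or at unramified branches, which is a short finite check.
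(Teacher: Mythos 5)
Your proposal is correct and takes essentially the same route as the paper: the paper's proof of this theorem is exactly the reformulation you describe — (GA1)--(GA3) of \cref{pr:ndga} translated edge by edge through the slope/aspect-ratio dictionary \eqref{aspslo}--\eqref{aspslo2}, with the same case analysis at $\mu\le 1$, $\mu=1$, and equal slopes. One small caveat: your translation of (C-ii) in the $E_{\infty 0}$ corner, ``$\nu_1\le\tfrac{1}{r_{12}}$ becomes $\tfrac{1}{\mu_j}\ge\tfrac{r_{12}+1}{r_{12}}$'', does not follow from $\nu=-1+\tfrac{1}{\mu}$ (the substitution gives $\tfrac{1}{\mu_j}\le\tfrac{r_{12}+1}{r_{12}}$, which for $\mu_j<1$ forces equality $\tfrac{1}{\mu_j}=\tfrac{r_{12}+1}{r_{12}}$); you have reproduced what appears to be a sign slip in (G2') of \cref{de:edgeadm2} itself, as one can check against the equal-slope lemma following that definition. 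This does not affect the structure of your argument, but the inequality in (G2') should be reversed for the equivalence with ``(C-i) or (C-ii)'' to hold literally.
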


\begin{proof}
This is merely a reformulation of \cref{pr:ndga}, taking into account the coding of points in $\eta^{-1}(p_{\epsilon\epsilon'})$ by edges and the discussion preceding \cref{de:edgeadm}, especially formulae \eqref{aspslo}-\eqref{aspslo2} relating aspect ratios and slopes. Condition ($\Gamma$A1) is (GA1) verbatim. We recall that condition (GA2) from \cref{pr:ndga} requires local admissibility of all points $\eta^{-1}(p_{0\infty})$ and that every pair in $\eta^{-1}(p_{0\infty})$ satisfies ``(C-i) or (C-ii)''.

We recall that edges with  $\mu > 1$ have $r \geq 2$ coprime to $\bar s = s$. So, in the \cref{de:localadm} of local admissibility, (lA1) and (lA3) are automatic and in (lA2) the condition $s \in [r + 1]$ is equivalent to $\mu > 1$ while the congruence condition is repeated in \cref{de:edgeadm}. Edges with $\mu = 1$ have $r = 1$ (unramified point) and they automatically correspond to locally admissible points. Edges with $\mu < 1$ have either $r = 1$, or $\bar s = s \leq -1$ coprime to $r$, so automatically correspond to locally admissible points. This identifies local admissibility of edges with local admissibility of the corresponding point. Condition (C-i) for a pair of points matches (G1) for the corresponding pair of edges,  once we  notice that slopes of edges in $E_{0\infty}$ are finite, so $m = 1$ can be excluded. Condition (C-ii) can be split in two cases:  $\nu_1 \in (0,\frac{1}{r_{12}})$, which amounts to $\frac{1}{\mu_1} \geq \frac{r_{12} - 1}{r_{12}}$ and matches (G2); and $\nu_1 \leq 0$, which corresponds to $\mu_1 \leq 1$ and matches (G3).

Checking that condition (GA3) is equivalent to ($\Gamma$A3) can be carried out similarly. One should only be careful that $\nu$ is now a decreasing function of $\mu$ by \eqref{aspslo2}: the choice $\mu_1 \leq \mu_2$ in \cref{th:rewriting} leads to $\nu_2 \leq \nu_1$, hence the conditions (G2')-(G3') translating (C-ii) involve $\mu_2$ instead of $\mu_1$.
\end{proof}

\subsection{Global admissibility and deformations of the Newton polygon}

It turns out that the property of global admissibility is preserved under certain types of deformations, which will be useful in the next section.

Given a Newton polygon $\Delta(P)$, we call  \emph{interior point} an integral point in the interior of  $\Delta(P)$, and \emph{edge interior point} an integral point belonging to a maximal edge of the polygon $\Delta(P)$ and which is not an endpoint of this maximal edge. The context will not allow any confusion between this denomination and points in the interior of a set.

\subsubsection{Farey sequences and triangles}
 \label{s:Farey}
 
 Let us recall the notion of a Farey sequence and some of its properties. Given an integer $ r > 1 $, the \textit{Farey sequence of order $ r $} is the sequence $\mathcal{F}_r$ of irreducible nonnegative fractions with denominator $\leq r$, considered in increasing order. Unlike some references, we allow elements larger than $1$ and thus our $\mathcal{F}_r$ is infinite to the right. For example:
\begin{equation}\label{eq:F5}
\mathcal{F}_5 = 0, \frac{1}{5},\frac{1}{4},\frac{1}{3},\frac{2}{5},\frac{1}{2},\frac{3}{5},\frac{2}{3},\frac{3}{4},\frac{4}{5},1,\frac{6}{5},\frac{5}{4},\frac{4}{3},\frac{7}{5},\cdots
\end{equation}
\textit{Farey neighbours} of a fraction $ \frac{c}{d} > 0 $ in the Farey sequence $\mathcal{F}_r$ are elements $ \frac{a}{b} , \frac{e}{f} $ in $\mathcal{F}_r $ that are adjacent to $ \frac{c}{d} $ and such that  $\frac{a}{b} < \frac{c}{d} <  \frac{e}{f}$. They are known to satisfy the  relation 
\begin{equation}\label{eq:fsum}
	\frac{c}{d} = \frac{a+e}{b+f}\,.
\end{equation}  In addition, two neighbouring elements $ \frac{a}{b} < \frac{c}{d} $  in $\mathcal{F}_r $ must satisfy \textit{B\'ezout's identity}:
\begin{equation}
\label{Bezout}	bc - ad = 1\,.
\end{equation}
We will also need to consider the trivial Farey sequence
$$
\mathcal{F}_1 = 0,1,2,3,\ldots, \infty .
$$
By convention, we define Farey neighbours in this sequence to be triples $(p-1,p,\infty)$ for $p \geq 1$. Viewing $\infty = \frac{1}{0}$ and $p = \frac{p}{1}$, we see that \eqref{eq:fsum} and \eqref{Bezout} remain valid. This convention may seem strange, but it will make some of the results below easier to state.

In this section Newton polygons with the shape of a triangle will play a crucial role. Given rational numbers $\alpha,\mu,\beta$ (with $\beta$ possibly infinite) such that $0 \leq \alpha < \mu < \beta \leq \infty$, we consider a triangle $T \subseteq \mathbb{R}_{\geq 0}^2$ having integral extremal vertices, and slopes $\alpha,\mu,\beta$. We write
\begin{equation}
\label{slopelocal}
\frac{1}{\mu} = \frac{r -\bar s}{r},\quad \frac{1}{\alpha} = \frac{r_\alpha -\bar s_\alpha}{r_\alpha},\quad \frac{1}{\beta} = \frac{r_\beta - \bar s_\beta}{r_{\beta}}\,.
\end{equation}
Here $r \in \mathbb{Z}_{> 0}$ and $\bar s \in \mathbb{Z}$ are coprime such that $\bar s \leq r$; in particular if $\bar s = 0$ we must have $r = 1$, representing the inverse slope $1$. Analogous conditions hold for $(r_{\alpha},\bar s_{\alpha})$ and $(r_{\beta},\bar s_{\beta})$. In particular, if $r_\alpha = 0$, we must have $\bar s_\alpha = -1$, representing the slope $\alpha=0$, and if $r_\beta = \bar s_\beta$, we must have $r_\beta = \bar s_\beta = 1$, representing the slope $\beta = \infty$.

 \begin{definition}[Elementary triangle]\label{de:EleT}
 $T$ is an \emph{elementary triangle} if its only integral points are the extremal vertices. Equivalently, if it does not have edge interior points nor interior points.
 \end{definition}
 
 Elementary triangles can be characterised in terms of the slopes of the edges.
 
\begin{lemma}\label{le:FT}
	Let $T$ be a triangle with no edge interior points. $ T $ is elementary if and only if $ \frac{1}{\beta} < \frac{1}{\mu} < \frac{1}{\alpha} $ are Farey neighbours in $\mathcal{F}_r$, where $r$ is the numerator of $\mu$.  In this case, $\frac{1}{\mu}$ uniquely determines $\frac{1}{\beta}$ and $\frac{1}{\alpha}$ (but $\frac{1}{\alpha}$  or $\frac{1}{\beta}$ alone does not uniquely determine the two other ratios).
\end{lemma}

\begin{proof}
Up to global translation, the endpoints of the edge of slope $ \mu $ must be of the form $ (0,0)$ and $ (r-\bar s,r) $ as $ r,\bar s $ are coprime, and the edge has no interior points. Without loss of generality we can assume that the edge with slope $ \beta $ has $ (0,0) $ as endpoint. Since $r_\beta,\bar s_{\beta}$ are coprime, the third vertex of the triangle must have coordinates $ (r_\beta-\bar s_\beta,r_\beta) $ and  the edge with slope $ \beta  $ has no interior points. Calculating the slope of the remaining edge yields
\begin{equation}
	\frac{r_\alpha}{r_\alpha - \bar s_\alpha} = \alpha = \frac{r-r_\beta}{r-r_\beta -\bar s +  \bar s_\beta}\,.
\end{equation}
However, the edge of slope $ \alpha $ is also assumed to have no interior point, and hence we must have $ r_\alpha = r - r_\beta $ and $ \bar s_\alpha = \bar s - \bar s_\beta $. In particular $r_{\alpha\beta} \leq r$ so $\frac{1}{\beta},\frac{1}{\mu},\frac{1}{\alpha}$ belong to $\mathcal{F}_r$. We shall first treat the case $r > 1$, which implies $\frac{1}{\beta} < \frac{1}{\mu} < \frac{1}{\alpha}$, and deal with the case $ r = 1 $ later.

If $T$ had an interior point, there would exist edges of slopes $ \beta' $ and $ \alpha' $ satisfying $ \alpha < \alpha' <\mu < \beta' <\beta $, such that $\frac{1}{\beta'} < \frac{1}{\mu} < \frac{1}{\alpha'}$ belong to $\mathcal{F}_r$. Then $\frac{1}{\beta}$ and $\frac{1}{\alpha}$ would not be Farey neighbours of $\frac{1}{\mu}$ in $\mathcal{F}_r$. Therefore, if $\frac{1}{\beta}$ and $\frac{1}{\alpha}$ are Farey neighbours of $\frac{1}{\mu}$ in $\mathcal{F}_r$, then $T$ has no interior points.

For the converse statement, assume that $T$ has no interior point. Suppose that $\frac{1}{\beta}$ is not a left Farey neighbour of $\frac{1}{\mu}$ in $\mathcal{F}_r$, and denote $\frac{1}{\beta'} > \frac{1}{\beta}$ be this left Farey neighbour. We write as before $\frac{1}{\beta'} = \frac{r_{\beta'} - \bar s_{\beta'}}{r_{\beta'}}$.  Defining $r_{\alpha'} = r - r_{\beta'}$ and $\bar s_{\alpha'} = \bar s - \bar s_{\beta'}$, we see from \eqref{eq:fsum} that $\frac{1}{\alpha'} = \frac{r_{\alpha'} - \bar s_{\alpha'}}{r_{\alpha'}}$ is the right  Farey neighbour of $\frac{1}{\mu}$ in $\mathcal{F}_r$. In particular,
\begin{equation}
\label{slopineq} \frac{1}{\beta} < \frac{1}{\beta'} < \frac{1}{\mu} < \frac{1}{\alpha'} \leq \frac{1}{\alpha}\,.
\end{equation}
Following the first paragraph of the proof, we obtain a triangle $T'$ with integral vertices: two of them are $(0,0)$ and $(r- \bar s,r)$ joined by the edge of slope $\mu$, the second edge issuing from $(0,0)$ has slope $\beta'$, the third edge has slope $\alpha'$, and none of the three edges has interior points. Besides, \eqref{slopineq} implies that $T' \subset T$, in particular the third vertex of $T'$ must be interior to $T$  (cf. \cref{fig:triang}), which is a contradiction. The same argument goes through if $\frac{1}{\alpha}$ is not a right Farey neighbour of $\frac{1}{\mu}$ in $\mathcal{F}_r$. We conclude that $\frac{1}{\beta}$ and $\frac{1}{\alpha}$ must be Farey neighbours of $\frac{1}{\mu}$ in $\mathcal{F}_r$, which completes the proof for $r>1$.

\begin{figure}[!ht]
\begin{center}
\includegraphics[width=0.2\textwidth]{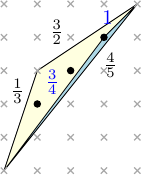}
\caption{\label{fig:triang} In yellow: a triangle $T$ with inverse slopes $\frac{1}{3}$, $\frac{4}{5}$ and $\frac{3}{2}$ in $\mathcal{F}_5$, in which edges have no interior points. Note that $\frac{1}{3}<\frac{4}{5}<\frac{3}{2}$ are not Farey neighbours in $\mathcal{F}_5$, and $T$ has interior points. In blue: a triangle $T' \subset T$ with inverse slopes $\frac{3}{4}$, $\frac{4}{5}$ and $1$ in $\mathcal{F}_5$. Note that $\frac{3}{4}<\frac{4}{5}<1$ are Farey neighbours in $\mathcal{F}_5$, and $T'$ has no interior point.}
\end{center}
\end{figure}

If $ r = 1 $, since $r = r_{\alpha} + r_{\beta}$ we must have $ r_\beta = 1$ and $r_{\alpha} = 0$ so that the edge with slope $ \alpha $ has no interior point. Hence $\bar s_\beta = \bar s - \bar s_\alpha = \bar s + 1 $ and $\bar s_\alpha = -1$. We obtain the sequence $\frac{-\bar s}{1}  < \frac{1-\bar s}{1} < \infty$, where $ \bar s < r = 1$. We recognise a triple of Farey neighbours in $\mathcal{F}_1$ according to our convention. In this case, the triangle never has interior points. Thus, the equivalence holds as well for $ r = 1 $.
\end{proof} 

\subsubsection{Deformations of Newton polygons}
\label{Sec45}
The main result of this section is that the global admissibility of spectral curves is invariant under enlarging (or conversely, shrinking) Newton polygons while preserving its set of interior points. We start by studying general properties of these deformations.

\begin{definition}[Polygons and equivalence]\label{de:prxpry}
For us, a \emph{polygon} is a non-empty subset of $\mathbb{R}^2$ which is the convex hull of finitely many points in $\mathbb{Z}_{\geq 0}^2$. It is \emph{inscribed} if there exists $(d_x,d_y) \in \mathbb{Z}_{> 0}^2$ (its \emph{bidegree}) such that $\Delta \subseteq \square(d_x,d_y)$ and $\Delta$ touches each four closed segments forming the boundary of $\square(d_x,d_y)$. We say that $\Delta$ is a \emph{long diagonal} if $d_x,d_y$ are not coprime and $\Delta = [(0,0),(d_x,d_y)]$ or $\Delta = [(d_x,0),(0,d_y)]$. Corners of $\Delta$ can be defined as in \cref{sec:conrern} and are denoted $\Gamma_{\epsilon\epsilon'}(\Delta)$. 
We say that two inscribed polygons are \emph{equivalent} if they have the same bidegree and the same interior points.

\end{definition}

We can move within an equivalence class of inscribed polygons by adding or removing certain vertices.

\begin{definition}[Adding and removing vertices]
Let $\Delta$ be an inscribed polygon in $\square(d_x,d_y)$ and $v \in \square(d_x,d_y) \cap \mathbb{Z}^2$. We say that $v \notin \Delta$ is \emph{addible} to $\Delta$ if the convex hull of $\Delta \cup \{v\}$, denoted by $\Delta_+[v]$, is an inscribed polygon equivalent to $\Delta$. We say that $v$ is \emph{removable} from $\Delta$ if there exists an inscribed polygon $\Delta'$ equivalent to $\Delta$ and such that $\Delta = \Delta'_+[v]$ (we then write $\Delta' = \Delta_-[v]$). We say that two inscribed polygons in $\square(d_x,d_y)$ are \emph{strongly equivalent} if one can transform one into the other by a finite sequence of addition/removal of addible/removable vertices.
\end{definition}
We note that equivalent inscribed polygons may not be strongly equivalent, for instance $\Delta = [(0,0), (d_x, d_y)]$ and $\Delta' = [(d_x,0), (0,d_y)]$.

An interesting question is whether (strong) equivalence classes of Newton polygons have minimal or maximal polygons under inclusion. It is clear that a given equivalence class of Newton polygons may not have a minimal polygon (cf. \cref{fig:nomin}) nor a maximum (cf. \cref{fig:nomax}). It turns out that strong equivalence classes may still not have minimal polygons; cf. \cref{fig:nomin}. However, up to a pathological case, they do have a maximum, as we show next.

\begin{figure}[!ht]
\begin{center}
\includegraphics[width=0.23\textwidth]{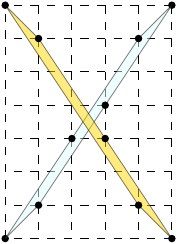}
\caption{\label{fig:nomax} The two Newton polygons depicted in the figure are inscribed in $\square(5,7)$ and have no interior points, and thus are equivalent. Furthermore, they are both maximal under inclusion. However, they are not strongly equivalent: one cannot transform one into the other by addition/removal of vertices without changing the bidegree, keeping them inscribed or creating an interior point in an intermediate step.}
\end{center} 
\end{figure}

\begin{figure}[!ht]
\begin{center}
\includegraphics[width=0.7\textwidth]{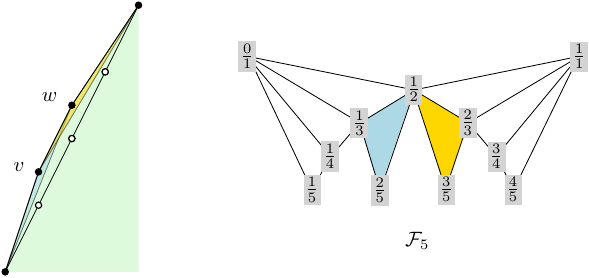}
\caption{\label{fig:nomin}  A Newton polygon $\Delta$ is depicted in green. An elementary triangle $T_v$ is shown in blue (with slopes $2 < \frac{5}{2} < 3$), and another elementary triangle $T_w$ is shown in orange (with slopes $\frac{3}{2}< \frac{5}{3} < 2$). $v$ and $w$ are both separately removable from $\Delta$, aligning with the fact that the inverse slopes of $T_v$ and $T_w$ are separately Farey neighbours in $\mathcal{F}_5$ (the numerator of the middle slope is $5$). However, $w$ is not removable from $\Delta_-[v]$ (the $\circ$ which were interior points become boundary points), aligning with the fact that $\frac{2}{5}$ and $\frac{2}{3}$ are not Farey neighbours in $\mathcal{F}_{5}$. Likewise, $v$ is not removable from $\Delta_-[w]$. This shows that equivalence classes, and even strong equivalence classes, may not have minimal polygons under inclusion.}
\end{center} 
\end{figure}

\begin{proposition}\label{pr:Qdelt}
Let $\Delta$ be an inscribed polygon which is not a long diagonal. Then the following properties hold:
\begin{enumerate}
\item If $v$ is addible to $\Delta$, there exists a sequence of $v^1,\ldots,v^k \in \mathbb{Z}^2_{\geq 0}$ and polygons $\Delta^0\ldots,\Delta^k$ equivalent to $\Delta$ such that $\Delta^0 = \Delta$, $\Delta^k = \Delta_+[v]$, and for each $i \in [k]$, $v_i$ is addible to $\Delta^{i - 1}$ and $\Delta^i = \Delta^{i -1}_+[v_i]$ differs from $\Delta^{i-1}$ by the addition of an elementary triangle.
\item If $v,\tilde{v}$ are addible to $\Delta$, then $\tilde v$ is addible to $\Delta_+[v]$.
\end{enumerate}
Each irreducible strong equivalence class which does not contain a long diagonal admits a maximal polygon.
\end{proposition}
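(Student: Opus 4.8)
\textbf{Proof plan for \cref{pr:Qdelt}.}

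The plan is to prove the three assertions in order, using \cref{le:FT} as the bridge between discrete geometry and Farey combinatorics. For item (1), I would start by observing that $\Delta_+[v]$ is obtained from $\Delta$ by attaching, along the boundary of $\Delta$, a polygonal region $R$ (the ``cap'' cut off by the two supporting segments from $v$ to $\Delta$); since $v$ is addible, $R$ contains no integral points other than those already in $\Delta$ together with $v$ and (possibly) edge-boundary points of $\partial(\Delta_+[v])$. The idea is to triangulate $R$ into elementary triangles using the integral points available on its boundary --- any lattice polygon with no interior lattice points admits a triangulation into elementary triangles by its boundary lattice points (a standard fact, provable by induction on boundary lattice point count via ``ear removal''). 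Then I would order these elementary triangles $T_1,\dots,T_k$ so that $\Delta^i = \Delta \cup T_1 \cup \cdots \cup T_i$ is convex at every step: this is the technical heart of item (1), and it works because one can always peel triangles off $R$ starting from an extremal vertex of $R$ not in $\Delta$, the way one triangulates a convex polygon fan-wise; each partial union remains convex and inscribed, and remains equivalent to $\Delta$ since no interior point is ever created (all new lattice points stay on the boundary). Adding a single elementary triangle is by definition the elementary move, so $v^i$ is the apex of $T_i$ and $\Delta^i = \Delta^{i-1}_+[v^i]$. I expect the bookkeeping of ``remains inscribed'' (i.e. still touches all four sides of $\square(d_x,d_y)$) to require a short argument: since $\Delta \subseteq \Delta^i \subseteq \Delta_+[v]$ and both endpoints of the containment are inscribed with the same bidegree, every intermediate polygon is too.

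For item (2), I would argue as follows. Addibility of $v$ to $\Delta$ means $\Delta_+[v]$ is inscribed with the same bidegree and same interior points; similarly for $\tilde v$. We want $\Delta_+[v]\cup\{\tilde v\}$ to have convex hull equivalent to $\Delta$. Convexity of the hull and ``inscribed with the same bidegree'' are immediate since we only enlarge within $\square(d_x,d_y)$ and $\Delta$ already touches all four sides. The only thing to check is that the convex hull of $\Delta \cup \{v,\tilde v\}$ has the same interior lattice points as $\Delta$. Suppose not: then some lattice point $w$ lies in the interior of $\mathrm{conv}(\Delta\cup\{v,\tilde v\})$ but is a boundary point (or outside) of $\Delta$. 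The key observation is that $\mathrm{conv}(\Delta\cup\{v,\tilde v\}) = \mathrm{conv}(\Delta_+[v]\cup\Delta_+[\tilde v])$, and any lattice point in this hull that is not in $\Delta_+[v]\cup\Delta_+[\tilde v]$ lies in the ``bridging'' region between the two caps; I would show this region, being squeezed between two edges of $\square(d_x,d_y)$-inscribed polygons sharing $\Delta$, contains no new interior lattice points --- essentially because $w$ interior to the big hull forces $w$ to be interior to one of $\Delta_+[v]$, $\Delta_+[\tilde v]$, or to lie strictly between their boundaries in a region whose lattice points are controlled by the convexity of $\Delta$. This is the step I expect to be fiddliest; a clean way is to use item (1) to reduce to the case where $v,\tilde v$ are each apexes of elementary triangles glued to $\Delta$, and then do an explicit local analysis of the two (possibly disjoint, possibly adjacent) elementary triangles, invoking \cref{le:FT} to see that no interior point appears.

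Finally, for the existence of a maximal polygon in each irreducible strong equivalence class not containing a long diagonal: here ``irreducible'' means (following \cref{de:prxpry}) that the bidegree $(d_x,d_y)$ has $\gcd(d_x,d_y)$ whatever the relevant convention is, but in any case the class has a well-defined bidegree, so all its members sit inside the fixed rectangle $\square(d_x,d_y)$ --- a finite lattice polytope --- hence there are only finitely many members, and a maximal one under inclusion exists trivially. The content is that it is a \emph{maximum}, i.e. unique and containing all others. For this I would take two maximal polygons $\Delta,\Delta'$ in the class and consider $\Delta'' = \mathrm{conv}(\Delta \cup \Delta')$. Using item (2) repeatedly (add the vertices of $\Delta'$ to $\Delta$ one at a time, each addition legal by item (2) applied inductively), $\Delta''$ is again in the strong equivalence class, contains both $\Delta$ and $\Delta'$, and by maximality $\Delta'' = \Delta = \Delta'$. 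The exclusion of long diagonals is exactly what guarantees that item (2) is available (a long diagonal has the pathology that the ``other'' diagonal is equivalent but not strongly equivalent, and the join of the two diagonals would create interior points), so the argument is clean for all remaining classes. The main obstacle throughout is item (1)'s convexity-preserving ordering of the elementary triangle decomposition and the interior-point control in item (2); both are elementary plane lattice geometry but need to be set up carefully to avoid hand-waving.
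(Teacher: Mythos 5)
Your overall strategy matches the paper's: reduce item (1) to adding elementary triangles one at a time (the paper does a fan triangulation of the cap from one base vertex, exactly as you propose; note the cap is automatically a single triangle $[u,v,w]$, since any vertex of $\Delta$ strictly inside it would become an interior point), then use this reduction to prove item (2) for pairs of elementary triangles. However, there is a genuine gap at the crux of item (2). After reducing to two elementary triangles $T_v=[u,v,w]$ and $T_{\tilde v}=[\tilde u,\tilde v,\tilde w]$, the decisive case is when they share the \emph{same base} $[u,w]=[\tilde u,\tilde w]$ with both apexes on the same side. Your plan to settle this by ``an explicit local analysis invoking \cref{le:FT}'' does not go through: \cref{le:FT} characterises elementary triangles by Farey-neighbour slopes but does not pin down the apex over a given base (e.g.\ over the base $[(0,0),(1,0)]$ every $(k,1)$ is the apex of an elementary triangle), so two distinct addible apexes over the same base are perfectly possible, and one must show that the quadrilateral $[u,v,\tilde v,w]$ is convex, has no interior lattice points, and that the segment $(v,\tilde v)$ carries no lattice points. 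The paper handles this with a separate argument: by Pick's formula the quadrilateral has area $\tfrac12\lvert(v,\tilde v)\rvert_{\mathbb Z}+\lvert(v,q,\tilde v)\rvert_{\mathbb Z}+1$ (where $q$ is the crossing of the diagonals), and a length-ratio estimate shows this area is $<\tfrac32$, forcing both counts to vanish. The Farey lemma is used only in the easier adjacent case $w=\tilde u$. Since this common-base case is precisely what the authors flag as non-obvious, it cannot be left as ``local analysis''.

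Your derivation of the final claim also has a gap. Taking two maximal polygons $\Delta,\Delta'$ in the class and adding the vertices of $\Delta'$ to $\Delta$ ``one at a time, each addition legal by item (2)'' presupposes that each vertex of $\Delta'$ is individually addible to $\Delta$; strong equivalence only gives a chain of additions \emph{and removals} connecting them, and does not directly yield this (compare \cref{fig:nomax}, where two equivalent maximal polygons have a join that is not equivalent, and \cref{fig:nomin}, where legality of one move can be destroyed by another). The paper sidesteps this by constructing the maximum directly: add to $\Delta$ all its addible vertices simultaneously, which is legal by repeated use of item (2), and observe that the result dominates everything reachable by additions. If you want the ``join of two maximal elements'' route, you must first prove that strong equivalence implies mutual addibility of vertices, which is an additional (unproved) step.
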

\begin{proof}
We start with claim (1). Let $v \notin \Delta$ be addible to $\Delta$. As $\Delta_+[v]$ has the same interior points as $\Delta$, $\Delta_+[v]$ differs from $\Delta$ by the addition of  a triangle $T_v$ with vertices $v$ and $u,w \in \Delta$, with no interior points. The edge $(u,w)$ cannot have edge interior points, unless $\Delta$ consists of the single maximal edge $(u,w)$; otherwise, the edge interior points of $(u,w)$ would become interior points in $\Delta_+[v]$, and $\Delta$ and $
\Delta_+[v]$ would not be equivalent. However, if we assume that $\Delta$ is not a long diagonal, it cannot consist of a single edge $(u,w)$ with edge interior points. We conclude that $(u,w)$ has no edge interior point. 

As for the other two edges of the triangle $T_v$, if there were edge interior points in $(u,v) \cup (v,w)$, we could add them to $\Delta$ one by one, so that at each step one adds only an elementary triangle (cf. \cref{fig:finetrig}). This justifies claim (1).

\begin{figure}[!ht]
\begin{center}
\includegraphics[width=0.18\textwidth]{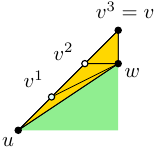}
\caption{\label{fig:finetrig} The triangle $T_v$ (in orange) can be added to the polygon $\Delta$ (in green) by a succession of addition of elementary triangles.}
\end{center}
\end{figure}

We now proceed to claim (2). Claim (1), which we have just proved, shows that we can add $v$ (respectively $\tilde v$) to $\Delta$ by adding a sequence of elementary triangles. Thus, it suffices to prove claim (2) when $v,\tilde{v}$ are addible to a polygon $\Delta$ which is not a long diagonal and such that $T_v$ and $T_{\tilde{v}}$ are elementary triangles, which we now assume. We denote $\Delta_+[v,\tilde{v}] = \Delta_v \cup T_v \cup T_{\tilde{v}}$.

First assume that $v$ and $\tilde{v}$ lie in different corners of $\Delta$. If $\Delta$ consists of a single maximal edge, then among the two vertices $v$ and $\tilde{v}$, one lies below $\Delta$ and the other above. As $\Delta$ is not a long diagonal, the maximal edge in $\Delta$ does not contain an edge interior point, and $\tilde{v}$ is clearly addible to $\Delta_+[v]$. If $\Delta$ does not consist of a single maximal edge, then it is also clear that $\tilde{v}$ is addible to $\Delta_+[v]$.

We now assume that $v$ and $\tilde{v}$ belong to the same corner $\Gamma_{0\infty}(\Delta)$ (the discussion at the other corners would be similar).  Let $u,w \in \Delta$ be the other two vertices of the elementary triangle $T_v$ as in the beginning of the proof.
As $\Delta$ is inscribed, it must touch both the closure of the left vertical side and the upper horizontal side of $\square(d_x,d_y)$. This forces $u$ and $w$ to belong to the same corner $\Gamma_{0\infty}(\Delta)$. Then, denoting $\mu > 0$ the slope of the base $[u,w] \subseteq \Delta$, and $\beta,\alpha > 0$ the slopes of $[u,v]$ and $[v,w]$ respectively, we have (up to renaming $u$ and $w$ so that $u$ is below $w$)  the inequality $0 \leq \frac{1}{\alpha} < \frac{1}{\mu} < \frac{1}{\beta}$. By \cref{le:FT}, this is a triple of neighbours in $\mathcal{F}_r$, where $r$ is the numerator of $\mu$. The same construction applied to $\tilde{v}$ yields an elementary triangle $T_{\tilde{v}}$ with vertices $\tilde{v}$ and $\tilde{u},\tilde{w} \in \Delta$ in $\Gamma_{0\infty}(\Delta)$ such that $\tilde{u}$ is below $\tilde{v}$. Up to exchanging the role of $v$ and $\tilde{v}$, we can assume that $v$ is below $\tilde{v}$.

If $[u,w] \cap [\tilde{u},\tilde{w}] = \emptyset$, then $\Delta_+[v,\tilde{v}]$ is convex and has the same interior points as $\Delta$, so $\tilde{v}$ is addible to $\Delta_+[v]$.

If $w = \tilde{u}$, $\Delta_+[v,\tilde{v}]$ contains no other interior points than those already present in $\Delta$. Then, $\tilde{v}$ is addible to $\Delta_+[v]$ if and only if $\Delta_+[v,\tilde{v}]$ is convex. As $T_v$ and $T_{\tilde{v}}$ are elementary triangles, we have two triples of Farey neighbours $0 \leq \frac{1}{\alpha} < \frac{1}{\mu} < \frac{1}{\beta}$ in $\mathcal{F}_r$ and $0 \leq \frac{1}{\tilde{\alpha}} < \frac{1}{\tilde{\mu}} < \frac{1}{\tilde{\beta}}$ in $\mathcal{F}_{\tilde{r}}$. Since $v$ and $\tilde{v}$ are addible to $\Delta$, we know that $\Delta_+[v]$ and $\Delta_+[\tilde{v}]$ are convex. This implies $\frac{1}{\beta} \leq \frac{1}{\tilde{\mu}}$ and $\frac{1}{\mu} \leq \frac{1}{\tilde{\alpha}}$. The aforementioned Farey neighbour property forces $\frac{1}{\beta} \leq \frac{1}{\tilde{\alpha}}$, which implies that $\Delta_+[v,\tilde{v}]$ is convex.

The last case is when the bases of the two triangles coincide $[u,w] = [\tilde{u},\tilde{w}]$. In that case $T_v$ and $T_{\tilde{v}}$ have same middle slope $\mu$, and this datum determines uniquely the two other slopes of an elementary triangle (Lemma~\ref{le:FT}) with that middle slope, forcing $T_v = T_{\tilde{v}}$. Thus,  (2)  holds trivially in this case. This concludes the proof of (2).

The last claim then follows by adding simultaneously all addible vertices to $\Delta$. This gives an inscribed polygon strongly equivalent to $\Delta$, which by construction is the maximum (for the inclusion) among all inscribed polygons obtained from $\Delta$ by addition of vertices.
\end{proof}

We now return to the main subject of this section -- is global admissibility of spectral curves  preserved within strong equivalence classes? Our main result is the following theorem.

\begin{theorem}[Global admissibility is preserved in strong equivalence classes]\label{th:gapairs}
Let $\mathfrak{C} = \{P(x,y) = 0\} \subset \mathbb{C}^2$ and $\mathfrak{C}' = \{ P'(x,y) = 0\} \subset \mathbb{C}^2$ be two irreducible nondegenerate affine curves whose Newton polygon belong to the same strong equivalence class, and we assume that the latter does not contain a long diagonal. Let $\mathcal{S}$ and $\mathcal{S}'$ be the corresponding compact spectral curves as in  \cref{de:irredpc}.
Then $\mathcal{S}$ is globally admissible if and only if $\mathcal{S}'$ is globally admissible.
\end{theorem}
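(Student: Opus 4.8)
The strategy is to reduce the statement to an invariance check under a single elementary operation --- the addition of one addible vertex producing an elementary triangle --- and then to combine this with \cref{pr:Qdelt}(1) to handle arbitrary steps in a strong equivalence. Since $\mathfrak{C}$ and $\mathfrak{C}'$ have Newton polygons in the same strong equivalence class, there is a finite chain $\Delta = \Delta^0, \Delta^1, \ldots, \Delta^\ell = \Delta(P')$ in which consecutive polygons differ by addition/removal of an addible/removable vertex; by \cref{pr:Qdelt}(1) we may refine this chain so that each step adds or removes a single \emph{elementary} triangle. As global admissibility is a symmetric relation between $\mathcal{S}$ and $\mathcal{S}'$, it suffices to prove: \emph{if $\Delta' = \Delta_+[v]$ with $T_v = [u,v,w]$ an elementary triangle, and if $\mathcal{S}$ is globally admissible, then $\mathcal{S}'$ is globally admissible} (and the converse, which follows by running the same argument with the roles of $\Delta$ and $\Delta'$ exchanged, noting removal is the inverse of addition).

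\textbf{Key steps.} First I would invoke \cref{th:globsimp} to translate global admissibility of a nondegenerate spectral curve into the three conditions ($\Gamma$A1)--($\Gamma$A3) on the Newton polygon: smoothness at the origin (i.e.\ $|\hat\Gamma_{00}(P)|_{\mathbb{Z}} = 0$), and global admissibility of the edge-sequences in $E_{0\infty}$ and $E_{\infty 0}$. Because $\Delta$ and $\Delta' = \Delta_+[v]$ have the same interior points and the same corners' integral content outside the boundary --- this is exactly what being equivalent inscribed polygons means --- the condition ($\Gamma$A1) is clearly preserved: $\hat\Gamma_{00}(P)$ and $\hat\Gamma_{00}(P')$ have the same integral points. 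The heart of the matter is then ($\Gamma$A2) and ($\Gamma$A3), which concern the edges in the corners $\Gamma_{0\infty}$ and $\Gamma_{\infty 0}$. The elementary triangle $T_v$ either lies entirely in one of the four corners $\Gamma_{\epsilon\epsilon'}$, or has its base on a maximal edge of $\Delta$. If $T_v$ lies in $\Gamma_{00}$, then adding it would create an interior point or an edge interior point in $\hat\Gamma_{00}$ --- contradicting ($\Gamma$A1) which we already know holds for both --- so this case does not arise; the same remark shows $v$ cannot sit strictly inside the Newton polygon. So the only relevant cases are $T_v \subseteq \Gamma_{0\infty}$, $T_v \subseteq \Gamma_{\infty 0}$, $T_v \subseteq \Gamma_{\infty\infty}$, or $T_v$ straddling a maximal edge on the boundary of $\Delta$ in one of these corners.

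\textbf{The edge-combinatorics.} In each of these cases the effect of adding $T_v$ on the relevant edge-sequence is: an edge of slope $\mu$ (the base) is replaced by two edges of slopes $\alpha < \mu < \beta$ (the two new sides of $T_v$), where by \cref{le:FT} the inverse slopes $\tfrac{1}{\beta} < \tfrac{1}{\mu} < \tfrac{1}{\alpha}$ are Farey neighbours in $\mathcal{F}_r$, $r$ being the numerator of $\mu$. I would then prove the combinatorial lemma: \emph{a finite increasing sequence of slopes obtained from a globally admissible sequence (in the sense of \cref{de:edgeadm} or \cref{de:edgeadm2}) by such a Farey split is again globally admissible, and conversely}. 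This reduces to two verifications at the level of local parameters. (i) \emph{Local admissibility is preserved}: the two new edges have parameters $(r_\alpha,\bar s_\alpha)$, $(r_\beta,\bar s_\beta)$ with $r_\alpha + r_\beta = r$, $\bar s_\alpha + \bar s_\beta = \bar s$ (from \eqref{eq:fsum} and the proof of \cref{le:FT}); using the congruence characterisation of local admissibility ($r = \pm 1 \bmod s$ for $\mu > 1$, automatic for $\mu \le 1$) together with B\'ezout's identity \eqref{Bezout} $bc - ad = 1$ for Farey neighbours, one checks directly that $r_\alpha = \pm 1 \bmod \bar s_\alpha$ and $r_\beta = \pm 1 \bmod \bar s_\beta$ follow from $r = \pm 1 \bmod \bar s$; intuitively, the B\'ezout relation between neighbouring Farey fractions is precisely the arithmetic shadow of the $\pm 1 \bmod s$ condition. (ii) \emph{Globalisability of pairs is preserved}: one must show that every pair among the edges of the new sequence --- pairs within $\{$old edges$\}$, pairs old-new, and the pair $\{$new, new$\}$ --- satisfies ``(G1) or (G2) or (G3)'' (and non-resonance, which by \cref{le:coprime} is automatic for edges sharing a maximal edge, and for distinct maximal edges follows since distinct slopes give distinct aspect ratios). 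Pairs of old edges are unchanged. For the pair $\{\alpha,\beta\}$ of the two new sides of $T_v$, the Farey-neighbour relation $\tfrac{1}{\beta} < \tfrac{1}{\mu} < \tfrac{1}{\alpha}$ places $\tfrac{1}{\mu} = \tfrac{m-1}{m}$ (or the analogue) at a point witnessing (G1) after checking $m \in (r_{\alpha\beta}-1]$ via $r_{\alpha\beta} = \max(r_\alpha,r_\beta) < r$. For a pair $\{$new, old$\}$, one uses that the old pair $\{\mu,\mu_{\mathrm{old}}\}$ was globalisable, that $\alpha < \mu < \beta$, and the integrality/Farey structure to transport the witness $m$; this is the only genuinely fiddly part and the case analysis on whether $\mu_{\mathrm{old}}$ lies above or below the interval $(\alpha,\beta)$ has to be carried out carefully using the same ``$\ell m \le i-1$ from $\ell m < i$'' integer-rounding trick that appears in the proof of \cref{th:rewriting}.

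\textbf{Main obstacle.} I expect step (ii) above --- showing that globalisability of \emph{mixed} pairs survives the Farey split --- to be the crux, because ``(C-i) or (C-ii)'' is not a convex condition on aspect ratios, so one cannot simply say ``inserting a slope between two admissible ones stays admissible''; instead the argument must exploit the Farey-neighbour constraint, which says the inserted slope is as ``coarse as possible'' between its neighbours and hence cannot destroy a previously-available integer witness $m$. Once this lemma is in hand, the theorem follows: ($\Gamma$A1) is preserved for trivial reasons; ($\Gamma$A2) and ($\Gamma$A3) are preserved by the lemma applied to the edge-sequence of the corner in which $T_v$ lives (the sequences in the other corners being untouched), with the case $T_v \subseteq \Gamma_{\infty\infty}$ requiring nothing since \cref{pr:ndga}/\cref{th:ga} show $p_{\infty\infty}$ is automatically handled once everything else is admissible --- indeed in that corner local admissibility at a ramified point is automatic ($\nu < -1$) and, since global admissibility of $\mathcal{S}$ already grants the relevant (C-i)/(C-ii), the Farey split preserves it by the same lemma. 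Iterating over the refined chain $\Delta^0 \to \cdots \to \Delta^\ell$ gives the equivalence $\mathcal{S} \text{ globally admissible} \iff \mathcal{S}' \text{ globally admissible}$, as claimed.
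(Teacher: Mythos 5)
Your plan coincides essentially step for step with the paper's proof: reduce via \cref{pr:Qdelt}(1) to the addition of a single elementary triangle, translate global admissibility through \cref{th:globsimp} into ($\Gamma$A1)--($\Gamma$A3), observe that ($\Gamma$A1) and the corner $\Gamma_{\infty\infty}$ need nothing, and then show that the Farey split $\mu \rightsquigarrow (\alpha,\beta)$ of \cref{le:FT} preserves local admissibility and pairwise globalisability of the edge sequences in $E_{0\infty}$ and $E_{\infty 0}$ in both directions. You also correctly locate the crux in the mixed pairs, which is exactly what the paper's Lemmata~\ref{le:localad}--\ref{le:muiglob} spend their effort on; that case analysis is deferred in your write-up rather than carried out. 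One small inaccuracy in your sketch: for the new pair $\{\alpha,\beta\}$ the witness for (G1)/(G2) is not that $\tfrac{1}{\mu}$ itself is of the form $\tfrac{m-1}{m}$ (it generally is not); rather, local admissibility of $\mu$ together with B\'ezout forces one of $\bar s_\alpha,\bar s_\beta$ to equal $1$ (\cref{le:localad}), and it is that edge's inverse slope $\tfrac{r'-1}{r'}$ that supplies the integer witness.
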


\begin{remark}
If $\mathfrak{C}$ is irreducible, $\Delta(P)$ must be inscribed in $\square(d_x,d_y)$ where $(d_x,d_y)$ is the bidegree. If $\Delta(P)$ and $\Delta(P')$ are equivalent, $P$ and $P'$ must have the same bidegree. Although the Newton polygon of an irreducible affine curve cannot be a long diagonal, it could still be strongly equivalent to a long diagonal, and such a case is excluded in \cref{th:gapairs}. 
\end{remark}

Given $P(x,y) = 0$ defining a globally admissible spectral curve $\mathcal{S}$, these results tell us that we can perform two types of operations preserving global admissibility:
\begin{itemize}
\item \emph{Enlarging the Newton polygon/Deforming the curve.} We can add to $P$ linear combinations of monomials $x^iy^j$ for which $(i,j)$ is addible to $\Delta(P)$. Due to \cref{pr:Qdelt}, we can in fact add all addible vertices at the same time, and include $\mathcal{S}$ in a largest possible family of globally admissible spectral curves.
\item \emph{Squeezing the Newton polygon/Taking limits in a family of curves.} We can send to $0$ the coefficient in $P$ of one monomial $x^iy^j$ where $(i,j)$ is removable from $\Delta(P)$. Contrarily to enlargements, we can kill monomials one by one, but not necessarily simultaneously (as it may remove interior points). The possible existence of several minimal elements in the strong equivalence class of $\Delta(P)$ means that different types of spectral curves may be obtained by taking limits from $\mathcal{S}$. 
\end{itemize}
In the two operations, for generic choice of coefficients, the resulting affine curve will remain nondegenerate, and we can also deform the coefficients of monomials whose exponents $(i,j)$ are already in $\Delta(P)$.

\begin{proof}[Proof of \cref{th:gapairs}]
	
	As we can transform both $\Delta(P)$ and $\Delta(P')$ to the maximal polygon in their common strong equivalence classes, it is enough to consider the situation where $\Delta(P')$ is obtained from $\Delta(P)$ by addition of an addible vertex, and prove that global admissibility of $\Delta(P)$ is then equivalent to global admissibility of $\Delta(P')$. 
		Let us analyse what happens corner by corner. Global admissibility does not impose any condition on $\Gamma_{\infty\infty}$.  Condition ($\Gamma$A1) for global admissibility in \cref{th:globsimp} requires that $|\hat{\Gamma}_{00}|_{\mathbb{Z}} = 0$. This is preserved by addition/removal of vertices which does not create/remove interior points --- in particular these vertices must be on the horizontal or vertical axes.  We need to take a closer look at $\Gamma_{0\infty}$ where global admissibility requires ($\Gamma$A2), and $\Gamma_{\infty0}$ where global admissibility requires ($\Gamma$A3). 
	
	We proceed to the top left corner. The edges in $E_{0\infty}$ have slopes $ 0 < \mu_1 < \mu_2 < \ldots < \mu_{\ell} $, appearing from right to left. Let us add $\mu_0 = 0$ and $\mu_{\ell + 1} = \infty$ to this sequence, although they do not represent edges in $E_{0\infty}$. We claim that a vertex $ v = (p,q) $ added in the corner $ \Gamma_{0\infty}(P)$ to produce $\Delta(P')$ must create exactly two new edges in $ E_{0\infty} $ with slopes $ \alpha, \beta $ such that $ \mu_{i-1} \leq  \alpha < \mu_i < \beta \leq  \mu_{i+1} $ for some $i \in [\ell]$. Indeed, we must keep the endpoints of the edges in $E_{0\infty}$ (except perhaps the leftmost and the rightmost one) as vertices, otherwise it would result in new interior points in $\Delta(P')$ compared to $\Delta(P)$. Adding the vertex $v$ can then only result in  adding to the convex hull of a triangle with slopes $\alpha < \mu_i < \beta$. If we add such a $v$, the edge of slope $\mu_i$ cannot have an interior point, unless $\Delta(P)$ consists of a single edge from $(0,0)$ to $(d_x,d_y)$. In the latter case, any interior point of that edge would remain on the boundary  of $\Delta(P')$. However, in such a case, the assumption that $\Delta(P)$ is not a long diagonal forces $d_x,d_y$ to be coprime, implying the absence of interior point on this edge.

	\begin{figure}[!ht]
	\begin{center}
	\includegraphics[width=0.75\textwidth]{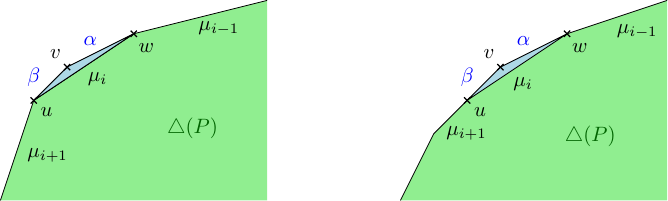}
	\caption{\label{fig:slopepoly} Allowed addition of a vertex $v$ to $\Delta(P)$.}
	\end{center}
	\end{figure} 
	Denote by $u$  (resp. $w$) the left (resp. right) endpoint of $\mu_i$, and consider the triangle $ T $ formed by the vertices $ u , v $ and $ w $ (cf. \cref{fig:slopepoly}). Assume that the edge with slope $\alpha$ has interior points $v_1,\ldots,v_q$ when going from $u$ to $v$. The segment joining $v_i$ to $w$ cannot have an interior point as $T$ does not have interior points. Then, instead of adding $ v $ as a vertex, we could have added the $v_j$ one by one, with increasing $j$. The $j$-th step amounts to adding to the so-far obtained Newton polygon a triangle $T_j$ whose edges do not have interior points. Therefore, it is enough to study these elementary steps where the added triangle is an elementary triangle.

For such a triangle, still denoting by $\alpha,\mu_i,\beta$ the slopes of its edges, as in \cref{fig:slopepoly}:
\begin{itemize}
\item We claim that the sequence $\mu_1, \dots, \mu_{i-1}, \alpha, \beta, \mu_{i+1}, \ldots, \mu_\ell$ is globally admissible provided that the sequence $\mu_1, \ldots, \mu_{i-1}, \mu_i, \mu_{i+1}, \ldots, \mu_\ell$ is globally admissible. If $\mu_i > 1$ this is proved in \cref{le:globad}; if $\mu_i \geq 1$ this is proved in \cref{le:localad2}, showing that if $\alpha$ and/or $\beta$ are in $E_{0\infty}$, they are $\leq 1$ and thus are automatically globally admissible.
\item Conversely, we claim that the sequence $\mu_1, \ldots, \mu_{i-1}, \mu_i, \mu_{i+1}, \ldots, \mu_\ell$ is globally admissible  provided that the sequence $\mu_1, \dots, \mu_{i-1}, \alpha, \beta, \mu_{i+1}, \ldots, \mu_\ell$ is globally admissible. This is proved in \cref{le:muiglob}.
\end{itemize}
	The proof for the corner $  \Gamma_{\infty 0} $ and condition ($\Gamma$A3) can be obtained by reflection of these arguments over the axis $y = x$ and is thus omitted.
\end{proof}

 \subsection{Proof of the lemmata}

We are going to complete the proof of \cref{th:gapairs} in several steps. We consider as in this proof a situation where an elementary triangle with edges of slopes $\alpha < \mu_i < \beta$ is added to $\Delta(P)$ (or removed from $\Delta(P')$). Recall the encoding of slopes by parameters $(r_{\alpha},\bar s_{\alpha})$, $(r_i,\bar s_i)$ and $(r_{\beta},\bar s_{\beta})$ from \eqref{slopelocal}.  From \cref{le:FT} we know that $\frac{1}{\beta},\frac{1}{\mu_i},\frac{1}{\alpha}$ are Farey neighbours in $\mathcal{F}_{r_i}$. We decide to call edges by their slopes, when it does not lead to confusion.

\begin{lemma}\label{le:localad}
	Assume that $ \mu_i $ is  locally admissible and $\mu_i  > 1$.  Then $r_i \geq 2$,  $\bar s_i = s_i > 0$ and we have the following alternative:
		\begin{itemize}
		\item If $ r_i = +1\,\,{\rm mod}\,\,s_i $, then $\bar s_\beta = 1  $ and $\bar s_\alpha = s_i -1 $;
		\item If $ r_i = -1\,\,{\rm mod}\,\,s_i $ and $s_i \geq 2$, then $\bar s_\alpha = 1 $ and $\bar s_\beta = s_i - 1 $.
	\end{itemize}
In both cases, $ \alpha $ and $ \beta $ are locally admissible.
\end{lemma}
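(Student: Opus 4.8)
\textbf{Proof plan for \cref{le:localad}.}
The plan is to reduce everything to elementary arithmetic of the Farey triple attached to the added elementary triangle. First I would dispose of the preliminary claims: since the edge lies in $E_{0\infty}$ we have $\bar s_i < r_i$ with $\gcd(r_i,\bar s_i)=1$, and the hypothesis $\mu_i>1$ means $\tfrac1{\mu_i}<1$, i.e. $\bar s_i>0$; moreover $r_i=1$ would only allow inverse slope $0$ or $1$ in $E_{0\infty}$, so $r_i\geq 2$, and then $r_i\nmid\bar s_i$ forces $s_i=\bar s_i$ by \cref{le:coprime}(b). In particular, since $\mu_i>1$, local admissibility of $\mu_i$ is exactly the congruence $r_i=\pm1\,\,{\rm mod}\,\,s_i$, so we are in (at least) one of the two cases of the statement.

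Next I would set up the combinatorics. By \cref{le:FT} the inverse slopes $\tfrac1\beta<\tfrac1{\mu_i}<\tfrac1\alpha$ are Farey neighbours in $\mathcal{F}_{r_i}$, and from the proof of that lemma (equivalently, from the mediant identity \eqref{eq:fsum} applied to the elementary triangle) one gets the additive relations $r_\alpha+r_\beta=r_i$ and $\bar s_\alpha+\bar s_\beta=\bar s_i=s_i$. B\'ezout's identity \eqref{Bezout} for the two consecutive pairs then yields
\begin{equation*}
\bar s_\beta r_i-r_\beta s_i=1\,,\qquad s_i r_\alpha-\bar s_\alpha r_i=1\,.
\end{equation*}
From these I would extract, on the one hand, the congruences $\bar s_\beta r_i\equiv 1$ and $\bar s_\alpha r_i\equiv -1\pmod{s_i}$, and on the other hand, using $1\leq r_\beta\leq r_i-1$, the two-sided bound $0<\tfrac{1+s_i}{r_i}\leq\bar s_\beta=\tfrac{1+r_\beta s_i}{r_i}\leq s_i$ (the upper bound because $\tfrac{1-s_i}{r_i}\leq 0$). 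Substituting the hypothesis $r_i\equiv 1$ (resp. $r_i\equiv -1$, which then requires $s_i\geq 2$ to be a genuinely different case) into $\bar s_\beta r_i\equiv 1\pmod{s_i}$ gives $\bar s_\beta\equiv 1$ (resp. $\bar s_\beta\equiv s_i-1$) $\pmod{s_i}$, and the bound $0<\bar s_\beta\leq s_i$ pins down $\bar s_\beta=1$ (resp. $\bar s_\beta=s_i-1$); then $\bar s_\alpha=s_i-\bar s_\beta$ gives the complementary value. I expect this modular-plus-bound step to be the main (though still elementary) obstacle, since one must be careful that the interval $(0,s_i]$ contains a unique residue, and that the degenerate values $s_i=1$, $r_\alpha=1$ or $r_\beta=1$ (unramified edges, or $\beta=\infty$) are handled correctly.

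Finally I would check local admissibility of $\alpha$ and $\beta$ using \cref{de:edgeadm}. For the edge with $\bar s=1$ (either $\beta$ in the first case or $\alpha$ in the second), it is ramified only if $r\geq2$, and then $r\equiv\pm1\,\,{\rm mod}\,\,1$ holds trivially; if $r=1$ it is unramified, hence admissible. For the edge with $\bar s=s_i-1$: if $s_i=1$ this edge has $\bar s=0$, hence $r=1$ and slope $1$, so it is unramified and admissible; if $s_i\geq2$ then $\bar s=s_i-1\geq1>0$ so the slope exceeds $1$ and we must verify the congruence, which follows by reducing the relevant B\'ezout identity modulo $s_i-1$ (using $s_i\equiv1$): in the first case $s_i r_\alpha-(s_i-1)r_i=1$ gives $r_\alpha\equiv 1\,\,{\rm mod}\,\,(s_i-1)$, and in the second $(s_i-1)r_i-r_\beta s_i=1$ gives $r_\beta\equiv -1\,\,{\rm mod}\,\,(s_i-1)$. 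This establishes local admissibility of both $\alpha$ and $\beta$, completing the proof.
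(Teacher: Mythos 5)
Your proof is correct and follows essentially the same route as the paper's: Farey-neighbour additivity and B\'ezout identities for the elementary triangle, reduction modulo $s_i$ combined with a bound on $\bar s_\beta$ to pin down $(\bar s_\alpha,\bar s_\beta)$, and a further reduction modulo $s_i-1$ to get local admissibility of the new edges. The only differences are cosmetic: you treat $s_i=1$ uniformly via the explicit bound $0<\bar s_\beta\leq s_i$ where the paper splits off $\bar s_i=1$ as a separate case, and your B\'ezout identities carry the correct signs (the paper's \eqref{riki} has a sign slip that self-corrects in its final conclusion).
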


	\begin{proof}
The assumption $\mu_i > 1$ implies that $r_i \geq 2$ and $\bar s_i \geq 1$. It also implies $\alpha \geq 1$ as $\frac{1}{\alpha}$ is a right Farey neighbour of $\frac{1}{\mu_i} < 1$.
		
If $\bar s_i = 1$, we have $\alpha = 1$ and must be in the situation $(\frac{1}{\beta},\frac{1}{\mu_i},\frac{1}{\alpha}) = (\frac{r_i - 2}{r_i - 1},\frac{r_i - 1}{r_i},1)$, in particular $\bar s_i = \bar s_{\beta} = 1$ and $\bar s_\alpha = 0$. In this case the first alternative is satisfied, $\alpha = 1$ is locally admissible because $r_{\alpha} = 1$, and $\beta$ is locally admissible because either $r_{\beta} = 1$ (if $r_i = 2$) or $r_{\beta} \geq 2$ is congruent to $1$ modulo $s_{\beta} = \bar s_{\beta} = r_{\beta} - 1$.
		
Now assume $\bar s_i \geq 2$. This forces $\beta > \alpha > 1$, in particular $r_{\alpha},r_{\beta} \geq 2$ and $(s_i,s_{\alpha},s_{\beta}) = (\bar s_i,\bar s_{\alpha},\bar s_{\beta})$ is a triple of positive numbers, implying that $s_{\alpha} + s_{\beta} =  s_i \geq 2$ and $\max(s_{\alpha},s_{\beta}) <  s_i$. Furthermore, B\'ezout's identity \eqref{Bezout} for the two pairs of neighbours yields
\begin{equation}
	\label{riki}	r_i s_{\alpha} - r_{\alpha} s_i = 1,\qquad r_{\beta} s_i - r_i s_{\beta} = 1\,.
\end{equation}
Taking the reduction modulo $s_i$ yields:
\begin{equation}
\label{rikimod}
r_i  s_{\alpha} = 1\,\,{\rm mod}\,\,s_i,\qquad r_i s_{\beta} = -1\,\,{\rm mod}\,\,s_i\,.
\end{equation}

Recall from \eqref{aspslo} and the discussion above it that the edge $\mu_i = \frac{r_i}{r_i - s_i} > 1$ has local parameter $ s_i \in [r_i-1]$. As we assumed it is locally admissible, we have $r_i = \pm 1\,\,{\rm mod}\,\,s_i$. Together with \eqref{rikimod} we deduce that $s_{\alpha} = \pm 1 \,\,{\rm mod}\,\,s_i$ and $s_{\beta} = \mp 1\,\,{\rm mod}\,\,s_i$. Recalling that $1 \leq s_{\alpha} < s_i$ and $1 \leq s_{\beta} < s_i$ we conclude that $(s_{\alpha},s_{\beta}) = (1,s_i - 1)$ in the case $r_i = -1\,\,{\rm mod}\,\,s_i$ and $(s_{\alpha},s_{\beta}) = (s_i - 1,1)$ in the case $r_i = 1\,\,{\rm mod}\,\,s_i$, as announced.

We now turn to the local admissibility of $\alpha$ and $\beta$. When $s_{\alpha} = 1$, the edge $\alpha$ is clearly  locally admissible. We are also in the situation $s_{\beta} = s_i - 1$ due to the previous discussion, while the reduction of \eqref{riki} modulo $s_{\beta}$ yields $r_{\beta} s_i = 1\,\,{\rm mod}\,\,s_{\beta}$. This implies $r_{\beta} = 1\,\,{\rm mod}\,\,s_{\beta}$, thus $\beta$ is locally admissible as well. When $ s_{\beta} = 1$, the edge $\beta$ is clearly admissible, while $ s_{\alpha} = s_i - 1$ and the reduction of \eqref{riki} modulo $r_{\alpha}$ leads to $r_{\alpha} = -1\,\,{\rm mod}\,\,s_{\alpha}$, thus $\alpha$ is locally admissible as well.
\end{proof}
		
\begin{lemma}\label{le:globad}
	Assume that $ \mu_1,\ldots,\mu_i,\ldots,\mu_{\ell}$ is a sequence of globally admissible edges, and that $ \mu_i  > 1$. Then, $\mu_1,\ldots,\mu_{i - 1},\alpha,\beta,\mu_{i + 1},\ldots,\mu_{\ell} $ is a sequence of globally admissible edges.
	\end{lemma}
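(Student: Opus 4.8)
The plan is to prove \cref{le:globad} by reducing the global admissibility of the deformed sequence to a finite collection of pairwise checks, and then dispatching each pair by invoking the already established local facts of \cref{le:localad} together with the combinatorial description of globalisability in \cref{de:edgeadm} (conditions (G1)--(G3)). Recall that a sequence of edges in $E_{0\infty}$ is globally admissible if every edge is locally admissible and every pair is globalisable. Since $\mu_i > 1$ and $\mu_i$ is locally admissible, \cref{le:localad} already gives us that $\alpha$ and $\beta$ are locally admissible, and that (up to the swap $r_i = \pm 1 \bmod s_i$) one of $\bar s_\alpha,\bar s_\beta$ equals $1$ while the other equals $s_i - 1$. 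So the only thing left to verify is that every pair of edges in the new sequence is globalisable. The pairs fall into three classes: (a) both edges lie in $\{\mu_1,\dots,\mu_{i-1},\mu_{i+1},\dots,\mu_\ell\}$ — these were already globalisable in the original sequence, so nothing to do; (b) the pair $\{\alpha,\beta\}$ itself; (c) a pair consisting of one of $\alpha,\beta$ and one $\mu_j$ with $j \neq i$.

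For class (b): since $\frac{1}{\beta} < \frac{1}{\mu_i} < \frac{1}{\alpha}$ are Farey neighbours in $\mathcal{F}_{r_i}$, B\'ezout's identity \eqref{Bezout} and the Farey sum relation \eqref{eq:fsum} pin down $r_\alpha,r_\beta$ in terms of $r_i,\bar s_\alpha,\bar s_\beta$; in fact $r_\alpha + r_\beta = r_i$, so $r_{\alpha\beta} = \max(r_\alpha,r_\beta) < r_i$. We have $\alpha,\beta > 1$ (or $\alpha = 1$ in the boundary case $\bar s_i = 1$, which is handled separately and trivially since then $r_\alpha = 1$). Using $\min(s_\alpha,s_\beta) = 1$ from \cref{le:localad}, say $s_\alpha = 1$, i.e. $\nu_\alpha = 1/r_\alpha$, one checks directly that the pair $\{\alpha,\beta\}$ satisfies (G1) or (G2): indeed with the ordering $\mu_\alpha \le \mu_\beta$ (equivalently $\nu$-ordering as needed), the condition $\frac{1}{\beta} \le \frac{m-1}{m} \le \frac{1}{\alpha}$ can be realised with $m = r_\alpha$ since $\frac{1}{\alpha} = \frac{r_\alpha - 1}{r_\alpha}$ is exactly of the form $\frac{m-1}{m}$; if instead $\alpha$ is the smaller slope one uses (G2) with $\frac{1}{\beta}$'s denominator. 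This is a short explicit computation using only \eqref{aspslo} and the relations among $(r_\alpha,\bar s_\alpha),(r_\beta,\bar s_\beta),(r_i,\bar s_i)$.

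For class (c): fix $j \ne i$ and an edge $e \in \{\alpha,\beta\}$. The key monotonicity observation is that $\frac{1}{\alpha}$ and $\frac{1}{\beta}$ lie strictly between $\frac{1}{\mu_{i-1}}$ and $\frac{1}{\mu_{i+1}}$ (the slopes of the two edges flanking $\mu_i$ in the original sequence), with $\frac{1}{\mu_{i+1}} \le \frac{1}{\beta} < \frac{1}{\mu_i} < \frac{1}{\alpha} \le \frac{1}{\mu_{i-1}}$; and moreover $r_\alpha, r_\beta \le r_i$. I would then show that globalisability of $\{\mu_j, \mu_i\}$ (which we are given) together with these inequalities implies globalisability of $\{\mu_j, e\}$. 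Concretely: if the original pair $\{\mu_j,\mu_i\}$ satisfied (G1) with some integer $m \in (r_{\max(r_j,r_i)} - 1]$ sitting so that $\frac{1}{\mu_i}$ and $\frac{1}{\mu_j}$ lie on opposite sides of $\frac{m-1}{m}$ (or one of them is $\le \frac{m-1}{m}$ and the other $\ge$), then because $\frac{1}{e}$ lies on the same side of $\frac{1}{\mu_i}$ relative to $\mu_j$ — i.e. the interval $(\frac{1}{\mu_j}, \frac{1}{e})$ or $(\frac{1}{e},\frac{1}{\mu_j})$ still contains $\frac{m-1}{m}$ — the same $m$ works, after checking $m$ is still in the allowed range $(r_{\max(r_j,r_e)}-1]$, which holds since $r_e \le r_i$. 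The cases (G2) and (G3) are similar and in fact easier: (G3) ($\mu_j \le 1$ or $\mu_e \le 1$) is monotone in an obvious way, and when $e$ has $s_e = 1$ it is automatically part of a globalisable pair with anything by the remark after \cref{th:rewriting}.

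I expect the main obstacle to be the bookkeeping in class (c): one must carefully track which of the two edges $\mu_j, e$ plays the role of ``$\mu_i$'' (the one with smaller inverse slope, i.e. larger $\nu$) in conditions (G1)--(G3), since these conditions are not symmetric, and one must ensure the witnessing integer $m$ remains within the range determined by $r_{\max} = \max(r_j, r_e)$ rather than $\max(r_j, r_i)$ — this is fine because $r_e \le r_i$, but it needs to be said. A secondary subtlety is the degenerate boundary case $\bar s_i = 1$ (so $\alpha = 1$, $r_\alpha = 1$), where the triangle is $(\frac{r_i-2}{r_i-1},\frac{r_i-1}{r_i},1)$; there $\alpha$ is unramified and trivially globalisable with everything, and $\beta = \frac{r_i - 1}{r_i - 2}$ needs the same argument as above with $\beta$ in place of a generic edge. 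I would state these reductions cleanly and then verify the handful of inequalities, citing \cref{le:localad}, \eqref{aspslo}, \eqref{eq:fsum}, \eqref{Bezout}, and \cref{le:coincm} (for the equal-slope sub-case, which can occur when $\alpha$ or $\beta$ coincides with a flanking $\mu_{i\pm 1}$, forcing that slope to be $1$ or of the form $\frac{r}{r-1}$).
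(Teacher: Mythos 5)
Your overall architecture is the same as the paper's: reduce global admissibility of the new sequence to pairwise checks, get local admissibility of $\alpha,\beta$ from \cref{le:localad}, and then verify globalisability pair by pair using (G1)--(G3). The problem is that the central step --- your class (c) --- contains two genuine errors, and together they undermine exactly the part of the argument that carries all the content of the lemma.

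First, the ``same witness $m$ transfers'' claim is false for one of the two new edges. Write the inverse slopes in order; for $j>i$ you have $\tfrac{1}{\mu_j}\leq \tfrac{1}{\beta}<\tfrac{1}{\mu_i}<\tfrac{1}{\alpha}$. For the pair $\{\alpha,\mu_j\}$ the relevant interval $[\tfrac{1}{\mu_j},\tfrac{1}{\alpha}]$ indeed contains $[\tfrac{1}{\mu_j},\tfrac{1}{\mu_i}]$ and hence $\tfrac{m-1}{m}$; but for $\{\beta,\mu_j\}$ the interval $[\tfrac{1}{\mu_j},\tfrac{1}{\beta}]$ is \emph{contained in} $[\tfrac{1}{\mu_j},\tfrac{1}{\mu_i}]$, so $\tfrac{m-1}{m}$ may land in the gap $(\tfrac{1}{\beta},\tfrac{1}{\mu_i}]$ and the same $m$ fails. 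The symmetric failure occurs for $\alpha$ when $j<i$. Your blanket monotonicity statement ``$\tfrac{1}{e}$ lies on the same side\dots the interval still contains $\tfrac{m-1}{m}$'' is simply not true on the shrinking side. The paper closes this in two different ways depending on which edge is involved: the edge with $s_e=1$ (so $\tfrac{1}{e}=\tfrac{r_e-1}{r_e}$, guaranteed by \cref{le:localad}) is handled by a \emph{direct} computation with $m=r_e$ or (G2), with no transfer at all; for the other edge, when the transfer lands in the gap one must invoke the fact that $\tfrac{1}{\beta}<\tfrac{1}{\mu_i}<\tfrac{1}{\alpha}$ are Farey neighbours in $\mathcal{F}_{r_i}$ (\cref{le:FT}), so that a fraction $\tfrac{m-1}{m}\in\mathcal{F}_{r_i}$ strictly greater than $\tfrac{1}{\mu_i}$ is automatically $\geq \tfrac{1}{\alpha}$. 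You never use the Farey-neighbour property in class (c), and without it the argument does not close.

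Second, your range check has the inequality backwards. You claim $m\in(r_{\max(r_j,r_i)}-1]$ implies $m\in(r_{\max(r_j,r_e)}-1]$ ``since $r_e\leq r_i$''. But $r_e<r_i$ makes $\max(r_j,r_e)\leq\max(r_j,r_i)$, so the allowed range for the new pair can only \emph{shrink}, and the old witness $m$ may fall outside it (e.g.\ $r_j<r_e<r_i$ and $m=r_i-1$). The correct fix, which the paper uses, is that whenever $m\geq r_{\alpha j}$ (say), one has $\tfrac{r_{\alpha j}-1}{r_{\alpha j}}\leq\tfrac{m-1}{m}<\tfrac{1}{\alpha}$ and the pair is globalisable by (G2) instead. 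Both gaps are repairable, but as written your proof asserts the opposite of what is true at the two places where the real work happens.
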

\begin{proof}
	As $ \mu_i $ is assumed locally admissible, \cref{le:localad} tells us that $r_i \geq 2$ and $\bar s_i = s_i$, that $r_i = \pm 1\,\,{\rm mod}\,\,\bar s_i$, and that $\alpha,\beta$ are locally admissible. According to \cref{de:edgeadm2}, we only need to check that the pairs $ \alpha,\mu_j $, the pairs $ \beta,\mu_j $ (for each $j  \neq i$) and the pair $\alpha,\beta$ are globalisable. We shall examine the conditions (G1)-(G2)-(G3) of \cref{de:edgeadm2}. Since $\frac{1}{\alpha}$ is the right Farey neighbour of $\frac{1}{\mu_i}$ in  $\mathcal{F}_{r_i}$, we have $\alpha \geq 1$.
		
We treat in detail the case $r_i = 1\,\,{\rm mod}\,\,s_i$, meaning $\bar s_{\beta} = 1$ from \cref{le:localad} and thus $\frac{1}{\beta} = \frac{r_{\beta} - 1}{r_{\beta}}$, with $r_{\beta} \geq 2$ since $\beta > \mu_i > 1$.

\medskip 

\noindent \underline{\textit{Case $j > i$}.} We have $1 < \mu_i < \mu_j $ and thus
\begin{equation}
\label{thuenu}1 \leq \alpha < \mu_i < \beta \leq \mu_j\,.
\end{equation}  First, we check that the pair $\beta,\mu_j $ is globalisable. When $\beta = \mu_j$, the pair $\beta,\mu_j$ is globalisable due to \cref{le:coincm} as $\frac{1}{\beta} = \frac{r_{\beta} - 1}{r_{\beta}}$. We now assume $\beta < \mu_j$, and observe that $\frac{1}{\mu_j} \leq  \frac{r_\beta-1}{r_\beta}   < \frac{1}{\mu_i}$ with $0 < r_\beta < r_i  $. If $ r_\beta < r_j $, the pair $\beta, \mu_j  $ is globalisable by choosing $m = r_{\beta} \leq r_j - 1 = r_{\beta j} - 1$ in (G1). If we rather have $r_{\beta} \geq r_j$, then $\frac{r_\beta-1}{r_\beta} = \frac{ r_{\beta j} -1}{r_{\beta j}}$  and thus $ \beta, \mu_j $ is globalisable by (G2). 

	Now, we check that $\alpha,\mu_i$ is globalisable. When $\alpha = 1$ the pair $\alpha,\mu_i$ satisfies (G3), so we move on to the case $\alpha > 1$.  As $\mu_i,\mu_j$ is globalisable by assumption and $1 < \mu_i < \mu_j$, it satisfies either (G1) or (G2).
	\begin{itemize}
			\item If $ \mu_i, \mu_j $ satisfies (G1), then there exists $m \in (r_{ij} - 1]$ such that $\frac{1}{\mu_j} \leq \frac{m - 1}{m} \leq \frac{1}{\mu_i}$. When $m \in (r_{\alpha j} - 1]$, since $\frac{1}{\mu_i} < \frac{1}{\alpha}$ we recognise that $ \alpha , \mu_j $ is globalisable by (G1). When $m \geq r_{\alpha j}$, we write $\frac{r_{\alpha j} -1}{r_{\alpha j}} \leq \frac{m - 1}{m} < \frac{1}{\alpha}$ and deduce that $\alpha , \mu_j $ is globalisable by (G2).
		\item If $\mu_i,\mu_j$ satisfies (G2), we have $\frac{1}{\mu_i} \geq \frac{r_{ij} - 1}{r_{ij}}$, and since $r_i > r_{\alpha}$ we get $\frac{1}{\alpha} > \frac{1}{\mu_i} \geq \frac{r_{\alpha i} - 1}{r_{\alpha i}} \geq \frac{r_{\alpha j} - 1}{r_{\alpha j}}$, thus $\alpha,\mu_j$ is globalisable by (G2).  
\end{itemize}

\medskip
 
\noindent \underline{\textit{Case $j < i$}.} We have $1 < \mu_j \leq \alpha < \mu_i < \beta$. First, we check that the pair $\mu_j,\beta$ is globalisable. We have $\frac{r_\beta-1}{r_\beta}	 < \frac{1}{\mu_i}	< \frac{1}{\alpha}   \leq \frac{1}{\mu_j}$ with $0 < r_\beta < r_i $. When $ r_\beta < r_j $, the pair  $\mu_j,\beta$ is globalisable by choosing $m = r_\beta \leq r_j - 1 = r_{\beta j}-1$ in (G1). When $ r_\beta \geq r_j $, we have $\frac{r_\beta-1}{r_\beta} =  \frac{r_{\beta j}-1}{r_{\beta j}}$, hence $\mu_j,\beta$ is globalisable by (G2).

Second we check that $\mu_j,\alpha$ is globalisable. We use that $\mu_j,\mu_i$ is globalisable with $\mu_j > 1$, so satisfies either (G1) or (G2).
\begin{itemize}
			\item If $ \mu_j, \mu_i $ satisfies (G1), then there exists $m \in (r_{ij} -1]$ satisfying $ \frac{1}{\mu_i}  \leq  \frac{m-1}{m} \leq \frac{1}{\mu_j}$. When $m < r_\alpha $, since $r_{\alpha} < r_i$ the first inequality must be strict and $m \in (r_{\alpha j} - 1]$. As $\frac{1}{\alpha}$ is the right Farey neighbour of $\frac{1}{\mu_i}$ in $\mathcal{F}_{r_i}$, we deduce that $\frac{1}{\mu_i} < \frac{1}{\alpha}  \leq  \frac{m -1}{m} \leq \frac{1}{\mu_j}$, thus $ \mu_j,\alpha$ is globalisable by (G1). When $m \geq r_{\alpha j} $, we have $\frac{r_{\alpha j} -1}{r_{\alpha j}} \leq \frac{m - 1}{m} \leq \frac{1}{\mu_j}$ and thus $\mu_j,\alpha$ is globalisable by (G2). When $r_\alpha \leq m <r_j $, recalling that $\alpha > 1$ implies $\bar s_{\alpha} \geq 1$, we have $\frac{1}{\alpha} = \frac{r_\alpha - \bar s_\alpha}{r_\alpha}  \leq \frac{r_\alpha-1}{r_\alpha} \leq \frac{m-1}{m} \leq \frac{1}{\mu_j}$ with $m \in (r_{\alpha j} - 1]$, hence $\mu_j,\alpha$ is globalisable by (G1).
			\item If $ \mu_j,\mu_i$ satisfies (G2),  we have $\frac{1}{\mu_j}  \geq \frac{r_{ij} -1}{r_{ij}} $. Since $r_i > r_{\alpha}$ we deduce $\frac{1}{\mu_j} \geq \frac{r_{\alpha j} - 1}{r_{\alpha j}}$ and thus $\mu_j,\alpha$ is globalisable by (G2).
		\end{itemize}
This establishes that the sequence $\mu_1,\ldots,\mu_{i - 1},\alpha,\beta,\mu_{i +1},\ldots,\mu_{l}$ is globally admissible in the case $ r_i = 1\,\,{\rm mod}\,\,\bar s_i $. The case $r_i = -1\,\,{\rm mod}\,\,\bar s_i$ can be handled similarly.
\end{proof} 

\begin{lemma}\label{le:localad2}
	Assume $ \mu_i  \leq 1 $. Then:
	\begin{itemize}
	\item either $\alpha = 0$ (the corresponding edge does not belong to $E_{0\infty}$) or $\alpha \in (0,1)$.
	\item either $\beta = \infty$ (the corresponding edge does not belong to $E_{0\infty}$) or $\beta \leq 1$.
	\end{itemize}
\end{lemma}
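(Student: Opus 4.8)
The plan is to read off both claims from the Farey-neighbour description of the elementary triangle supplied by \cref{le:FT}, keeping careful track of the conventions for the slopes $0,\infty$ and for the degenerate Farey sequence $\mathcal{F}_1$. Recall the situation set up before the lemma: an elementary triangle with edge slopes $\alpha<\mu_i<\beta$ is added to (respectively removed from) the Newton polygon, the middle slope has local parameters $(r_i,\bar s_i)$ with $r_i$ the numerator of $\mu_i$, so that $\frac{1}{\mu_i}=\frac{r_i-\bar s_i}{r_i}$. By \cref{le:FT}, the inverse slopes $\frac{1}{\beta}<\frac{1}{\mu_i}<\frac{1}{\alpha}$ are consecutive in $\mathcal{F}_{r_i}$; in particular $\frac{1}{\beta}$ is the immediate predecessor and $\frac{1}{\alpha}$ the immediate successor of $\frac{1}{\mu_i}$ in $\mathcal{F}_{r_i}$, where for $r_i=1$ we use the convention of \cref{s:Farey} under which the successor of $p$ is $\infty=\frac{1}{0}$. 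Throughout I will use that the edges of $E_{0\infty}$ are exactly those of $\Delta(P)$ in the corner $\Gamma_{0\infty}$, which have slope in $(0,\infty)$.

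For $\alpha$ there is essentially nothing to prove: since $0\le\alpha<\mu_i\le 1$ we have $\alpha\in[0,1)$. If $\alpha=0$ the corresponding edge is horizontal, hence lies on the lower side of $\square(d_x,d_y)$ and is not an edge of $E_{0\infty}$; otherwise $\alpha\in(0,1)$, and by \cref{de:edgeadm} such an edge, if it lies in $E_{0\infty}$, is locally admissible because its slope is $\le 1$.

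The real content is the statement about $\beta$, and here I would distinguish $\mu_i<1$ from $\mu_i=1$. If $\mu_i<1$, then $\frac{1}{\mu_i}>1$, while $1=\frac{1}{1}\in\mathcal{F}_{r_i}$ for every $r_i\ge 1$; since $\frac{1}{\beta}$ is the immediate predecessor of $\frac{1}{\mu_i}$ in $\mathcal{F}_{r_i}$, we must have $\frac{1}{\beta}\ge 1$, for otherwise $1$ would lie strictly between $\frac{1}{\beta}$ and $\frac{1}{\mu_i}$, contradicting adjacency; hence $\beta\le 1$. If instead $\mu_i=1$, then $\frac{1}{\mu_i}=1=\frac{r_i-\bar s_i}{r_i}$ forces $\bar s_i=0$ and therefore $r_i=1$; the relevant sequence is $\mathcal{F}_1$, whose neighbours of $\frac{1}{\mu_i}=\frac{1}{1}$ are $(0,1,\infty)$ by the convention of \cref{s:Farey}, so $\frac{1}{\beta}=0$, i.e. $\beta=\infty$, which corresponds to a vertical edge lying on the left side of $\square(d_x,d_y)$ and hence not in $E_{0\infty}$. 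In all cases $\beta=\infty$ or $\beta\le 1$, and as before, if the edge of slope $\beta$ lies in $E_{0\infty}$ then $\beta\le 1$ and \cref{de:edgeadm} makes it locally admissible.

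The argument involves no genuine estimate; the only point requiring care is the borderline case $r_i=1$, where one must invoke the ad hoc definition of Farey neighbours in $\mathcal{F}_1$ together with the identifications of the slopes $0$ and $\infty$ with the fractions $\frac{1}{0}$ and $\frac{0}{1}$, and where one must correctly decide which of the two newly created edges actually belongs to $E_{0\infty}$. Once these conventions are pinned down, both claims are immediate consequences of the single fact that $1$ belongs to every Farey sequence $\mathcal{F}_r$.
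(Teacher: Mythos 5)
Your proof is correct and follows essentially the same route as the paper's: both rest on \cref{le:FT} identifying $\frac{1}{\beta},\frac{1}{\mu_i},\frac{1}{\alpha}$ as Farey neighbours, the convention for $\mathcal{F}_1$, and the fact that $1$ lies in every $\mathcal{F}_r$ so the left neighbour of $\frac{1}{\mu_i}>1$ cannot drop below $1$. Your treatment of $\alpha$ via the direct inequality $0\le\alpha<\mu_i\le 1$ is a slight simplification of the paper's case split on $r_i$, but the substance is identical.
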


\begin{proof}
	As $\mu_i \leq 1$, the corresponding edge is locally admissible.
	
	When $r_i = 1$, we have $\mu_i = \frac{1}{m}$ for some $m \in \mathbb{Z}_{> 0}$. Due to our convention, the right Farey neighbour of $\frac{1}{\mu_i}$ in $\mathcal{F}_1$ is $\infty$, leading to $\alpha = 0$. This edge is horizontal, so is not in $E_{0\infty}$, does not correspond to a branch of the singularity at $ p_{0\infty} $ and can be ignored. Considering the left Farey neighbour of $\frac{1}{\mu_i} = m$ in $\mathcal{F}_1$ we obtain $\beta = \frac{1}{m - 1}$. If $m \geq 2$ we have $\beta \leq 1$.  If $m = 1$, we rather have $\beta = \infty$: the corresponding edge is vertical, so not in $E_{0\infty}$ and can be ignored.
	
 	Since $\mu_i = 1$ has $(r_i,\bar s_i) = (1,0)$, if $r_i \geq 2$ we must have $\mu_i < 1$. Then, the right Farey neighbour $\frac{1}{\alpha}$ of $\frac{1}{\mu_i}$ in $\mathcal{F}_{r_i}$ is finite and $> 1$, while the left Farey neighbour $\frac{1}{\beta}$ is $\geq 1$.
\end{proof}

\begin{lemma}\label{le:muiloc}
Assume that $\alpha,\beta$ is globalisable. Then $\mu_i$ is locally admissible.
\end{lemma}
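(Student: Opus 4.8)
The plan is to prove the contrapositive: assuming $\mu_i$ is not locally admissible, we show that the pair $\{\alpha,\beta\}$ is not globalisable. By \cref{le:localad2}, if $\mu_i \leq 1$ then $\mu_i$ is automatically locally admissible, so we may assume $\mu_i > 1$, hence $r_i \geq 2$ and $\bar s_i = s_i > 0$ (as in the setup preceding \cref{le:localad}, edges with slope $>1$ have $r \geq 2$ coprime to $\bar s = s$). The failure of local admissibility for $\mu_i$ then means precisely that $r_i \not\equiv \pm 1 \pmod{s_i}$, and in particular $s_i \geq 3$ (for $s_i \in \{1,2\}$ the congruence $r_i \equiv \pm 1$ is automatic since $r_i$ is coprime to $s_i$).

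First I would record the arithmetic constraints. Since $\frac{1}{\beta} < \frac{1}{\mu_i} < \frac{1}{\alpha}$ are Farey neighbours in $\mathcal{F}_{r_i}$ (by \cref{le:FT}), and since $\mu_i > 1$ forces $\frac{1}{\mu_i} < 1$, we are in the situation where $\alpha \geq 1$. If $\bar s_i = 1$ then $r_i \equiv 1 \pmod{s_i}$ trivially, contradicting non-admissibility; so $\bar s_i = s_i \geq 2$, which forces $\alpha,\beta > 1$ and hence $r_\alpha, r_\beta \geq 2$ with $s_\alpha = \bar s_\alpha \geq 1$, $s_\beta = \bar s_\beta \geq 1$. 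As in the proof of \cref{le:localad}, additivity of Farey neighbours gives $s_\alpha + s_\beta = s_i$ and $r_\alpha + r_\beta = r_i$, and B\'ezout's identity \eqref{Bezout} for the two neighbouring pairs yields $r_i s_\alpha - r_\alpha s_i = 1$ and $r_\beta s_i - r_i s_\beta = 1$, whence modulo $s_i$ we get $r_i s_\alpha \equiv 1$ and $r_i s_\beta \equiv -1$.

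Next comes the core of the argument: I must show $\{\alpha,\beta\}$ fails all of (G1), (G2), (G3) of \cref{de:edgeadm}. Condition (G3) ($\mu_i' \leq 1$ for the smaller slope) fails since both $\alpha,\beta > 1$. For (G1) and (G2) one orders $\alpha < \beta$, so $r_{\alpha\beta} = \max(r_\alpha,r_\beta)$, and $\frac{1}{\beta} < \frac{1}{\mu_i} < \frac{1}{\alpha}$. Condition (G2) asks $\frac{1}{\alpha} \geq \frac{r_{\alpha\beta}-1}{r_{\alpha\beta}}$, i.e. $\frac{s_\alpha}{r_\alpha} \leq \frac{1}{r_{\alpha\beta}}$, i.e. $s_\alpha r_{\alpha\beta} \leq r_\alpha$; since $r_{\alpha\beta} \geq r_\alpha$ and $s_\alpha \geq 1$ this forces $s_\alpha = 1$ and $r_\beta \leq r_\alpha$, and then $r_i s_\alpha \equiv 1 \pmod {s_i}$ reads $r_i \equiv 1$, contradiction; so (G2) fails. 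The delicate point is (G1): we must show there is no integer $m \in (r_{\alpha\beta} - 1]$ with $\frac{1}{\beta} \leq \frac{m-1}{m} \leq \frac{1}{\alpha}$, equivalently $\alpha \leq \frac{m}{m-1} \leq \beta$. Here I would argue that such an $m$ would, via the Farey-neighbour structure, force an integral point strictly between the edges $\alpha$ and $\beta$ inside the elementary triangle with slopes $\alpha < \mu_i < \beta$ — but that triangle is elementary (it was added as an elementary triangle in the proof of \cref{th:gapairs}), so it has no interior integral points, a contradiction; alternatively, a direct computation using $bc - ad = 1$ from \eqref{Bezout} shows that $\frac{m}{m-1}$ lying in $[\alpha,\beta]$ would force $m-1$ to be a common denominator compatible with the Farey relations only when $r_i \equiv \pm 1 \pmod{s_i}$. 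I expect this step — ruling out (G1) — to be the main obstacle, since it requires translating the slope inequality into the discrete-geometric statement about the elementary triangle, or else carrying out the Farey/B\'ezout bookkeeping carefully; the cleanest route is probably the geometric one, invoking that the triangle $[u,v,w]$ is elementary so that the mediant-type fraction $\frac{m}{m-1}$ cannot separate $\alpha$ from $\beta$ without producing a lattice point the triangle does not contain.
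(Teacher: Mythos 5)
Your contrapositive set-up, the arithmetic preliminaries (Farey neighbours, $r_\alpha+r_\beta=r_i$, $s_\alpha+s_\beta=s_i$, and the two B\'ezout congruences $r_is_\alpha\equiv 1$, $r_is_\beta\equiv -1 \pmod{s_i}$), and the dispatching of (G2) and (G3) are all correct. But the (G1) case is a genuine gap: you state two possible strategies and carry out neither, and you yourself flag it as ``the main obstacle''. Neither sketch is adequate as written. The ``integral point inside the elementary triangle'' route would require you to actually produce a lattice point from the hypothetical $m$ and show it lies strictly inside $[u,v,w]$ --- nothing in your text does this, and it is not clear such a point exists. The ``direct B\'ezout bookkeeping'' is just an assertion of the conclusion.

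The step you are missing is in fact a one-liner, and it is essentially how the paper handles (G1) (the paper argues the lemma directly, splitting on which of (G1)--(G3) holds, rather than by contrapositive). If (G1) held, there would be $m\in(r_{\alpha\beta}-1]$ with $\frac{1}{\beta}\le\frac{m-1}{m}\le\frac{1}{\alpha}$. Since $m\le r_{\alpha\beta}-1<r_i$ (recall $r_i=r_\alpha+r_\beta$ with $r_\alpha,r_\beta\ge 2$ in your setting), the fraction $\frac{m-1}{m}$ belongs to the Farey sequence $\mathcal{F}_{r_i}$. But $\frac{1}{\beta}<\frac{1}{\mu_i}<\frac{1}{\alpha}$ are \emph{consecutive} terms of $\mathcal{F}_{r_i}$ by \cref{le:FT}, so the only elements of $\mathcal{F}_{r_i}$ in the closed interval $[\frac{1}{\beta},\frac{1}{\alpha}]$ are these three. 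Hence $\frac{m-1}{m}$ equals $\frac{1}{\mu_i}$, $\frac{1}{\alpha}$, or $\frac{1}{\beta}$; by irreducibility this forces $s_i=1$, $s_\alpha=1$, or $s_\beta=1$ respectively, and via $r_is_\alpha\equiv 1$, $r_is_\beta\equiv-1\pmod{s_i}$ each option yields $r_i\equiv\pm 1\pmod{s_i}$, contradicting the assumed non-admissibility of $\mu_i$ (for which you have already reduced to $s_i\ge 3$). With this inserted, your proof closes.
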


\begin{proof} We will rely on the properties of $\alpha,\mu_i,\beta$ seen in the course of the proof of \cref{le:localad}. By assumption, $ \alpha,\beta$ satisfies one of the conditions (G1), (G2) or (G3).
\begin{itemize}
\item If $\alpha,\beta$ satisfies (G1), then there exists $m \in (r_{\alpha \beta} - 1]$ such that $\frac{1}{\beta} = \frac{r_\beta - \bar s_\beta}{r_\beta} \leq \frac{m-1}{m} \leq \frac{1}{\alpha} = \frac{r_\alpha - \bar s_\alpha}{r_\alpha}$. As $r_{\alpha \beta} < r_i$, all the elements in the inequality are in the Farey sequence $\mathcal{F}_{r_i}$. Since $\frac{1}{\beta},\frac{1}{\mu_i},\frac{1}{\alpha}$ are Farey neighbours in $\mathcal{F}_{r_i}$, we must have $\frac{r_i - \bar s_i}{r_i} = \frac{1}{\mu_i} = \frac{m - 1}{m}$. As $r_i,\bar s_i$ are coprime, we deduce $r_i = m$ and $\bar s_i = 1$. Thus $\mu_i$ is locally admissible.
\item If $\alpha,\beta$ satisfies (G2), we have $1 > \frac{1}{\alpha} = \frac{r_{\alpha} - \bar s_{\alpha}}{r_{\alpha}} \geq \frac{r_{\alpha \beta} - 1}{r_{\alpha \beta}}$. From this we infer $\bar s_{\alpha} = 1$, hence $\bar s_{\beta} = \bar s_i - 1$. As $\mu_i > \alpha > 1$, we have $r_i \geq 2$ and $\bar s_i = s_i$. The second equation of \eqref{rikimod} then yields $r_i = -1\,\,{\rm mod}\,\,s_i$, and we deduce that $\mu_i$ is locally admissible.
\item If $\alpha,\beta$ satisfies (G3), we have $\alpha \leq 1$. Let us first consider the case $r_i \geq 2$. Then $\frac{1}{\beta},\frac{1}{\mu_i},\frac{1}{\alpha}$ are Farey neighbours in $\mathcal{F}_{r_i}$. When $\alpha < 1$, this implies $\mu_i \leq 1$, hence $\mu_i$ is locally admissible. When $\alpha = 1$, we have $(r_{\alpha},r_j)  = (1,r_i - 1)$ and $\frac{1}{\mu_i} = \frac{r_i - 1}{r_i}$, leading to $\bar s_i = s_i = 1$, so $\mu_i$ is locally admissible. In the case $r_i = 1$, we have a triple $\frac{1}{\beta},\frac{1}{\mu_i},\frac{1}{\alpha}$ of Farey neighbours in $\mathcal{F}_1$, implying that  $\frac{1}{\mu_i} \in \mathbb{Z}_{> 0}$, hence $\mu_i \leq 1$ is again locally admissible.
\end{itemize}
\end{proof}
  
\begin{lemma}\label{le:muiglob}
	Assume that the sequence $ \mu_1,\ldots,\mu_{i-1}, \alpha,\beta, \mu_{i+1},\ldots, \mu_\ell $ is globally admissible. Then  the sequence $\mu_1,\ldots,\mu_i,\ldots,\mu_\ell$ is also globally admissible.
\end{lemma}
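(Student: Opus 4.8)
The statement is the converse direction needed to close \cref{th:gapairs}: assuming global admissibility of the \emph{finer} sequence $\mu_1,\ldots,\mu_{i-1},\alpha,\beta,\mu_{i+1},\ldots,\mu_\ell$ (obtained by replacing the edge $\mu_i$ by the two edges $\alpha<\mu_i<\beta$ of an added elementary triangle), we must recover global admissibility of the original sequence $\mu_1,\ldots,\mu_i,\ldots,\mu_\ell$. By \cref{de:edgeadm}, this means: (a) each $\mu_j$ ($j\neq i$) is locally admissible --- this is untouched by the hypothesis and given; (b) $\mu_i$ itself is locally admissible --- this is exactly \cref{le:muiloc}, which I would invoke directly; and (c) every pair of edges in the new sequence is globalisable, the only new pairs being $\{\mu_i,\mu_j\}$ for $j\neq i$ (the pairs $\{\mu_j,\mu_k\}$ with $j,k\neq i$ are unchanged). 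So the heart of the matter is to show that $\{\mu_i,\mu_j\}$ is globalisable for every $j\neq i$, knowing that $\{\alpha,\mu_j\}$ and $\{\beta,\mu_j\}$ are globalisable and that $\alpha,\mu_i,\beta$ are Farey neighbours in $\mathcal{F}_{r_i}$.

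\textbf{Key steps.} First I would dispose of the case $\mu_i\leq 1$: then by \cref{le:muiloc} $\mu_i$ is locally admissible, and \cref{de:edgeadm} condition (G3) makes any pair containing $\mu_i$ (as the smaller slope, since we may assume $\mu_i\leq\mu_j$ when $\mu_i\leq 1$... careful: $\mu_j$ could be $<\mu_i$) globalisable --- more precisely if $\mu_i\leq 1$ then for any $j$, ordering the pair so the smaller slope is $\leq 1$ gives (G3), \emph{unless} $\mu_j<\mu_i\leq 1$ in which case I use (G3) with the smaller being $\mu_j\leq 1$. Either way (G3) applies, so this case is immediate. Now assume $\mu_i>1$, so $r_i\geq 2$, $\bar s_i=s_i\geq 1$, and by \cref{le:localad} (whose hypotheses hold since $\mu_i$ is locally admissible by \cref{le:muiloc}) we are in exactly one of the two alternatives: either $r_i=+1\bmod s_i$ with $\bar s_\beta=1$, $\bar s_\alpha=s_i-1$; or $r_i=-1\bmod s_i$, $s_i\geq 2$, with $\bar s_\alpha=1$, $\bar s_\beta=s_i-1$. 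I would treat, say, the first alternative in detail (the second being symmetric, as in \cref{le:globad}). Fix $j\neq i$. Since $\alpha<\mu_i<\beta$ and $\frac1\beta,\frac1{\mu_i},\frac1\alpha$ are Farey neighbours in $\mathcal{F}_{r_i}$ we have $r_\alpha,r_\beta<r_i$ and $r_\alpha+r_\beta=r_i$. Split into $j>i$ (so $\mu_j>\mu_i$, hence $\mu_j>\beta>\mu_i>\alpha$, well $\mu_j\geq\beta$) and $j<i$ (so $\mu_j<\mu_i$, hence $\mu_j<\alpha$, well $\mu_j\leq\alpha$). In each case I use the globalisability of $\{\beta,\mu_j\}$ (resp.\ $\{\alpha,\mu_j\}$) --- whichever of $\alpha,\beta$ lies between $\mu_i$ and $\mu_j$ --- to extract an integer $m$ witnessing condition (G1) or (G2) for that pair, and then show the \emph{same} $m$ (or a nearby one) witnesses (G1)/(G2) for $\{\mu_i,\mu_j\}$, using that $\frac{1}{\mu_i}$ lies strictly between $\frac1\beta$ and $\frac1\alpha$ and that $r_{ij}=\max(r_i,r_j)\geq\max(r_\beta,r_j)=r_{\beta j}$, so the range $(r_{ij}-1]$ for $m$ is no smaller. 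The inequalities $\frac1\beta\leq\frac{m-1}{m}\leq\frac1{\mu_i}$ or $\frac1{\mu_i}\leq\frac{m-1}{m}\leq\frac1\alpha$ will follow by interleaving; the delicate point is handling the boundary subcase where $\frac{m-1}{m}$ equals $\frac1\beta$ or $\frac1\alpha$ exactly, where one must fall back on (G2) or on \cref{le:coincm} (if $\mu_i=\mu_j$, impossible here since $\mu_i\neq\mu_j$, but $\mu_j$ could equal $\beta$).

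\textbf{Main obstacle.} The bookkeeping in the $\mu_i>1$ case is where the real work lies: unlike \cref{le:globad}, here we are \emph{coarsening} rather than refining, so we cannot simply pick $m=r_\alpha$ or $m=r_\beta$ as a witness --- we must argue that a witness for the pair involving $\alpha$ or $\beta$ still works for the pair involving the larger-denominator slope $\mu_i$, and the worry is that the ``slack'' in (G1)/(G2) for $\{\beta,\mu_j\}$ might be consumed exactly at the endpoint, forcing a case analysis on whether $\frac{m-1}{m}$ hits $\frac1\beta$ or $\frac1\alpha$. In those endpoint situations one knows the precise value $\bar s_\beta=1$ (or $\bar s_\alpha=1$) from \cref{le:localad}, which pins down $\frac1\beta=\frac{r_\beta-1}{r_\beta}$, and then a direct Farey/B\'ezout computation (as in \eqref{riki}--\eqref{rikimod}) shows $\frac{1}{\mu_i}$ is close enough to reuse the same $m$ or shift it by one within the allowed range. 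I expect this to be a finite, if slightly tedious, enumeration of roughly four sub-cases per alternative per sign of $j-i$, all resolved by the two B\'ezout identities $r_i s_\alpha - r_\alpha s_i=1$ and $r_\beta s_i - r_i s_\beta = 1$ together with the elementary estimate that $\frac1{\mu_i}$ is a Farey mediant of $\frac1\beta$ and $\frac1\alpha$.
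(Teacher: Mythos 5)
Your plan matches the paper's proof essentially step for step: invoke \cref{le:muiloc} for local admissibility of $\mu_i$, dispose of $\mu_i\leq 1$ via (G3), and for $\mu_i>1$ transfer the (G1)/(G2) witness $m$ from the pair $\{\mu_j,\alpha\}$ or $\{\mu_j,\beta\}$ to $\{\mu_j,\mu_i\}$ using $r_\alpha,r_\beta<r_i$ and the Farey-neighbour position of $\tfrac1{\mu_i}$ between $\tfrac1\beta$ and $\tfrac1\alpha$. The boundary case you flag ($\mu_j=\alpha$, where the witness fraction can land exactly on $\tfrac1\beta$) is indeed where the paper spends its effort, and it resolves it exactly as you anticipate — via \cref{le:coincm}, the B\'ezout identities, and shifting the witness to $m+1$ — so the proposal is correct.
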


	\begin{proof}
			The assumption implies that the pair $\alpha,\beta$ is globalisable, so \cref{le:muiloc} already says that $\mu_i$ is locally admissible. It remains to show that for any $j \neq i$, the pair $\mu_j,\mu_i$ is globalisable.
			
\medskip
			
\noindent \underline{\textit{Case $\mu_j < \alpha < \mu_i < \beta$}.} By assumption $\mu_j,\alpha$ is globalisable.
\begin{itemize}
\item If $\mu_j,\alpha$ satisfies (G1), we have $1 < \mu_j < \alpha$ and there exists $m \in (r_{j \alpha} - 1]$ such that $\frac{1}{\alpha} \leq \frac{m - 1}{m} \leq \frac{1}{\mu_j}$. Taking into account $\alpha < \mu_i$ and $r_{\alpha} < r_i$ we deduce $\frac{1}{\mu_i} \leq \frac{m - 1}{m} \leq \frac{1}{\mu_j}$ with $m \in (r_{ij} - 1]$. Since $1 < \mu_j < \mu_i$, we conclude that $\mu_i,\mu_j$ is globalisable by (G1).
\item If $\mu_j,\alpha$ satisfies (G2), we have $1 < \mu_j < \alpha$ and $\frac{1}{\mu_j} \geq \frac{r_{j \alpha} - 1}{r_{j \alpha}}$. When $r_j \geq r_i$, since $r_i > r_{\alpha}$ we have $\frac{1}{\mu_i} \geq \frac{r_{ji} - 1}{r_{ij}}$ so $\mu_j,\mu_i$ is globalisable by (G2). When $r_j < r_i$, recalling that $\alpha > 1$ guarantees $\bar s_{\alpha} \geq 1$, we write $\frac{1}{\mu_i} < \frac{1}{\alpha} = \frac{r_{\alpha} - \bar s_{\alpha}}{r_{\alpha}} \leq \frac{r_{\alpha} - 1}{r_{\alpha}} \leq \frac{r_{j\alpha} - 1}{r_{j \alpha}} \leq \frac{1}{\mu_j}$. Taking $m = r_{\alpha} \leq r_j - 1$ we see that the pair $\mu_j,\mu_i$ is globalisable by (G2).
\item If $\mu_j,\alpha$ satisfies (G3), then $\mu_j \leq 1$ and thus $\mu_j,\mu_i$ is globalisable by (G3).
\end{itemize} 

\noindent \underline{\textit{Limit case $\mu_j = \alpha < \mu_i < \beta$}.} By assumption $\mu_j,\alpha$ is globalisable, and \cref{le:coincm} imposes $\mu_j \leq 1$ (only relevant for (G3)) or $s_{j} = 1$. 
\begin{itemize}
\item If $\mu_j,\beta$ satisfies (G1), we have $1 < \mu_j$ and there exists $m \in (r_{j \beta} - 1]$ such that $\frac{1}{\beta} \leq \frac{m - 1}{m} \leq \frac{1}{\mu_j}$. As $r_j = r_{\alpha} < r_i$ and $r_{\beta} < r_i$, the fraction $\frac{m - 1}{m}$ is in the Farey sequence $\mathcal{F}_{r_i}$. We recall that $\frac{1}{\beta},\frac{1}{\mu_i},\frac{1}{\alpha} = \frac{1}{\mu_j}$ are Farey neighbours of $\mathcal{F}_{r_i}$. If $\frac{1}{\mu_i} \leq \frac{m - 1}{m} \leq \frac{1}{\mu_j}$ with $m \in (r_{ij} - 1]$, then $\mu_j,\mu_i$ is globalisable by (G1). If $\frac{1}{\mu_i} > \frac{m - 1}{m}$, we must have $\frac{1}{\beta} = \frac{m - 1}{m}$ and then $r_{\beta} = m$ and $\bar s_{\beta} = 1$. Recalling that $s_j = 1$, we obtain $\bar s_i = 2$. B\'ezout's identities for the triple of Farey neighbours $\frac{m - 1}{m},\frac{r_i - 2}{r_i},\frac{r_j - 1}{r_j}$ then yield $r_i = 2m + 1$ and $r_j = m + 1 < r_{i} = r_{ij}$, that is $(\frac{1}{\beta},\frac{1}{\mu_i},\frac{1}{\mu_j}) = (\frac{m - 1}{m},\frac{2m - 1}{2m + 1},\frac{m}{m + 1})$, and we conclude that $\mu_j,\mu_i$ still satisfies (G1) using $m + 1$.
\item If $\mu_j,\beta$ satisfies (G2), we have $1 < \mu_j$ and $\frac{1}{\mu_j} \geq \frac{r_{j \beta} - 1}{r_{j \beta}}$, meaning in fact $\frac{1}{\mu_j} = \frac{r_j - 1}{r_j}$ and in particular $\bar s_j = 1$. When $r_i \leq r_j$, the latter is also $\frac{r_{ij} - 1}{r_{ij}}$, hence the pair $\mu_j,\mu_i$ is globalisable by (G2). When $r_i > r_j$, we have clearly $\frac{1}{\mu_i} \leq \frac{r_j - 1}{r_j} \leq \frac{1}{\mu_j}$ hence the pair $\mu_j,\mu_i$ is globalisable by (G1) with $m = r_j \in (r_i - 1]$.
 \item If $\mu_j,\beta$ satisfies (G3), we have $\alpha \leq 1$. So the pair consisting of $\alpha = \mu_j$ and $\mu_i$ also satisfies (G3).
\end{itemize}

\medskip

\noindent \underline{\textit{Case $\alpha < \mu_i < \beta \leq \mu_j$}.} The analysis is similar to the two previous cases and is omitted.
\end{proof}

\section{Analyticity of topological recursion in families}
\label{S5}

In this section, we investigate how topological recursion behaves in families. We will consider families of spectral curves over a base $T$, and analyse the behavior of the topological recursion correlators as we move in the family. In particular, we are interested in knowing when we can take limits, i.e., when the correlators $ \omega_{g,n}$ at a certain point in the family can be recovered by taking an appropriate limit of the correlators at nearby points.\par

In the local topological recursion, the integrand in \cref{de:TR} of the recursion is only locally defined on the curve: it is different for each ramification point $ p  \in \Sigma $ that we take the residue at, and the structure of the neighbourhoods where they are locally defined as well as the behavior of the branched covering $x$ there could change when we deform the spectral curve. For instance, if we deform a locally admissible spectral curve we could end up with a spectral curve that is not locally admissible or vice versa, and in such cases the local topological recursion typically cannot hold or does not make sense for all deformation parameters.

The global topological recursion is a tool to bypass these difficulties and it enables studying the behaviour of topological recursion under deformations and/or limits of spectral curves. More precisely, we take the following approach. Given a spectral curve, we first use the results of  \cref{S3,S4} to globalise the topological recursion as defined in \cref{de:globalTR}. Recall that horizontal globalisation is a rewriting of the recursion formula as a sum of contour integrals containing clusters of ramification points. Now, we can deform the spectral curve while keeping the $x$-projection of the contours fixed so that the ramification points remain inside the contour (and perhaps collide there), and finally transform back from the global to the local topological recursion on the deformed spectral curve of interest. In summary, the strategy is
$$
\text{local to global} \quad \longrightarrow \quad \text{take limit or deform} \quad \longrightarrow \qquad \text{global to local}\,.
$$

This procedure can be made to work under various conditions: the local admissibility conditions, the sufficient conditions we found for globalisation, and conditions on the type of deformations of spectral curves that are allowed. In this section, we will formulate conditions on families of spectral curves guaranteeing that the correlators behave nicely in families. One important request is that one can find adapted disc collections so that the branch points do not escape the discs as we vary the spectral curve. Our main result is then \cref{th:TRLimits} which says that the topological recursion correlators are analytic over the base of the family.

This nevertheless raises an interesting question: why not start directly with global topological recursion and avoid the discussion of whether local topological recursion is equivalent to global topological recursion? That is, why not take any family of spectral curves and simply study global topological recursion directly for this family? The  difficulty with this approach is that one would need to show that global topological recursion is well-defined in the first place, i.e. that it produces symmetric differentials, and this is far from obvious a priori (see the discussion in \cref{Sec:intermz}). By starting with local topological recursion, we avoid this issue, as we know that the correlators produced by local topological recursion are well-defined symmetric differentials, assuming that the spectral curve is locally admissible (\cref{th:ale}). The price to pay is that, to study limits and deformations, we need to assume that local topological recursion is equivalent to global topological recursion, for instance by requiring that the sufficient conditions of \cref{th:rewriting} are satisfied.

\subsection{Admissible families of spectral curves}

\label{Sec51}

\begin{definition}[Family of spectral curves]\label{de:FamSC}
A \emph{family of spectral curves} over a connected complex manifold $ T$ is a quadruple \hbox{ $ \mc{S}_T = (f \colon \Sigma_T \rightarrow T, x_T, (\omega_{0,1})_T,(\omega_{0,2})_T)$ } such that
\begin{enumerate}
	\item $ f$ is a surjective submersion of complex manifolds with $1$-dimensional fibres $\Sigma_t = f^{-1}(\{t\})$ having finitely many connected components. We denote by $\iota_t \colon \Sigma_t \hookrightarrow \Sigma_T$ the natural inclusion.
	\item $x_T \colon \Sigma_T \rightarrow \P^1 $ is a holomorphic map whose restriction to each connected component of the fibres of $f$ is not constant:
		\item $(\omega_{0,1})_T $ is a meromorphic section of the relative cotangent bundle $ \Omega_f$, whose divisor of poles and zeros intersects each fibre on a discrete set;
	\item $(\omega_{0,2})_T \in H^0 ( \Sigma_T\times_T\Sigma_T ; \Omega_f^{\boxtimes 2} ( 2 \Delta ))$ with biresidue $1$ along the diagonal $ \Delta_T \subseteq \Sigma_T \times_T \Sigma_T $.
\end{enumerate}
We say that the family of spectral curves is \textit{proper} if the map $ f : \Sigma_T \rightarrow T $ is proper. We call \emph{partial spectral curve} the data of $(f : \Sigma_T \rightarrow T,x_T)$ satisfying in (1) and (2).
\end{definition} 
 
 In practice, the base  $ T $ of the family can be taken to be a open subset of $\mathbb{C}^D$. We do not necessarily restrict to proper families, as there are various situations in topological recursion (most notably, in applications to Hurwitz theory) where the family happens not to be proper. In lack of properness there will be additional subtleties that we should treat carefully. For instance, we could have situations where ramification points move off the curve, and we will want to exclude such pathologies. Nevertheless, proper families form an important class of families of spectral curves that we treat in detail based on \cref{s:algcurves}. In this case, $T$ is the parameter space for the coefficients of the bihomogeneous polynomial equation cutting out the spectral curve.

Given a family of spectral curves as above, the \textit{fibre} over $ t \in T $ is the spectral curve
\[
	\mc{S}_t \coloneqq \big( \Sigma_t, x_t \coloneqq x_T \circ \iota_t, (\omega_{0,1})_t \coloneqq \iota_t^*(\omega_{0,1})_T, (\omega_{0,2})_t \coloneqq (\iota_t, \iota_t)^* (\omega_{0,2})_T \big)\,.
\]
Note that the fibre of a family of proper spectral curves is a compact spectral curve as defined in \cref{de:sc}. We define the map 
\begin{equation}\label{eq:Xdef}
	X \coloneqq (x_T,f) : \Sigma_T \rightarrow \mathbb P^1 \times T\,,
\end{equation}
and denote $\mathsf{Ram}_T$ the zero locus of $\dd X$, which is a closed complex submanifold of $ \Sigma_T $ intersecting each fibre in a discrete set, and $\mathsf{Br} = x_T(\mathsf{Ram}_T) \subset \mathbb{P}^1$. For later use, we introduce $f_n \colon \Sigma^n_T \rightarrow T$, defined as the cartesian product over $T$ of $n$ copies of $f: \Sigma_T \rightarrow T$. We set $\mathsf{Ram}^n_T = \bigsqcup_{i = 1}^n {\rm pr}_i^*(\mathsf{Ram}_T)$, where ${\rm pr}_i \colon \Sigma_T^n \rightarrow \Sigma_T$ is the projection onto the $i$-th factor.

We shall add conditions on our families of spectral curves in order to establish an analytic dependence of the correlators on points $ t \in T $. The most basic ones are finiteness conditions on $x_t$ (useful for globalisation) and conditions preventing ramification points from ``escaping the curve'' as we vary in family.  If a ramification point were escaping the curve, we would lose its contribution to the correlators, and unless this contribution is identically zero for obvious reasons (i.e. $s_p \leq -1$)  an analytic dependence of the correlators on $ t $ cannot in general be expected (however, in \cref{sec:flyoff} we give an example with this pathology where compatibility with limits is independently known to hold). Then, we also want to impose the sufficient conditions for globalisation found in \cref{S3}. In order to take advantage of horizontal globalisation \cref{pr:globalTRdomain}, we need a notion of disc collections upgrading \cref{de:adapcon} to work in families.

\begin{definition}[Disc collection in families]\label{de:dc}
Let $\mathcal{S}_T$ be a family of spectral curves and $\mathcal{D} = x_t(\Sigma_t)$ be independent of $t \in T$. Call $(\Pi_{0,2})_t$ the period map \eqref{02periodes} corresponding to $(\omega_{0,2})_t$ on $\mathcal{S}_t$. A \emph{disc collection adapted} to $\mathcal{S}_T$ is a (independent of $t \in T$) finite sequence of open subsets $(\mathsf{D}_i)_{i = 1}^{\mathsf{k}}$ of $\mathbb{P}^1$ such that $\overline{\mathsf{Br}} \subseteq \bigcup_{i = 1}^{\mathsf{k}} \mathsf{D}_i$ and
\begin{itemize}
\item[(DC1)] the $\overline{\mathsf{D}_i}$ are pairwise disjoint, properly embedded discs in $\mathcal{D}$;
\item[(DC2)] we have for any $t \in T$, each $\mathsf{D}_i$ contains at least one branch point of $x_t$, and each branch point of $x_t$ belongs to some $\mathsf{D}_i$;
\item[(DC3)] for any $t \in T$ the restriction of $x_t$ to each connected component of $x_t^{-1}(\mathsf{D}_i)$ is a finite-degree branched covering onto $\mathsf{D}_i$, the list of their degree (taken in weakly decreasing order) is independent of $t$ and has only finitely many entries different from $1$;
\item[(DC4)] for any $t \in T$ and $i \in [\mathsf{k}]$, we have $H_1(x_t^{-1}(\mathsf{D}_i),\mathbb{Z}) \subseteq {\rm Ker}\,(\Pi_{0,2})_t$.
\end{itemize}
\end{definition}
\begin{remark}
The requirement that $x_t(\Sigma_t)$ is independent of $t$ is not a big restriction: one can restrict to the region of the $x$-plane one is interested in and simply take $\Sigma_t = x_t^{-1}(\mathcal{D})$. What is important is the existence of $t$-independent region of the $x$-plane delimiting where the contours can move in the step of horizontal globalisation.
\end{remark}

Given such a disc collection and choosing $t_0 \in T$ and an unramified point $p_{i,j}$ in each connected component of $x_{t_0}^{-1}(\mathsf{D}_i)$, and assuming that $T$ is simply-connected, we get unique analytic sections $\mathsf{p}_{i,j} : T \rightarrow \Sigma_T$ such that $\mathsf{p}_{i,j}(t_0) = p_{i,j}$. By following to which component $\mathsf{p}_{i,j}(t)$ belongs for $t \in T$, this allows to label the connected components of $x_t^{-1}(\mathsf{D}_i)$ as $\tilde{\mathsf{D}}_{i,j,t}$ by an index $j$ in the $t$-independent set $\mathsf{c}_i = x_{t_0}^{-1}(\mathsf{D}_i)$.  Due to (DC3), the degree $\mathsf{d}_{i,j}$ of the restriction of $x_t$ to $\tilde{\mathsf{D}}_{i,j,t}$ is independent of $t$. We denote again
$$
\mathsf{c}_i^+ = \{j \in \mathsf{c}_i\,\,|\,\,\mathsf{d}_{i,j} \geq 2\}
$$
labelling those connected components containing at least one ramification point (this property is independent of $t$), and
$$
\tilde{\mathsf{D}}^+ = \bigcup_{i = 1}^{\mathsf{k}} \bigcup_{j \in \mathsf{c}_i^+} \tilde{\mathsf{D}}_{i,j}\,. 
$$
  
\begin{definition}[Globally admissible families]\label{de:admfamily}
A family of spectral curves $ \mc{S}_T$  is \textit{globally admissible} if the ramification locus $\mathsf{Ram}_T $ is proper over $ T $ (i.e. the composition $\mathsf{Ram}_T \hookrightarrow \Sigma_T \twoheadrightarrow T $ is proper) and if for every $t \in T$:
\begin{itemize}
\item the fibre $ \mc{S}_t $ is a locally admissible spectral curve in the sense of \cref{de:localadm};
\item $(\omega_{0,1})_t $  separates fibres at every point $ q  \in  \mathcal{D}$ which is not a branch point of $x_t$;
\item the image $\mathcal{D} = x_t(\Sigma_t)$ is connected and independent of $t$;
\item there exists a simply-connected open neighborhood $T' \subseteq T$ of $t$ and a disc collection adapted to $\mathcal{S}_{T'}$ such that for any $t' \in T$, topological recursion is globalisable (for instance, through the sufficient conditions of \cref{th:rewriting}) over $x_{t'}^{-1}(q) \cap \tilde{\mathsf{D}}_{i,j,t'}$ for each branch point $q \in \mathsf{D}_i$ of $x_{t'}$ and each $j \in \mathsf{c}_i^+$.
\end{itemize}
\end{definition}

If $x_t$ is finite, we can also work with stronger but simpler assumptions.

\begin{definition}[Globally admissible families in finite-degree setting]\label{de:admfamilyfinite} A family of spectral curves $\mathcal{S}_T$ is \emph{globally admissible in the finite-degree setting} if the ramification locus $\mathsf{Ram}_{T}$ is proper over $T$ and if for every $t \in T$
\begin{itemize}
\item $x_t$ is a finite-degree branched covering onto a $t$-independent image $\mathcal{D} = x_t(\Sigma_t)$ whose degree is independent of $t$;
\item the fibre $ \mc{S}_t $ is a locally admissible spectral curve in the sense of \cref{de:localadm};
\item  $(\omega_{0,1})_t $  separates fibres at every point $ q  \in  \mathcal{D}$ which is not a branch point of $x_t$;
\item topological recursion is globalisable over $x^{-1}_t(q)$ (for instance, through the sufficient conditions of \cref{th:rewriting}) for each branch point $q$ of $x_t$.
\end{itemize}
\end{definition}
\begin{lemma}\label{le:finset}
If a family of spectral curves $\mathcal{S}_T$ is globally admissible in the finite-degree setting (in the sense of \cref{de:admfamilyfinite}), then it is globally admissible (in the sense of \cref{de:admfamily}).
\end{lemma}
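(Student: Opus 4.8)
The plan is to verify the four bullet points of \cref{de:admfamily} one at a time, using the hypotheses of \cref{de:admfamilyfinite} together with the elementary fact that in the finite-degree setting, a canonical disc collection can be manufactured around the (locally finite, since there are finitely many branch points in each fibre and they depend continuously on $t$) branch locus. Three of the four conditions in \cref{de:admfamily} are literally among the hypotheses of \cref{de:admfamilyfinite}: the local admissibility of each fibre $\mathcal{S}_t$, the separation of fibres of $(\omega_{0,1})_t$ at non-branch points of $x_t$, and the properness of $\mathsf{Ram}_T$ over $T$. The connectedness of $\mathcal{D} = x_t(\Sigma_t)$ holds because, for a finite-degree branched covering of a connected base, the image is all of $\mathbb{P}^1$ (or the relevant connected target), which is certainly connected and, by hypothesis, independent of $t$. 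So the only real content is producing, around any given $t_0 \in T$, a simply-connected neighbourhood $T'$ and a disc collection adapted to $\mathcal{S}_{T'}$ in the sense of \cref{de:dc} over which the globalisation condition (the fourth bullet of \cref{de:admfamily}) holds.

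For this, first I would shrink to a simply-connected neighbourhood $T'$ of $t_0$ small enough that the branch points of $x_t$, for $t \in T'$, can be enclosed in a fixed finite collection of pairwise disjoint open discs $(\mathsf{D}_i)_{i=1}^{\mathsf{k}}$ in $\mathbb{P}^1$ whose closures are pairwise disjoint; this is possible since $\mathsf{Br}$ depends continuously on $t$ (it is the image under $x_T$ of the proper-over-$T$ locus $\mathsf{Ram}_T$, which in the finite-degree setting is a branched cover of $T$) and $x_{t_0}$ has only finitely many branch points. One can arrange, again by continuity and by shrinking $T'$ if necessary, that each $\mathsf{D}_i$ contains at least one branch point of every $x_t$, $t \in T'$, and that no branch point leaves $\bigcup_i \mathsf{D}_i$; this gives (DC1) and (DC2). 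Condition (DC3) is immediate: since $x_t$ is a finite-degree branched covering onto $\mathbb{P}^1$ with $t$-independent degree, its restriction to each connected component of $x_t^{-1}(\mathsf{D}_i)$ is automatically a finite-degree branched covering onto $\mathsf{D}_i$, and the local-constancy of the list of degrees follows from the fact that these are locally constant functions on $T'$ (sum to the total degree, which is constant) — after shrinking $T'$ to make them constant. Condition (DC4) is the subtle one topologically, but here each $\tilde{\mathsf{D}}_{i,j,t}$ is a branched cover of a disc; if we choose the $\mathsf{D}_i$ small enough that each component $\tilde{\mathsf{D}}_{i,j,t}$ is in fact a disc (which can be arranged by taking the $\mathsf{D}_i$ to be contained in small balls around each branch point), then $H_1(\tilde{\mathsf{D}}_{i,j,t}, \mathbb{Z}) = 0$ and (DC4) holds trivially, independently of $(\omega_{0,2})_t$.

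With such a disc collection in hand, the globalisation requirement over $x_t^{-1}(q) \cap \tilde{\mathsf{D}}_{i,j,t}$ for each branch point $q \in \mathsf{D}_i$ and $j \in \mathsf{c}_i^+$ reduces to the globalisation over the full fibre $x_t^{-1}(q)$, since by construction the $\mathsf{D}_i$ are small enough that each of them contains exactly one branch point of $x_t$ (after shrinking) lying in a single component $\tilde{\mathsf{D}}_{i,j,t}$, so that $x_t^{-1}(q) \cap \tilde{\mathsf{D}}_{i,j,t} = x_t^{-1}(q)$; alternatively, if one does not shrink that far, one uses that globalisability over the whole fibre $x_t^{-1}(q)$ (which is a hypothesis of \cref{de:admfamilyfinite}) implies globalisability over each of its subsets of the form $x_t^{-1}(q) \cap \tilde{\mathsf{D}}_{i,j,t}$, which is part of \cref{pr:sufficient2}/\cref{le:pairs}: the pairwise conditions on $x_t^{-1}(q)$ restrict to pairwise conditions on any subset. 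This closes all four bullets of \cref{de:admfamily}, and the lemma follows.

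\textbf{Main obstacle.} The only genuinely delicate point is arranging the disc collection so that (DC3) holds with a $t$-independent list of degrees and (DC4) holds; both are handled by a standard continuity/shrinking argument exploiting that $\mathsf{Ram}_T \to T$ is proper (hence, in the finite-degree setting, a finite branched cover), so that the combinatorial type of the branching — which branch points have which ramification profiles — is locally constant on $T$ after passing to a small enough $T'$. One should be slightly careful that ``shrinking $T'$'' is legitimate because \cref{de:admfamily} only asks for the \emph{existence} of such a $T'$ around each $t$, not for a global statement, so there is no incompatibility in choosing $T'$ as small as needed.
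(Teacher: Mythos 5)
Your proposal is correct and follows essentially the same route as the paper: shrink to a simply-connected neighbourhood $T'$ of $t_0$, surround the branch points of $x_{t_0}$ by small pairwise disjoint discs whose preimage components are (contractible) discs, and verify (DC1)--(DC4) by continuity of the family of branched coverings, the remaining bullets of \cref{de:admfamily} being hypotheses of \cref{de:admfamilyfinite}.

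Two small slips are worth fixing. First, the image of a finite-degree branched covering need not be all of $\mathbb{P}^1$ (e.g. $\Sigma_t = \mathbb{C}$, $x_t(z) = z^2$ has image $\mathbb{C}$), so the connectedness of $\mathcal{D}$ should be taken as implicit in the phrase ``branched covering onto a $t$-independent image of constant degree'' in \cref{de:admfamilyfinite} rather than deduced the way you do. Second, the first alternative you offer for the last bullet is false: even if $\mathsf{D}_i$ contains a single branch point $q$ of $x_t$, the fibre $x_t^{-1}(q)$ is spread over \emph{all} components $\tilde{\mathsf{D}}_{i,j,t}$ with $j \in \mathsf{c}_i$ (the unramified preimages live in the other components), so $x_t^{-1}(q) \cap \tilde{\mathsf{D}}_{i,j,t} \neq x_t^{-1}(q)$ whenever $|\mathsf{c}_i| > 1$. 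Your second alternative --- that the sufficient conditions of \cref{th:rewriting} are pairwise conditions on points of the fibre and therefore restrict to any subset such as $x_t^{-1}(q) \cap \tilde{\mathsf{D}}_{i,j,t}$ --- is the correct argument and is what actually carries the last bullet of \cref{de:admfamilyfinite} over to the last bullet of \cref{de:admfamily}; keep that one and drop the first.
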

\begin{proof} 
This is all about checking the existence of disc collection locally adapted to the family. 
Note that in the setup of \cref{de:admfamilyfinite} the associated partial spectral curve forms a family of branched coverings over $\mathcal{D}$ which is automatically continuous in the standard topology of the space of smooth branched coverings. 

Let $t \in T$, and denote $d$ the degree of $x_t$, which is by assumption finite and $t$-independent. We can take small enough properly embedded smooth discs $\mathsf{D}_i \subseteq \mathcal{D}$, each of them containing a single branch point of $x_t$, and such that $x_t^{-1}(\mathsf{D}_i)$ is a union of less than $d$ properly embedded smooth discs. Then, for $t'$ in a small enough neighborhood $T' \subseteq T$ of $t$, all branch points of $x_{t'}$ for $t' \in T$ remain in $\mathsf{D}_i$ (for $t' \neq t$ there can be more than one in a given $\mathsf{D}_i$) and the connected components $x_{t'}^{-1}(\mathsf{D}_i)$ remain contractible (this latter property refers to the standard description of the topology of the space of smooth branched coverings). Then the four conditions of \cref{de:admfamily} are matched.
\end{proof}

In the compact setting, the conditions can be simplified further using our analysis in \cref{s:algcurves}.

\begin{lemma}\label{le:admproper}
	Assume that  $ \mc{S}_T$ is a proper family of spectral curves such that for every $t \in T$, $\mathcal{S}_t$ is a compact spectral curve obtained from a reduced affine curve as in \cref{de:irredpc} and it is a globally admissible spectral curve in the sense of  \cref{de:globaladm}.  Assume that the genus of $\mathcal{S}_t$ is independent of $t$. Then, the family $ \mc{S}_T $ is globally admissible in the finite-degree setting.
	\end{lemma}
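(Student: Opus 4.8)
The strategy is to verify the four bullet points of \cref{de:admfamilyfinite} one at a time, using the structural results of \cref{s:algcurves} to convert hypotheses about the fibres $\mathcal{S}_t$ into statements that are uniform in $t$. The overall difficulty is that \cref{de:irredpc} constructs each fibre $\mathcal{S}_t$ by an \emph{a priori} $t$-dependent normalisation procedure, so we must argue that the numerics (degree of $x_t$, branch loci, Newton-polygon/singularity data) do not jump as $t$ varies; the constant-genus hypothesis is exactly what rules out such jumps.

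\textbf{Step 1: finite-degree branched covering onto a fixed image.} Since $\mathcal{S}_t$ is compact, $x_t \colon \tilde{C}_t \to \mathbb{P}^1$ is a finite-degree branched covering for each $t$; its degree equals $d_y^{(t)}$, the $y$-degree of the defining bihomogeneous polynomial $F_t$. As $\mathcal{S}_T$ is a proper family of spectral curves over the connected base $T$, the map $f$ is proper and the $\Sigma_t$ vary continuously, so the degree $\deg(x_t)$ is locally constant, hence constant on $T$. Next, $\mathcal{D}_t = x_t(\Sigma_t) = \mathbb{P}^1$ for every $t$ because $x_t$ is a finite-degree branched covering of a connected curve onto $\mathbb{P}^1$ (surjective as soon as nonconstant); thus $\mathcal{D} = \mathbb{P}^1$ is trivially $t$-independent and connected. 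This disposes of the first bullet.

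\textbf{Step 2: local admissibility and separation of fibres.} These two bullets hold fibrewise \emph{by hypothesis}: the assumption is precisely that $\mathcal{S}_t$ is globally admissible in the sense of \cref{de:globaladm}, and by \cref{th:ga} a globally admissible spectral curve is locally admissible; moreover $\omega_{0,1}$ separates fibres at every non-branch point of $x_t$, since global admissibility of the compact curve forbids (by \cref{le:singularities}) the coincidences that would violate the hypotheses of \cref{th:unramified} at unramified fibres, and the only way to fail separation of fibres at an unramified point $q$ is to have two points of $x_t^{-1}(q)$ with equal $(\bar s,\tau)$ or two zeros of $\omega_{0,1}$ in the same fibre, which in the affine chart means two branches of $C_t$ through a common point — impossible since global admissibility requires $\mathfrak{C}_t$ smooth and restricts multibranch points to $E'$ away from unramified fibres. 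So the second and third bullets of \cref{de:admfamilyfinite} are immediate from the standing hypothesis.

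\textbf{Step 3: uniform globalisability above branch points, and properness of $\mathsf{Ram}_T$.} Global admissibility of each compact fibre gives, via \cref{th:ga}, that topological recursion can be \emph{fully} globalised on $\mathcal{S}_t$; in particular it is globalisable over $x_t^{-1}(q)$ for every branch point $q$. This is the fourth bullet. Finally, $\mathsf{Ram}_T \hookrightarrow \Sigma_T \twoheadrightarrow T$ is proper: $\mathsf{Ram}_T$ is closed in $\Sigma_T$ (the zero locus of $\dd X$), $f$ is proper by assumption, and a closed subset of the source of a proper map maps properly; here the constant-genus hypothesis guarantees that no ramification point escapes to infinity or disappears, so that the ramification scheme is flat over $T$ — but properness alone follows formally from $\mathsf{Ram}_T$ closed and $f$ proper. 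Assembling Steps 1--3, all four conditions of \cref{de:admfamilyfinite} are verified, so $\mathcal{S}_T$ is globally admissible in the finite-degree setting.

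\textbf{Main obstacle.} The delicate point is Step 2, specifically showing that separation of fibres at \emph{all} non-branch points follows from global admissibility of the individual compact fibres, because global admissibility as phrased in \cref{de:globaladm} constrains only the singular points of $C_t$, not the smooth ramified or unramified points. The resolution is that at a non-branch point $q$, every preimage lies over a smooth point of the affine curve $\mathfrak{C}_t$ (smoothness of $\mathfrak{C}_t$ being forced by global admissibility), two distinct preimages correspond to distinct $y$-values, and a zero of $\omega_{0,1}$ at such a point would be a zero of $y$ with $x$ unramified, which separates fibres automatically; one still has to rule out that $\omega_{0,1}$ vanishes at two \emph{different} points of the same unramified fibre, but distinct preimages have distinct $y$, and at most one such $y$ can be zero. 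This is exactly the content already extracted in the proof of \cref{le:redacla}, which we invoke; the constant-genus hypothesis ensures this picture is stable along the family so that the disc collection built in \cref{le:finset} can be chosen uniformly.
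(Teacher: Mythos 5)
Your proof is correct and follows essentially the same route as the paper: verify the four conditions of \cref{de:admfamilyfinite}, deriving the last three from \cref{th:ga} (global admissibility of each fibre yields local admissibility, separation of fibres at non-branch points, and globalisability over every branch fibre) and the first from constancy of the degree together with properness of $\mathsf{Ram}_T \to T$ as a composition of proper maps. The only real difference is cosmetic: where you appeal informally to continuity for the local constancy of $\deg x_t$, the paper makes this precise by noting that $X=(x_T,f)$ is quasi-finite and proper, hence finite, and a finite flat morphism over a connected base has constant degree.
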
 
	
	It may seem that \cref{le:admproper} does not cover all proper families of spectral curves, but it does cover all  proper families that are potentially globally admissible. Indeed, the requirement in \cref{de:admfamilyfinite} that $ (\omega_{0,1})_t $  separates fibres at every point $ q  \in \mathbb P^1 $ which is not a branch point implies after \cref{pr:everycurve}  that $ \mc{S}_t $ is obtained from a reduced curve in $ \mathbb P^1 \times \mathbb P^1 $ in the sense of \cref{de:irredpc}.

\begin{proof}[Proof of \cref{le:admproper}]

	The composition $\mathsf{Ram}_T \rightarrow \Sigma_T \rightarrow T $ is proper as it is the composition of two proper maps.
		
	The fibre $ x_t : \Sigma_t \rightarrow \mathbb P^1 $ is a non-constant morphism of compact Riemann surfaces, hence a finite-degree branched covering onto its image $\mathcal{D} = \mathbb{P}^1$. Thus, the morphism $ X : \Sigma_T \rightarrow \mathbb P^1 \times T $ (viewed as a morphism of algebraic varieties) is quasi-finite, i.e., has finite fibres. The projection $ \pi_2 \colon \P^1 \times T \rightarrow T $ and the map $ f = \pi_2 \circ X$ are both proper, hence so is $ X$. By \cite[02OG]{stacks-project}, a proper map with finite  fibres is finite. Notice furthermore that as the morphism $ X  : \Sigma_T \rightarrow \mathbb P^1 \times T$ is finite and flat and $T$ is connected,  the degree of the branched covering $ x_t : \Sigma_t \rightarrow \mathbb{P}^1$ is independent of $t \in T$ \cite[Lemma 37.53.8]{stacks-project}. This checks the first condition in \cref{de:admfamilyfinite}.
	
 By \cref{th:ga}, the assumptions imply that $ \mc{S}_t $ is locally admissible, $ (\omega_{0,1})_t $  separates fibres at every point $ q  \in \mathbb P^1 $ which is not a branch point of $x_t$ and the conditions for globalisation in \cref{th:rewriting} are  satisfied for the sets $ x_t^{-1}(q) $ for every branch point $q \in \mathbb{P}^1$ of $x_t$.  This proves the last three conditions in \cref{de:admfamilyfinite}.

\end{proof}

In \cref{de:irredpc}, starting from a bivariate polynomial $P$ defining a reduced affine curve $\mathfrak{C} = \{P(x,y) = 0\}$,  we have constructed a compact curve by taking a normalisation $\eta : \tilde{C} \rightarrow C$. When doing this in family, some care is needed to guarantee the existence of a simultaneous normalisation. In \cref{Sec45},  we studied enlargements $\Delta$ of the Newton polygon of $P$ preserving the set of interior points. We thus get a family of bivariate polynomials $(P_t)_{t \in \mathbb{C}^D}$ such that $P_t$ has Newton polygon $\Delta$ for generic $t$ and $P = P_0$. Assuming that $P$ is nondegenerate and up to restricting the base to an open dense $T \subseteq \mathbb{C}^D$ containing $0$, we can assume that for any $t \in T$, $P_t$ is nondegenerate and its Newton polygon contains the one of $P$ and is contained in $\Delta$. In particular, its set of interior points is independent of $t \in T$. Let us denote this family of curves by $ C_T $.

Nondegeneracy implies (\cref{le:ndnp}) that the $\delta$-invariants of the projectivisation $C_t \subseteq \mathbb{P}^1 \times \mathbb{P}^1$ of $\mathfrak{C}_t = \{P_t(x,y) = 0\}$ (cf. \cref{de:irredpc}) are independent of $t \in T$. In such a  situation where the $ \delta $-invariant is  constant in the family, a simultaneous normalisation $\eta : \tilde{C}_T \rightarrow C_T$  is known to exist \cite{Tei76, CL06} and we obtain a proper family of  curves $f \colon \tilde{C}_T \rightarrow T$ together with $x_T$ and $(\omega_{0,1})_T = y_T \dd x_T$ as in \cref{de:FamSC}. 

To upgrade it to a proper family of spectral curves, it should be equipped with a choice of bidifferential $(\omega_{0,2})_T$. This can always be done (not in a unique way) in a connected, simply-connected open subset $T' \subseteq T$. Indeed, the relative homology of $f \colon \tilde{C}_T \rightarrow T$ carries the Gau\ss{}--Manin connection. Taking $t_0 \in T'$ and choosing a Lagrangian sublattice $L_{t_0} \subseteq H_1(\tilde{C}_{t_0},\mathbb{Z})$ (with the intersection pairing determining the symplectic structure), the Gau\ss{}--Manin connection transports it to Lagrangian sublattices $L_t \subseteq H_1(\tilde{C}_t,\mathbb{Z})$. Then, for each $t \in T'$, there exists a unique meromorphic bidifferential $(\omega_{0,2})_t$ with biresidue $1$ on the diagonal of $\tilde{C}_{t} \times \tilde{C}_t$ and such that
$$
\forall \gamma \in L_t,\qquad \int_{\gamma} (\omega_{0,2})_t(w_0,\cdot) = 0\,.
$$
Using local trivialisations for $f$, one can check that the so-determined $(\omega_{0,2})_T$ fulfills the condition (3) in \cref{de:irredpc}.

The constant genus assumption was used both for the existence of the simultaneous normalisation and in the construction of $(\omega_{0,2})_T$.

\subsection{The main result}

If we have a family $\mathcal{S}_T$ of spectral curves for which each fibre $\mathcal{S}_t$ for $t \in T$ is locally admissible, topological recursion yields a system of correlators $(\omega_{g,n})_t$ indexed by $(g,n) \in \mathbb{Z}_{\geq 0} \times \mathbb{Z}_{> 0}$ and symmetric under action of the symmetric group $\mathfrak{S}_n$. They define meromorphic sections $(\omega_{g,n})_T$ of $\Omega_{f_n}(*\mathsf{Ram}^n_T)$ on $\Sigma_T^n$.  In this section we investigate continuity (in fact, analyticity) of $(\omega_{g,n})_t$ with respect to $t \in T$, based on the following simple observation.

\begin{lemma}\label{le:LimitsOfContours}
	Let $ \mc{S}_{T}$ be a family of partial spectral curves. Let $ \Gamma $ be a contour in $ \P^1$. If $ \phi$ is a meromorphic section of $\Omega_f$ whose divisor of poles intersects each fibre on a discrete set and avoids $ x_{T}^{-1}(\Gamma)$, then $ \int_{x_t^{-1}(\Gamma)} \iota_t^{*} \phi $ is an analytic function of $t \in T$. 
\end{lemma}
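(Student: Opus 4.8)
\textbf{Proof strategy for \cref{le:LimitsOfContours}.}
The plan is to reduce the statement to a classical fact about parametric integrals: an integral of a family of forms that depend holomorphically on a parameter, taken over a fixed compact chain, is holomorphic in the parameter. Concretely, I would first observe that $x_T^{-1}(\Gamma)$ need not be fixed as $t$ varies, but its $x_T$-image $\Gamma$ is fixed, and this is what makes things work. Pick $t_0 \in T$. Since $f$ is a submersion, locally near $t_0$ we may trivialise: there is an open neighbourhood $T_0 \subseteq T$ of $t_0$ and, over $x_{t_0}^{-1}(\Gamma)$, a biholomorphism $\Psi \colon U \times T_0 \xrightarrow{\sim} V \subseteq \Sigma_T$ with $f \circ \Psi = \mathrm{pr}_{T_0}$ and $\Psi(\cdot, t_0) = \iota_{t_0}$, where $U$ is an open neighbourhood of $x_{t_0}^{-1}(\Gamma)$ in $\Sigma_{t_0}$. (Here one uses that $x_{t_0}^{-1}(\Gamma)$ is a compact one-dimensional subset away from the poles of $\phi$, so it has a neighbourhood on which the trivialisation is defined; strictly, one covers the compact set by finitely many trivialising charts and patches, but the upshot is the same.) Pulling back, $\Psi^*\phi$ is a holomorphic family, parametrised by $t \in T_0$, of meromorphic $1$-forms on $U$ whose polar divisor avoids $x_{t_0}^{-1}(\Gamma)$ for every $t \in T_0$ — the last point following from the hypothesis that the polar divisor of $\phi$ avoids $x_T^{-1}(\Gamma)$ and is closed, together with compactness of the chain (shrinking $T_0$ if necessary).

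Next I would fix a chain representative. Since $\Gamma$ is a contour in $\mathbb P^1$ avoiding $\mathsf{Br}$ near $x_{t_0}^{-1}(\Gamma)$ — more precisely, since $\phi$ has no poles on $x_{t_0}^{-1}(\Gamma)$ — the preimage $x_{t_0}^{-1}(\Gamma)$ is a finite union of smooth curves in $\Sigma_{t_0}$, and under the trivialisation $\Psi$ it is transported to a $t$-independent chain $\Gamma_t \subseteq \Sigma_t$, namely $\Gamma_t = \Psi(\Gamma_{t_0} \times \{t\})$ where $\Gamma_{t_0} = x_{t_0}^{-1}(\Gamma)$. The crucial compatibility is that $\Gamma_t$ is homologous in $\Sigma_t \setminus (\text{poles of }\phi)$ to the actual preimage $x_t^{-1}(\Gamma)$: indeed $x_T \circ \Psi$ and $x_t$ both project this chain onto $\Gamma$, and as $t$ moves continuously in the simply-connected $T_0$ the family $x_t^{-1}(\Gamma)$ sweeps out a homotopy from $x_{t_0}^{-1}(\Gamma)$ to $x_t^{-1}(\Gamma)$ staying in the complement of the (closed, fibrewise discrete) polar locus. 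By Stokes/the residue theorem, since $\phi$ is closed (it is a $1$-form on a curve) away from its poles and those poles are avoided by the homotopy, we get
\[
\int_{x_t^{-1}(\Gamma)} \iota_t^*\phi \;=\; \int_{\Gamma_t} \iota_t^* \phi \;=\; \int_{\Gamma_{t_0}} \Psi(\cdot,t)^*\phi \,.
\]
Finally, the right-hand side is an integral over the fixed compact chain $\Gamma_{t_0}$ of a form depending holomorphically on $t \in T_0$ with no poles along $\Gamma_{t_0}$; differentiating under the integral sign (justified by uniform bounds on the compact chain) shows it satisfies the Cauchy–Riemann equations in $t$, hence is analytic on $T_0$. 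Since $t_0 \in T$ was arbitrary, $t \mapsto \int_{x_t^{-1}(\Gamma)} \iota_t^*\phi$ is analytic on all of $T$.

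\textbf{Main obstacle.} The routine part is the parametric-holomorphy argument once everything is set up; the delicate point is the homology/homotopy step — verifying that, after the local trivialisation, the transported chain $\Gamma_t$ really is homologous to $x_t^{-1}(\Gamma)$ in the complement of the poles of $\phi$, uniformly for $t$ near $t_0$. This requires that the polar divisor of $\phi$ stays away from $x_T^{-1}(\Gamma)$ not just fibrewise at $t_0$ but on a whole neighbourhood, which one extracts from the closedness of the polar set together with compactness of $x_{t_0}^{-1}(\Gamma)$ (possible because, away from the poles, $\Gamma$ being a contour forces $x$ to be a genuine covering over a neighbourhood of $\Gamma$, so $x_{t_0}^{-1}(\Gamma)$ is compact). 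One also needs to know that $x_t^{-1}(\Gamma)$ varies continuously as a chain, which follows from $X = (x_T,f)$ being a submersion over points of $\mathbb{P}^1 \times T$ lying above $\Gamma$ near the relevant locus. I would make sure to state these two inputs explicitly, as they are exactly where the hypotheses ``divisor of poles intersects each fibre on a discrete set'' and ``avoids $x_T^{-1}(\Gamma)$'' get used.
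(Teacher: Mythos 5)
Your proof is correct and is essentially an expanded version of the paper's own argument, which is a one-liner: rewrite the integral as $\int_{X^{-1}(\Gamma\times\{t\})}\phi$ with $X=(x_T,f)$, observe that the polar divisor of $\phi$ avoids this set, and conclude analyticity — your local trivialisation, fixed-chain, and differentiation-under-the-integral-sign steps are precisely the details being suppressed there. Two sub-claims in your write-up are stated incorrectly, though the argument survives: first, since $\Psi$ trivialises $f$ but not $X$, the transported chain $\Gamma_t=\Psi(\Gamma_{t_0}\times\{t\})$ does \emph{not} project onto $\Gamma$ under $x_t$ for $t\neq t_0$, so the justification for $\int_{x_t^{-1}(\Gamma)}\iota_t^*\phi=\int_{\Gamma_t}\iota_t^*\phi$ must be the one you give next (the two cycles are uniformly close for $t$ near $t_0$, hence homotopic in the complement of the closed polar locus, and $\iota_t^*\phi$ is a closed form there); second, ``$\Gamma$ being a contour forces $x_{t_0}^{-1}(\Gamma)$ to be compact'' is false for infinite-degree coverings, and the compactness you need is really supplied by the finite-degree hypotheses (DC3) in force wherever the lemma is applied — an implicit assumption the paper's proof shares.
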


\begin{proof}
We write:
$$
\int_{x_t^{-1} (\Gamma)} \iota_t^* \phi = \int_{X^{-1} ( \Gamma \times \{ t\} )} \phi \,.
$$
By assumption, $\iota_t^*\phi$ is a meromorphic $1$-form on $\Sigma_t$ and the set of poles of $ \phi $ does not intersect $ x_T^{-1}(\Gamma)$. This immediately implies that the divisor of poles of $ \phi $ does not intersect $ X^{-1} (\Gamma \times \{t\}) $, where $X: \Sigma_T \to \mathbb P^1 \times T$ was defined in \cref{eq:Xdef}. Hence, the integral is an analytic function of $t \in T $.
\end{proof}

\begin{theorem}[Topological recursion is analytic in globally admissible families]\label{th:TRLimits}
	Let $ \mc{S}_T = ( f \colon \Sigma_T, x_T, (\omega_{0,1})_T, (\omega_{0,2})_T) $ be a globally admissible family of spectral curves over a manifold $ T$. Then, for any $(g,n) \in \mathbb{Z}_{\geq 0}^2$, the correlator $ (\omega_{g,1+n})_t$ obtained by applying the topological recursion to a fibre $\mathcal{S}_t$, depends analytically on $t \in T$, in the sense that $(\omega_{g,1+n})_T$ is a meromorphic section over $ \Sigma_T \times_T \dotsb \times_T \Sigma_T $ of $ \Omega_f^{\boxtimes (1+n)}$. If $2g - 2 + (1 + n) > 0$, it has poles on $\mathsf{Ram}_T^n$.	
\end{theorem}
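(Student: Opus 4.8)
The strategy is exactly the ``local to global $\to$ deform $\to$ global to local'' scheme described above, implemented via \cref{pr:globalTRdomain} and \cref{le:LimitsOfContours}. I would proceed by induction on $2g - 2 + (1+n)$, the base case $(g,n) = (0,2)$ being the datum $(\omega_{0,2})_T$ (which is analytic in $t$ by hypothesis (4) of \cref{de:FamSC}), and $(g,n) = (0,1)$ being $(\omega_{0,1})_T$. For the inductive step, fix $t_0 \in T$ and work on a simply-connected neighbourhood $T' \subseteq T$ of $t_0$ on which a disc collection $(\mathsf{D}_i)_{i=1}^{\mathsf{k}}$ adapted to $\mathcal{S}_{T'}$ in the sense of \cref{de:dc} exists; such a $T'$ is provided by the last bullet of \cref{de:admfamily}. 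By hypothesis, for each $t \in T'$ the fibre $\mathcal{S}_t$ is locally admissible (so \cref{th:ale} applies and the correlators exist and are well-defined symmetric differentials), and topological recursion can be globalised over $x_t^{-1}(q) \cap \tilde{\mathsf{D}}_{i,j,t}$ for each branch point $q$ of $x_t$ in $\mathsf{D}_i$ and each $j \in \mathsf{c}_i^+$.

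The key point is that \cref{pr:globalTRdomain}, applied fibrewise, gives for each $t \in T'$ the horizontally globalised formula
$$
(\omega_{g,1+n})_t(w_0, w_{[n]}) = \frac{1}{2\mathrm{i}\pi} \sum_{i=1}^{\mathsf{k}} \sum_{j \in \mathsf{c}_i^+} \oint_{z \in \gamma_{i,j}} \sum_{\substack{Z \subseteq \mathfrak{f}'(z) \cap \tilde{\mathsf{D}}_{i,j,t} \\ |Z| \geq 1}} K_{1+|Z|}^{(i,j)}(w_0; z, Z)\,\mathcal{W}'_{g,1+|Z|;n}(z, Z; w_{[n]})\,,
$$
where the integration contours $\gamma_{i,j}$ are (the $x_t$-preimages of) Jordan curves in $\mathsf{D}_i$ slightly pushed in from $\partial \mathsf{D}_i$, and crucially \emph{their $x$-projection can be chosen independently of $t \in T'$}, since $\mathsf{D}_i$ itself is $t$-independent and by (DC2)--(DC3) the branch points and the covering structure stay controlled. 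The local primitive $\alpha_{0,2}^{(i,j)}(w_0;z)$ is defined by integrating $(\omega_{0,2})_t$ from the base point $\mathsf{p}_{i,j}(t)$ (the analytic section determined by the choice at $t_0$), along a path inside $\tilde{\mathsf{D}}_{i,j,t}$; by (DC4) this is path-independent, and since $(\omega_{0,2})_T$ and $\mathsf{p}_{i,j}$ are analytic in $t$, so is $\alpha_{0,2}^{(i,j)}$ (away from its poles, by \cref{le:LimitsOfContours} applied to the $w_0$-variable, or directly). Now the integrand of the $(i,j)$-contour integral is built from: the recursion kernels $K^{(i,j)}$, which are analytic in $t$ by the previous sentence; and the factors $\mathcal{W}'_{g,1+|Z|;n}$, which are polynomial expressions in correlators $\omega_{g',n'}$ with $2g'-2+n' < 2g-2+(1+n)$, hence analytic in $t$ by the induction hypothesis; and the function $y = (\omega_{0,1})_t/\dd x_t$ appearing in $\Upsilon$, analytic in $t$ by hypothesis (3). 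The sum over $Z \subseteq \mathfrak{f}'(z) \cap \tilde{\mathsf{D}}_{i,j,t}$ is, as explained below \eqref{localsk}, really a push-pull of a single meromorphic form via $x_t$ restricted to $\tilde{\mathsf{D}}_{i,j,t}$, so it too is analytic in the family (its $x$-fibrewise nature is preserved, and the degree $\mathsf{d}_{i,j}$ is $t$-independent by (DC3)). Therefore \cref{le:LimitsOfContours} (applied with $\phi$ the integrand, viewed as a relative $1$-form in $z$ with the other variables and $t$ as parameters, and $\Gamma$ the fixed $x$-projection of $\gamma_{i,j}$) yields that each contour integral --- and hence the finite sum $(\omega_{g,1+n})_t$ --- is analytic in $t \in T'$. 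Since $t_0$ was arbitrary and $T$ is connected, analyticity holds on all of $T$, establishing that $(\omega_{g,1+n})_T$ is a meromorphic section of $\Omega_f^{\boxtimes(1+n)}$ over $\Sigma_T \times_T \cdots \times_T \Sigma_T$. The claim that its polar locus lies in $\mathsf{Ram}_T^n$ follows from the fibrewise statement of \cref{th:ale} (the $\omega_{g,n}$ have poles only at ramification points) together with the properness of $\mathsf{Ram}_T$ over $T$, which guarantees that the ramification points do not escape and that $\mathsf{Ram}_T^n$ is a genuine closed analytic subset along which the poles are located.

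\textbf{Main obstacle.} The delicate point is not any single analyticity verification --- each ingredient is analytic for transparent reasons --- but rather ensuring that the \emph{combinatorial shape} of the globalised formula is locally constant in $t$, i.e.\ that one can choose a single disc collection and single contours valid on a whole neighbourhood $T'$. This is precisely what hypotheses (DC2)--(DC4) of \cref{de:dc} and the properness of $\mathsf{Ram}_T$ are designed to secure: properness prevents branch points from escaping to the boundary of the $\mathsf{D}_i$ or off the curve, (DC3) keeps the covering combinatorics (degrees, number of components) frozen, and (DC4) keeps the period-vanishing condition --- needed for the local primitive of $\omega_{0,2}$ to be well defined --- stable. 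Without these, the contours $\gamma_{i,j}$ would have to move with $t$ in an uncontrolled way and \cref{le:LimitsOfContours} could not be applied with a fixed $\Gamma$. A secondary subtlety is bookkeeping the base points $\mathsf{p}_{i,j}(t)$: one must check that as $t$ varies these never collide with the contour or wander into a different component, which again is handled by the analytic-section construction on the simply-connected $T'$ and by (DC3). Once the locally-constant-combinatorics point is in place, the rest of the argument is the routine induction sketched above.
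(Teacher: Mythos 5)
Your overall architecture matches the paper's proof: induction on $2g-2+(1+n)$, restriction to a simply-connected $T'$ carrying a $t$-independent disc collection, horizontal globalisation via \cref{pr:globalTRdomain} with kernels built from the analytic base-point sections $\mathsf{p}_{i,j}(t)$, and an application of \cref{le:LimitsOfContours} to the fixed contours $\gamma_i$. Up to that point the argument is sound, and your ``main obstacle'' paragraph correctly identifies why (DC2)--(DC4) and properness of $\mathsf{Ram}_T$ are what make a locally constant choice of contours possible.

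However, there is a genuine gap at the final step. The globalised formula \eqref{globalTR} is only valid for $w_0,\ldots,w_n$ kept \emph{outside} $\tilde{\mathsf{D}}^+$, and \cref{le:LimitsOfContours} requires the polar divisor of the integrand to avoid $x_T^{-1}(\Gamma)$. When some $w_a$ lies on or inside a contour $\gamma_{i,j}$, the integrand acquires poles at $z=w_0$ (from the simple pole of $\alpha_{0,2}^{(i,j)}(w_0;z)$ in the kernel) and at $z\in\mathfrak{f}(w_a)$ (from the double poles of $\omega_{0,2}(z',w_a)$), so your application of the lemma breaks down exactly there. Consequently your argument only produces a meromorphic section over the open locus where all $w_a$ avoid $\tilde{\mathsf{D}}^+$ --- a region that, since the discs must contain the branch points by (DC2), can never be enlarged to cover a neighbourhood of $\mathsf{Ram}_T^{n}$. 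This is fatal for the statement as given: both the claim that $(\omega_{g,1+n})_T$ is meromorphic over the whole fibre product and the identification of its polar locus with $\mathsf{Ram}_T^{n}$ concern precisely the excluded region. The paper closes this by taking $n+2$ nested contours $\gamma_i^{(0)},\ldots,\gamma_i^{(n+1)}$, observing that $n+1$ points cannot meet $n+2$ pairwise disjoint circles (so the corresponding opens $W_{i,j}^{(A,\ell)}$ cover $\Sigma_T^{n+1}$), and, for each nonempty set $A$ of indices $a$ with $w_a$ inside the disc, explicitly subtracting the residues at $z=w_0$ and at $z\in\mathfrak{f}(w_a)$; these residues are expressed through correlators of lower complexity and are meromorphic by the induction hypothesis. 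You need this (or an equivalent) argument to complete the proof.
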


\begin{proof}
	
	 The base cases $ \omega_{0,1} $ and $ \omega_{0,2} $ are covered by  \cref{de:FamSC}. Now take $(g,n) \in \mathbb{Z}_{\geq 0}^2$ and assume the theorem is proved for all $\omega_{g',1+n'}$ with $(g',n') \in \mathbb{Z}_{\geq 0}^2$ such that $2g' - 2 + (1+n') < 2g - 2 + (1 + n)$. We would like to prove that $(\omega_{g,1+n})_t$ is a meromorphic section of $\Omega_{f_{n + 1}}$ over $\Sigma_T^{n + 1}$ with poles at $\mathsf{Ram}_{T}^{n + 1}$. As this is a local property, up to restricting to small neighborhood of arbitrary points in the parameter space we can assume that  $T$ is simply-connected and that we are given a disc collection $(\mathsf{D}_i)_{i = 1}^{\mathsf{k}}$ uniformly adapted to $\mathcal{S}_{T}$.
	 
	 Then, for each $t \in T$ the assumptions in \cref{de:admfamily} allow us a horizontal globalisation as in \cref{pr:globalTRdomain}, i.e. we can rewrite
	 \begin{equation}\label{eq:integrandTR}
	 		(\omega_{g,1+n})_t(w_0, w_{[n]}) = \frac{1}{2{\rm i}\pi} \sum_{\substack{1 \leq i \leq \mathsf{k} \\ j \in \mathsf{c}_i^+}} \oint_{\gamma_{i,j,t}} \sum_{\substack{Z \subseteq  \mathfrak{f}'(z) \,\cap\,\tilde{\mathsf{D}}_{i,j,t} \\ |Z| \geq 1}} K_{1+|Z|}^{(i,j,t)}(w_0; z, Z) (\mathcal{W}'_{g,1+|Z|;n})_t(z, Z; w_{[n]}) \,.
	 \end{equation}
	 for $w_0,\ldots,w_n$ kept outside of $\tilde{\mathsf{D}}^+$. The various elements of this formula were described in \cref{Sec:correldef}. The details are not important for the present argument, except for four things that we should discuss: the choice of local primitive for the recursion kernel, the contour of integration $\gamma_{i,j,t}$, and the set $\mathfrak{f}'(z) \,\cap\,\tilde{\mathsf{D}}_{i,j,t}$.
	 	 
For the first thing: to define the recursion kernel $K^{(i,j,t)}$, we choose the local primitive $\alpha_{0,2}^{(i,j,t)}(w_0;z) = \int_{\mathsf{p}_{i,j}(t)}^{z} \omega_{0,2}(\cdot,w_0)$ involving base points $\mathsf{p}_{i,j}(t) \in \tilde{\mathsf{D}}_{i,j,t}$ introduced below \cref{de:dc} and (most importantly) depending analytically on $t$. For the second thing: the contour $\gamma_{i,j,t}$ is a union of the connected components of $x_t^{-1}(\gamma_i)$ that are in $\tilde{\mathsf{D}}_{i,j}$, where $\gamma_i$ is a obtained by pushing the (positively oriented) $\partial\mathsf{D}_i$ into $\mathsf{D}_i$, not crossing $\overline{\mathsf{Br}_T}$. Up to take a smaller $T$, we can also arrange that $\gamma_i$ surrounds $\overline{x_T \circ \mathsf{p}_{i,j}(T)}$ with index $+1$.
For the third thing: the sets $\mathsf{c}_i^+$ indexing the sums are finite and independent of $t$. It then suffices to examine the $t$-analyticity of each term $(i,j)$ separately. Since the restriction of $x_t$ to $\tilde{\mathsf{D}}_{i,j,t}$ is a finite-degree branched covering, we can take as partial spectral curve
	 $$
	 (\Sigma_{i,j})_T = \{z \in \Sigma_T\,\,|\,\,f(z) = t\,\,{\rm and}\,\,z \in \tilde{\mathsf{D}}_{i,j,t} \subseteq \Sigma_t\}\,,
	 $$
	 equipped with the restriction of $x_T$ to $(\Sigma_{i,j})_T$ that we denote $(x_{i,j})_T$. Then the set $\mathfrak{f}'(z) \cap \tilde{\mathsf{D}}_{i,j,t}$ appearing in \eqref{eq:integrandTR} is simply $\mathfrak{f}'(z)$ with respect to this partial spectral curve. So, we can write the $(i,j)$ term of \eqref{eq:integrandTR} as
	 \begin{equation}
	\label{phihphi}\bigg(\oint_{(x_{i,j})_t^{-1}(\gamma_i)} \iota_t^*\phi_{i,j}(\cdot;w_0,\ldots,w_n)\bigg) \prod_{i = 0}^{n} \dd x_t(w_i)\,,
	 \end{equation}
and in this expression it is sufficient to keep $w_0,\ldots,w_n$ outside $\tilde{\mathsf{D}}_{i,j}$ (instead of $\tilde{\mathsf{D}}^+$).

To handle the parametric dependence in $w_0,\ldots,w_n$ of the right-hand side, we change base to
$$
\tilde{T} = W_{i,j}^{(\emptyset)} := \big\{(w_0,\ldots,w_n) \in \Sigma_T^{n + 1}\,\,|\,\,w_0,\ldots,w_n \in \Sigma_T \setminus \tilde{\mathsf{D}}_{i,j,t}\,\,{\rm with}\,\,t = f(w_0) = \cdots = f(w_n)\big\}\,.
$$
Pulling back $\mathcal{S}_T$ via $f_{n + 1} : W_{\emptyset} \rightarrow T$ yields a new family of partial spectral curves over $\tilde{T}$, with defining morphisms $\tilde{f} : \Sigma_T \times_T W_{\emptyset}^{(i,j)}$ and $x_{\tilde{T}} = x_T \circ {\rm pr}_{\Sigma_T}$. Then, from the expression of $\phi_{i,j}$ is in terms of the $\omega_{g',n'}$ with $2g' - 2 + n' < 2g - 2 + (1 + n)$ one deduce that it is a meromorphic section of $\Omega_{\tilde{f}}$. Thanks to \cref{le:LimitsOfContours} we conclude that
\begin{equation}
\label{thephiform}
\frac{1}{2{\rm i}\pi} \oint_{(x_{i,j})_t^{-1}(\gamma_i)} \iota_t^*\phi(\cdot;w_0,\ldots,w_n)
\end{equation}
is an analytic function of $(w_0,\ldots,w_n) \in W_{i,j}^{(\emptyset)}$. From our definition of families, $\prod_{i = 0}^{n} \dd x_t(w_i)$ is a meromorphic section of $\Omega_f^{\boxtimes (n + 1)}$ over $\Sigma_T^{n + 1}$, therefore \eqref{phihphi} is a meromorphic section of $\Omega_f{\boxtimes(n + 1)}$ over $W_{i,j}^{(\emptyset)}$. 

We would like to extend this argument to $w_0,\ldots,w_n$ in $\Sigma_T$. For this purpose we choose nested contours $(\gamma_{i}^{(\ell)})_{\ell = 0}^{n + 1}$, which all represent the homology class of (positively oriented) $\partial\mathsf{D}_i$ in the complement of $\overline{\mathsf{Br}}$ in  $\mathcal{D}$, such that $\gamma_i^{(0)} := \gamma_i$ and for each $\ell \in [n + 1]$, $\gamma_i^{(\ell)}$ is obtained by slightly pushing off $\gamma_i^{(\ell - 1)}$ towards the inside of $\mathsf{D}_i$. We denote $\mathsf{D}_i^{(\ell)} \subset \mathsf{D}_i$ the disc bounded by $\gamma_i^{(\ell)}$ and $\mathsf{D}_{i,j,t}^{(\ell)} = x_t^{-1}(\mathsf{D}_i^{(\ell)}) \cap \tilde{\mathsf{D}}_{i,j,t}$. Since $(n + 1)$ points cannot meet $(n + 2)$ pairwise disjoint sets, the open sets
$$
W_{i,j}^{(\ell)} = \Big\{(w_0,\ldots,w_n) \in \Sigma_T^{n + 1}\,\,\Big|\,\,\forall a \in \{0,\ldots,n\}\quad x_T(w_a) \notin \gamma_i^{(\ell)}\Big\}
$$
indexed by $\ell \in \{0,\ldots,n + 1\}$ cover $\Sigma_T^{n + 1}$. For each $A \subseteq \{0,\ldots,n\}$, we have a connected component $W_{i,j}^{(A,\ell)} \subseteq W_{i,j}^{(\ell)}$ consisting of the $(n + 1)$-tuples $(w_0,\ldots,w_n)$ such that $w_a \in \tilde{D}_{i,j,t}^{(\ell)}$ if and only if $a \in A$, where $t = f(w_0) = \cdots = f(w_n)$. 

Up to replacing $\gamma_i$ with $\gamma_i^{(\ell)}$, the previous argument showed analyticity of $\omega_{g,1+n}(w_0,\ldots,w_n)$ over $W^{(\emptyset,\ell)}_{i,j}$. Now take a nonempty subset $A \subseteq \{0,\ldots,n\}$ and consider $(w_0,\ldots,w_n) \in W_{i,j}^{(A,\ell)}$. There is one more thing we should remember about the integrand $\iota_t^*\phi_{i,j}(z;w_0,\ldots,w_n)$ which originates from \eqref{eq:integrandTR}: with respect to $z \in \tilde{\mathsf{D}}_{i,j,t}^{(\ell)} \setminus \mathsf{Ram}_t$:
\begin{itemize}
\item if $0 \in A$ it has a singularity at $z = w_0$ due to the simple pole in the recursion kernel;
\item  it may have singularities at $z \in \bigcup_{a \in A}\mathfrak{f}(w_a)$  due to the presence of $\omega_{0,2}(z',w_a)$ with $z' \in \mathfrak{f}(z)$ in the topological recursion formula and the fact that $\omega_{0,2}(z',w_a)$ has a double pole at $z' = w_a$. Here, $\mathfrak{f}$ should be interpreted as the $x_t$-fibre restricted to $\tilde{\mathsf{D}}_{i,j,t}^{(\ell)}$.
\end{itemize}
Since the horizontal globalisation of \cref{pr:globalTRdomain} came from moving contours from small circles around the ramification points in $\tilde{\mathsf{D}}_{i,j}$ to $\gamma_{i,j}^{(\ell)}$ close to $\partial\tilde{\mathsf{D}}_{i,j}^{(\ell)}$, in this case the formula \eqref{thephiform} is replaced with
$$
\bigg(\frac{1}{2{\rm i}\pi} \oint_{(x_{i,j})_t^{-1}(\gamma_i)} \iota_t^*\phi(\cdot;w_0,\ldots,w_n)\Bigg) - \Big(\Res_{z = w_0} + \sum_{\substack{a \in A \\ p \in \mathfrak{f}(w_a)}} \Res_{z = p} \Big) \iota_t^*\phi_{i,j}(z;w_0,\ldots,w_n)
$$
The residues can be evaluated in terms of $\omega_{g',n'}$ with $2g' - 2 + n' < 2g - 2 + (1 + n)$ and by the induction assumption, the result is a meromorphic function of $(w_0,\ldots,w_n) \in W_{i,j}^{(A,\ell)}$.

Having covered $\Sigma_{T}^{n + 1}$ with the opens $W_{i,j}^{(A,\ell)}$, we conclude that $\omega_{g,1+n}(w_0,\ldots,w_n)$ is a meromorphic section of $\Omega_{f}^{\boxtimes (n + 1)}$. The location of its poles is clear from the fact that its restriction to each fibre $\Sigma_t^{n +1}$ is meromorphic with poles at $\mathsf{Ram}_t$, which is a discrete subset of $\Sigma_t$  due to (2) in \cref{de:FamSC}.
\end{proof}

This answers the question posed in the introduction: given a family of spectral curves, does the topological recursion procedure commutes with taking limits in the family? For globally admissible families, the answer is yes. As the correlators $(\omega_{g,n})_t$ are analytic in $t$, the $t \to t_0$ limit of the $t$-dependent correlators $(\omega_{g,n})_t$ is equal to the correlators constructed from the spectral curve at the point $t=t_0$ in the family, in the following sense: for any continuous local sections $W_0,\ldots,W_n$ of $\Sigma_T \rightarrow T$ defined near $t_0$, we have
\begin{equation}
	\lim_{t \rightarrow t_0}\, \frac{(\omega_{g,n})_t(W_0(t),\ldots,W_n(t))}{\dd x_t(W_0(t)) \cdots \dd x_t(W_n(t))} = \frac{(\omega_{g,n})_{t_0}(W_0(t_0),\ldots,W_n(t_0))}{\dd x_{t_0}(W_0(t_0)) \cdots \dd x_{t_0}(W_n(t_0))}\,. 
\end{equation}

 If we want to apply \cref{th:TRLimits} in a particular situation to  check that  the topological recursion commutes with taking limits, the main task is to prove that the family of spectral curves in  question is globally admissible (as given in \cref{de:admfamily} or \cref{de:admfamilyfinite}). Often, this boils down to checking the sufficient conditions for global admissibility as given in \cref{th:rewriting}. We illustrate this strategy in the next section by applying it to various interesting examples.

\begin{remark}
	\Cref{th:TRLimits} does not cover the free energies $ F_g = \omega_{g,0}$ of topological recursion. In fact, these free energies need not be analytic under these assumptions. This phenomenon was observed in \cite[Remark 3.21]{CGG22} even at the level of cohomological field theories and should be related to the conifold gap~\cite{HK07}. We do not investigate this question here.
\end{remark}

\section{Examples} \label{sec:EX}

In this section, we apply our main result on limits proved in  \cref{S5} to various cases of interest including deformations of $ (r,s) $-spectral curves and the spectral curves that appear in Hurwitz theory.  Finally, we also discuss cases where the commutation of topological recursion with limits fails. We will describe curves with $x$ and $y$ only, and it is implicit that we turned into a spectral curve by taking as $1$-form $\omega_{0,1} = y \dd x$ and (for curves of genus $0$) as bidifferential $\omega_{0,2}^{{\rm std}}(w_1,w_2) = \frac{\dd w_1\,\dd w_2}{(w_1 - w_2)^2}$.

\subsection{Deformations of \texorpdfstring{$(r,s)$}{(r,s)}-spectral curves}
\label{sec:rsexamples}

In this section, we revisit the $(r,s)$-spectral curves $\mathcal{S}^{(r,s)} = (\tilde{C}^{(r,s)},x,\omega_{0,1},\omega_{0,2})$, which are compact genus $0$ curves constructed in \cref{s:rscurve} from the equation $x^{r - s}y^r = 1$ with $r \geq 2$ and $s \in [r - 1]$ coprime. They can be presented as
\begin{equation}
\label{rscurveparams} x(w) = w^r,\qquad y(w) = w^{s - r}\,,
\end{equation}
with $w \in \mathbb{P}^1$ is a uniformising variable.  Such spectral curves are locally admissible if and only if $r = \pm 1 \,\,{\rm mod}\,\,s$. The extra case $s = r + 1$  (associated to the polynomial equation $x = y^r$) will only be briefly mentioned, as it corresponds to the well-studied $A_{r - 1}$-singularity.

We shall study the existence of maximal globally admissible families $\mathcal{S}^{(r,s)}_{T}$ deforming $\mathcal{S}^{(r,s)}$ as an illustration of the results of \cref{S4,S5}.  The importance of this example is twofold. First, as is manifest in \cref{de:localadm}, all  locally admissible spectral curves are modelled locally on the locally admissible $ (r,s) $-curves. In fact, we prove in \cref{le:class0} that $\mathcal{S}^{(r,s)}_{T}$ with $r = \pm 1\,\,{\rm mod}\,\,s$ and $s \in [r + 1]$ already provide a large class among the admissible families of genus $0$ spectral curves. Second, it is expected that one can associate to each locally admissible $(r,s)$-curve a sequence of classes of CohFT-like classes on $\prod_{g,n} \overline{\mathcal{M}}_{g,n}$, thus giving the correlators of topological recursion applied to $\mathcal{S}^{(r,s)}$ an intersection-theoretic representation \cite{BBCCN18}, \cite[Conjecture 7.23]{BKS20}. This would imply an intersection-theoretic representation for the correlators of the local topological recursion of any locally admissible spectral curve \cite[Theorem 7.26]{BKS20}. The $(r,s)$-class is known for $s = r + 1$ (it is the Witten $r$-spin class) and $s = r -1$ (the $ \Theta^r $-class). Deformations and limits of spectral curves already play an important role in the study of these two special cases \cite{DNOPS19, CCGG22, CGG22}. The results of this section should offer a basis for the construction of $(r,s)$-classes in general.

\subsubsection{Deformations of the Newton polygon}

We want to construct a maximal deformation family $\mathcal{S}^{(r,s)}_{T}$ of $\mathcal{S}^{(r,s)}$, defined by projectivisation and normalisation of a family of affine curves, such that the Newton polygon of the polynomial defining the affine curve is inscribed in the rectangle $\square(r-s,r)$ and has no interior points. In particular the bidegree and the (empty) set of interior points remain constant in the family. Keeping only the fibres associated to nondegenerate polynomials (this is an open dense condition on the base), we obtain a maximal family of smooth compact genus $0$ spectral curves.

\begin{definition}\label{de:maxhalf}
	Let $\Delta_{\max}^{(r,s)}$ be the maximal (under inclusion) inscribed polygon in $\square(r-s,r)$ having no interior points and obtained from $\Delta^{(r,s)} = [(0,0),(r-s,r)]$ by adding vertices (that is, it is in the same strong equivalence class). The existence of  $\Delta_{\max}^{(r,s)}$  is proved in \cref{pr:Qdelt}. Denote by $\frac{r' - s'}{r'}$ the left neighbour of $\frac{r - s}{r}$ in the Farey sequence $\mathcal{F}_r$, and introduce the positive integers
	\begin{equation}
	b \coloneqq \Big\lfloor \frac{r}{r'} \Big\rfloor,\quad 
	c \coloneqq \Big\lfloor \frac{r - s}{(r - s) - (r' - s')} \Big\rfloor\,.
	\end{equation}
\end{definition}
\begin{lemma}\label{le:rsmax}
	Let $ r \geq 2$ and $s \in [r-1]$ coprime. Then, $\Delta_{\max}^{(r,s)}$ is the quadrilateral with vertices $(0,0)$, $(r-s,r)$, $(p - q,p)$, $(p' - q',p')$ with $p' = r - p$, $q' = s - q$ given by one of the following two situations
	\begin{itemize}
		\item[(F${}_+$)]  $c = 1$ and $p = b r' $ and $ q = b s' $;
		\item[(F${}_-$)]  $b = 1$ and $p = r -  c (r-r') $ and $q = s -  c (s-s')$;
\end{itemize}
In the overlapping case $b = c = 1$, we have $p = r'$ and $q = s'$. Note that $(p-q,p)$ lies above $[(0,0),(r-s,r)]$. In particular, for the locally admissible values of $(r,s)$ the possible alternatives are the following:
\begin{itemize}
\item if $(r,s) = (2,1)$, then (F${}_{+}$) is realised with $(r',s') = (1,1)$ while $b = 2$;
\item if $r \geq 3$ and $s = 1$, then (F${}_-$) is realised with $(r',s') = (r - 1,1)$ and $c = r - 1$;
\item if $s \in \{2,\ldots,r-1\}$ and $r = 1\,\,{\rm mod}\,\,s$, then (F${}_+$) is realised with $r = r's + 1$ and $s' = 1$, while $b = r$ (for $s = r - 1$) or $b = s$ (for $s < r - 1$);
\item if $s \in \{3,\ldots,r-2\}$ and $r = -1\,\,{\rm mod}\,\,s$, then (F${}_-$) is realised with $r = (r - r')s - 1$, while $c = s' = s - 1$.
\end{itemize}
\end{lemma}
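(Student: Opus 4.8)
The statement is a completely explicit computation of the maximal polygon $\Delta_{\max}^{(r,s)}$ inside $\square(r-s,r)$, starting from the degenerate polygon $\Delta^{(r,s)}$ which is the single long-diagonal segment $[(0,0),(r-s,r)]$ (a ``long diagonal'' in the sense of \cref{de:prxpry} precisely because $\gcd(r-s,r)=\gcd(s,r)=1$ forces this segment to have no integral interior points, so it is inscribed but not literally the pathological long diagonal of \cref{pr:Qdelt}; one must double-check that $\mathcal F_r$ and \cref{le:FT} still apply, which they do since the segment has coprime direction $(r-s,r)$). The plan is to use \cref{pr:Qdelt}: since $\Delta^{(r,s)}$ consists of a single maximal edge of inverse slope $\frac{r-s}{r}\in\mathcal F_r$ with no edge interior points, the strong equivalence class has a maximum, obtained by adjoining \emph{all} addible vertices, and by \cref{le:FT} addibility of a vertex above or below the diagonal is governed exactly by Farey neighbours of $\frac{r-s}{r}$ in $\mathcal F_r$.

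First I would compute the Farey neighbours of $\frac{r-s}{r}$ in $\mathcal F_r$. By B\'ezout's identity \eqref{Bezout} the left neighbour $\frac{r'-s'}{r'}$ (with $r'<r$, which exists because $r\ge 2$) satisfies $r'(r-s)-r(r'-s')=1$, i.e. $rs'-r's=1$; this is exactly the defining relation making $(r',s')$ the data of the adjacent edge. The right neighbour is then, by \eqref{eq:fsum}, $\frac{(r-r')-(s-s')}{r-r'}$. Geometrically, the vertex one adds \emph{above} the diagonal (in the corner towards $(0,r)$) corresponds to the left neighbour, giving the first candidate vertex $(r'-s',r')$ translated to sit on the upper side of $\square(r-s,r)$; iterating, one can keep adding vertices $(kr'-ks',kr')$ as long as they stay inside $\square(r-s,r)$, i.e. as long as $kr'\le r$, which gives the bound $k\le b=\lfloor r/r'\rfloor$, and similarly the number of times one can iterate on the lower side is $c=\lfloor (r-s)/((r-s)-(r'-s'))\rfloor$. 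The content of the lemma is that one of these two counts is forced to be $1$ (case (F${}_+$) if $c=1$, case (F${}_-$) if $b=1$), so the maximal polygon is a quadrilateral with the two explicitly named extra vertices $(p-q,p)$ and $(p'-q',p')$, $p'=r-p$, $q'=s-q$.

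The core combinatorial step — the one I expect to be the main obstacle — is showing that one cannot simultaneously have $b\ge 2$ and $c\ge 2$: that is, at most one side of the diagonal admits more than one new extremal vertex. This should follow from B\'ezout's identity $rs'-r's=1$ together with the complementary relation for the right neighbour, which pins down $r'$ and $r-r'$ tightly relative to $r$; concretely, $b\ge 2$ means $2r'\le r$ and $c\ge 2$ means $2(r'-s')\le r-s$ is impossible to combine with $rs'-r's=1$ and $0<s'<r'$, and a short case analysis (using that $s',r'$ are positive and coprime to each other) rules it out — I would write this out by contradiction, multiplying the inequalities and invoking $rs'-r's=1$. Once exclusivity is established, in case $c=1$ the lower side contributes no new extremal vertex beyond $(0,0)$ and $(r-s,r)$ while the upper side contributes the chain $(kr'-ks',kr')$ for $k=1,\dots,b$, whose topmost member $(br'-bs',br')$ is the claimed $(p-q,p)$ with $p=br'$, $q=bs'$; the fourth vertex $(p'-q',p')$ is obtained by the central symmetry $v\mapsto (r-s,r)-v$ of $\square(r-s,r)$, which preserves the set of interior points and hence the strong equivalence class. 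Case (F${}_-$) is symmetric (reflect across the antidiagonal, or equivalently swap the roles of the two corners). For the overlapping case $b=c=1$ both descriptions agree and give $(p,q)=(r',s')$. The final itemised list of the four locally admissible families is then a direct substitution: for $(r,s)=(2,1)$ one has $\mathcal F_2=\dots,\tfrac12,1,\dots$ so $\tfrac{r-s}{r}=\tfrac12$ has left neighbour $0/1$?—no, one must be careful, $\tfrac{r-s}{r}=\tfrac12$, its left neighbour in $\mathcal F_2$ is actually $0=\tfrac01$, hence $(r',s')=(1,1)$ viewing $0$ as $\tfrac{1-1}{1}$, and $b=\lfloor 2/1\rfloor=2$; for $s=1$, $r\ge 3$, $\tfrac{r-1}{r}$ has left neighbour $\tfrac{r-2}{r-1}$ so $(r',s')=(r-1,1)$ and $c=\lfloor (r-1)/1\rfloor=r-1$; for $r=1\bmod s$, write $r=r's+1$ and check $rs'-r's=r\cdot 1-r'\cdot s=r's+1-r's=1$ so $(r',s')=(r',1)$ is indeed the left neighbour, with $b=\lfloor r/r'\rfloor$ which equals $s$ when $s<r-1$ and $r$ when $s=r-1$ (since then $r'=1$); for $r=-1\bmod s$ the left-neighbour computation gives $s'=s-1$ and $c=s-1$. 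Each of these is a one-line arithmetic verification using \eqref{Bezout}, so no genuine difficulty remains once the exclusivity lemma and the Farey-neighbour identification are in place.
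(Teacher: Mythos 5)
Your high-level plan — reduce to the half of the polygon above the diagonal, read off the Farey neighbours of $\tfrac{r-s}{r}$ in $\mathcal F_r$, prove that $b\geq 2$ and $c\geq 2$ cannot hold simultaneously, recover the fourth vertex by the central symmetry $v\mapsto (r-s,r)-v$, and finish with arithmetic for the four admissible families — is the same as the paper's, and your closing arithmetic checks are correct. But the middle of the argument, which is where the actual content lies, has a genuine gap. Both $b$ and $c$ pertain to the \emph{same} half $\Delta_+^{(r,s)}$ above the diagonal: they measure how far its unique extra extremal vertex sits along the ray from $(0,0)$ in the left-neighbour direction $(r'-s',r')$, respectively along the ray from $(r-s,r)$ in the negated right-neighbour direction $-\bigl((r-s)-(r'-s'),\,r-r'\bigr)$. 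You instead assign $b$ to the upper side and $c$ to the lower side of the diagonal, which leads to the claim that ``in case $c=1$ the lower side contributes no new extremal vertex''; this is false (for $(r,s)=(5,2)$ one has $c=1$ and the lower side contributes the extremal vertex $(1,1)$) and contradicts your own next sentence, where the fourth vertex below the diagonal is produced by central symmetry. In case (F$_-$) your description breaks down entirely: the extra vertex above the diagonal is \emph{not} on the ray $(kr'-ks',kr')$, and the ``reflection across the antidiagonal'' you invoke is not a symmetry of the problem, since it exchanges the bidegrees $r-s$ and $r$.

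The missing idea is the proof that $\Delta_+^{(r,s)}$ is a \emph{triangle}, i.e.\ that at most one extra extremal vertex can appear above the diagonal, and that it lies on one of exactly two rays. Iterating additions of elementary triangles via \cref{pr:Qdelt} and \cref{le:FT} does produce your chain, but nothing in your argument excludes further extremal vertices off that chain. The paper closes this by noting that for \emph{any} vertex $(p-q,p)$ of $\Delta_+^{(r,s)}$, the requirement that $(r'-s',r')$ not become an interior point forces the alternative $\tfrac{p-q}{p}=\tfrac{r'-s'}{r'}$ or $\tfrac{(r-s)-(p-q)}{r-p}=\tfrac{(r-s)-(r'-s')}{r-r'}$; you would need this observation or a substitute. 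Finally, a small but real slip in the exclusivity step: $c\geq 2$ is equivalent to $2(r'-s')\geq r-s$, not $\leq$; with the correct direction, combining $2r'\leq r$ and $2(r'-s')\geq r-s$ contradicts the Farey inequality $\tfrac{r}{r'}<\tfrac{r-s}{r'-s'}$, exactly as you intend (note also that $0<s'<r'$ can fail, e.g.\ $s'=r'=1$ for $(r,s)=(2,1)$, but then $c=1$ trivially).
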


\begin{figure}[!ht]
\begin{center}
\includegraphics[width=0.9\textwidth]{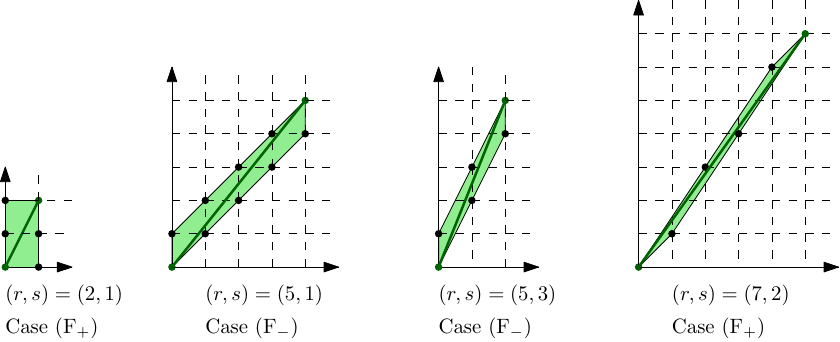}
\caption{\label{fig:examrs} $\Delta^{(r,s)}_{\max}$ for examples of the locally admissible types cited in \cref{le:rsmax}.}
\end{center}
\end{figure}

\begin{proof}
Recall B\'ezout's identity $rs' - r's = 1$. This implies in particular that $r' \leq r - 1$ and $s' \leq s$, that $r',s'$ are coprime unless $s = r- 1$, and that if $s = r - 1$ we in fact have $s' = r' = 1$. Recall as well that the right neighbour of $\frac{r - s}{r}$ in $\mathcal{F}_r$ must be $\frac{(r - s) - (r' - s')}{r - r'}$.

Let $\mathcal{P}_+$ (resp.  $\mathcal{P}_-$) be the set of polygons inscribed in $\square(r-s,r)$, containing no interior points, and obtained from $\Delta^{(r,s)} = [(0,0),(r-s,r)]$ by adding vertices above it (resp. below it). Denoting $\Delta_{\pm}^{(r,s)} = \max \mathcal{P}_{\pm}$, it is clear that
\begin{equation}
\label{Deltatwo}\Delta_{\max}^{(r,s)} = \Delta_+^{(r,s)} \cup \Delta_-^{(r,s)}\,.
\end{equation}
We can define an inclusion-preserving bijection $\sigma \colon \mathcal{P}_+ \rightarrow \mathcal{P}_-$: if $(x_i,y_i)_{i = 1}^{r}$ are the steps between consecutive vertices in $\Delta \in \mathcal{P}_+$ listed from $(0,0)$ to $(r-s,r)$, we define $\sigma(\Delta)$ to be the polygon where $(-x_i,-y_i)_{i = 1}^{r}$ is the steps between consecutive vertices listed from $(r - s,r)$ to $(0,0)$. Subsequently
\begin{equation}
\label{isodelta}
\Delta_-^{(r,s)} = \sigma(\Delta_+^{(r,s)})\,.
\end{equation}

Next, we show that $\Delta_+^{(r,s)}$ is a triangle. In the following, the set of edges $E_{0\infty}$ and the corner $\Gamma_{0\infty}$ will refer to those of $\Delta_+^{(r,s)}$ and we recall that ``point" means ``integral point'' unless precised otherwise. For any two vertices $(p_1 - q_1,p_1),(p_2 - q_2,p_2)$ of $\Delta_+^{(r,s)}$, the fact that a point $(p_0 - q_0,p_0) \in \Delta_+^{(r,s)}$ cannot be an interior point (as this polygon does not contain any) is expressed by the alternative 
	$$
	\frac{p_1}{p_1 - q_1} \leq \frac{p_0}{p_0 - q_0} \qquad {\rm or}\qquad \frac{r - p_2}{(r - s) - (p_2 - q_2)} \geq \frac{r - p_0}{(r - s) - (p_0 - q_0)}\,.
	$$
Taking $(p_1,q_1) = (p_2,q_2) = (p,q) \in \Gamma_{0\infty}$ and $(p_0 - q_0,p_0) = (r' - s',r')$, this condition yields
$$
\frac{r' - s'}{r'} \leq \frac{p - q}{p}\qquad {\rm or}\qquad \frac{(r - s) - (r' - s')}{r - r'} \geq \frac{(r - s) - (p - q)}{r - p}\,.
$$
Since $\max(p,r') \leq r$, the ratios $\frac{r' - s'}{r'}$ and $\frac{p - q}{p}$ belong to the Farey sequence $\mathcal{F}_r$. Since $\frac{r' - s'}{r'}$ is the left neighbour of $\frac{r - s}{r}$ in $\mathcal{F}_r$, we must in fact have
\begin{equation}
\label{eq:alternat012}\frac{p - q}{p} = \frac{r' - s'}{r'}\qquad {\rm or}\qquad \frac{(r - s) - (p - q)}{r - p} = \frac{(r - s) - (r' - s')}{r - r'}\,.
\end{equation}
As this is true for any vertex of $\Delta_+^{(r,s)}$, this means that $\Delta_+^{(r,s)}$ must be a triangle with vertices $(0,0),(r-s,r),(p-q,p)$ with $p,q$ satisfying the alternative \eqref{eq:alternat012}. We deduce from \eqref{Deltatwo}-\eqref{isodelta} that $\Delta_{\max}^{(r,s)}$ is the quadrilateral with vertices  $(0,0),(r-s,r),(p-q,p),(p' - q',p')$ where $p' = r - p$ and $q' = s - q$.
	
The first equality $\frac{p - q}{p} = \frac{r' - s'}{r'}$ is realised if and only if $p = br'$ and $q = bs'$ for some $b \in \mathbb{Z}_{> 0}$. We claim that the open segment $((p-q,p),(r-s,s))$ has no point. Indeed, such a point $(p' - q',p')$ would have to satisfy $\frac{(r - s) - (r' - s')}{r - r'} = \frac{(r - s) - (p' - q')}{r - p'} = \frac{(r - s) - b(r' - s')}{b(r - r')}$, which forces $b = 1$ and $(p' - q',p') = (r' - s',r')$, and therefore the latter would not be a point in $((p-q,p),(r-s,s))$. Then, the maximality of $\Delta_+^{(r,s)}$ for the inclusion implies that $b = \lfloor \frac{r}{r'} \rfloor$. If furthermore $b \geq 2$, from the Farey sequence $ \mathcal F_r $, we have $ 2 \leq \frac{r}{r'} < \frac{r-s}{r'-s'} $, which implies that $ c < 2 $ and thus $ c = 1 $. This fits in the case (F$_{+}$).

The second equality $\frac{(r - s) - (p-q)}{r  - r'} = \frac{(r  - s) - (r' - s')}{r - r'}$ is realised if and only if $r - p = c(r - r')$ and $s - q = c(s - s')$ for some $c \in \mathbb{Z}_{> 0}$. The previous argument can be adapted to show that the open segment $((0,0),(p-q,p))$ has no point, and then, the maximality of $\Delta_+^{(r,s)}$ implies that $c = \big\lfloor \frac{r - s}{(r - s) - (r' - s')} \big\rfloor$. If furthermore $c \geq 2$, from the Farey sequence $ \mathcal F_r $, we have $ 2 \leq \frac{r}{r'} < \frac{r-s}{r'-s'} $, which implies that $ b < 2 $ and thus $ b = 1 $. This fits in the case (F$_{-}$).

The only remaining case is $b = c = 1$, which means that $p = r'$ and $q = s'$, hence fits in both (F$_{\pm}$).

The locally admissible values of $(r,s)$ are those satisfying $r = \pm 1\,\,{\rm mod}\,\,s$. The value of $(r',s')$ is uniquely characterised by B\'ezout's identity $rs' - r's = 1$ with $r' \in [r - 1]$ and $s' \in [s]$. Writing $r = ks \pm 1$ (and treating $s = 1$ separately), it is then easy arithmetics to understand which of (F$_{\pm}$) is realised and find the corresponding $(r',s',b,c)$ depending on the congruence and $k$. This leads to the announced result. As a matter of fact, admissible $(r,s)$ always give $\max(b,c) \geq 2$, so are covered either by (F$_{+}$) or (F$_{-}$) but not by both.
\end{proof}

\subsubsection{Deformation of spectral curves}
\label{Sec532}
We now describe maximal proper families of spectral curves $\mathcal{S}^{(r,s)}_{T}$ fitting \cref{de:FamSC} and for which the Newton polygon for the generic fibre is $\Delta_{\max}^{(r,s)}$. Their expression is slightly different in the two situations (F${}_{\pm}$) described in \cref{le:rsmax}, and we will see that the extra case $s = r + 1$ is in fact covered by (F${}_+$). We present them in two ways: by the equation of the underlying affine curve $P(x,y) = 0$; and by a parametrisation of the normalised compact (genus $0$) spectral curve $\mathcal{S}^{(r,s)}_t$ constructed as in \cref{de:irredpc} (see also the discussion about simultaneous normalisations at the end of \cref{Sec51}). In all cases we indicate a parametrisation $(x(w),y(w))$ in terms of a uniformising coordinate $w \in \mathbb{P}^1$ and this completes the description of the spectral curve by equipping it with $\omega_{0,1} = y \dd x$ and $\omega_{0,2}^{{\rm std}}$. Checking that the given formulae provide a parametrisation is a direct computation with help of B\'ezout's identity $rs' - r's = 1$.

%

\medskip

\noindent \underline{\textit{Case (F$_{+}$) with $s < r -1$}.} The defining equation is
$$
P(x,y) = - L\big(x^{r' - s'}y^{r'}\big) + x^{r - s}y^{r}R\big(x^{-(r' - s')}y^{-r'}\big)\,,
$$
where $L,R$ are polynomials of degree at most $b$ with $L(0)R(0) \neq 0$.  This non-vanishing condition is needed to have a Newton polygon inscribed in $\square(r-s,r)$. We can take $w= x^{-(r' - s')}y^{-r'}$ as uniformising variable. Introducing $\check{L}(w) = w^{b}L(1/w)$, we obtain the parametrisation
$$
x(w) = w^{r - br'}\bigg(\frac{\check{L}(w)}{R(w)}\bigg)^{r'}\,,\qquad y(w) = \frac{1}{w^{r - s - b(r' - s')}} \bigg(\frac{R(w)}{\check{L}(w)}\bigg)^{r' - s'}\,.
$$

\medskip

\noindent \underline{\textit{Case (F$_{+}$) with $s = r -1$}.} The previous formulae are valid, but we only impose that $L(0)R(0) \neq 0$ or $\check{L}(0)\check{R}(0) \neq 0$, as this suffices to have a Newton polygon inscribed in $\square(1,r)$. Here we denoted as before $\check{L}(w) = w^rL(1/w)$ and $\check{R}(w) = w^{r}R(1/w)$. As $r' = s' = 1$ and $b = r$ the expressions simplify: the defining equation is $P(x,y) = -L(y) + x y^{r}\,R(y^{-1})$ and the parametrisation is
\begin{equation}
\label{theonesp}x(w) = \frac{\check{L}(w)}{R(w)}\,,\qquad y(w) = \frac{1}{w}\,.
\end{equation}
For $L(w) = R(w) = w^r$ we retrieve the curve $\mathcal{S}^{(r,r+1)}$ --- in its usual parametrisation if we use $1/w$ as uniformising variable --- while $L(w) = R(w) = 1$ corresponds to $\mathcal{S}^{(r,r - 1)}$.
 
\medskip

\noindent \underline{\textit{Case (F$_{-}$)}.} The defining equation is
$$
P(x,y) = - R\big(x^{r - r' - (s - s')}y^{r - r'}\big) + x^{r - s}y^{r} L\big(x^{-(r - r' - (s - s'))}y^{-(r - r')}\big)\,,
$$
where $L$ and $R$ are polynomials of degree at most $c$ and such that $L(0)R(0) \neq 0$. We can take $w = x^{r - r' - (s - s')}y^{s - s'}$ as uniformising variable. Introducing $\check{L}(w) = w^{c}L(1/w)$, we obtain the parametrisation
$$
x(w) = w^{r - c(r - r')} \bigg(\frac{\check{L}(w)}{R(w)}\bigg)^{r - r'}\,,\qquad y(w) = \frac{1}{w^{r - s - c(r - r' - (s - s'))}}\bigg(\frac{R(w)}{\check{L}(w)}\bigg)^{r - r' - (s - s')}\,.
$$

\medskip

In the overlapping case $b = c = 1$, our description of the two families coincide up to a linear transformation on the parameters of the polynomials $L,R$. In all three cases the base $T$ parametrises the space of polynomials $(L,R)$ as above that have simple roots (which amounts to requiring that $P$ is nondegenerate). It is the complement of a divisor in some $\mathbb{C}^D$, hence is connected. The central fibre --- namely the one identified with the $(r,s)$-curve --- corresponds to $L(w) = R(w) = 1$. When $t \in T$ approaches the central fibre via generic fibres, half of the finite ramification points (and the finite branch points) merge to form a single ramification point of order $r$ above the branch point $0$, while the other half merge and form a single ramification point of order $r$ above the branch point $\infty$. For a precise description of the branching structure of a generic fibre for all three cases, with the order $r$ and associated local parameter $\bar s$, see \cref{assidesec}.
 
All fibres in these families are irreducible. This is essentially a complete list of globally admissible spectral curves of genus $0$ which cannot be deformed (in a globally admissible family) to a reducible spectral curve, due to the following lemma.

\begin{lemma}\label{le:class0}
Let $\mathcal{S}$ be an irreducible globally admissible compact spectral curve of genus $0$ coming from a nondegenerate affine curve (as in \cref{de:irredpc}) of coprime bidegree $(d_x,d_y)$. We have the following alternative:
\begin{itemize}
\item either it has no ramification points;
\item or its ramification points above the branch points $0,\infty \in \mathbb{P}^1$ do not contribute to topological recursion;
\item or it is a fibre in the family $\mathcal{S}^{(r,s)}_{T}$ for $s \in [r + 1]$ and $r = \pm 1\,\,{\rm mod}\,\,s$.
\end{itemize}
\end{lemma}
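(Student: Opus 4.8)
The goal is to classify irreducible globally admissible compact spectral curves of genus $0$ coming from a nondegenerate affine curve of coprime bidegree $(d_x,d_y)$. The plan is to run the classification entirely at the level of the Newton polygon $\Delta(P)$, using the dictionary between Newton polygons and local parameters established in \cref{le:coprime,th:globsimp}, together with the structural results on strong equivalence classes from \cref{pr:Qdelt,th:gapairs}. First I would observe that, since the spectral curve has genus $0$, \cref{le:ndnp} gives $|\mathring{\Delta}(P)|_{\mathbb{Z}} = 0$: the Newton polygon has no interior integral points. Since $(d_x,d_y)$ are coprime, $\Delta(P)$ is an inscribed polygon which is not a long diagonal, so \cref{pr:Qdelt} applies and $\Delta(P)$ is strongly equivalent to a maximal polygon $\Delta_{\max}$ with no interior points. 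By \cref{th:gapairs}, global admissibility is preserved in the strong equivalence class, so it suffices to analyse the possible maximal polygons.

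\textbf{Key steps.} The heart of the argument is a case distinction on the shape of $\Delta_{\max}$ inside $\square(d_x,d_y)$, organised according to the conditions ($\Gamma$A1)--($\Gamma$A3) of \cref{th:globsimp}. Condition ($\Gamma$A1) forces $|\hat{\Gamma}_{00}(P)|_{\mathbb{Z}} = 0$, i.e. the polygon touches the origin corner trivially; combined with the absence of interior points this strongly constrains the lower-left part of the polygon. I would then argue that if $\Delta_{\max}$ has no edge of slope in $(0,\infty)\setminus\{1\}$ touching the corners $\Gamma_{0\infty}$ or $\Gamma_{\infty 0}$ — equivalently, if all Puiseux branches above $p_{0\infty}$ and $p_{\infty 0}$ are unramified or have $\bar s \le -1$ (i.e. $\nu \le -1$) — then either there are no ramification points at all (first bullet) or the ramification points over $0$ and $\infty$ do not contribute to topological recursion (second bullet), using \cref{th:ale} which says exactly that points with $r_p = 1$ or $\bar s_p \le -1$ give vanishing contribution. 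Otherwise, $\Delta_{\max}$ carries at least one ``essential'' edge of slope $\neq 1$ in the $E_{0\infty}$ or $E_{\infty 0}$ region, and global admissibility (via \cref{le:globad,le:muiglob} and the globalisability conditions (G1)--(G3)) together with local admissibility of each edge (the congruence $r = \pm 1 \bmod s$) severely restricts the edge slopes. The final step is to identify the resulting maximal polygons with the polygons $\Delta_{\max}^{(r,s)}$ described in \cref{le:rsmax}: a maximal inscribed polygon with no interior points, coprime bidegree, and whose essential edges are globally admissible is precisely the quadrilateral of type (F$_+$) or (F$_-$) with bidegree $(r-s,r)$ and $r = \pm 1 \bmod s$, $s \in [r+1]$. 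By \cref{pr:everycurve} (applied fibrewise) and the explicit description of $\mathcal{S}^{(r,s)}_T$ in \cref{Sec532}, any curve whose Newton polygon is strongly equivalent to such a $\Delta_{\max}^{(r,s)}$ and which is nondegenerate arises as a fibre of the family $\mathcal{S}^{(r,s)}_T$; the constant genus and bidegree along the strong equivalence class ensure the simultaneous normalisation exists (as discussed at the end of \cref{Sec51}).

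\textbf{Main obstacle.} The delicate point is the combinatorial core: showing that a maximal inscribed polygon with no interior points, coprime bidegree, and globally admissible essential edges must have exactly the shape $\Delta_{\max}^{(r,s)}$ and no more complicated configuration of edges. The difficulty is that \emph{a priori} $E_{0\infty}$ (and $E_{\infty 0}$) could contain several edges of different slopes, and one must show that global admissibility — specifically the interplay of the pairwise globalisability conditions (C-i)/(C-ii) of \cref{th:rewriting} across all pairs of edges, together with local admissibility $r_e = \pm 1 \bmod s_e$ of each — collapses this to at most one essential edge per corner, with the slope tightly pinned down by the Farey-neighbour structure of \cref{le:FT}. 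I expect this to require a careful area/Pick's-formula argument in the spirit of \cref{le:rsmax}, ruling out intermediate vertices, plus a short arithmetic lemma handling the congruence constraints; once that combinatorial skeleton is in place, matching with $\mathcal{S}^{(r,s)}_T$ and invoking \cref{pr:everycurve} is routine. A secondary subtlety is bookkeeping the degenerate cases — when $d_x = r - s$ equals $0$ or $1$, or when the only essential edge has slope $1$ — which should all fall into the first two bullets, but need to be stated explicitly to make the trichotomy exhaustive.
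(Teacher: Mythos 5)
Your general strategy (work entirely with the Newton polygon, use genus $0$ $\Rightarrow$ no interior points, and invoke \cref{th:globsimp}) is the right starting point, but the proposal has a genuine gap at its core, and the route it sketches is harder than what is actually needed.

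The missing step is the following geometric fact, which is the engine of the paper's proof: after disposing of the trivial cases $d_y=1$ (no ramification) and $d_x=1$ (a fibre of the family (F$_+$)), one shows that \emph{both} $(0,0)$ and $(d_x,d_y)$ lie in $\Delta(P)$, hence the full diagonal $[(0,0),(d_x,d_y)]$ is contained in $\Delta(P)$. The point $(0,0)$ comes from ($\Gamma$A1) when $d_x,d_y\geq 2$; the point $(d_x,d_y)$ comes from a short contradiction argument (if it were absent, the triangle spanned by $(0,0)$, $(h_x,d_y)$, $(d_x,h_y)$ would sit inside $\Delta(P)$ and always contain an interior integral point). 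Once the diagonal is inside, the trichotomy falls out of the comparison of $d_x$ and $d_y$ (they cannot be equal by coprimality unless both are $1$): if $d_x<d_y$ the diagonal \emph{is} $\Delta^{(r,s)}$ with $(r,s)$ read off from the bidegree, so $\Delta(P)$ sits between $\Delta^{(r,s)}$ and $\Delta^{(r,s)}_{\max}$ and the curve is a fibre of $\mathcal{S}^{(r,s)}_T$, with the congruence forced by \cref{le:analrs}; if $d_x>d_y$, reflecting across $y=x$ shows every edge of $E_{0\infty}\cup E_{\infty 0}$ has slope $\leq 1$, i.e.\ every point over $0$ or $\infty$ is unramified or has $\bar s_p\leq -1$, so by \cref{th:ale} none of them contributes — that is the second bullet, with no admissibility analysis required at all. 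Your proposal never establishes the containment of the diagonal, and without it you cannot identify $(r,s)$ from the bidegree nor see why the second bullet occurs exactly when $d_x>d_y$.

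Instead, you defer the heart of the matter to an "expected" Pick's-formula classification of maximal inscribed polygons with globally admissible essential edges, which you do not carry out — and whose guiding heuristic ("admissibility collapses to at most one essential edge per corner") is false: $\Delta^{(r,s)}_{\max}$ in case (F$_\pm$) has a corner containing two maximal edges, one of which is a union of $b$ (or $c$) edges, each corresponding to a distinct contributing branch. Two smaller slips: the reduction to the maximal polygon via \cref{pr:Qdelt,th:gapairs} is unnecessary (one only needs that $\Delta(P)$ is sandwiched between the diagonal and $\Delta^{(r,s)}_{\max}$, which is what membership in the family $\mathcal{S}^{(r,s)}_T$ means), and "$\bar s_p\leq -1$" is not equivalent to "$\nu_p\leq -1$" since $\nu_p=\bar s_p/r_p$.
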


\begin{proof}
Let $(d_x,d_y)$ be the bidegree and $\Delta$ be the Newton polygon of such an affine curve. If $d_y = 1$, $x$ has no ramification point as it is a global coordinate on the curve. If $d_x = 1$, the curve has an equation of the form $R_0(y) + xR_1(y) = 0$ and is a fibre in the family (F$_{+}$). We now assume $d_x,d_y \geq 2$. Since the curve is nondegenerate and has genus $0$, $\Delta$ has no interior points. Since it is irreducible and globally admissible, it satisfies (GA1) of \cref{pr:ndga}, i.e.  $|\hat{\Gamma}_{00}(\Delta)|_{\mathbb{Z}} = 0$. For $d_x,d_y \geq 2$ this implies $(0,0) \in \Delta$. Irreducibility also implies that $\Delta$ is inscribed in $\square(d_x,d_y)$. Assume that $(d_x,d_y) \notin \Delta$. Then, we have
\begin{equation*}
\begin{split}
& h_x := \max\{h \in \mathbb{Z}_{\geq 0}\,\,|\,\,(h,d_y) \in \Delta\}  < d_x\,, \\
& h_y := \max\{h\in \mathbb{Z}_{\geq 0}\,\,|\,\,(d_x,h) \in \Delta\} < d_y\,.
\end{split}
\end{equation*}
 By convexity, the triangle $T$ with vertices $(0,0),(h_x,d_y),(d_x,h_y)$ is contained in $\Delta$, but $T$ always contains an interior point:
\begin{itemize}
\item if $\min(h_x,h_y) > 0$, then $(h_x,h_y) \in \mathring{T}$;
\item If  $h_x = 0$ and $h_y > 0$, then $(d_x - 1,h_y) \in \mathring{T}$;
\item if $h_y = 0$ and $h_x > 0$, then $(h_x,d_y - 1) \in \mathring{T}$;
\item if $h_x = h_y = 0$, then $(1,1) \in \mathring{T}$.
\end{itemize} 
This contradicts the original assumption, hence $(d_x,d_y) \in \Delta$ and $\ell\coloneqq [(0,0),(d_x,d_y)] \subseteq \Delta$.

We now take into account the assumption that $d_x,d_y$ are coprime. If $d_x < d_y$, $\ell = \Delta^{(r,s)}$ with $r = d_x$ and $s = d_x - d_y$ coprime, so the curve appears as a fibre in one of the families (F$_{\pm}$).  As we will see in \cref{le:analrs}, the global admissibility assumption forces $r = \pm 1\,\,{\rm mod}\,\,s$, so we are in the third case.

It remains to treat the case $d_x > d_y$. Let $\rho$ be the plane reflection along the diagonal axis and set $r = d_x$, $s = d_x - d_y$. Then $\rho(\Delta)$ is inscribed in $\square(r-s,r)$ with $s \in [r - 1]$, has no interior points and contains $\rho(\ell) = \Delta^{(r,s)}$, so $\rho(\Delta)$ is contained in $\Delta^{(r,s)}_{\max}$ described in \cref{le:rsmax}. In particular, $\Delta$ only has edges in $E_{0\infty}$ and $E_{\infty0}$ and all their slopes must be between (or equal to) the left and right neighbours of $\frac{r - s}{r}$ in the Farey sequence $\mathcal{F}_r$, and they are smaller or equal to $1$. The edges $E_{0\infty}$ (resp. $E_{\infty0}$) correspond to points $p$ in the projectivised and normalised curve $\tilde{C}$ projecting to $x=0$ (resp. $x = \infty$). Recalling the correspondence \eqref{slopelocal} between slopes in $E_{0\infty}$ or $E_{\infty 0}$ and local parameters at $p$ we see that an edge of slope $1$ corresponds to an unramified point while an edge of slope $< 1$ corresponds to $\bar s_p \leq -1$. In both situations such points do not contribute to the topological recursion, cf. \cref{th:ale}.
\end{proof}

\subsubsection{Global admissibility and limits}

For the proper families of spectral curves $ \mathcal S^{(r,s)}_T $ described in \cref{Sec532}, we would like to understand how local admissibility of the central fibre relates to global admissibility of the family. To this end, we apply the deformation result of \cref{th:gapairs} regarding the invariance of the notion of global admissibility under deformations of Newton polygons that preserve the set of interior points.

 \begin{lemma}\label{le:analrs}
	Let $s \in [r + 1]$ coprime to $r \geq 2$. The family of spectral curves $\mathcal S^{(r,s)}_T$  is globally admissible if and only if $ r = \pm 1\,\,{\rm mod}\,\,s $.
\end{lemma}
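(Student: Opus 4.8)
The strategy is to invoke \cref{th:gapairs} to reduce the question of global admissibility of the whole family $\mathcal{S}^{(r,s)}_T$ to a statement about a single (convenient) fibre, and then to check global admissibility of that fibre via \cref{th:globsimp}. Recall from \cref{le:rsmax} that the generic fibre of $\mathcal{S}^{(r,s)}_T$ has Newton polygon $\Delta^{(r,s)}_{\max}$, and that the central fibre (the $(r,s)$-curve itself) has Newton polygon $\Delta^{(r,s)} = [(0,0),(r-s,r)]$, which is a long diagonal precisely when $r$ and $s$ are not coprime --- but here they are coprime, so $\Delta^{(r,s)}$ is not a long diagonal, and neither is $\Delta^{(r,s)}_{\max}$ (it contains interior-free triangles but the whole strong equivalence class is not that of a long diagonal). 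Hence \cref{th:gapairs} applies: $\mathcal{S}^{(r,s)}_T$ is globally admissible (equivalently, all its fibres coming from nondegenerate affine curves are) if and only if the central fibre $\mathcal{S}^{(r,s)}$ is globally admissible.

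First I would record the local data of $\mathcal{S}^{(r,s)}$, which was already computed in \cref{s:rscurve}: the only (finite or infinite) ramification points are $\mathbf{0}$ with $(r_{\mathbf{0}},s_{\mathbf{0}},\bar s_{\mathbf{0}}) = (r,s,s)$ and $\boldsymbol{\infty}$ with $(r_{\boldsymbol{\infty}},s_{\boldsymbol{\infty}},\bar s_{\boldsymbol{\infty}}) = (r,-s,-s)$, and the curve has genus $0$. Since $\tilde{C}^{(r,s)} \simeq \mathbb{P}^1$, condition (DC4) and the global admissibility conditions concerning $\omega_{0,2}$ are automatic. The singular points of the projective curve $C^{(r,s)}$ are $p_{0\infty}$ (lying over $x=\mathbf{0}$, giving the single branch point $\mathbf{0}$ and its preimage of order $r$) and $p_{\infty 0}$ (over $x=\boldsymbol{\infty}$); both are unibranched because $r,s$ are coprime, so each fibre $x^{-1}(q)$ for a branch point $q$ is a \emph{single} point. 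Therefore the pairwise conditions in \cref{de:globaladm} and \cref{th:globsimp} are vacuous: there is never a pair of distinct points over the same branch point. Consequently, global admissibility of $\mathcal{S}^{(r,s)}$ reduces exactly to: (i) $|\hat\Gamma_{00}(P_{r,s})|_{\mathbb{Z}} = 0$, which holds since the monomial support of $x^{r-s}y^r - 1$ touches the origin; and (ii) local admissibility of $\mathcal{S}$ at $\mathbf{0}$ and at $\boldsymbol{\infty}$ (recall singularities are confined to $E'$ here so $\mathfrak{C}_{r,s}$ is smooth, matching (gA1)). Comparing with \cref{de:localadm}: at $\mathbf{0}$ we need $s_{\mathbf{0}} = s$ coprime to $r$ (true) and, since $0 < s \le r+1$, the congruence $r = \pm 1\,\,{\rm mod}\,\,s$; at $\boldsymbol{\infty}$ we have $s_{\boldsymbol{\infty}} = -s \le -1$, so (lA1) requires $\gcd(r,s)=1$ (true) and (lA2), (lA3) are automatically satisfied. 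Hence $\mathcal{S}^{(r,s)}$ is locally admissible --- equivalently globally admissible --- if and only if $r = \pm 1\,\,{\rm mod}\,\,s$.

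Putting the two halves together: by \cref{th:gapairs}, $\mathcal{S}^{(r,s)}_T$ is globally admissible as a family iff its central fibre $\mathcal{S}^{(r,s)}$ is a globally admissible spectral curve, which by the analysis above happens iff $r = \pm 1\,\,{\rm mod}\,\,s$. The one point that needs a little care, and which I expect to be the mildest obstacle, is the bookkeeping of hypotheses of \cref{th:gapairs} and \cref{th:globsimp}: one must confirm that the generic fibre of $\mathcal{S}^{(r,s)}_T$ is irreducible and nondegenerate (true by construction of $T$ as the nondegeneracy locus, and irreducibility follows since the Newton polygon $\Delta^{(r,s)}_{\max}$ is inscribed and not a long diagonal), and that the strong equivalence class does not contain a long diagonal --- which holds because $\gcd(r,s)=1$ forces $\gcd(d_x,d_y) = \gcd(r-s,r) = 1$ for the bidegree. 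With these checked, the proof is complete; note in particular the extra case $s = r+1$ (the $A_{r-1}$-singularity deformation, covered by family (F${}_+$)) satisfies $r = -1 \bmod s$ when $s = r+1$ only if $r \equiv -1 \pmod{r+1}$, i.e.\ always, so it is automatically globally admissible, consistent with the known results of \cite{DNOPS19, CCGG22}.
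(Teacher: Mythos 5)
Your proof is correct and follows essentially the same route as the paper: reduce to the central fibre via \cref{th:gapairs} (after checking irreducibility, nondegeneracy, and the absence of a long diagonal, which follows from $\gcd(r-s,r)=1$), then verify that global admissibility of the $(r,s)$-curve reduces to local admissibility at its two ramification points, yielding the congruence $r=\pm1\,\,{\rm mod}\,\,s$ at $\mathbf{0}$ and nothing at $\boldsymbol{\infty}$. The only cosmetic difference is that the paper phrases the central-fibre check through the slope conditions ($\Gamma$A1)--($\Gamma$A3) of \cref{th:globsimp}, whereas you use the local parameters from \cref{s:rscurve} together with the vacuousness of the pairwise conditions (the fibres over the two branch points being singletons); these are equivalent formulations.
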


\begin{proof}
We rely on the notations and constructions of \cref{S4} and in particular \cref{s:rscurve}. We first claim that the central fibre $\mathcal{S}^{(r,s)}$ is globally admissible if and only if it is locally admissible, i.e. if and only if $r = \pm 1\,\,{\rm mod}\,\,s$. Since the underlying projectivised curve $ C^{(r,s)} \subset \mathbb{P}^1 \times \mathbb{P}^1$ is nondegenerate, we rely on \cref{th:globsimp} providing equivalent conditions of global admissibility, named ($\Gamma$A1)--($\Gamma$A3). The Newton polygon $\Delta^{(r,s)}$ of $x^{r -  s}y^r = 1$ is a diagonal in $\square(|r - s|,r)$. Its corner $\Gamma_{00}$ does not contain integral points off the axes, so ($\Gamma$A1) is satisfied. ($\Gamma$A2)-($\Gamma$A3) say that the edges in $ E_{0\infty} $ and $ E_{\infty 0} $ should be globally admissible in the sense of \cref{de:edgeadm,de:edgeadm2}. If $ s = r + 1 $, this is automatic as $ E_{0\infty} $ and $ E_{\infty 0} $  are both empty.  If  $ s \in [r-1]$, both $ E_{0\infty} $ and $ E_{\infty 0} $ consist of the same edge that connects $ (0,0) $ to $ (r-s,r) $. The global admissibility condition for this edge is  $ r = \pm 1\,\,{\rm mod}\,\,s $ since the slope is $ \frac{r}{r-s} > 1 $. This justifies the claim.

Next, for every $t \in T$ the affine curves $\mathfrak{C}_t^{(r,s)}$ underlying $\mathcal{S}_{t}^{(r,s)}$ are by design irreducible, nondegenerate and have constant bidegree $(|r - s|,r)$. By construction, the Newton polygons of $\mathcal{S}^{(r,s)}_{t}$ for $t \in T$ are obtained from the one of the central fibre $\mathcal{S}^{(r,s)}$ by adding vertices in $\square(|r - s|,r)$ in such a way that no interior point is created. By  \cref{th:gapairs}, we deduce that $\mathcal{S}^{(r,s)}_t$ is globally admissible if and only if $r = \pm 1\,\,{\rm mod}\,\,s $.	 
\end{proof}

We note that the family is trivialisable, i.e. $\Sigma_T^{(r,s)} \simeq \mathbb{P}^1 \times T$ as the global coordinate $w$ provides a global section of $T \rightarrow \Sigma_T^{(r,s)}$. The system of correlators $(\omega_{g,n})_t$ constructed by topological recursion on $\Sigma_{t}^{(r,s)}$ can therefore be considered as a $t$-dependent $n$-differentials on $(\mathbb{P}^1)^n$. Applying  \cref{th:TRLimits}, we arrive at the following result. 

	\begin{theorem}\label{th:lim}
		Let $s \in [r + 1]$ coprime to $r \geq 2$.  If $r = \pm 1 \,\,{\rm mod}\,\,s$, then $(\omega_{g,n})_t(w_1,\ldots,w_n)$ depends analytically on $t \in T$ and in particular
$$
\forall w_1,\ldots,w_n \in (\mathbb{P}^1 \setminus \{0\})^n,\qquad \lim_{t\rightarrow 0} (\omega_{g,n})_t(w_1,\ldots,w_n) = (\omega_{g,n})_0(w_1,\ldots,w_n)\,.
$$
with uniform convergence for $(w_1,\ldots,w_n)$ in any compact of $(\mathbb{P}^1 \setminus \{0\})^n$. If $r \neq \pm 1\,\,{\rm mod}\,\,s$, then topological recursion does not commute with the limit $t \rightarrow 0$, in particular $(\omega_{0,3})_t(w_1,w_2,w_3)$ diverges when $t \rightarrow 0$.
\end{theorem}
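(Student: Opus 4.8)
The plan is to treat the two directions separately. For the ``if'' direction (when $r=\pm 1\,\,{\rm mod}\,\,s$), I would simply combine the two results established earlier: by \cref{le:analrs} the family $\mathcal S^{(r,s)}_T$ is globally admissible, and by \cref{le:admproper} (the affine curves $\mathfrak C^{(r,s)}_t$ are irreducible, nondegenerate, of constant bidegree, and the normalisations have constant geometric genus $0$) it is globally admissible in the finite-degree setting, hence globally admissible as a family of spectral curves. Then \cref{th:TRLimits} immediately gives that $(\omega_{g,1+n})_T$ is a meromorphic section of $\Omega_f^{\boxtimes(n+1)}$ over the relative product; since $\Sigma^{(r,s)}_T\simeq \mathbb P^1\times T$ via the global coordinate $w$ and $\dd x_t$ is a $t$-analytic family of meromorphic $1$-forms, analyticity of $(\omega_{g,n})_t(w_1,\dots,w_n)$ in $t$ follows. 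The limit statement and the uniform convergence on compacts of $(\mathbb P^1\setminus\{0\})^n$ are then just the standard consequence of analyticity recalled in the displayed formula right after \cref{th:TRLimits}: one evaluates along constant local sections $W_i(t)\equiv w_i$, which is legitimate precisely because $w_i\neq 0$ keeps us away from the ramification locus, which collides only above $x=0$ and $x=\infty$ as $t\to 0$.

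For the ``only if'' direction, the idea is to show directly that $(\omega_{0,3})_t$ blows up as $t\to 0$ when $r\neq\pm1\,\,{\rm mod}\,\,s$, which also proves non-commutation since $(\omega_{0,3})_0$ is a fixed finite object (the central fibre is still locally admissible as a spectral curve only when $r=\pm1\,\,{\rm mod}\,\,s$; but even when it is not locally admissible, the statement to contradict is that the limit of the $(\omega_{g,n})_t$ exists). The cleanest route is an explicit computation of $(\omega_{0,3})_t$ on the genus-$0$ fibre. Using the parametrisation $(x_t(w),y_t(w))$ from \cref{Sec532} and the genus-$0$ topological recursion formula, one has the classical closed form
\begin{equation}
(\omega_{0,3})_t(w_1,w_2,w_3)=\sum_{a}\Res_{w=\beta_a(t)}\frac{\prod_{i=1}^3\big(\int^{w}_{\beta_a(t)}\tfrac{\dd u\,\dd w_i}{(u-w_i)^2}\big)}{\dd x_t(w)\,\dd y_t(w)}\,,
\end{equation}
where $\beta_a(t)$ are the (finite) ramification points. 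As $t\to 0$, roughly half of the $\beta_a(t)$ coalesce at $w=0$ and the other half at $w=\infty$. I would track the leading behaviour of $\dd x_t\,\dd y_t$ near the coalescing cluster at $w=0$: there $x_t\sim w^{r}$ and $y_t\sim w^{s-r}$ up to the polynomial corrections $\check L/R$, so the relevant denominator degenerates at a rate governed by the local parameters $(r,s)$ at the central ramification point. The congruence $r=\pm1\,\,{\rm mod}\,\,s$ is exactly the local admissibility condition (lA2); when it fails, the residue sum over the coalescing cluster does not organise into a finite limit — the recursion kernel $K_2$ of the limiting curve is ill-defined (its denominator $\Upsilon_{r-1}$ acquires the wrong vanishing order, cf. the discussion after \cref{th:ale}), and correspondingly the finite-$t$ residues pick up a pole in $t$.

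The main obstacle will be making the last divergence argument rigorous rather than heuristic: one must extract the precise order of the pole in $t$ from the coalescence of the $\beta_a(t)$, i.e. perform a controlled blow-up/rescaling $w=t^{\kappa}\tilde w$ near $w=0$ with the correct exponent $\kappa$ (dictated by the Newton polygon data, hence by $r,s$), and show that after rescaling the cluster residue limit equals the (divergent) attempt to run topological recursion on the non-locally-admissible $(r,s)$-curve. I expect the bookkeeping to parallel the abstract-loop-equation exponent computations in the proof of \cref{th:ale} (the analysis around $\theta_p$ versus $\tilde\theta_p$): the inequality that there guaranteed holomorphy of the kernel becomes, for $r\neq\pm1\,\,{\rm mod}\,\,s$, a strict failure, and that gap in the exponent is precisely the negative power of $t$ that appears. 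An alternative, perhaps shorter, obstruction-style argument: if $\lim_{t\to0}(\omega_{0,3})_t$ existed it would be a symmetric meromorphic $3$-differential on $\mathbb P^1$ with poles only at $w=0,\infty$, satisfying the linear and quadratic loop equations of the central fibre at $w=0$; but (lA1)--(lA3) are \emph{necessary} for such a solution with the projection property to exist (this is implicit in the necessity discussion of \cref{de:localadm} and in \cite{BBCCN18}), contradicting $r\neq\pm1\,\,{\rm mod}\,\,s$. I would present the explicit-computation version as the primary proof and mention the obstruction argument as a remark.
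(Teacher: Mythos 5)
Your treatment of the first part (the congruence case) is correct and is exactly the paper's route: \cref{le:analrs} gives global admissibility of the family $\mathcal S^{(r,s)}_T$, and \cref{th:TRLimits} then gives analyticity; nothing more is needed there. (One small imprecision: keeping $w_i\neq 0$ does not keep you away from the ramification locus of the family, since half of the ramification points of the generic fibre collide at $w=\infty$ as $t\to 0$; the uniform convergence near $\infty$ rests on the meromorphicity statement of \cref{th:TRLimits} together with the fact that those points do not contribute poles, not on avoidance of $\mathsf{Ram}_T$.)

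For the second part there is a genuine gap, and you have essentially flagged it yourself. Your primary argument — rescaling $w=t^\kappa\tilde w$ near the coalescing cluster and extracting the order of the pole in $t$ from the degeneration of $\dd x_t\,\dd y_t$ — is never carried out, and making it rigorous is not a routine bookkeeping exercise: one must control the full residue sum over a cluster of colliding ramification points of mixed types, not just the local order of vanishing of the kernel denominator. As written, this part proves nothing. The argument you relegate to a remark is in fact the paper's entire proof, but in a leaner form than you state it: one does \emph{not} need to argue that the limit would satisfy the loop equations of the central fibre (a nontrivial claim, since the local fibre structure changes discontinuously at $t=0$, and you give no justification for passing the loop equations to the limit). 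The paper's argument is simply that each $(\omega_{0,3})_t$ is symmetric, so any limit must be symmetric, whereas the output $(\omega_{0,3})_0$ of formula \eqref{eq:TR} on the central fibre is explicitly non-symmetric when $r\neq\pm1\,\,\mathrm{mod}\,\,s$ by the computation in \cite[Appendix B]{BBCCN18}; hence the limit, if it exists, cannot equal $(\omega_{0,3})_0$, which is exactly the non-commutation statement. Note also that neither your obstruction remark nor this symmetry argument establishes the divergence of $(\omega_{0,3})_t$ by itself; in the paper that claim is supported by the explicit $(7,5)$ computation displayed after the theorem, whereas in your write-up it rests entirely on the uncompleted rescaling analysis. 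You should promote the symmetry-mismatch argument (in its minimal form) to the main proof and either carry out an explicit example or drop the divergence assertion from what you claim to prove.
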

\begin{proof} The first part is the consequence of \cref{th:TRLimits}. The second part comes from the fact that, if the congruence condition is not satisfied,  $(\omega_{0,3})_0(w_1,w_2,w_3)$ is non-symmetric \cite[Appendix B]{BBCCN18} while the limit of a symmetric $3$-differential (if it exists) must be symmetric.
\end{proof}

The first cases where the congruence does not hold are $(r,s) = (7,5), (8,5), (9,7)$ and by explicitly computing $\omega_{g,n}$ for low values of $g$ and $n$ we observed that $(\omega_{g,n})_{t}(w_1,\ldots,w_n)$ diverges when $t \rightarrow 0$. For instance, the family of compact spectral curves associated to
$$
-1 + x^2 y^7 - t^2 y = 0
$$
is a deformation of the $(7,5)$-curve. The left neighbour of $\frac{2}{7}$ in the Farey sequence of order $7$ is $\frac{1}{4}$, so $(r',s') = (4,3)$, and $(b,c) = (1,2)$: this is covered by a (F$_-$) case in \cref{le:rsmax}, and it can be uniformised as
$$
x_t(w) = w(w^2 - t^2)^{3}\,,\qquad y_t(w) = \frac{1}{w^2 - t^2}\,.
$$
For $t \in \mathbb{C}^*$ this is a locally admissible curve, with one regular ramification point at $w = \frac{t^2}{7}$ and two ramification points of type $(3,2)$ at $w = \pm t$. The topological recursion formula \eqref{eq:TR} yields 
\begin{equation*}
\begin{split}
(\omega_{0,3})_t(w_0,w_1,w_2)  & = \frac{ 343\big(343w_0^2w_1^2w_2^2 + 49t^2 A_1 + 7t^4A_2+ t^6\big)}{2t^2(7w_0^2 - t^2)^2(7w_1^2 - t^2)^2(7w_2^2 - t^2)^2}\, \dd w_0 \dd w_1 \dd w_2,
\\
(\omega_{1,1})_t(w_0) & = \frac{A_3}{16t^4(w_0^2 - t^2)^3(7w_0^2 - t^2)^4}\, \dd w_0,  \\
\end{split}
\end{equation*}
where
\begin{equation*}
\begin{split}
A_1 & = w_0^2w_1^2 + w_0^2w_2^2 + w_1^2w_2^2 + 4w_0w_1w_2(w_0 + w_1 + w_2)\,, \\
A_2 & =  w_0^2 + w_1^2 + w_2^2 + 4w_0w_1 + 4w_0w_2 + 4w_1w_2\,, \\
A_3 & = 2401w_0^{12} - 6174w_0^{10}t^2 - 5537w_0^8t^4 + 4284w_0^6t^6 + 2255w_0^4t^8 - 1790w_0^2t^{10} - 47t^{12}\,.
\end{split}
 \end{equation*}
We explicitly see the divergence of $(\omega_{0,3})_t$ and $(\omega_{1,1})_t$ when $t \rightarrow 0$.

\subsubsection{Example: Chebyshev family}

\label{sec:Chebychev} 
As mentioned in the introduction, the CohFT associated to the spectral curve  $ \mathcal S^{(r,r+1)}$ is the  Witten $ r $-spin class, and deformations of this spectral curve that are parametrised as 
$$
	x(w) =  w^r +  a_{r-1} w^{r-1} + \cdots + a_0 , \qquad y(w) = w\,,
$$
correspond to shifts of the Witten $ r $-spin class. For a precise statement we refer to \cite{DNOPS19} and \cite[Remark 4.8]{CCGG22}). The spectral curve for one of the two specific shifts studied in \cite{PPZ19} corresponds to the family
 \begin{equation}
\label{xtyz} x_t(w) = 2 t^{\frac{r}{2}} \mathcal{T}_r \left(\frac{w}{2\sqrt{t}}\right),\qquad y(w) = w\,,
\end{equation}
where $ \mathcal{T}_r $ is the Chebyshev polynomial of the second kind \cite[Theorem A (1)]{CCGG22} and $t \in \mathbb{C}$. A notable feature of this example is that there are multiple ramification points lying over the same branch point, so commutation with the $t \rightarrow 0$ was left as an open problem in \cite[Remark 4.8 and 5.5]{CCGG22}. Since the roots of $x_t'(w)$ are simple for $t \in \mathbb{C}$, the curve \eqref{xtyz} is nondegenerate and appears as a subfamily of $\mathcal{S}^{(r,r+1)}_{T}$. Therefore, Theorem~\ref{th:lim} establishes that topological recursion commutes with the $t \rightarrow 0$ for the family \eqref{xtyz}.

\subsection{More (non-)examples of deformations of \texorpdfstring{$(r,s)$}{(r,s)}-curves}
\label{sec:rsexamples2}

\subsubsection{Non-example: singular deformations}
\label{sec:singular}
One of the conditions in the construction of the family of spectral curves $ \mathcal S_T^{(r,s)}  $ is that  the underlying affine curve $ \mathfrak{C}_t $ remains smooth for any $ t \in T $. If the affine curve had a singular point, we proved in \cref{le:singularities} that the spectral curve cannot be globally admissible. As global admissibility is only a sufficient condition for vertical globalisation (cf. \cref{le:pairs} and \cref{th:rewriting}), it could happen that the contributions from the singular points vanish in the global topological recursion formula. Then, topological recursion would still commute with the $ t \to 0 $ limit even for some deformations of the $ (r,s) $-curves where the underlying affine curve $  \mathfrak{C}_t $  is singular.  However, we strongly believe that this is not the case  and that in the case of deformations of the $ (r,s) $-curves, the conditions of  \cref{th:rewriting} are both necessary and sufficient.

For various locally admissible deformation families where the generic underlying affine curve is singular, we did extensive computations of the correlators symbolically using \textsc{Mathematica}~\cite{Mathematica}. These computations suggest  that indeed the contributions from the singular points do not vanish in the global topological recursion formula, and thus $ \lim_{t \to 0} (\omega_{g,n})_t  $ is finite, but $ \lim_{t \to 0} (\omega_{g,n})_t  \neq (\omega_{g,n})_0$. Hence, topological recursion does not seem to commute with limits in these cases. 

An example is the family of affine curves of equation
$$
y^3x^2 - 1 + 6t^2y - 9t^4y^2 = 0\,.
$$
It has a singular point at $(x,y) = (0,\frac{1}{t})$. The corresponding family of smooth compact curves has genus $0$ (despite the presence of one interior point in the Newton polygon) and can be uniformised by a global coordinate  $w \in \mathbb{P}^1$ as
$$
x_t(w) = w^3 - 3t^2w\,,\qquad y_t(w) = w^{-2}\,.
$$
We compute from the topological recursion formula \eqref{eq:TR} for $2g - 2 + n = 1$:
\begin{equation}
\begin{split}
(\omega_{0,3})_t(w_0,w_1,w_2) & = \frac{t^2 \dd w_0 \dd w_1 \dd w_2}{12} \bigg(\sum_{\varepsilon = \pm 1} \frac{1}{(w_0 +\epsilon t)^2(w_1 + \epsilon t)^2(w_2 + \epsilon t)^2}\bigg)\,, \\
(\omega_{1,1})_t(w_0) & = -\frac{\dd w_0}{288}\bigg(\frac{3t^2 - 13tw_0 + 7w_0^2}{(w_0 - t)^4} + \frac{3t^2 + 13tw_0 + 7w_0^2}{(w_0 + t)^4}\bigg)\,.
\end{split}
\end{equation}
We observe that these correlators have a limit for $w_0,w_1,w_2$ kept away from $0$:
$$
\lim_{t \rightarrow 0}\,(\omega_{0,3})_t(w_0,w_1,w_2) = 0\,,\qquad \lim_{t \rightarrow 0}\,(\omega_{1,1})_t(w_0) = -\frac{7 \dd w_0}{144w_0^2}\,,
$$
but it does not in general agree with the correlators of the $(3,1)$-spectral curve\footnote{The latter can be computed directly with \eqref{eq:TR}, or extracted from  \cite[Appendix B and Equation 5.10]{BBCCN18} where $y$ should be be multiplied by $-3$ to match the present convention for the $(3,1)$-curve.}:
$$
\omega_{0,3}^{(3,1)}(w_0,w_1,w_2) = 0\,,\qquad \omega_{1,1}^{(3,1)}(w_0) = \frac{\dd w_0}{9w_0^2}\,.
$$

\subsubsection{Example: components at horizontal \texorpdfstring{$\infty$}{infinity}}
\label{sec:PaulsExample}
 This example was suggested by Paul Norbury. For any $ k \geq 3$, consider the family of affine curve of equation
  \begin{equation}\label{PaulsCurve}
 x y^2 - t y^k - 1 = 0\,.
  \end{equation}
  with parameter $t \in \mathbb{C}$. If $ t \in \mathbb{C}^*$, this is a smooth affine curve of degree $k$, while if $ t = 0$, its degree is $ 2$. This is a deformation of the $(2,1)$-curve (also called Bessel curve) which falls out of the scope of \cref{sec:rsexamples} since it does not respect the bidegree. The compact spectral curve associated to this family has genus $0$ and can be uniformised as
   \begin{equation}
    x_t(w) = w^2 + t w^{2-k}\,, \quad y_t(w) = w^{-1}\,.
  \end{equation}
Its ramification points are at the $k$-th roots of $ (1-\frac{k}{2}) t$. Let $(\omega_{g,n}^{{\rm N}})_t$ be the correlators of the topological recursion. Interestingly, in this case the limit $ t \to 0 $ does in fact commute with topological recursion, but the reason is subtle.

\begin{proposition}\label{pr:Norbury}
For $2g - 2 + n > 0$ and $w_1,\ldots,w_n$ kept away from $0$, we have
\begin{equation}
\label{naivlm}\lim_{t \rightarrow 0} (\omega_{g,n}^{{\rm N}})_t(w_1,\ldots,w_n) = \omega_{g,n}^{(2,1)}(w_1,\ldots,w_n)\,.
\end{equation}
\end{proposition}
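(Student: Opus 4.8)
\textbf{Proof proposal for \cref{pr:Norbury}.}

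The obstacle this statement faces is exactly the one flagged in the text: the family \eqref{PaulsCurve} does \emph{not} have constant bidegree, so the naive picture ``globalise, move contours, localise back'' breaks down because the curve acquires a whole component at horizontal infinity in the limit $t\to 0$ (the degree of $x_t$ as a map drops, or equivalently, points of the curve escape to $w=\infty$ where $x$ is constant). So \cref{th:TRLimits} does not apply directly and one must argue by hand. The plan is to realise the family differently: instead of embedding it in a constant-bidegree family, I would compactify and normalise carefully so that the limiting curve $\Sigma_0$ has \emph{two} irreducible components --- the Bessel curve $\{xy^2=1\}$ and a rational component on which $x\equiv\infty$ --- and track which ramification points of $\Sigma_t$ limit to each component. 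The $k$-th roots of $(1-\tfrac k2)t$ all tend to $0$ as $t\to0$; in the uniformising coordinate $w$ they scale like $t^{1/k}$, so after blowing up (rescaling $w = t^{1/k}\tilde w$) one sees that these $k$ ramification points collide at $w=0$, which is exactly the single ramification point of the Bessel curve. The point at $w=\infty$, where the extra component sits, carries a ramification point of $x_t$ of order $k-2$ whose local parameter $\bar s$ is negative (indeed $x = w^2 + tw^{2-k}$ has $\omega_{0,1} = w^{-1}\,\dd x$ vanishing to high negative order there), so by \cref{th:ale} it does \emph{not} contribute to topological recursion for any $t$, including $t=0$. This is the ``subtle reason'' alluded to in the text: the would-be obstruction component is invisible to the recursion.

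With this in hand the argument runs as follows. First I would check directly from \cref{de:localadm} that $\mathcal S_t^{{\rm N}}$ is locally admissible for all $t\in\mathbb C$ (for $t\neq0$: simple regular ramification points at the roots of $(1-\tfrac k2)t$, plus the non-contributing point at $w=\infty$; for $t=0$: the Bessel ramification point of type $(2,1)$, i.e. $r=2$, $s=\bar s=-1$, which is locally admissible, plus again a non-contributing point). Second, restricting attention to a disc $\mathsf D\subset\mathbb P^1$ around $x=0$ that contains all the finite branch points for $t$ in a neighbourhood of $0$, I would show that topological recursion can be vertically globalised over $x_t^{-1}(\mathsf D)$: the only ramification points landing in there are the $k$ simple regular ones, which pairwise satisfy the non-resonance and ``(C-i) or (C-ii)'' conditions of \cref{th:rewriting} because their aspect ratios are all $\nu = 1+\tfrac 1r$-type data on a genus-$0$ curve with $\omega_{0,1}$ separating the remaining fibres --- this is the same verification as in \cref{le:redacla}. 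Third, since $\Sigma$ is rational, \cref{pr:globalTRdomain} lets me write $(\omega_{g,1+n}^{{\rm N}})_t$ as a \emph{single} contour integral over $x_t^{-1}(\gamma)$, with $\gamma=\partial\mathsf D$ fixed independently of $t$, the recursion kernel built from the unique genus-$0$ bidifferential and base point chosen analytically in $t$; crucially the contour on the curve stays away from $w=\infty$, so the non-contributing component never enters.

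Fourth --- and this is where I expect the real work --- I would invoke \cref{le:LimitsOfContours}: the integrand is, by the induction hypothesis on $2g-2+n$, a meromorphic family of $1$-forms in $z$ whose poles (at the ramification points inside $\gamma$, and at $w_0$ and the fibres of $w_1,\dots,w_n$) stay away from $x_t^{-1}(\gamma)$ as long as $w_1,\dots,w_n$ are kept away from $0$ and $\infty$; hence the contour integral depends analytically on $t$, and in particular $\lim_{t\to0}$ of it equals its value at $t=0$, which is precisely the globalised --- hence, by globalisation at $t=0$, the local --- topological recursion output on the Bessel curve, giving \eqref{naivlm}. The main obstacle is making rigorous that \emph{nothing leaks out through $w=\infty$}: one must confirm that for small $t$ there is genuinely no branch point of $x_t$ near $x=\infty$ beyond the fixed non-contributing one, that the disc collection $\{\mathsf D\}$ (or $\mathsf D$ together with a small disc at $\infty$ whose contribution vanishes by \cref{th:ale}) remains adapted uniformly in $t$, and that the induction can be run fibrewise exactly as in the proof of \cref{th:TRLimits} despite the family not being proper --- i.e. one is really proving an \emph{ad hoc} analogue of \cref{th:TRLimits} for this specific non-proper, non-constant-bidegree family, with the escaping component harmless because of its negative $\bar s$.
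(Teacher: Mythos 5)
Your overall architecture (local admissibility for all $t$, vertical globalisation over a fixed disc $\mathsf D$ about $x=0$, horizontal globalisation to a $t$-independent contour, then analyticity via \cref{le:LimitsOfContours} and induction) is the right one and is essentially how the paper argues. But there is a genuine error in your identification of the degenerate component, and it propagates into a gap in the limit argument. The bihomogeneisation at $t=0$ is $Y_1^{k-2}\,(X_0Y_0^2-X_1Y_1^2)=0$: the extra component is $\{Y_1=0\}$, i.e. the \emph{horizontal} locus $y\equiv\infty$, on which $x$ is an honest unramified covering of the whole of $\mathbb{P}^1$ — it is emphatically \emph{not} a component on which $x\equiv\infty$ (that would be the separate ``vertical component'' phenomenon of \cref{sec:moreexamples}, which is excluded by \cref{de:sc} and treated by a different mechanism). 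Concretely, for $t\neq0$ the map $x_t=w^2+tw^{2-k}$ has degree $k$, and over a small disc $\mathsf D\ni 0$ its preimage is an \emph{annulus}: besides the two Bessel-like sheets it contains $k-2$ sheets located near $w=0$, on which $y=1/w\to\infty$ as $t\to0$. Your claims that the extra component ``sits at $w=\infty$'', carries a ramification point of order $k-2$ there (the order at $w=\infty$ is $2$; the order-$(k-2)$ point is at $w=0$, over $x=\infty$), and that ``the contour on the curve stays away from it, so the non-contributing component never enters'' are all false: those $k-2$ degenerating sheets lie over $\mathsf D$, inside your contour, and they appear in every globalised fibre sum $\sum_{Z\subseteq\mathfrak f'(z)\cap\tilde{\mathsf D}}$.

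Consequently the heart of the proof is missing. One must show that the contributions of subsets $Z$ meeting those $k-2$ sheets vanish as $t\to0$; the ``negative $\bar s$'' argument via \cref{th:ale} does not do this (it only disposes of the ramification points at $w=0,\infty$ lying over $x=\infty$, which are irrelevant here). The paper's mechanism is different: since $y\equiv\infty$ on the horizontal component, one adopts the convention $1/\Upsilon_{|Z|}(Z;z)=0$ as soon as some element of $Z\sqcup\{z\}$ lies on it, checks that the recursion kernel $K^{(p)}_{1+|Z|}=-\alpha_{0,2}/\Upsilon_{|Z|}$ indeed tends to $0$ as an argument approaches that sheet (because $\Upsilon$ contains factors $y(z')-y(z)\to\infty$), and that $\omega_{0,2}$ varies continuously in the family (Fay), so the limiting globalised formula is the one for the Bessel component alone, on which local and global topological recursion agree. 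Without this step your contour integral for small $t\neq0$ and its putative limit at $t=0$ are not comparing the same quantities, and \eqref{naivlm} does not follow.
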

\begin{proof}
For $ t \in \mathbb{C}^*$,  \eqref{PaulsCurve} is a well-behaved affine spectral curve: it is reduced, smooth, and has only simple ramifications lying above pairwise distinct branchpoints. To construct the associated the compact spectral curve according to \cref{de:irredpc}, we should look at the bihomogenisation
  \begin{equation}\label{SCHorizontalInfinity}
    X_0 Y_0^2 Y_1^{k-2} - t^k X_1 Y_0^k - X_1 Y_1^k = 0\,.
  \end{equation}
  At $ t= 0$, we find that there is a \emph{horizontal} component given by $ Y_1^{k-2} = 0$, which was invisible in the affine picture. On this component, topological recursion is problematic, as $y = \frac{Y_0}{Y_1}$ is identically $\infty$ and $\omega_{0,1} = y \dd x$. However, we can still make it work, as follows. The map $x$ on this component is unramified, so at first we look at the other component, which is the $(2,1)$-curve $X_0Y_0^2 - X_1Y_1^2 = 0$: we know that topological recursion is well-defined and fully globalisable on this $(2,1)$-curve. Now, let us try to apply \cref{de:globalTR} with the recursion kernel from \cref{de:kernel}. As we have formally $ \omega_{0,1} \equiv \infty$ on the horizontal component, it is natural to \emph{take as convention} $ \frac{1}{\Upsilon_{|Z|}(Z;z)} = 0$ as soon as one of the elements of $Z \sqcup \{z\}$ lies on this component. This means that none of these terms contribute, and hence topological recursion can be fully globalised on the union of the two components. Moreover, the $ \omega_{g,n} $ for $ 2g-2+n > 0$ are identically zero as soon as any argument is on the extra component, cf.   \cite[Proposition~5.27]{BKS20}.
 
  We can now see that the conditions in \cref{de:admfamily} are mostly satisfied, if we understand the globalisation argument as above. More precisely, in the proof of \cref{th:TRLimits}, we require the integrand to be meromorphic. This is achieved as $ t \to 0$ since $ K^{(p)}_{1+|Z|}(w_0;z,Z)$ does tend to $0$ as one of the arguments goes to the horizontal sheet and, by \cite[Corollary~3.2]{Fay73}, $\omega_{0,2}$ is also continuous, with limit as $ t \to 0 $ the individual $ \omega_{0,2}$ on the two sheets. Therefore, the naive limit \eqref{naivlm} that discards the horizontal component actually works.
 \end{proof}

The mechanism in this example is of general nature: if we have a family of spectral curves in which certain fibres develop horizontal components (i.e. irreducible components where $y$ is identically $\infty$), these components can be discarded and checking that topological recursion commutes with limits can be done by examining the globalisation argument on the other components.

\subsubsection{Euler class of the Chiodo bundle}
\label{sec:Chiodo}

We now discuss an example of deformation of $(r,s)$-curves, which contains a logarithmic singularity for $t \neq 0$, where the existence of the $t \rightarrow 0$ limit can be proved thanks to a detour via intersection theory on $\overline{\mathcal{M}}_{g,n}$ but is valid independently whether the congruence relation $r = \pm 1\,\,{\rm mod}\,\,s$ is satisfied or not and does not seem in general to satisfy topological recursion even when the congruence condition is satisfied.

Given $r \in \mathbb{Z}_{> 0}$ and $s \in \mathbb{Z}$, and given a cut $L \subseteq \mathbb{C}$ from $0$ to $\infty$ for the logarithm, we call the following family of spectral curves the Chiodo family:
\begin{equation}
\label{chiodocurve}
\Sigma_t = \mathbb{C} \setminus L,\qquad x_t(w) = w^r - r t^r \ln w , \qquad  y_t(w) = w^{s-r}\,,
\end{equation}
This is a family parametrised by $t$ in
$$
T =  \mathbb{C} \setminus \bigg(\bigcup_{j = 0}^{r - 1} e^{\frac{2{\rm i}\pi j}{r}}L \bigg)\,,
$$
while we obtain the $(r,s)$-curve at $t = 0$. Let $(\omega_{g,n}^{{\rm C},(r,s)})_t$ be the correlators of the topological recursion for this spectral curve for $t \in \mathbb{C}^*$, and $\omega_{g,n}^{(r,s)}$ the ones for the $(r,s)$-curve. 

As we now review, the $(\omega_{g,n}^{{\rm C},(r,s)})_t$ have an intersection-theoretic expression stemming from \cite{LPSZ17} and involving classes first studied by Chiodo in  \cite{Chiodo}. For $\mathbf{a} \in \mathbb{Z}^r$ such that
\begin{equation}
\label{divisi}\frac{s(2g - 2 + n) - \sum_{i = 1}^n a_i}{r} \in \mathbb{Z}\,,
\end{equation}
we let $\overline{\mathcal{M}}_{g,n}^{\frac{s}{r} - 1}(\mathbf{a})$ be the compactified moduli space parametrising line bundles $L$ over a complex curve $C$ with $n$ marked points $q_1,\ldots,q_n$ together with an isomorphism
$$
L^{\otimes r} \simeq (K_{{\rm log},C})^{\otimes (s - r)}\Big(-\sum_{i = 1}^n a_i q_i\Big)\,.
$$
We denote $\mathcal{L}$ the universal line bundle over the universal curve
$$
\mathcal{C} \mathop{\longrightarrow}^{\pi} \overline{\mathcal{M}}_{g,n}^{\frac{s}{r} - 1} (\mathbf{a}) \mathop{\longrightarrow}^{p} \overline{\mathcal{M}}_{g,n}\,.
$$
The total Chern class
$$
{\rm C}_{g,n}^{\frac{s}{r} - 1}(\mathbf{a}) = p_*c(- R^\bullet\pi_*\mathcal{L}) = p_*\exp\bigg(\sum_{m \geq 1} (-1)^m(m-1)!\,{\rm ch}_m(-R^\bullet \pi_*\mathcal{L})\bigg)
$$
is called the Chiodo class. If $\mathbf{a}$ does not satisfies the divisibility condition \eqref{divisi}, we set ${\rm C}_{g,n}^{\frac{s}{r} - 1}(\mathbf{a}) = 0$.

The universal line bundle has degree
$$
\deg \mathcal{L} = \frac{(s - r)(2g - 2 + n) - \sum_{i = 1}^n  a_i}{r}\,,
$$
In the regime $s \in [r - 1]$ and $a_1,\ldots,a_n \in [r]$, we deduce $\deg \mathcal{L} < 0$. Hence $h^0(\mathcal{L}) = 0$ and $H^1(\pi_*\mathcal{L})$ forms a bundle over $\overline{\mathcal{M}}_{g,n}^{\frac{s}{r} - 1}(\mathbf{a})$. By Riemann--Roch formula, its rank is
\begin{equation}
\label{Dgnra} D_{g,n}(\mathbf{a}) = \frac{(3r - 2s)(g - 1) + \sum_{i = 1}^n (a_i + r - s)}{r}\,.
\end{equation}
\begin{definition}
For $s \in [r - 1]$, we denote ${\rm e}_{g,n}^{\frac{s}{r} - 1}(\mathbf{a}) \in H^{2D_{g,n}(\mathbf{a})}(\overline{\mathcal{M}}_{g,n})$ the pushforward of the Euler class of this bundle by the forgetful map $p$ to $\overline{\mathcal{M}}_{g,n}$.
\end{definition}

According to \cite{LPSZ17}, the topological recursion for the spectral curve
\begin{equation}
\label{tildecurve}
\tilde{\Sigma} = \mathbb{C} \setminus L,\qquad \tilde{x}(w) = -w^r + \ln w,\qquad \tilde{y}(w) = w^{s - r}
\end{equation}
yields correlators storing intersection indices of Chiodo classes in the following way
$$
\tilde{\omega}_{g,n}(w_1,\ldots,w_n) = \frac{r^{(2g - 2 + n)\frac{s}{r}}}{(s - r)^{2g - 2 + n}} \sum_{\substack{1 \leq a_1,\ldots,a_n \leq r \\ m_1,\ldots,m_n \geq 0}} \bigg( \int_{\overline{\mathcal{M}}_{g,n}} {\rm C}^{\frac{s}{r} - 1}_{g,n}(\mathbf{a}) \prod_{i = 1}^{n} \psi_i^{m_i}\bigg) \prod_{i = 1}^n \dd \tilde{\Xi}_{a_i,m_i}(w_i)\,,
$$
where
$$
\tilde{\Xi}_{a,m}(w) = \frac{1}{r^{\frac{a}{r} + m}} \partial_{x}^{m} \bigg(\frac{w^{r - a}}{1 - rw^{r}}\bigg)\,.
$$

As we shall see, introducing the parameter $t$ as in \eqref{chiodocurve} amounts to considering the Chern polynomial
$$
{\rm C}_{g,n}^{\frac{s}{r} - 1}(u;\mathbf{a}) = p_* \exp\bigg(\sum_{m \geq 1} (-1)^m\,u^m (m - 1)!\,{\rm ch}_m(-R^{\bullet}\pi_*\mathcal{L})\bigg)
$$
instead of the total Chern class. Taking into account \eqref{Dgnra} and by definition of the Euler class, we observe that for $s \in [r - 1]$
\begin{equation}
\label{CgnEul} {\rm C}_{g,n}^{\frac{s}{r} - 1}(u;\mathbf{a}) \mathop{=}_{u \rightarrow \infty} u^{D_{g,n}(\mathbf{a})}\big({\rm e}_{g,n}^{\frac{s}{r} - 1}(\mathbf{a}) + O(\tfrac{1}{u})\big)\,.
\end{equation}

\begin{proposition}\label{pr:Chiodo}
For $2g - 2 +n > 0$ and $t \in \mathbb{C}^*$, we have
\begin{equation}
\begin{split}
\label{omgntch}
& \quad (\omega_{g,n}^{{\rm C},(r,s)})_t(w_1,\ldots,w_n) \\
& = (r - s)^{2 - 2g - n} \sum_{\substack{1 \leq a_1,\ldots, a_n \leq r \\ m_1,\ldots,m_n \geq 0}} \bigg(\int_{\overline{\mathcal{M}}_{g,n}} t^{rD_{g,n}(\mathbf{a})}\,{\rm C}_{g,n}^{\frac{s}{r} - 1}(t^{-r};\mathbf{a}) \prod_{i = 1}^n \psi_i^{m_i}\bigg) \prod_{i = 1}^m (\dd \Xi_{a_i,m_i})_t(w_i)  \,,
\end{split}
\end{equation}
when decomposed on the basis of $1$-forms generated by the differential of the rational functions
$$
(\Xi_{a,m})_t(w) = \frac{(-1)^m}{r} \partial_{x_t}^m\bigg(\frac{w^{r - a}}{t^r - w^r}\bigg)\,.
$$
If furthermore $s \in [r - 1]$,  the following limit exists for $2g - 2 + n > 0$ and is uniform for $w_1,\ldots,w_n$ in any compact of $\mathbb{C}$:
\begin{equation}
\label{rsmoinsEUl}
\begin{split}
& \quad \lim_{t \rightarrow 0}\, (\omega_{g,n}^{{\rm C},(r,s)})_t(w_1,\ldots,w_n) = \omega_{g,n}^{{\rm E}}(w_1,\ldots,w_n) \\
& \coloneqq (r - s)^{2 - 2g - n} \sum_{\substack{1 \leq a_1,\ldots,a_n \leq r \\ m_1,\ldots,m_n \geq 0}}  \bigg(\int_{\overline{\mathcal{M}}_{g,n}} e_{g,n}^{\frac{s}{r} - 1}(\mathbf{a}) \prod_{i = 1}^n \psi_i^{m_i}\bigg) \prod_{i = 1}^n \frac{(m_ir + a_i)!^{(r)}\,\dd w_i}{w_i^{m_ir + a_i + 1}}\,.
\end{split}
\end{equation}
In this formula, we have the $r$-fold factorial $(rm + a)!^{(r)} = \prod_{j = 0}^{m} (jr + a)$.
\end{proposition}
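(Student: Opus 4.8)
The strategy is to first establish the formula \eqref{omgntch} for $t \in \mathbb{C}^*$ by relating the Chiodo family \eqref{chiodocurve} to the spectral curve \eqref{tildecurve} from \cite{LPSZ17} via a rescaling, and then to extract the limit \eqref{rsmoinsEUl} by carefully passing to the limit in the intersection-theoretic expression. The key observation linking the two curves is a scaling symmetry: if in \eqref{tildecurve} we substitute $w \mapsto w/t$ and rescale $\tilde x \mapsto t^r \tilde x$ (up to an irrelevant additive constant coming from $\ln t$), we recover $x_t$ and $y_t$ of \eqref{chiodocurve} up to overall powers of $t$. Since topological recursion transforms covariantly under such rescalings of $x$, $y$, and the uniformising coordinate, the correlators $(\omega_{g,n}^{{\rm C},(r,s)})_t$ are obtained from $\tilde{\omega}_{g,n}$ by the corresponding substitution and rescaling. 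Tracking the powers of $t$ through the formula of \cite{LPSZ17} --- in particular, through $\tilde{\Xi}_{a,m}(w)$, which becomes $(\Xi_{a,m})_t(w)$ up to a power of $t$ determined by the homogeneity degree $\tfrac{a}{r} + m$ --- and collecting the total power of $t$ attached to a monomial indexed by $(\mathbf a, \mathbf m)$, one finds it equals $r D_{g,n}(\mathbf a)$ after using the dimension constraint $\sum_i m_i = D_{g,n}(\mathbf a) + (\text{something})$ coming from $\dim \overline{\mathcal M}_{g,n} = 3g - 3 + n$ and \eqref{Dgnra}. This reorganises the $t$-dependence precisely into the Chern polynomial ${\rm C}_{g,n}^{\frac{s}{r} - 1}(t^{-r};\mathbf a)$, since $u^m$ multiplies ${\rm ch}_m$ which has cohomological degree $m$.

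\textbf{Extracting the limit.} Assuming now $s \in [r-1]$, so that $\deg \mathcal{L} < 0$ and $D_{g,n}(\mathbf a) \geq 0$ is the rank of the bundle $H^1(\pi_*\mathcal{L})$, I would use \eqref{CgnEul}: as $u = t^{-r} \to \infty$ (i.e. $t \to 0$), the polynomial $t^{rD_{g,n}(\mathbf a)} {\rm C}_{g,n}^{\frac{s}{r}-1}(t^{-r};\mathbf a) = \sum_{k} t^{r(D_{g,n}(\mathbf a) - k)} [\text{degree-}k\text{ part}]$ has all exponents of $t$ nonnegative, with the $t^0$ term equal to the top-degree part, namely ${\rm e}_{g,n}^{\frac{s}{r}-1}(\mathbf a)$ by definition of the Euler class as the pushforward of the top Chern class of this rank-$D_{g,n}(\mathbf a)$ bundle. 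Hence each coefficient in the (finite) sum over $(\mathbf a, \mathbf m)$ converges as $t \to 0$ to the corresponding intersection number against ${\rm e}_{g,n}^{\frac{s}{r}-1}(\mathbf a)$. Separately, one checks that $(\Xi_{a,m})_t(w) \to (-1)^m r^{-1} \partial_{x_0}^m\big(-w^{-a}\big)$ as $t \to 0$ (where $x_0 = w^r$ on the $(r,s)$-curve, so $\partial_{x_0} = \tfrac{1}{r} w^{1-r}\partial_w$), and a direct computation of this iterated derivative yields $\dd(\Xi_{a,m})_t(w) \to \tfrac{(mr+a)!^{(r)}}{w^{mr+a+1}}\dd w$, matching the right-hand side of \eqref{rsmoinsEUl}. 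Uniformity on compacts of $\mathbb{C}$ follows because the poles of $(\Xi_{a,m})_t$ sit at the $r$-th roots of $t^r$, which all collapse to $w = 0$, so on any compact avoiding a neighbourhood of $0$ the convergence is uniform; and on a compact containing $0$ one checks the expressions remain bounded and converge uniformly by the explicit rational form.

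\textbf{Main obstacle.} The routine but delicate part is the bookkeeping of powers of $t$ in the first step: one must verify that the homogeneity degrees assigned to $w_i$ (via $a_i$), to the differentiations (via $m_i$), to the prefactor $(r-s)^{2-2g-n}$ versus $(s-r)^{2g-2+n}$, and to the change of uniformising coordinate all conspire so that the net power attached to the class of cohomological degree $k$ in the expansion of ${\rm C}_{g,n}^{\frac{s}{r}-1}(\mathbf a)$ is exactly $t^{r(D_{g,n}(\mathbf a) - k)}$, with no leftover global power of $t$. This requires using the dimension constraint on $\overline{\mathcal M}_{g,n}$ to trade $\sum_i m_i$ for $k$ and $D_{g,n}(\mathbf a)$, and the divisibility condition \eqref{divisi} to ensure the relevant moduli space is nonempty. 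I expect the covariance of topological recursion under the rescaling $x \mapsto \lambda x$, $y \mapsto \mu y$, $w \mapsto \nu w$ (which multiplies $\omega_{g,n}$ by $(\lambda\mu)^{2-2g-n}$ and pulls back the arguments) to be the clean conceptual input that makes this bookkeeping tractable; the genuinely substantive content is entirely borrowed from \cite{LPSZ17} and \cite{Chiodo}, and from \eqref{CgnEul}, so no new hard analysis is needed beyond the elementary limit of the rational functions $(\Xi_{a,m})_t$.
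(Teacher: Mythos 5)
Your proposal is correct and follows essentially the same route as the paper's proof: a rescaling of the uniformising coordinate identifying the Chiodo curve with the curve of \cite{LPSZ17}, covariance of the correlators under rescaling of $\omega_{0,1}$ (the factor $(-r^{\frac{s}{r}}t^{s})^{2-2g-n}$), the dimension-constraint trick to absorb the residual powers of $t$ into the Chern polynomial ${\rm C}_{g,n}^{\frac{s}{r}-1}(t^{-r};\mathbf{a})$ with overall factor $t^{rD_{g,n}(\mathbf{a})}$, and then \eqref{CgnEul} together with the elementary limit of $(\Xi_{a,m})_t$ for \eqref{rsmoinsEUl}. The only caveat is your remark about uniformity on compacts containing $0$: the limit differentials have poles at $w_i=0$, and the paper's own proof only claims uniformity away from $0$, so that last sentence of your argument should be dropped rather than patched.
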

\begin{proof}
This is folklore knowledge. Here is a short derivation. For any $t \in \mathbb{C}$ the Chiodo spectral curve \eqref{chiodocurve} is locally admissible. As $(\omega_{0,2})_t$ extends to a meromorphic bidifferential on $\mathbb{P}^1$, the correlators $(\omega_{g,n})_t$ computed by topological recursion for $2g - 2 + n > 0$ are meromorphic $n$-differentials on $(\mathbb{P}^1)^n$, with poles at $w = e^{\frac{2{\rm i}\pi j}{r}}r^{\frac{1}{r}}t$ for $j \in [r]$. 

Now take $t \in \mathbb{C}^*$. We introduce the change of global coordinate $\tilde{w} = r^{-\frac{1}{r}}t^{-1}w$ to compare the Chiodo spectral curve to \eqref{tildecurve}. Namely, we have
\begin{equation*}
\begin{array}{rclcrcl} x_t(w) & = & - rt^{r}\,\tilde{x}(\tilde{w}) - rt^{r}\ln(r^{\frac{1}{r}}t) & \qquad & y_t(w) & = & r^{\frac{s}{r} - 1}t^{s - r} \,\tilde{y}\big(r^{-\frac{1}{r}}t^{-1}w\big)  \\[3pt]
(\omega_{0,1})_t(w) & = &  -r^{\frac{s}{r}} t^{s}\, \tilde{\omega}_{0,1}(\tilde{w}) & \qquad &  (\omega_{0,2})_t(w_1,w_2) & = & \tilde{\omega}_{0,2}(\tilde{w}_1,\tilde{w}_2) \end{array}
\end{equation*}
and
\begin{equation}
\label{Xidiff}
\tilde{\Xi}_{a,m}(\tilde{w}) =  t^{rm + a}\,(\Xi_{a,m})_t(w)\qquad {\rm where}\quad (\Xi_{a,m})_t(w) \coloneqq \frac{(-1)^m}{r} \,\partial_{x_t}^m\bigg(\frac{w^{r - a}}{t^r - w^{r}}\bigg)\,.
\end{equation}

Besides, looking at the topological recursion formula \eqref{eq:TR} for $2g - 2 + n > 0$, we see that multiplying $\omega_{0,1}$ by a scalar prefactor results in multiplying $\omega_{g,n}$ by the same prefactor to the power ``Euler characteristics $2 - 2g - n$''. Therefore
\begin{equation*}
\begin{split}
(\omega_{g,n}^{{\rm C},(r,s)})_t(w_1,\ldots,w_n) & = (-r^{\frac{s}{r}}t^{s})^{-(2g - 2 + n)} \tilde{\omega}_{g,n}(\tilde{w}_1,\ldots,\tilde{w}_n) \\
 &  = \frac{t^{-s(2g - 2 + n)}}{(r - s)^{2g - 2 + n}} \sum_{\substack{1 \leq a_1,\ldots, a_n \leq r \\ m_1,\ldots,m_n \geq 0}} \bigg(\int_{\overline{\mathcal{M}}_{g,n}} {\rm C}_{g,n}^{\frac{s}{r} - 1}(\mathbf{a})\prod_{i = 1}^n \psi_i^{m_i}\bigg) \prod_{i = 1}^n \dd \tilde{\Xi}_{a_i,m_i}(\tilde{w}_i)\,.
 \end{split}
 \end{equation*}
The integral of $\prod_{i = 1}^n \psi_i^{m_i}$ against the Chiodo class ${\rm C}_{g,n}^{\frac{s}{r} - 1}(\mathbf{a})$ only receives contributions from the components of the latter with cohomological degree $2\big(3g - 3 + n  - \sum_{i = 1}^n m_i\big)$. We do not change this integral if we replace the Chiodo class it contains by the rescaled Chern polynomial $u^{- (3g - 3 + n) + \sum_{i = 1}^n m_i}\,{\rm C}_{g,n}^{\frac{s}{r} - 1}(u;\mathbf{a})$ for an arbitrary $u \in \mathbb{C}^*$. Making the choice $u = t^{-r}$, we obtain
\begin{equation*}
\begin{split}
& \quad (\omega_{g,n}^{{\rm C},(r,s)})_t(w_1,\ldots,w_n)  \\
&  = \frac{t^{-s(2g - 2 + n) + r(3g - 3 + n) + \sum_{i = 1}^n a_i}}{(r - s)^{2g - 2 + n}}\sum_{\substack{1 \leq a_1,\ldots, a_n \leq r \\ m_1,\ldots,m_n \geq 0}} \bigg(\int_{\overline{\mathcal{M}}_{g,n}} {\rm C}_{g,n}^{\frac{s}{r} - 1}(t^{-r};\mathbf{a})\prod_{i = 1}^n \psi_i^{m_i}\bigg) \prod_{i = 1}^m (\dd \Xi_{a_i,m_i})_t(w_i)\,.  
\end{split}
\end{equation*}
Comparing with \eqref{Dgnra} we recognise the overall factor $t^{rD_{g,n}(\mathbf{a})}$, justifying \eqref{omgntch}. If $s \in [r - 1]$, we deduce from \eqref{CgnEul} that
\begin{equation*}
	{\rm C}_{g,n}^{\frac{s}{r} - 1}(t^{-r};\mathbf{a}) \mathop{=}_{t \rightarrow 0} t^{-rD_{g,n}(\mathbf{a})}\big({\rm e}_{g,n}^{\frac{s}{r} - 1}(\mathbf{a}) + O(t^{r})\big)\,,
\end{equation*}
while a straightforward consequence of \eqref{Xidiff} and $x_t'(w) = rw^{r - 1} + O(t^{r})$ is
\begin{equation*}
	(\Xi_{a,m})_t(w) \mathop{=}_{t \rightarrow 0}  -\frac{\prod_{j = 0}^{m - 1}(jr + a)}{r^m}\,\frac{1}{w^{mr + a}} + O(t^{r})\,.
\end{equation*}
This shows that in the $t \rightarrow 0$ regime, $(\omega_{g,n})_t(w_1,\ldots,w_n)$ behaves like the right-hand side of \eqref{rsmoinsEUl} up to $O(t^{r})$ correction, uniformly for $w_1,\ldots,w_n$ away from $0$.
\end{proof}

We see that $(\omega_{g,n}^{{\rm C},(r,s)})_t$ have a well-defined $t \rightarrow 0$ limit for any value of $s \in [r - 1]$. This limit is expressed in terms of the intersection indices of the Euler class of the Chiodo bundle and we denote it $\omega_{g,n}^{{\rm E},(r,s)}$. On the other hand, $\omega_{g,n}^{(r,s)}$ is well defined (i.e. symmetric in its variables if and only if $r = \pm 1\,\,{\rm mod}\,\,s$. So, when this congruence does not hold, $\omega_{g,n}^{(r,s)}$ cannot be equal to $\omega_{g,n}^{{\rm E},(r,s)}$. Even when the congruence is satisfied, the Chiodo family is not globally admissible, because the ramification points hit the logarithm cut and therefore escape the curve when $t \rightarrow 0$ (making $\mathsf{Ram}_T$ non-proper). In fact, for $s \in [r - 1]$, \textsc{Mathematica}~\cite{Mathematica} computations suggest that
\begin{equation}
	\omega_{g,n}^{{\rm E},(r,s)} = \omega_{g,n}^{(r,s)}\quad \Longleftrightarrow \quad s = r - 1\,.
\end{equation}
The results of \cite{CGG22} imply indirectly  $\omega_{g,n}^{{\rm E},(r,r - 1)} = \omega_{g,n}^{(r,r - 1)}$ because the $\Theta^r$-class coincides up to a factor with the Euler class of the Chiodo bundle. This leads to an interesting puzzle:
\begin{enumerate}
\item For $s = r - 1$, we do not know how to prove  $\omega_{g,n}^{{\rm E},(r,r - 1)} = \omega_{g,n}^{(r,r - 1)}$ without a detour through properties of the Chiodo bundle;
\item For $r = \pm 1\,\,{\rm mod}\,\,s$ and $s \in [r - 2]$, we do not know an intersection-theoretic expression for $\omega_{g,n}^{(r,s)}$;
\item For arbitrary $s \in [r - 2]$, we do not know a recursion (on $2g - 2 + n$) that computes $\omega_{g,n}^{{\rm E},(r,s)}$.
\end{enumerate}
This illustrates the shortcoming of our approach in treating global topological recursion when branchcuts of $x$ are included in the curve --- see also \cref{GWP111}.	

\subsection{Hurwitz theory}

\label{sec:Hurwitz}

Hurwitz theory, in its most general formulation, covers numerous problems of enumeration of branched coverings of a two-dimensional sphere that can equivalently be reformulated as enumeration of factorisations in the group algebra of the symmetric group. This includes original Hurwitz numbers, Hurwitz numbers with completed cycles, enumeration of various kinds of maps and constellations, fully simple maps and their analogs, etc. The goal of this Section is to illustrate how \cref{th:TRLimits} can be used to waive genericity assumptions in the general setup connecting Hurwitz theory to topological recursion. We will restrict our presentation to two sample situations, which are sufficient to illustrate the general principle.

\subsubsection{Weighted Hurwitz numbers} \label{sec:weiHur}

Let $\Lambda = \mathbb{C}[p_1,p_2,\ldots]$ be the polynomial ring in countably many variables, and $\Lambda_{\hbar} = \Lambda \otimes \mathbb{C}[\![\hbar]\!]$. If $n \in \mathbb{Z}_{> 0}$ and $\lambda \vdash n$, we denote $\mathsf{s}_\lambda \in \Lambda$ the Schur function in the (power-sum) variables $(p_k)_{k \geq 1}$. Let $\psi(\hbar^2,\theta)$ be a formal power series in its variables such that $\psi(0,0)=0$. We associate to it a $\mathbb{C}[\![\hbar]\!]$-linear operator $\cD_\psi \in {\rm End}(\Lambda_{\hbar})$ defined by
\begin{equation}
	\cD_\psi(\mathsf{s}_\lambda) = \sum_{(i,j)\in \mathbb{Y}_{\lambda}} e^{\psi(\hbar^2,\hbar(i-j))} \mathsf{s}_\lambda \,.
\end{equation}
Finally, let $ \gamma (z) = \frac{R_1(z)}{R_2(z)} = \sum_{k=1}^{\infty} \gamma_k z^k$ be a rational function vanishing at $z=0$. 

Consider the partition function $Z$ defined as 
\begin{equation}
	Z=\mathcal{D}_\psi \exp\Big(\sum_{k \geq 1} \frac{\gamma_k p_k}{\hbar k}  \Big)\,.
\end{equation}
 in a suitable completion of $\Lambda_{\hbar}$. Under some assumptions on $\psi$ one can construct meromorphic $n$-differentials $(\omega_{g,n})_{(g,n)\in\Z_{\geq 0}\times\Z_{> 0}}$ on $\Sigma^n$ for $\Sigma = \mathbb{C}$ such that
 \begin{equation} \label{eq:HurwitzCor}
	\sum_{g \geq 0} \hbar^{2g-2+n} \omega_{g,n}(z_1,\dots,z_n) = \bigotimes_{i=1}^n \Big(\sum_{k \geq 1} \dd(x_i^{k}) \partial_{p_k}\Big)\ln Z\Big|_{p_1= p_2 = \cdots = 0} + \delta_{n,2}\frac{\dd x_1\,\dd x_2}{(x_1-x_2)^2}\,,
\end{equation}
where $x_i$ is related to the uniformising coordinate $z_i \in \mathbb{C}$ as $x_i = z_ie^{-\psi(0,\gamma(z_i))}$.  Here is a sample theorem proved in \cite{BDKS20a} (it is called Family I in \emph{op.~cit.}).

\begin{theorem}[\cite{BDKS20a}]\label{th:trHurwitz}
Assume there exist polynomials $P_1,P_2,P_3$ such that $P_1(0) = 0$ and $P_2(0) =P_3(0)= 1$, and
\begin{equation}
	\psi(\hbar^2,\theta) = \ln \bigg(\frac{P_2(\theta)}{P_3(\theta)} \bigg) + \frac{e^{\frac{\hbar\partial_\theta}{2}}-e^{-\frac{\hbar\partial_\theta}{2}}}{\hbar \partial_\theta} P_1(\theta)\,.
\end{equation}
Then the $(\omega_{g,n})_{(g,n)\in\Z_{\geq 0}\times\Z_{> 0}}$ defined in \eqref{eq:HurwitzCor} form a system of correlators satisfying topological recursion on the spectral curve $\Sigma = \mathbb{C}$ with
\begin{equation}\label{eq:TfamilyHurwitz}
	x(z)
	=
	z\,\frac{P_3\big(\tfrac{R_1(z)}{R_2(z)}\big)}{P_2\big(\tfrac{R_1(z)}{R_2(z)}\big)} e^{-P_1\big(\tfrac{R_1(z)}{R_2(z)}\big)}\,,
	\quad \omega_{0,1}(z) 
	=
	\frac{R_1(z)}{R_2(z)}\,\frac{\dd x(z)}{x(z)}\,,\quad \omega_{0,2}(z_1,z_2)= \frac{\dd z_1\,\dd z_2}{(z_1-z_2)^2}.
	\end{equation}
under the assumption that $\frac{R_1(z)}{R_2(z)}$ is analytic and has order $1$ (i.e. it is unramified) at the ramification points of $x$. 
\end{theorem}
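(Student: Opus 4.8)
\textbf{Proof proposal for \cref{th:trHurwitz}.}

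The plan is to deduce the non-generic case from \cref{th:trHurwitz} in the generic situation (already established in \cite{BDKS20a}) by viewing the given spectral curve as a fibre in a suitable family and invoking the analyticity statement of \cref{th:TRLimits}. First I would set up the family: the data $\psi$ (through the polynomials $P_1,P_2,P_3$) and the rational function $\frac{R_1}{R_2} = \sum_{k\geq 1}\gamma_k z^k$ depend on finitely many complex parameters, so we obtain a family of partial spectral curves $\mathcal{S}_T$ over an open subset $T \subseteq \mathbb{C}^D$ with $x_t,(\omega_{0,1})_t,(\omega_{0,2})_t$ given by \eqref{eq:TfamilyHurwitz}, and $(\omega_{0,2})_t = \omega_{0,2}^{\mathrm{std}}$ is independent of $t$. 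The point $t_0 \in T$ of interest is the one producing the curve in the statement; I would pick the family so that for generic $t$ the ramification points of $x_t$ are simple (this is the standard genericity assumption under which \cite{BDKS20a} already proves topological recursion) and so that $\frac{R_1}{R_2}$ remains unramified at all ramification points of $x_t$ for every $t$ (this is exactly the hypothesis we are allowed to assume at $t_0$, and it is an open-plus-finitely-many-conditions constraint that we can build into the definition of $T$).

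Next I would verify that $\mathcal{S}_T$ is a globally admissible family in the sense of \cref{de:admfamily} (or rather its finite-degree analogue \cref{de:admfamilyfinite} after a suitable compactification, or directly in the non-compact setting since $\Sigma = \mathbb{C}$ here). The key local computation is: at a ramification point $p$ of $x_t$, since $\frac{R_1}{R_2}$ is unramified there, $\omega_{0,1} = \frac{R_1}{R_2}\frac{\dd x}{x}$ has a zero or pole of the expected order, and one reads off the local parameters $(r_p,s_p,\bar s_p)$; one checks that they satisfy local admissibility (lA1)--(lA3) — in fact these will be of the regular type $(r_p,r_p+1)$ or involve $\bar s_p \le -1$ at points where $x$ has a pole, both of which are locally admissible. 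Then one checks that $\omega_{0,1}$ separates fibres at non-branch points (\cref{th:unramified}): two points in the same $x_t$-fibre have, generically, different values of $\frac{R_1}{R_2}$, hence $\omega_{0,1}$ takes different leading coefficients; any residual coincidences can be excluded on the open dense $T$. Finally one checks vertical globalisability above each branch point via \cref{th:rewriting}: for a pair of ramification points in the same fibre, since $\omega_{0,1} = y\,\dd x$ with $y = \frac{R_1}{R_2}$ holomorphic and generically nonzero, one is in the situation $\bar s = s = 1$ at the relevant points (or $\bar s \le 0$ at poles of $x$), so condition (C-i) or (C-ii) holds — this is precisely the ``$\omega_{0,1}$ holomorphic but nonzero'' bullet in the discussion following \cref{th:rewriting}. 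Also the properness of $\mathsf{Ram}_T$ over $T$ must be checked, i.e. that ramification points do not escape to infinity or off the curve as $t \to t_0$; this follows because the ramification points are zeros of $\dd x_t$, a meromorphic $1$-form whose divisor varies holomorphically, and the hypothesis that $\frac{R_1}{R_2}$ stays unramified prevents degenerate collisions.

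Once global admissibility of $\mathcal{S}_T$ is in place, \cref{th:TRLimits} gives that the topological recursion correlators $(\omega_{g,n})_t^{\mathrm{TR}}$ depend analytically on $t$, in particular $(\omega_{g,n})_{t_0}^{\mathrm{TR}} = \lim_{t\to t_0}(\omega_{g,n})_t^{\mathrm{TR}}$ in the sense made precise after \cref{th:TRLimits}. On the other hand, the Hurwitz correlators $(\omega_{g,n})_t^{\mathrm{Hur}}$ defined by \eqref{eq:HurwitzCor} also depend analytically (in fact polynomially in $\gamma_k$ and the coefficients of $P_i$, up to the change of variable $x = z e^{-\psi(0,\gamma(z))}$), which one sees directly from the combinatorial/operator-theoretic formula for $Z$. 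Since the two families of correlators agree on the generic locus by \cref{th:trHurwitz} of \cite{BDKS20a}, and both are analytic on the connected base $T$, they agree everywhere, in particular at $t_0$. I expect the main obstacle to be the bookkeeping in the second paragraph — verifying ``$\omega_{0,1}$ separates fibres'' and the pairwise globalisation condition \cref{th:rewriting} at branch points where several ramification points of $x$ collide in the non-generic fibre, and simultaneously ensuring these conditions define an open dense $T$ containing $t_0$ — together with the properness of $\mathsf{Ram}_T$; the analyticity of the Hurwitz side and the final ``analytic continuation of an identity'' step are routine.
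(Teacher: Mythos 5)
Your architecture is exactly the paper's: quote \cite{BDKS20a} for the generic case, embed the given curve in a family $\mathcal{S}_T$ over a small neighbourhood of the reference tuple (with the discs $\mathsf{D}_i$ fixed and $T$ shrunk so that no branch point escapes them), prove global admissibility of the family (this is \cref{pr:PQRHur}), invoke \cref{th:TRLimits}, and conclude by analytic continuation since the Hurwitz-side correlators of \eqref{eq:HurwitzCor} are given in \cite{BDKS20a} by closed formulas depending analytically on the parameters. That skeleton, and the closing continuation argument, match the paper.

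The problem is the local bookkeeping in your second paragraph, which you rightly identify as the crux but get wrong. Since $\omega_{0,1} = \frac{R_1}{R_2}\frac{\dd x}{x}$, the function $y$ is $\frac{R_1}{R_2 x}$, not $\frac{R_1}{R_2}$. Writing $\frac{R_1}{R_2} = b_0 + b_1\zeta + O(\zeta^2)$ near a ramification point of order $r$ (the unramifiedness hypothesis gives $b_1\neq 0$), one finds: above a branch point in $\mathbb{C}^*$ the type is $(r_p,\bar s_p,s_p) = (r,r,r+1)$ if $b_0\neq0$ and $(r,r+1,r+1)$ if $b_0=0$ --- the regular case with $\nu_p\geq 1$, not ``$\bar s=s=1$''; above $x=0$ and $x=\infty$, where $\frac{\dd x}{x} = (\pm r + O(\zeta))\frac{\dd\zeta}{\zeta}$, the type is $(r_p,\bar s_p,s_p)=(r,0,1)$ or $(r,1,1)$ --- the maximally irregular case, not $\bar s_p\leq -1$. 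The last point is not cosmetic: points with $\bar s_p\leq -1$ would drop out of the recursion by \cref{th:ale}, whereas these $s_p=1$ points genuinely contribute, and the globalisation criterion that applies to a pair of them is (C-ii) with $\nu=0\leq\frac{1}{r_{12}}$ together with non-resonance (here $r'=1$ and $\tau_p/r_p$ is the value of $\frac{R_1}{R_2}$ at $p$, which separates the fibre because $x = z\,f(R_1/R_2)$ forces $z(p_1)=z(p_2)$ whenever both $x$ and $\frac{R_1}{R_2}$ agree); above finite nonzero branch points the relevant criterion is (C-i) with $m=1$, again with non-resonance supplied by the distinct values $b_0$. With these corrections your verification becomes precisely the paper's proof of \cref{pr:PQRHur}, and the remainder of your argument goes through.
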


The proof of \cref{th:trHurwitz} under the extra assumption that the polynomials $P_1,P_2,P_3,R_1,R_2$ are in general position is the main content of~\cite{BDKS20a}. By general position in this particular situation we mean that all ramification points of $x$ are simple, and there are further technical assumptions that the zeros of all involved polynomials are simple. We also assume that the zeros of $P_2$ and $P_3$ and of $R_1$ and $R_2$ are different (this condition is not restriction but rather a convention in the definition of the ratio $\frac{P_2}{P_3}$ and $\frac{R_1}{R_2}$).  In order to relax the general position assumption, the approach of \cite{BDKS20a} needs the following construction. Let $(P_{1,o},P_{2,o},P_{3,o},R_{1,o},R_{2,o})$ be a reference tuple of polynomials such that the general position requirements are possibly not satisfied. Denote by $x_o(z)$ the corresponding function $x$ given by~\eqref{eq:TfamilyHurwitz}. Let $T$ be a complex manifold that is a neighbourhood of the reference tuple in the space of the tuples $(P_{1,t},P_{2,t},P_{3,t},R_{1,t},R_{2,t})$ satisfying the conditions that the degrees of all polynomials are fixed, the zeros of $P_2$ and $P_3$  and of $R_1$ and $R_2$ are different, and $P_1(0)=0$, $P_2(0)=P_3(0)=1$, $R_1(0)=0$. 

Consider all ramification points of $x_o$ given as the limit points of the ramification points of $x_t(z)$ for $t \in T$ tending to $o$, with multiplicities. Here we count each finite pole of $\frac{\dd x_o}{x_o}$ with residue $m$ for $m\in\mathbb{Z} \setminus \{0\}$ as a ramification point of multiplicity $|m| - 1$. Let $(\mathsf{D}_i)_{i = 1}^{\mathsf{k}} \subset \mathbb{P}^1$ be a sequence of discs whose closures are pairwise disjoint and  containing all branch points of $x_o$. Note that $\mathcal{D} = x_o(\mathbb{C})$ may contain $\infty \in \mathbb{P}^1$. Instead of considering a big family suggested by~\eqref{eq:TfamilyHurwitz}, we rather choose $T$ small enough so that the branch points of all $x_t$, $t\in T$, are also in $({\mathsf{D}}_i)_{i = 1}^{\mathsf{k}}$. 
Define $S_T$ be setting $\Sigma_T= \mathbb{C}\times T$ with its natural projection to $T$, and $x_t(z)$, $(\omega_{0,1})_t$, and $(\omega_{0,2})_t$ given by restrictions of formulae~\eqref{eq:TfamilyHurwitz}.

\begin{proposition}\label{pr:PQRHur}
	$\Sigma_T$ is a globally admissible family of spectral curves over $T$. 
\end{proposition}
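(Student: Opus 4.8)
The plan is to verify, one by one, the conditions appearing in \cref{de:admfamily}, exploiting the fact that the spectral curves in the family are of genus $0$ (with $\Sigma_t = \mathbb{C}$), which simplifies both the topological requirements on disc collections and the bidifferential conditions. Concretely, I would proceed as follows.

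\emph{Step 1: Properness of the ramification locus and local admissibility.} First I would analyse the ramification points of $x_t$ as given by \eqref{eq:TfamilyHurwitz}, distinguishing the zeros of $\frac{\dd x_t}{x_t}$ (coming from zeros of $R_1$, from zeros of $x_t'$, and from the polynomial $P_1$) from the poles of $\frac{\dd x_t}{x_t}$, i.e. the zeros of $P_2(\frac{R_1}{R_2})$, $P_3(\frac{R_1}{R_2})$, and of $R_2$. The key point, already used implicitly in \cite{BDKS20a}, is that by \cref{th:trHurwitz} the spectral curve is locally admissible: at a zero of $\frac{\dd x_t}{x_t}$ that is unramified for $\frac{R_1}{R_2}$ one gets local parameters $(r,s,\bar s) = (r,r+1,r+1)$ or $(r,r,r+1)$ as in \cref{le:redacla}; at a finite pole of $\frac{\dd x_t}{x_t}$ with residue $-m$, $m > 0$, one computes from $\omega_{0,1} = \frac{R_1}{R_2}\frac{\dd x}{x}$ that $\bar s = s = -m \le -1$ (these points do not contribute to topological recursion), which is locally admissible by (lA2) since one checks coprimality of $r_p$ and $s_p$ using that $\frac{R_1}{R_2}$ is unramified there. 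Since all the zeros/poles in question are roots of polynomials of fixed degree whose coefficients vary continuously (and boundedly) over the relatively compact $T$, none of them can escape to infinity in $\Sigma_t = \mathbb{C}$ or be ``lost'', so $\mathsf{Ram}_T \hookrightarrow \Sigma_T \to T$ is proper.

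\emph{Step 2: Separation of fibres and the disc collection.} Off the branch points, I would check that $(\omega_{0,1})_t$ separates fibres at every $q \in \mathcal{D} = x_t(\mathbb{C})$ in the sense of \cref{th:unramified}: at an unramified preimage, $\omega_{0,1}$ has $\bar s \geq 2$ precisely when $\frac{R_1}{R_2}$ has a zero there, and $\frac{R_1}{R_2} = \frac{\omega_{0,1}}{\dd x/x}$ takes distinct values at distinct points of a common $x_t$-fibre (two preimages with the same $\frac{R_1}{R_2}$-value and the same $x$-value would be forced to coincide by the structure of \eqref{eq:TfamilyHurwitz}), which gives both the ``at most one zero'' and the ``distinct leading coefficients'' conditions. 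The image $\mathcal{D} = x_t(\mathbb{C}) \subseteq \mathbb{P}^1$ is connected, and one checks it is independent of $t$ (it is the complement of the finite set of omitted values of the rational-type map $x_t$, which is stable under the small deformation). For the disc collection: we already fixed $(\mathsf{D}_i)_{i=1}^{\mathsf{k}}$ containing all branch points of all $x_t$, $t \in T$, with $\overline{\mathsf{D}_i}$ pairwise disjoint; (DC1) and (DC2) are then immediate, (DC3) follows since $x_t$ restricted to $x_t^{-1}(\mathsf{D}_i)$ is a proper holomorphic map of bounded degree with the list of local degrees controlled by the (continuously varying, hence locally constant in type) ramification data, and (DC4) is automatic because $\omega_{0,2} = \omega_{0,2}^{\rm std}$ has no periods at all on a genus-$0$ curve, so $\mathrm{Ker}\,(\Pi_{0,2})_t$ is everything.

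\emph{Step 3: Globalisability over each cluster.} The remaining condition of \cref{de:admfamily} is that topological recursion can be globalised over $x_t^{-1}(q) \cap \tilde{\mathsf{D}}_{i,j,t}$ for each branch point $q$. Here I would invoke the ``applying the local criterion'' discussion after \cref{th:rewriting}: every pair of ramification points $\{p_1,p_2\}$ in a common fibre has the property that $\frac{R_1}{R_2}$ is unramified and separates them, so either some $\bar s_k \leq 0$ — giving (C-ii) — or $\bar s_k \in \{r_k, r_k+1\}$ so that $\nu_k \geq 1$ with equality only when $\frac{R_1}{R_2}(p_k) \ne 0$; combined with $\nu_1 \le 1 \le \nu_2$ (or $r_1 = r_2 = 1$) this yields (C-i) with $m = 1$ (or reduces to the unramified separation already established), and non-resonance follows from $\tau_k/r_k = (R_1/R_2)(p_k)$ being pairwise distinct. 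This is exactly the situation handled in the proof of \cref{le:redacla}, now carried out for the normalisation-free genus-$0$ curve $\Sigma_t = \mathbb{C}$.

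The main obstacle I anticipate is Step 3 in the presence of a branch point $q$ over which \emph{several} ramification points collide as $t \to o$, mixing a genuine zero of $x_t'$ (local parameter type $(2,3)$) with a pole of $\frac{\dd x_t}{x_t}$ (type $\bar s \le -1$) in the same disc component $\tilde{\mathsf{D}}_{i,j,t}$: one must confirm that the pairwise criterion ``(C-i) or (C-ii) and non-resonance'' of \cref{th:rewriting} indeed holds for every such mixed pair uniformly in $t$, which requires a careful bookkeeping of the local parameters $(r_p, \bar s_p, \tau_p)$ at all the poles of $\frac{R_1}{R_2}\frac{\dd x}{x}$ — the finite poles of $x$ (zeros of $P_3$ evaluated at $\frac{R_1}{R_2}$, and zeros of $R_2$) and the preimages where $P_2(\frac{R_1}{R_2})$ vanishes — and checking that $\frac{R_1}{R_2}$ stays unramified there so that $s_p = \bar s_p$ is coprime to $r_p$. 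Once that verification is in place, \cref{le:finset} (or directly \cref{de:admfamily}) gives global admissibility of $\mathcal{S}_T$, and then \cref{th:TRLimits} yields the desired analyticity, extending \cref{th:trHurwitz} to the non-generic reference tuple $o$.
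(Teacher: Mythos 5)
Your overall strategy is the same as the paper's: verify the conditions of \cref{de:admfamily} fibre by fibre (local admissibility, fibre separation, disc collection with (DC4) trivial in genus $0$, and pairwise globalisability over each branch point). Steps~2 and the (DC1)--(DC4) discussion are essentially the paper's argument. However, there is a concrete computational error in Step~1 that propagates into the part you yourself flag as the remaining obstacle.

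At a finite pole of $\frac{\dd x_t}{x_t}$ with residue $-m$ (i.e.\ a pole of $x_t$ of order $m$, so $x_t = \zeta^{-m}$ in a standard coordinate), you claim $\bar s_p = s_p = -m \le -1$ and that such points do not contribute to topological recursion. This is not what $\omega_{0,1} = \frac{R_1}{R_2}\frac{\dd x}{x}$ gives: since $\frac{\dd x}{x} = -m\,\frac{\dd\zeta}{\zeta}$ and $\frac{R_1}{R_2} = b_0 + b_1\zeta + O(\zeta^2)$ with $b_1 \neq 0$, one finds $\omega_{0,1} = -m(b_0 + b_1\zeta + \cdots)\frac{\dd\zeta}{\zeta}$, hence $\bar s_p = 0$ (or $1$ if $b_0 = 0$) and $s_p = 1$. (Your value $\bar s_p = -m$ would arise from $\frac{R_1}{R_2}\,\dd x$, not from $\frac{R_1}{R_2}\,\frac{\dd x}{x}$.) The error is not harmless: first, with your values $s_p = -m = -r_p$ is \emph{not} coprime to $r_p$, so (lA1) would fail and the curve would not be locally admissible — your own Step~1 is internally inconsistent. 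Second, with the correct values these are maximally irregular points ($s_p = 1$) which \emph{do} contribute to topological recursion (Theorem~\ref{th:ale} only discards $r_p = 1$ or $\bar s_p \le -1$), so they cannot be ignored in Step~3. The same analysis applies to the fibres over $x = 0$. With the correct data the resolution of your "main obstacle" is in fact straightforward and is what the paper does: over $q \in \mathbb{C}^*$ all points have $\nu_p = 1$ except at most one with $\nu_p = 1 + \frac{1}{r_p}$, giving (C-i) with $m = 1$ and non-resonance from the distinct values of $\frac{R_1}{R_2}$; over $q \in \{0,\infty\}$ all points have $\nu_p = 0$ except at most one with $\nu_p = \frac{1}{r_p}$, giving (C-ii) and non-resonance again from fibre separation. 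You need to redo the local expansions at the zeros and poles of $x_t$ and rerun your Step~3 with $\bar s_p \in \{0,1\}$, $s_p = 1$ before the proof is complete.
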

\begin{proof}
	Recall~\cref{de:FamSC} and~\cref{de:admfamily}. All basic analytic and topological properties are manifestly satisfied. Note, however, that the constructed family of spectral curves is not proper. We have to check three properties.

\begin{itemize}

	\item \emph{Each fibre $S_t$ is an admissible spectral curve in the sense of~\cref{de:localadm}.} To this end, we note that the condition that $\frac{R_{1,t}(z)}{R_{2,t}(z)}$ is analytic and has order $1$ at the ramification points of $x_t(z)$ is an open condition hence satisfied for the whole family once satisfied at $t=o$. Furthermore, we have the following local expansions for $(\omega_{(0,1)})_t$ near a point $p$ of ramification order $r$:
		\begin{equation*}
		\begin{split}
			(\omega_{(0,1)})_t & = \big(b_0+b_1\zeta+ O(\zeta^2)\big) \frac{r a_r \zeta^r +O(\zeta^{r+1} )}{a_0+a_r \zeta^r+O(\zeta^{r+1})} \frac{\dd\zeta}{\zeta},\qquad  \text{if } x_t(p)\coloneqq a_0\not=0,\infty; \\
			(\omega_{(0,1)})_t & = \big(b_0+b_1\zeta+ O(\zeta^2)\big)\big(r+O(\zeta)\big) \frac{\dd\zeta}{\zeta},\qquad  \text{if } x_t(p)=0; \\
			(\omega_{(0,1)})_t & = \big(b_0+b_1\zeta+ O(\zeta^2)\big) \big(-r+O(\zeta)\big) \frac{\dd\zeta}{\zeta},\qquad  \text{if } x_t(p)=\infty; 
		\end{split}
		\end{equation*} 
		The condition that $\frac{R_{1,t}(z)}{R_{2,t}(z)}$ has order $1$ means that $b_1\not=0$. In the first case we assume $a_r\not=0$, and we have $r_p=r$, $\bar s_p=r$ if $b_0\not=0$ or $\bar s_p=r+1$ otherwise, and $s_p=r+1$. In the other two cases  $r_p=r$, $\bar s_p=0$ if $b_0\not=0$ or $\bar s_p=1$ otherwise, and $s_p=1$. Clearly it is admissible in the sense  of~\cref{de:localadm}.

	\item \emph{$(\omega_{0,1})_t $  separates fibres at every point $ q  \in \bigcup_{i = 1}^{\mathsf{k}} \mathsf{D}_i$  which is not a branch point of $x_t$.} Indeed, if both $x_t(p_1)=x_t(p_2)$ and $\frac{R_{1,t}(p_1)}{R_{2,t}(p_1)} = \frac{R_{1,t}(p_2)}{R_{2,t}(p_2)}$, then, using~\eqref{eq:TfamilyHurwitz} to express $ z(p_i)$ in terms of $ x(p_i)$ and $ \frac{R_{1,t}(p_i)}{R_{2,t}(p_i)}$, we find that $z(p_1)=z(p_2)$, where $z$ is the global coordinate on $\Sigma_t = \mathbb{C}$. So it is only possible if $p_1=p_2$. 
		
	\item \emph{The conditions for globalisation in \cref{th:rewriting} are satisfied for the sets $ x_t^{-1}(q) $ for every branch point $q \in \bigcup_{i = 1}^{\mathsf{k}} \mathsf{D}_i$ of $x_t$.} By the previous discussion, the values of $b_0$ are different for all $p\in x_t^{-1}(q)$. 
		
	If $q=a_0\not=0$, this means that $\bar s_p = r+1$ for at most one point in $x_t^{-1}(q)$, and for all other points in $x_t^{-1}(q)$ we have $\bar s_p = r$. So, for each pair of points $\{p_1,p_2\} \subseteq x_t^{-1}(q)$ either $\nu_1\not=\nu_2$, or $\nu_1=\nu_2=1$, $\frac{\tau_1}{r_1} \neq \frac{\tau_2}{r_2}$, and $r_1'=r_2'=1$, so each pair of points is non-resonant. Also for each pair of points condition (C-i) of~\cref{th:rewriting} is satisfied for $m=1$.
		
	In the other two cases, that is, $q=0$ or $q=\infty$, we see that $\bar s_p=1$ for at most one point in $x_t^{-1}(q)$, and for all other points in $x_t^{-1}(q)$ we have $\bar s_p = 0$. So, for each pair of points $\{p_1,p_2\} \subseteq x_t^{-1}(q)$ either $\nu_1\not=\nu_2$, or $\nu_1=\nu_2=0$, $\frac{\tau_1}{r_1} \neq \frac{\tau_2}{r_2}$, and $r_1'=r_2'=1$, so each pair of points is non-resonant.

\end{itemize}

Thus we see that indeed we have a globally admissible family of spectral curves. 
\end{proof}

Thanks to \cref{th:TRLimits} we conclude that topological recursion commutes with taking limits of spectral curves, and thus $(\omega_{g,n})_{t = o}$ still satisfy topological recursion even if the spectral curve is not in general position. Note that each $(\omega_{g,n})_t$ for $2g - 2 + n > 0$ is a meromorphic $n$-differential on $\mathbb{C}^n$ given in~\cite{BDKS20a} by an explicit closed formula that depends smoothly on $t$, so we have even a more refined statement that identifies the $(\omega_{g,n})_o$ obtained by topological recursion with the given explicit formulae.

In a number of earlier papers where special cases of Hurwitz problems are studied, this step is either not spelled out at all, or erroneously referred to~\cite{BE13}, see e.g. \cite{BDS20,BDKLM22}. On the other hand, in \cite{BDKS20a} this step is explained and explicitly refers to the present article. An illustrative example where the family and a choice of the disk $\mathsf{D}_1$ (with $\mathsf{k}=1$ in this particular case) are described in detail and everything works precisely as exposed above is the case of Bousquet-M\'elou--Schaeffer numbers discussed in~\cite{BDS20}.

\subsubsection{Combination with symplectic transformations}

Other examples of globally admissible families of spectral curves are coming from Hurwitz theory combined with generalised symplectic transformations, leading to the so-called fully simple maps~\cite{BGF20} and their generalisations, as well as the problems of enumeration of maps with internal faces. To this end, one considers a partition function $Z$ defined as 
\begin{equation}
\label{eq:GeneralPartitionZ}
Z=\mathcal{D}_{\psi_2} \exp\Big(\sum_{k=1}^{d_2} \frac{\gamma_{k,2} \partial_{p_k}}{\hbar}  \Big) \mathcal{D}_{\psi_1} \exp\Big(\sum_{k=1}^{d_1} \frac{\gamma_{k,1} p_k}{k\hbar}  \Big)\,.
\end{equation}
Here $\psi_i(\hbar^2,\theta)$ for $i \in \{1,2\}$ are given, for instance, by the same formulae as above,
\begin{align}
	\psi_i(\hbar^2,\theta) = \ln \bigg(\frac{P_{2,i}(\theta)}{P_{3,i}(\theta)} \bigg) + \frac{e^{\frac{\hbar\partial_\theta}{2}}-e^{-\frac{\hbar\partial_\theta}{2}}}{\hbar \partial_\theta} P_{1,i}(\theta),
\end{align}
for some polynomials $P_{1,i},P_{2,i},P_{3,i}$ of some fixed degrees satisfying the same conditions as above, and $T_i(z)= \sum_{k=1}^{d_1} \gamma_{k,i} z^k$, $i\in\{1,2\}$, are polynomials of fixed positive degrees $d_1,d_2$ (more general families can be treated as well, see comments in ~\cite{alexandrov2023topological}). 

One can define meromorphic $n$-differentials $(\omega_{g,n})_{(g,n)\in\Z_{\geq 0}\times\Z_{> 0}}$ by Equation~\eqref{eq:HurwitzCor}, with a parametrisation $x(z)$ by the uniformising coordinate $z \in \mathbb{C}$ which is now only implicitly defined, cf. \cite[Sections 3--4]{bychkov2023symplectic} and \cite{alexandrov2023topological}. In this situation, it is proved in \cite{bychkov2023symplectic,alexandrov2023topological} that they form a system of correlators satisfying topological recursion under the assumption that the spectral curve is in general position, in the same sense as above. 

Without this assumption, the statement on topological recursion is still true, but then it relies on exactly the same construction of a family of spectral curves $\Sigma_T$ as what we did above in \cref{sec:weiHur}. The statement that the thus constructed family is globally admissible as well as the subsequent steps on extending the limit result to the initial system of rational differentials (rather than the ones restricted to $\Sigma_t$ for $t\in T$) follow exactly the same lines as the argument in~\cref{sec:weiHur}.

\subsection{A few more examples and non-examples}

\label{sec:moreexamples}

In this section we will consider a few examples of families of spectral curves that fall outside the main scope of the article, but are interesting nonetheless. In each case, the exact statements of some results have to be adjusted, but the same proof strategy still applies.

\subsubsection{Equivariant Gromov--Witten theory of $ \P^1$}
\label{GWP111}
Fang--Liu--Zong have proved in \cite{FLZ17} that topological recursion on the spectral curve given by
\begin{equation}
  x = \mathsf{t}_0 + y + \frac{\mathsf{Q} e^{\mathsf{t}_1}}{y} + \mathsf{w}_1 \ln y + \mathsf{w}_2 \ln \frac{\mathsf{Q} e^{\mathsf{t}_1}}{y}
\end{equation}
yields the equivariant Gromov--Witten theory of $ \P^1$, where $\mathsf{w}_1$ and $\mathsf{w}_2$ are the equivariant parameters, $\mathsf{t}_0$ and $\mathsf{t}_1$ are the quantum cohomology coordinates, and $\mathsf{Q}$ is the Novikov parameter. They also consider several specialisations and limits of these parameters. As this naturally gives a family of spectral curves, let us investigate this from our point of view.\par
The ramification points are found by
\begin{equation}
  0 = \dd x = (y^2 + (\mathsf{w}_1 - \mathsf{w}_2 ) y - \mathsf{Q} e^{\mathsf{t}_1}) \frac{\dd y}{y^2}\,,
\end{equation}
which generically has two solutions, giving two simple ramification points.\par
The non-equivariant limit \cite[Section~1.2]{FLZ17}, obtained by setting $\mathsf{w}_1 = \mathsf{w}_2 = 0$, is a generic point, so topological recursion commutes with this limit. The further reduction to the stationary phase by $\mathsf{t}_0 = \mathsf{t}_1 = 0$ and $\mathsf{Q} = 1$ does not change this.\par
The large-radius limit \cite[Section~1.3]{FLZ17} on the other hand is taken by first setting $\mathsf{w}_2 = \mathsf{t}_0 = 0$ and $\mathsf{q} = \mathsf{Q}e^{\mathsf{t}_1}$ to get
\begin{equation}
  \begin{split}
    x = y + \frac{\mathsf{q}}{y} + \mathsf{w}_1 \ln y \,;
    \\
    \dd x = (y^2 + \mathsf{w}_1 y - \mathsf{q}) \frac{\dd y}{y^2}\,,
  \end{split}
\end{equation}
and then taking $\mathsf{q} \to 0$. Like for the Chiodo family in \cref{sec:Chiodo}, this limit \emph{does not fit} in the scope of our paper, as one of the ramification points moves off to infinity where it meets the logarithm cut. A proper setup to handle such situations would probably have to address the globalisation of the topological recursion for transalgebraic spectral curves defined in \cite{BKW23}. However, in this case the $\mathsf{q} = 0$ limit can be taken directly in two independent ways:
\begin{itemize}
\item[(i)] at the level of intersection-theoretic formulae for the Gromov--Witten theory, it yields simple Hodge integrals. Topological recursion is known independently to hold as well at $\mathsf{q} = 0$: this was Bouchard--Mari\~{n}o conjecture \cite{BM08} established in \cite{EMS11,DKOSS15}.
\item[(ii)] \cite{FLZ17} relies on Givental--Teleman reconstruction and its correspondence with topological recursion found in \cite{DOSS14}. The $R$ and $T$ matrices can be directly studied at $\mathsf{q} = 0$ and the correspondence of \cite{DOSS14}. This was achieved in \cite[Section 5]{FLZ17} which does give a proof of Bouchard--Mari\~{n}o conjecture logically independent of (i).
\end{itemize}

\label{sec:flyoff}

\subsubsection{Vertical component}

This example was suggested by Elba Garcia-Failde and appears as a special case of the families of spectral curves enumerating the $r$-spin graphs of \cite{BCEG21}.  Consider the family of affine curves
\begin{equation}
	x y^2 - y^2 - x^2 + (1 + 2t)x + 2t y - t^2 = 0
\end{equation}
with parameter $t \in \mathbb{C}$.  For $ t \neq 0$, it has a parametrisation
\begin{equation}
	x_t(z) = z^2 \,, \quad  y_t(z)  = z + \frac{t}{1-z} \,.
\end{equation}
It has a singularity at $ x= t+1$ and $ y = -1$: this is a double point for $ z = \pm \sqrt{ t + 1}$.
At $ t = 0$, the curve becomes reducible
\begin{equation}
	(y^2 - x)(x - 1) = 0
\end{equation}
and the two components intersect at $ x = 1$ and $ y = \pm 1$.\par
As in \cref{sec:PaulsExample}, we may ignore the node for $ t \neq 0$, as it gives unramified points in the normalisation. For $ t = 0$, the vertical component is problematic: we have explicitly excluded this from our definition of topological recursion as this does not give a ramified cover. However, in this particular situation, we see that independently of $t$, the only branch points are located at $x = 0$ and $ x= \infty$, while the vertical component is at $ x = 1$. We may take our spectral curve to be $x_T^{-1}(\mathcal{D})$ with $ \mathcal{D} = \P^1 \setminus \{ 1\}$, and then this family is globally admissible. Hence, topological recursion is analytic in this family -- if we ignore the node and the vertical component completely.

\appendix

\section{Symplectic transformations and local admissibility}
\label{ap:appendix}
The notion of local admissibilty in \cref{de:localadm} contains a surprising congruence condition $r_p = \pm 1 \,\,{\rm mod}\,\,s_p$ (when $r_p \geq 2$ and $s_p \in [r_p + 1]$) to be imposed on the local parameters at any ramification point $p$. This congruence comes from a computation within representation theory of $\mathcal{W}(\mathfrak{gl}_r)$-algebra \cite{BBCCN18}, where it was pointed out that its meaning for the geometry of the spectral curve is unclear. In this appendix, although we do not completely elucidate the question, we show that it appears naturally if we consider the action of symplectic transformations on spectral curves.

We recall that local admissibility was designed to guarantee that topological recursion is well defined, i.e. produces symmetric correlators.
Possible obstructions for this lie solely in the local parameters attached to the spectral curve. Thus they should already be seen at the level of the $(r,s)$-spectral curve
\begin{equation}
	\Sigma = \mathbb{P}^1\,,\qquad x = z^{r}\,,\qquad y = z^{s - r}\,,\qquad \omega_{0,2}(z_1,z_2) = \frac{\dd z_1\, \dd z_2}{(z_1 - z_2)^2}\,.
\end{equation}
Let us take here $r \in \mathbb{Z}\setminus \{0\}$ and $s \in \mathbb{Z}$, assuming that $s$ is not divisible by $r$ in case $|r| \geq 2$. The two points that may be ramified are found at $z = 0$ or $\infty$ and have local parameters
\begin{equation}
	(r_0,s_0) = (|r|,{\rm sgn}(r)s),\qquad (r_{\infty},s_{\infty}) = (|r|,-{\rm sgn}(r)s)\,.
\end{equation}
Topological recursion is well-defined on this spectral curve if and only if
\begin{itemize}
\item  either $|r| = 1$ (there are no ramification points, the $\omega_{g,n}$ all vanish);
\item or $|r| \geq 2$ and $|s| \in [|r| + 1]$ and $r = \pm 1 \,\,{\rm mod}\,\,s$.
\end{itemize}
The inequality $|s| > |r|+ 1$ means that $\dd x$ and $\dd y$ have a common zero, and it was already observed in \cite{EO07} that topological recursion is not well-defined in such cases. 

By symplectic transformations we mean birational maps $(x,y) \mapsto (\tilde{x},\tilde{y})$ with unit Jacobian, i.e. such that $\dd x \wedge \dd y = \dd \tilde{x} \wedge \dd \tilde{y}$. Given a spectral curve $\mathcal{S} = (\Sigma,x,y \dd x,\omega_{0,2})$ and a symplectic transformation, we can define a new spectral curve $\tilde{\mathcal{S}} = (\Sigma,\tilde{x},\tilde{y}\dd \tilde{x},\omega_{0,2})$. Such transformations play an important role in the theory of topological recursion: one can associate to a spectral curve (using the correlators computed by topological recursion) a sequence of free energies $(F_g)_{g \geq 0}$ which is invariant (in a suitable way) under symplectic transformations \cite{EO2MM,EO13}. For this invariance to even make sense, topological recursion should be well-defined both for $\mathcal{S}$ and $\tilde{\mathcal{S}}$.

An interesting set of transformations to look at are monomial transformations $(x,y) \mapsto (x^{a}y^b,x^{c}y^{d})$. Such transformations are symplectic (meaning that they preserve $\dd x \wedge \dd y$) if and only if  $a = 1 +b$, $c = -b$ and $d = 1 -b$. Indeed, we have
\begin{equation*}
	\dd(x^{a}y^b) \wedge \dd(x^c y^d) = (ad-bc) x^{a+c-1} y^{b+d-1} dx \wedge dy,
\end{equation*} and we must choose $a = 1+ b$, $c = -b$ and $d = 1 -b$ in order to obtain the symplectic form $dx \wedge dy$. We call these transformations
\begin{equation}
\Phi_b(x,y) = (x^{1+b}y^b,x^{-b}y^{1-b})\,.
\end{equation}

\begin{lemma}
Let $\mathcal{Y} \subset \mathbb{Z}^2$ be a set parametrising $(r,s)$-type spectral curves, such that for any $(\tilde{r},\tilde{s}) \in \mathcal{Y}$ either $|\tilde{r}| = 1$ or $|\tilde{s}| \leq |\tilde{r}| + 1$ holds. If $\mathcal{Y}$ is stable under $(\Phi_b)_{b \in \mathbb{Z}}$,\footnote{That is, if the geometrically understood condition ``$|r| = 1$ or $|s| \leq |r| + 1$''  is preserved by monomial symplectic transformations.}  then we have $\tilde{r} = \pm 1\,\,{\rm mod}\,\,|\tilde{s}|$ for any $(\tilde{r},\tilde{s}) \in \mathcal{Y}$.
\end{lemma}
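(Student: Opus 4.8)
The statement is about the arithmetic of the map $b \mapsto \Phi_b$ on the parameter plane. My first step would be to compute explicitly how $\Phi_b$ acts on the type parameters $(r,s)$. Recall that the $(r,s)$-curve is $x = z^r$, $y = z^{r-s}$, hence $\omega_{0,1} = y\,\dd x = r z^{2r - s - 1}\,\dd z$. Applying $\Phi_b(x,y) = (x^{1-b}y^b, x^b y^{1+b})$ and using $x = z^r$, $y = z^{r-s}$, one finds $\tilde{x} = z^{(1-b)r + b(r-s)} = z^{r - bs}$ and $\tilde{y} = z^{br + (1+b)(r-s)} = z^{(2b+1)r - (1+b)s - (r - bs)} \cdot$ -- more cleanly, since $\Phi_b$ preserves $\omega_{0,1}$, we have $\tilde\omega_{0,1} = \omega_{0,1} = r z^{2r - s - 1}\dd z$, and $\tilde x = z^{r-bs}$, so $\tilde y = \tilde\omega_{0,1}/\dd\tilde x$ has the form $z^{(2r-s) - (r - bs)} = z^{r - s + bs}$ up to a nonzero constant. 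Thus the new ramification order at $z = 0$ is $\tilde r = r - bs$ and the new parameter is $\tilde s = s$ (the exponent of $\omega_{0,1}$ modulo pullbacks from the base: $2r - s = (r - bs) + (r - s + bs) = \tilde r + (\tilde r - \tilde s)$ forces $\tilde s = s$). So in the $(r,s)$-plane, $\Phi_b$ acts by $(r,s) \mapsto (r - bs,\, s)$, i.e. it shifts $r$ by multiples of $s$ while fixing $s$.

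\textbf{Key steps.} Granting that computation, the proof is short. Fix $(\tilde r, \tilde s) \in \mathcal{Y}$ with $|\tilde s| \geq 2$ (if $|\tilde s| \leq 1$ the congruence $\tilde r = \pm 1 \bmod \tilde s$ is automatic, as is the case $|\tilde r| = 1$). By stability of $\mathcal{Y}$ under all $\Phi_b$, every point $(\tilde r - b\tilde s,\, \tilde s)$ for $b \in \mathbb{Z}$ lies in $\mathcal{Y}$, hence satisfies the geometric alternative ``$|r| = 1$ or $|s| \leq |r| + 1$''. Now choose $b$ so that $\tilde r - b\tilde s$ is the representative of $\tilde r \bmod \tilde s$ of smallest absolute value, so that $|\tilde r - b\tilde s| \leq |\tilde s|/2 < |\tilde s| - 1$ whenever $|\tilde s| \geq 3$ (and $|\tilde r - b\tilde s| \leq 1$ when $|\tilde s| = 2$). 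For this point the inequality $|s| \leq |r| + 1$ fails unless $|\tilde r - b\tilde s| = 1$; since $|\tilde r - b\tilde s| \neq 1$ would also rule out $|r| = 1$, the only way the alternative can hold is $|\tilde r - b\tilde s| \in \{0, 1\}$. But $\tilde r - b\tilde s = 0$ is impossible since $\tilde s \nmid \tilde r$ (this is part of the data defining an $(r,s)$-curve with $|\tilde r| \geq 2$; the edge case $|\tilde r| = 1$ is handled already). Therefore $|\tilde r - b\tilde s| = 1$, i.e. $\tilde r \equiv \pm 1 \pmod{\tilde s}$, as claimed.

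\textbf{Main obstacle.} The only genuinely delicate point is verifying the action $(r,s)\mapsto (r-bs,s)$ of $\Phi_b$ on the local parameters, including keeping track of signs when $r < 0$ and of the behaviour at $z = \infty$ (where the parameters are $(|r|, -\mathrm{sgn}(r)s)$); one must check that $\Phi_b$ acts compatibly at both potentially-ramified points and that $\tilde s$ is genuinely unchanged -- this uses that $\Phi_b$ preserves $\omega_{0,1}$, so the exponent of $\omega_{0,1}$ is literally the same series, and only its decomposition relative to the new base coordinate $\tilde x$ changes. I would also want to double-check that $\Phi_b$ is a well-defined birational symplectic transformation for every $b \in \mathbb{Z}$ (unit Jacobian: the monomial matrix $\begin{psmallmatrix} 1-b & b \\ b & 1+b\end{psmallmatrix}$ has determinant $(1-b)(1+b) - b^2 = 1 - 2b^2$; wait -- one should instead use the multiplicative structure, where symplecticity of a monomial map $(x,y)\mapsto(x^ay^b,x^cy^d)$ amounts to $ad - bc = 1$, giving $(1-b)(1+b) - b^2 = 1 - 2b^2$, which is $1$ only for $b = 0$). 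This sign discrepancy signals that one should double-check the exact form of the $\omega_{0,1}$-preserving monomial transformations as defined in the paper just above the lemma, and use precisely that normalization; the combinatorial core of the argument -- reducing $r$ modulo $s$ -- is unaffected.
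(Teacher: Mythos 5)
Your proof is correct and follows essentially the same route as the paper's: stability under $(\Phi_b)_{b\in\mathbb{Z}}$ lets you replace $\tilde r$ by a residue of $\tilde r$ modulo $\tilde s$, and the alternative ``$|r|=1$ or $|s|\leq |r|+1$'' then fails for that residue unless it equals $\pm 1$. The only differences are cosmetic — you reduce to the least-absolute-value residue where the paper reduces to $k\in\{1,\dots,|s|-1\}$ and checks the two admissible endpoints — and your observation that the exponents of $\Phi_b$ as printed do not literally give a symplectic, $\omega_{0,1}$-preserving map is well taken (the intended map is $(x,y)\mapsto(x(xy)^{-b},y(xy)^{b})$, which does send the $(r,s)$-curve to the $(r-bs,s)$-curve), but this does not affect the arithmetic core of the argument.
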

\begin{proof}
Take $(r,s) \in \mathcal{Y}$ and write $r = k + \beta s$ for $\beta \in \mathbb{Z}$ and $k \in \{1,\ldots,|s|-1\}$. We see that $\Phi_{-\beta}$ takes the $(r,s)$-spectral curve to the $(k,s)$-spectral curve. If $k = 1$ the $(k,s)$-curve is not ramified; if $k = |s| - 1$ it satisfies $|s| \leq k + 1$; in all other cases the $(k,s)$-curve is ramified but satisfies $|s| > k + 1$. This entails the claim.
\end{proof}

In other words, the mysterious congruence condition $r = \pm 1\,\,{\rm mod}\,\,s$ can be generated from the geometrically understood condition ``$r = 1$ or $s \leq r + 1$'' under the action of monomial symplectic transformations.

\printbibliography

\end{document}